\documentclass[12pt]{amsart}

\usepackage{geometry}
\geometry{a4paper,top=3.2cm,bottom=3.2cm,left=2.5cm,right=2.5cm}

\hyphenpenalty=5000
\tolerance=1000

\newcommand{\calum}[1]{{\textcolor{blue}{[Calum: #1]}}}

\usepackage{amsfonts, adjustbox, amssymb, amscd}
\numberwithin{equation}{section}

\usepackage{bm}
\usepackage{verbatim}
\usepackage{mathrsfs}
\usepackage{graphicx}
\usepackage{tikz-cd}
\usepackage{subcaption}
\usepackage{listings}
\usepackage{subfiles}
\usepackage[toc,page]{appendix}
\usepackage{mathtools}
\usepackage{comment}
\usepackage{enumerate}
\usepackage{enumitem}
\usepackage[all]{xy}

\usepackage{graphicx}
\usepackage{appendix}
\usepackage{hyperref}
\hypersetup{
    colorlinks=true,
    citecolor=red,
    linkcolor=blue,
    filecolor=magenta,      
    urlcolor=red,
}
\lstset{
  basicstyle=\ttfamily,
  columns=fullflexible,
  frame=single,
  breaklines=true,
  postbreak=\mbox{\textcolor{red}{$\hookrightarrow$}\space},
}

\newcommand{\bb}{\bm{b}}
\newcommand{\Mm}{{\bf{M}}}
\newcommand{\Nn}{{\bf{N}}}

\newcommand{\Pp}{{\bf{P}}}

\newcommand{\Qq}{\mathbb{Q}}

\newcommand{\Rr}{\mathbb{R}}

\newcommand{\Center}{\operatorname{center}}

\newcommand{\Exc}{\operatorname{Exc}}

\newcommand{\Hh}{\mathbf{H}}

\newcommand{\rk}{\operatorname{rank}}

\newcommand{\ninv}{\operatorname{ninv}}
\newcommand{\inv}{\operatorname{inv}}
\newcommand{\Nklt}{\operatorname{Nklt}}

\newcommand{\Supp}{\operatorname{Supp}}

\newcommand{\Nlc}{\operatorname{Nlc}}

\newcommand{\mult}{\operatorname{mult}}

\newcommand{\cont}{\operatorname{cont}}

\newcommand{\Aa}{{\bf{A}}}

\newcommand{\Bb}{{\bf{B}}}
\newcommand{\Ff}{\mathcal{F}}
\newcommand{\Gg}{\mathcal{G}}

\newcommand{\Ii}{\Gamma}

\newcommand{\Pic}{\mathrm{Pic}}

\makeatletter

\makeatother
\newcounter{parentnumber}

\newtheorem{thm}{Theorem}[section]
\newtheorem{conj}[thm]{Conjecture}

\newtheorem{lem}[thm]{Lemma}
\newtheorem{prop}[thm]{Proposition}

\newtheorem{claim}[thm]{Claim}

\theoremstyle{definition}
\newtheorem{defn}[thm]{Definition}
\newtheorem{ques}[thm]{Question}
\theoremstyle{definition}
\newtheorem{rem}[thm]{Remark}

\newtheorem{nota}[thm]{Notation}

\theoremstyle{definition}

\begin{document}

\title{Minimal model program for algebraically integrable adjoint foliated structures}
\author{Paolo Cascini, Jingjun Han, Jihao Liu, Fanjun Meng, Calum Spicer, Roberto Svaldi, and Lingyao Xie}

\subjclass[2020]{14E30, 37F75}
\keywords{minimal model program, algebraically integrable foliation, adjoint foliated structure}
\date{\today}

\begin{abstract}
For $\Qq$-factorial klt algebraically integrable adjoint foliated structures, we prove the cone theorem, the contraction theorem, and the existence of flips. Therefore, we deduce the existence of the minimal model program for such structures.

We also prove the base-point-freeness theorem for such structures of general type and establish an adjunction formula and the existence of $\Qq$-factorial quasi-dlt modifications for algebraically integrable adjoint foliated structures. 
\end{abstract}

\address{Department of Mathematics, Imperial College London, 180 Queen’s
Gate, London SW7 2AZ, UK}
\email{p.cascini@imperial.ac.uk}

\address{Shanghai Center for Mathematical Sciences, Fudan University, 2005 Songhu Road, Shanghai, 200438, China}
\email{hanjingjun@fudan.edu.cn}

\address{Department of Mathematics, Northwestern University, 2033 Sheridan Road, Evanston, IL 60208, USA}
\email{jliu@northwestern.edu}

\address{Department of Mathematics, Johns Hopkins University, 3400 N. Charles Street, Baltimore, MD 21218, USA}
\email{fmeng3@jhu.edu}

\address{Department of Mathematics, King’s College London, Strand, London WC2R 2LS, UK}
\email{calum.spicer@kcl.ac.uk}

\address{Dipartimento di Matematica ``F. Enriques'', Universit\`a degli Studi di Milano, Via Saldini 50, 20133 Milano (MI), Italy}
\email{roberto.svaldi@unimi.it}

\address{Department of Mathematics, University of California, San Diego, 9500 Gilman Drive \# 0112, La Jolla, CA 92093-0112, USA}
\email{l6xie@ucsd.edu}

\maketitle

\pagestyle{myheadings}\markboth{\hfill P. Cascini, J. Han, J. Liu, F. Meng, C. Spicer, R. Svaldi, and L. Xie \hfill}{\hfill Minimal model program for algebraically integrable adjoint foliated structures\hfill}

\tableofcontents

\section{Introduction}\label{sec:Introduction}
We work over the field of complex numbers $\mathbb{C}$.

The minimal model program for foliations has been extensively developed in recent years, particularly for foliated surfaces \cite{McQ08,Bru15}, foliated threefolds \cite{CS20,Spi20,CS21,SS22}, and algebraically integrable foliations \cite{ACSS21,CS23a,CHLX23,LMX24b}. However, it is also well known that many classical results in birational geometry that hold for algebraic varieties fail when attempting to extend those to foliations, e.g., abundance (cf. \cite[Theorem 3 IV.5.11]{McQ08}), effective birationality (cf. \cite[Theorem 1.3]{Lü24}), and Bertini-type theorems (cf. \cite[Example 3.4]{DLM23}, \cite[3.5.2]{CS23b}).

One natural approach to overcome the failure of these important results is to introduce a more general class of structures which we will call 
\emph{adjoint foliated structures}. 
In simple terms,  rather than  considering the canonical divisor $K_{\Ff}$ of a foliation $\Ff$ on $X$, 
we consider the interpolated canonical divisor
$tK_{\Ff}+(1-t)K_X$ instead, where $0\leq t\leq 1$.

The major goal of this paper is to establish the existence of the minimal model program for algebraically integrable adjoint foliated structures; that is, to establish the cone theorem, the contraction theorem, and the existence of flips for adjoint foliated structures $tK_{\Ff}+(1-t)K_X$ with $\Ff$ algebraically integrable. Our first main theorem is the following:

\begin{thm}\label{thm: simplified main theorem}
Let $\Ff$ be an lc algebraically integrable foliation on a smooth projective variety $X$ and $t\in [0,1]$ a real number. Then we can run a $(tK_{\Ff}+(1-t)K_X)$-MMP.
\end{thm}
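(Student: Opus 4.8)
The plan is to reduce this statement to the full technical machinery developed in the body of the paper — the cone theorem, the contraction theorem, and the existence of flips for $\Qq$-factorial klt algebraically integrable adjoint foliated structures — and then to address the two gaps between the hypotheses here and those required by that machinery: (i) the structure $(X,\Ff,tK_{\Ff}+(1-t)K_X)$ is only lc, not klt, and (ii) we need $\Qq$-factoriality together with an actual boundary to form a genuine adjoint foliated structure in the sense of the paper. First I would package the data as an adjoint foliated structure $\mathfrak{X}=(X,\Ff,B=0,\mathbf{M}=0,t)$ with $\Qq$-divisor part zero and trivial nef part, so that $K_{\mathfrak{X}}=tK_{\Ff}+(1-t)K_X$; since $X$ is smooth it is automatically $\Qq$-factorial, and since $\Ff$ is lc the pair is lc as an adjoint foliated structure (one should cite the comparison of discrepancies between $K_{\Ff}$, $K_X$ and their interpolation established earlier, or the fact that for $t\in[0,1]$ convexity of log discrepancies gives $a(E;\mathfrak{X})\ge 0$).

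The second step is the standard dlt-type modification. Running the MMP for an lc (as opposed to klt) structure requires either a $\Qq$-factorial dlt/qdlt model or a perturbation argument. I would invoke the existence of $\Qq$-factorial quasi-dlt modifications for algebraically integrable adjoint foliated structures promised in the abstract and proved in the paper: pass to a qdlt modification $\pi\colon \mathfrak{X}'\to \mathfrak{X}$ with $K_{\mathfrak{X}'}=\pi^*K_{\mathfrak{X}}$ (crepant), where $\mathfrak{X}'$ is $\Qq$-factorial. An MMP for $K_{\mathfrak{X}'}$ then induces, step by step, a sequence of birational maps that constitutes an MMP for $K_{\mathfrak{X}}$: each $K_{\mathfrak{X}'}$-negative extremal contraction pushes forward to a $K_{\mathfrak{X}}$-nonpositive operation, and one checks as usual that the negativity and the termination bookkeeping descend. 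Alternatively, if the qdlt modification is only available in a weaker form, I would perturb: write $tK_{\Ff}+(1-t)K_X$ as a limit of klt adjoint foliated structures $tK_{\Ff}+(1-t)K_X+\epsilon A$ for a small ample $A$ and a suitable $\epsilon>0$, run the klt MMP for the perturbed structure using the cone, contraction and flip theorems cited above, and then let $\epsilon\to 0$ using boundedness of the length of extremal rays.

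Concretely, once we are in the $\Qq$-factorial klt algebraically integrable setting, the MMP runs by the usual loop: given $\mathfrak{X}_i$ with $K_{\mathfrak{X}_i}$ not nef, the cone theorem produces a $K_{\mathfrak{X}_i}$-negative extremal ray, the contraction theorem contracts it to obtain $f_i\colon \mathfrak{X}_i\to Z_i$, and then either $f_i$ is a Mori fibre space (and the program stops), or $f_i$ is a divisorial contraction and we replace $\mathfrak{X}_i$ by $Z_i$, or $f_i$ is a flipping contraction and we replace $\mathfrak{X}_i$ by its flip, whose existence is exactly one of the main theorems. Preservation of $\Qq$-factoriality and of the klt (or lc, after descending) property along each step is the standard verification. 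I expect the main obstacle to be step two: ensuring that an MMP for the (possibly only lc) input descends correctly from a $\Qq$-factorial klt or qdlt model, i.e.\ that the qdlt modification exists with the crepant property in the algebraically integrable adjoint foliated category and that MMP-steps upstairs translate faithfully into MMP-steps downstairs — the foliated setting makes the control of discrepancies and of the cone of curves under these modifications more delicate than in the classical case. The remaining ingredients — the cone, contraction, and flip theorems — are quoted wholesale from the body of the paper, and termination is not required for merely \emph{running} an MMP.
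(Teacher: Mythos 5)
Your overall skeleton (cone theorem $\to$ contraction $\to$ flips, with preservation of $\Qq$-factoriality and of the singularity class along the run) is the right machinery, but the reduction step you insert contains a genuine gap and is also unnecessary. The gap: you propose to pass to a crepant $\Qq$-factorial qdlt modification $X'\to X$, run the MMP upstairs, and claim that "each $K$-negative extremal contraction pushes forward" to a step of the MMP on $X$. This descent is false in general: contractions of extremal rays on a crepant model do not push forward to contractions of extremal faces of $\overline{NE}(X)$ (the morphism $X'\to X$ itself is $K$-trivial and contracts curves), and at best this strategy produces minimal models in the Birkar--Shokurov sense rather than an actual run of the $(tK_{\Ff}+(1-t)K_X)$-MMP on $X$, which is what the statement asks for. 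This is precisely why the lc case is delicate, and the paper does not argue this way. Your fallback perturbation is also problematic as written: replacing $K$ by $K+\epsilon A$ with $A$ an ample boundary divisor does not produce a klt adjoint foliated structure, because Bertini-type theorems fail for foliations (the paper explicitly circumvents this by placing ample perturbations in the nef $\bb$-divisor part $\Mm$, not in $B$), and a $(K+\epsilon A)$-MMP is not automatically a $K$-MMP step by step.

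Both detours are avoidable, and this is where your proposal diverges from the paper. Since $X$ is smooth, every exceptional divisor $E$ over $X$ has $a(E,X)\geq 1$, so for $t<1$ one has $a(E,\Ff,0,\bm{0},t)=t\,a(E,\Ff)+(1-t)a(E,X)\geq -t\epsilon_{\Ff}(E)+(1-t)>-t\epsilon_{\Ff}(E)-(1-t)$, i.e.\ the structure $(X,\Ff,0,\bm{0},t)$ is already klt (also note your parenthetical threshold "$a\geq 0$" is not the lc condition; the correct bound is $-t\epsilon_{\Ff}(E)-(1-t)$). Hence for $t<1$ the statement is literally a special case of Theorem \ref{thm: can run mmp intro}, which is the paper's one-line proof. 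For $t=1$ the structure is only lc, but no qdlt descent is needed either: the paper's Theorem \ref{thm: run mmp strong} establishes the existence of the MMP directly for lc algebraically integrable adjoint foliated structures whose ambient variety is potentially klt, under any of the auxiliary conditions of Theorem \ref{thm: contraction theorem strong} -- here condition (viii), $X$ $\Qq$-factorial, holds trivially since $X$ is smooth -- with Lemma \ref{lem: properties preserved under mmp} guaranteeing these conditions persist along the run. So the correct fix is to delete your step two entirely and invoke the klt computation for $t<1$ together with Theorem \ref{thm: run mmp strong} (or \cite{LMX24b}) for $t=1$.
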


Theorem \ref{thm: simplified main theorem} was proved when $t=0$ in \cite{BCHM10} and when $t=1$ in \cite{LMX24b}. For further details on the exact technical statements that are proven in this paper, we refer the reader to the next subsections of the introduction.

The extra condition on the algebraic integrability of the foliation $\mathcal F$ allows us to deal with adjoint foliated structures in dimension higher than two, thus achieving in this context a greater generality than in~\cite{SS23}. 
Indeed, for algebraically integrable foliations the (non-adjoint) foliated MMP has already been established in \cite{ACSS21,CS23a,CHLX23,LMX24b}, whereas for foliations that are not necessarily  algebraically integrable the existence of MMP remains open in dimensions $\geq 4$;
this is one of the main obstacles to fully extending the results of~\cite{SS23} to higher dimension in full generality.


\medskip

Adjoint foliated structures were first considered by the sixth author and Pereira in order to prove effective birationality results for foliated surfaces \cite{PS19}.
These structures were later studied more systematically by the fifth and sixth authors \cite{SS23}: 
for adjoint foliated structures on surfaces, they established the minimal model program when 
$1\geq t>\frac{5}{6}$, \cite[Theorems 1.1 and 1.2]{SS23}, 
moreover, they showed that effective birationality, boundedness in families, lower bound of the canonical volumes, and boundedness of the birational automorphism groups hold when 
$t$ is sufficiently close to $1$, \cite[Theorems 1.3-1.7]{SS23}. These results provide strong evidence that classical results in birational geometry hold for adjoint foliated structures, although many of these results fail for foliations.

Moreover, M\textsuperscript{c}Kernan conjectured that the ``interpolated log canonical threshold" for adjoint foliated structures, that is, the supremum of the real number $t$ so that the adjoint foliated structure associated with $tK_{\Ff}+(1-t)K_X$ has log canonical singularities, should satisfy the ascending chain condition (ACC). See Appendix \ref{sec: acc} for details. M\textsuperscript{c}Kernan also proposed a proof in dimension $2$ (no later than his JAMI conference talk in May 2022 \cite{McK22}). In particular, it is conjectured that if $tK_{\Ff}+(1-t)K_X$ has a log canonical structure for some $0<t\ll 1$, then $\Ff$ is also log canonical. This highlights the importance of considering adjoint foliated structures. Methodologically, results on the $(tK_{\Ff}+(1-t)K_X)$ minimal model program could imply results on the $K_{\Ff}$ minimal model program by either letting $t=1$ or by letting $t$ approach $1$ and applying ACC-type results. In fact, this methodology will be applied in this paper to the minimal model program for algebraically integrable foliations on potentially klt varieties. See Theorem \ref{thm: small improvement LMX24b} for details.

We emphasize that, in this paper, rather than considering $tK_{\Ff}+(1-t)K_X$ only for $t$ sufficiently close to $1$, we take into consideration arbitrary value of $t\in [0,1]$. 
We also expect that similar methods as those of this paper can be applied to prove the existence of MMP for adjoint foliated structures that are not necessarily algebraically integrable in dimensions $\leq 3$. 
This will be addressed in future works.

Our main theorem also holds in the log setting, that is, in the setting where we have boundary divisors. 
Due to the novelty of the notation, before we proceed to state our main theorems in detail, we shall first provide the definition of adjoint foliated structures.

\medskip

\noindent\textbf{Definition of adjoint foliated structures.} 
Rather than considering pairs consisting of a normal variety $X$ and foliation $\Ff$ on $X$, it is more convenient to consider pairs $(X,B)$ and foliated triples $(X,\Ff,B)$. 
Recent developments in birational geometry and foliation theory suggest that it is sometimes even more convenient to consider generalized pairs $(X,B,\Mm)$ \cite{BZ16} and generalized foliated quadruples $(X,\Ff,B,\Mm)$ \cite{LLM23}. When we consider adjoint foliated structures, there is one more object to consider: the parameter $t$. For the reader's convenience, we now provide the definition of adjoint foliated structures.

\begin{defn}[$=$Definition \ref{defn: ads}]\label{defn: ads intro}
An \emph{adjoint foliated structure} $(X,\Ff,B,\Mm,t)/U$ consists of a projective morphism $X\rightarrow U$ from a normal quasi-projective variety to a quasi-projective variety, a foliation $\Ff$ on $X$, an $\Rr$-divisor $B\geq 0$ on $X$,  a nef$/U$ $\bb$-divisor $\Mm$, and a real number $t\in [0,1]$, such that $K_{(X,\Ff,B,\Mm,t)}:=tK_{\Ff}+(1-t)K_X+B+\Mm_X$ is $\Rr$-Cartier. 

If $B=0$, $\Mm=\bm{0}$, $U$ is not important, or $\Ff=T_X$, we may drop $B,\Mm,U,\Ff$ respectively. 
Depending on the objects in the parentheses we may also call an adjoint foliated structure as a quintuple, quadruple, triple, pair, etc. 
\end{defn}

The definition of singularities of an adjoint foliated structure (e.g. klt, lc) is similar to that of usual pairs or foliated triples. See Definition \ref{defn: sing of afs} for details.

\medskip

\noindent\textbf{MMP for adjoint foliated structures.} We establish the cone theorem, the contraction theorem, and the existence of flips for $\Qq$-factorial klt algebraically integrable adjoint foliated structures, which implies the existence of the MMP for such structures. 
We refer the reader to Definition \ref{defn: nklt locus} for the definition of the non-lc locus of an adjoint foliated structure.

\begin{thm}[Cone theorem]\label{thm: cone intro}
      Let $(X,\Ff,B,\Mm,t)/U$ be an algebraically integrable adjoint foliated structure. Let $K:=K_{(X,\Ff,B,\Mm,t)}$ and let $\Nlc:=\Nlc(X,\Ff,B,\Mm,t)$ be the nlc (non-lc) locus of $(X,\Ff,B,\Mm,t)$.
      
      Let $\{R_j\}_{j\in\Lambda}$ be the set of $K$-negative extremal rays in $\overline{NE}(X/U)$ that are rational and are not contained in $\Nlc$. Then:
\begin{enumerate}
    \item We have
    $$\overline{NE}(X/U)=\overline{NE}(X/U)_{K\geq 0}+\overline{NE}(X/U)_{\Nlc}+\sum_{j\in\Lambda}R_j.$$
    In particular, any $K$-negative extremal ray in $\overline{NE}(X/U)$ that is not contained in $\Nlc$ is rational.
    \item Each $R_j$ is spanned by a rational curve $C_j$ such that
    $$0<-K\cdot C_j\leq 2\dim X.$$
    \item For any ample$/U$ $\Rr$-divisor $A$ on $X$,
    $$\Lambda_A:=\{j\in\Lambda\mid R_j\subset\overline{NE}(X/U)_{K+A<0}\}$$
    is a finite set. Moreover, we can write
    $$\overline{NE}(X/U)=\overline{NE}(X/U)_{K+A\geq 0}+\overline{NE}(X/U)_{\Nlc}+\sum_{j\in\Lambda_A}R_j.$$
    In particular, $\{R_j\}_{j\in\Lambda}$ is countable and is a discrete subset of $\overline{NE}(X/U)_{K<0}$. 
    \item Let $F$ be a $K$-negative extremal face that is relatively ample at infinity with respect to $(X,\Ff,B,\Mm,t)$. 
    Then $F$ is a rational extremal face.
\end{enumerate}
\end{thm}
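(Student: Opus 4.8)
The plan is to transfer the statement, through the structure of an ACSS model, to the cone theorem for generalized pairs, applied twice: relatively over the base $Z$ of the structure morphism, to produce the $f$-vertical extremal rays, and on $Z$ itself, to produce --- after lifting --- the $f$-horizontal ones. First I would reduce to a good model. Using the standard reduction of the cone theorem for non-lc structures to the lc case, and then taking a $\Qq$-factorial quasi-dlt modification --- whose existence for algebraically integrable adjoint foliated structures is established earlier in the paper --- I reduce to the case that $(X,\Ff,B,\Mm,t)/U$ is $\Qq$-factorial, lc, and carries the structure of an ACSS (``property $(\ast)$'') algebraically integrable adjoint foliated structure: there is an equidimensional contraction $f\colon X\to Z$ over $U$ inducing $\Ff$, with $(X,\Ff)$ toroidal over $Z$. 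At each reduction, pushforward of curves shows that (1)--(3) on the model imply (1)--(3) on the original structure, with curves lying over the non-lc locus absorbed into $\overline{NE}(X/U)_{\Nlc}$; and (4) is then a formal consequence of (1)--(3) on any given $X$.

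On the ACSS model I would establish an adjunction / canonical bundle formula for $K:=tK_{\Ff}+(1-t)K_X+B+\Mm_X$: there is a generalized pair $(X,\Delta,\Nn)$ with $\Nn$ nef$/U$, generalized klt wherever $(X,\Ff,B,\Mm,t)$ is klt, with $K\equiv_{Z}K_X+\Delta+\Nn_X$ (the difference being numerically pulled back from $Z$); and there is a generalized klt pair $(Z,\Delta_Z,\Nn^{Z})$ with $\Nn^{Z}$ nef$/U$ such that $f$ relates $K$-negativity along $f$-horizontal curves with $(K_Z+\Delta_Z+\Nn^{Z}_{Z})$-negativity of their images. For $t=1$ this is the relative canonical bundle formula for algebraically integrable foliated triples used in \cite{LMX24b}, and for $t=0$ it is trivial; the content is the interpolation for general $t\in[0,1]$, which is where the adjoint setting genuinely differs, as one cannot perturb the foliated part away and must still produce honest nef moduli $\bb$-divisors with the correct singularities.

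Granting this, I would split the cone. Let $R$ be a $K$-negative extremal ray of $\overline{NE}(X/U)$ not contained in $\Nlc$. If $R\subseteq\overline{NE}(X/Z)$, then $R$ is $(K_X+\Delta+\Nn_X)$-negative, so the cone theorem for generalized pairs over $Z$ makes $R$ rational, spanned by a rational curve $C$ with $0<-K\cdot C\le 2\dim X$, and yields the finiteness in (3) after twisting by an ample divisor. If $R$ is not $f$-vertical, then $f_{*}R$ spans a $(K_Z+\Delta_Z+\Nn^{Z}_{Z})$-negative extremal ray of $\overline{NE}(Z/U)$; by the cone theorem for generalized pairs on $Z$ it is rational, spanned by a rational curve of length at most $2\dim Z\le 2\dim X$, which I would lift through the equidimensional toroidal fibration to a rational curve on $X$ spanning $R$, with controlled $K$-degree and disjoint from $\Nlc$. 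Combining the two generalized-pair cone decompositions with the fibration structure of the ACSS model gives (1) and (3); (2) is the length bound just obtained; and (4) holds because a $K$-negative extremal face that is relatively ample at infinity is, by (3), generated by finitely many of the $R_j$, each rational by (1)--(2).

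The main obstacle is the adjunction / canonical bundle formula of the second step: producing genuinely nef moduli $\bb$-divisors and matching the singularities --- klt, lc, and the precise non-lc locus --- on both sides, uniformly for all $t\in[0,1]$ rather than only for $t$ near $1$. A secondary difficulty is the lifting of horizontal rational curves from $Z$ to $X$ with the sharp bound $-K\cdot C\le 2\dim X$ and with the lifted curve avoiding $\Nlc$, which relies delicately on the equidimensional toroidal structure of the ACSS model.
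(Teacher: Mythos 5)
There is a genuine gap at the heart of your plan: the treatment of $f$-horizontal rays. On an ACSS model the relation $K_{\Ff}+B\sim_{Z}K_X+B+G$ (Proposition \ref{prop: weak cbf gfq}) does let you convert $K$-negativity of \emph{$f$-vertical} classes into negativity for a generalized pair on $X$ over $Z$, and that half of your splitting is fine. But for a ray $R$ not contracted by $f$ there is no ``canonical bundle formula'' of the kind you invoke: $K=tK_{\Ff}+(1-t)K_X+B+\Mm_X$ is not numerically trivial on the fibers of $f$ (the restriction of $K_{\Ff}$ to a general fiber $F$ is essentially $K_F$ plus boundary), so $K$-negativity of a horizontal curve is not determined by the image curve in $Z$, and there is no generalized pair $(Z,\Delta_Z,\Nn^Z)$ whose negativity detects it. Two further steps in that branch also fail independently: $f_{*}R$ need not span an extremal ray of $\overline{NE}(Z/U)$ (pushforward does not preserve extremality), and even when the image is a rational curve there is in general no way to lift it through the fibration to a rational curve on $X$ that spans the prescribed ray $R$, avoids $\Nlc$, and satisfies $-K\cdot C\leq 2\dim X$ (fibers need not be rational, multisections of bounded degree need not exist). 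So the proposed double application of the generalized-pair cone theorem does not cover the horizontal rays, which are exactly the hard ones.

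For comparison, the paper does not split rays by verticality over $Z$; it splits by whether the nef supporting function $H_R$ of an exposed ray is big$/U$. In the non-big case (Proposition \ref{prop: cone non-big case}) the bound $2\dim X$ comes from the bend-and-break statement \cite[Theorem 8.1.1]{CHLX23} applied on a general fiber of $X\to U$, with an interpolation trick comparing $-K_{\Ff}|_F$ and $-K_X|_F$ against the same $1$-cycle so that the worse of the two still gives the length bound. In the big case (Proposition \ref{prop: cone big case}) the ray is pushed onto a divisorial lc place extracted by a $\Qq$-factorial qdlt modification, the adjunction formula (Theorem \ref{thm: afs adj}) produces a lower-dimensional adjoint foliated structure on that divisor, and one concludes by induction on dimension; handling the non-lc locus requires the careful choice of a minimal nklt center for a perturbed boundary $B+\lambda P$ (Claim \ref{claim: existence of minimal (W,lambda)}), which your ``absorb into $\overline{NE}(X/U)_{\Nlc}$ after pushforward'' reduction glosses over. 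Your observation that (4) follows formally from (1)--(3), and the use of qdlt modifications and adjunction as ingredients, do match the paper; but without a replacement for the horizontal-ray step the argument as proposed does not go through.
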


For the definition of a a $K$-negative extremal face that is relatively ample at infinity with respect to an adjoint foliated structure, we refer the reader to Definition \ref{defn: basics of cone theorem}.

\begin{thm}[Contraction theorem]\label{thm: cont intro}
Let $(X,\Ff,B,\Mm,t)/U$ be an lc algebraically integrable adjoint foliated structure associated with a morphism $\pi: X\rightarrow U$, and let $R$ be a $K_{(X,\Ff,B,\Mm,t)}$-negative extremal ray$/U$. Assume that $X$ is klt. Then there exists a contraction$/U$ $\cont_R: X\rightarrow T$ satisfying the following.
    \begin{enumerate}
        \item For any integral curve $C$ such that $\pi(C)$ is a point, $\cont_R(C)$ is a point if and only if $[C]\in R$.
        \item Let $L$ be a line bundle on $X$ such that $L\cdot R=0$. Then there exists a line bundle $L_T$ on $T$ such that $L\cong f^\ast L_T$.
    \end{enumerate}
\end{thm}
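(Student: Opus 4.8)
\noindent\emph{Proof strategy.} The plan is to follow the Kawamata--Shokurov approach to the contraction theorem, with the base-point-free theorem for algebraically integrable adjoint foliated structures as the key input. Write $K:=K_{(X,\Ff,B,\Mm,t)}$. Since the structure is lc, $\Nlc(X,\Ff,B,\Mm,t)=\emptyset$, so by the cone theorem (Theorem~\ref{thm: cone intro}) we have $\overline{NE}(X/U)=\overline{NE}(X/U)_{K\geq 0}+\sum_{j\in\Lambda}R_j$ with the $R_j$ rational and forming a discrete subset of $\overline{NE}(X/U)_{K<0}$; in particular $R=R_{j_0}$ for some $j_0\in\Lambda$. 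From this I would extract a nef$/U$ $\Qq$-Cartier divisor $H$ on $X$ with $H^{\perp}\cap\overline{NE}(X/U)=R$ such that $aH-K$ is ample$/U$ for all $a\gg 0$: the existence of a nef$/U$ supporting $\Qq$-divisor of $R$ follows from the rational polyhedrality of $\overline{NE}(X/U)$ near $R$ given by the cone theorem, and ampleness of $aH-K$ for $a\gg 0$ is the usual compactness argument on a base of $\overline{NE}(X/U)$, using that $-K$ is positive precisely along $R$, where $H$ vanishes. Since $aH-K$ is then nef and big$/U$, and since $X$ is klt while $(X,\Ff,B,\Mm,t)$ is lc, the base-point-free theorem for algebraically integrable adjoint foliated structures shows that $H$ is semiample$/U$; I then let $\cont_R\colon X\to T$ be the Stein factorization over $U$ of the morphism defined by $|mH/U|$ for $m\gg 0$ sufficiently divisible. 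Thus $T$ is a normal quasi-projective variety, $(\cont_R)_{\ast}\mathcal{O}_X=\mathcal{O}_T$, the morphism $\cont_R$ is a contraction$/U$, and $mH\sim_{\Qq,U}\cont_R^{\ast}A_T$ for some ample$/U$ $\Qq$-Cartier divisor $A_T$ on $T$.

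\noindent\emph{Properties (1) and (2).} For an integral curve $C$ with $\pi(C)$ a point, $\cont_R$ contracts $C$ if and only if $H\cdot C=0$, i.e.\ $[C]\in H^{\perp}\cap\overline{NE}(X/U)=R$; this gives (1), and in particular $\overline{NE}(X/T)=R$ and $\rho(X/T)=1$. For (2), given a line bundle $L$ on $X$ with $L\cdot R=0$: since $\cont_R$ contracts exactly the curves in $R$ and has connected fibres, $\cont_R^{\ast}N^1(T/U)$ is precisely the hyperplane $R^{\perp}\subseteq N^1(X/U)$, so $L$ is numerically equivalent$/U$ to a pullback from $T$; hence for $N\gg 0$ and an ample line bundle $M$ on $T$ the line bundle $L+N\cont_R^{\ast}M$ is nef$/U$, with $a(L+N\cont_R^{\ast}M)-K$ ample$/U$ for $a\gg 0$, so it is semiample$/U$ by another application of the base-point-free theorem. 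The standard descent argument for extremal contractions — combining base-point-freeness, the rigidity lemma, and cohomology and base change, and using that $X$ has rational singularities — then produces a line bundle $L'$ on $T$ with $L+N\cont_R^{\ast}M\cong\cont_R^{\ast}L'$, whence $L\cong\cont_R^{\ast}\!\bigl(L'\otimes\mathcal{O}_T(-NM)\bigr)$. This part merely reproduces the proof of the corresponding statement in the classical contraction theorem, once (1) and the semiampleness of $H$ are in hand.

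\noindent\emph{Main obstacle.} The heart of the matter is the base-point-free theorem for algebraically integrable adjoint foliated structures used twice above. The difficulty is that for $t\in(0,1)$ the divisor $tK_{\Ff}+(1-t)K_X+B+\Mm_X$ is neither the (generalized) log canonical divisor of a pair nor a purely foliated canonical divisor, so neither the classical Kawamata--Viehweg-vanishing argument for base-point-freeness nor the known foliated ($t=1$) arguments apply directly; this is exactly where the algebraic integrability of $\Ff$ is needed. The route I would take is to pass, after a suitable crepant/birational modification, to a model on which $\Ff$ is induced by an equidimensional fibration $f\colon X'\to Z$, to compare $tK_{\Ff'}+(1-t)K_{X'}$ with the log canonical divisor of an auxiliary generalized pair on $X'$ up to a correction term pulled back from the base $Z$, to run the base-point-free theorem for generalized pairs while keeping track of this correction, and then to descend the resulting semiampleness back to $X$; the klt-ness of $X$ is what keeps the auxiliary generalized pair within the klt range. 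Making this reduction precise — choosing the modification, controlling discrepancies, handling the base contribution, and making everything compatible over $U$ — is the main technical task, and should coincide with the base-point-freeness theorem announced in the abstract.
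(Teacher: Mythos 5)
Your construction of $\cont_R$ hinges on a Kawamata--Shokurov--format base-point-free theorem for adjoint foliated structures: ``$H$ nef$/U$ and $aH-K$ ample$/U$ for $a\gg 0$ implies $H$ semi-ample$/U$.'' No such statement is available, and this is precisely where your argument breaks. The base-point-freeness theorem actually proved in the paper (Theorem \ref{thm: bpf intro}, via Theorem \ref{thm: eo flipping and divisorial contraction}) requires the nef divisor itself to be \emph{big}$/U$ and of the form $K+A$ with $A$ ample$/U$; the paper explicitly states that the non-big case requires different techniques and is deferred to future work. Consequently, when the supporting function $H_R$ of $R$ is big$/U$ (flipping or divisorial type), your plan coincides with the paper's, since by \cite[Lemma 8.4.1]{CHLX23} one may take $H_R=K+A$ with $A$ ample$/U$ and apply Theorem \ref{thm: eo flipping and divisorial contraction}. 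But when $H_R$ is not big$/U$ (the Fano-type case), your appeal to a general base-point-free theorem has no support, and your sketched route in the ``main obstacle'' paragraph (compare $tK_{\Ff'}+(1-t)K_{X'}$ with a generalized pair up to a base correction and run generalized-pair base-point-freeness) does not contain the ingredient that actually makes this case work.

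In the paper the fiber-type case is handled by a genuinely different mechanism (Theorem \ref{thm: contraction not big}): using the bend-and-break statement \cite[Theorem 8.1.1]{CHLX23} one produces, through every general point, rational curves tangent to $\Ff$ spanning $R$; passing to a proper ACSS modification and using the Property $(\ast)$ comparison $K_{\Ff'}+B'\sim_Z K_{X'}+B'+G$ (Proposition \ref{prop: weak cbf gfq}), one shows that $R$ is negative for an auxiliary lc generalized pair $(X,B_X,\Nn)$ on the klt ambient variety $X$ (here Proposition \ref{prop: adjoint lc implies X lc} supplies the lc structure), and then the contraction, together with your properties (1) and (2) and the global generation of $\mathcal{O}_X(mH_R)$, follows from the classical contraction theorem for generalized pairs. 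The same issue infects your proof of (2) in the fiber-type case: $L+N\cont_R^{\ast}M$ need not be big$/U$, so semi-ampleness again cannot be obtained from the paper's base-point-freeness theorem and must instead come from the contraction theorem for the auxiliary generalized pair. Your reduction of everything to an unproved non-big base-point-free theorem is therefore a genuine gap, not merely a deferred technicality.
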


\begin{thm}[Existence of flips]\label{thm: eof intro}
 Let $(X,\Ff,B,\Mm,t)/U$ be an lc algebraically integrable adjoint foliated structure such that $X$ is potentially klt (i.e. there exists a klt pair $(X,\Delta)$). Let $f: X\rightarrow T$ be a $K_{(X,\Ff,B,\Mm,t)}$-flipping contraction$/U$. 
 
Then the flip $f^+ \colon X^+\rightarrow T$ of $f$ exists. Moreover, if $X$ is $\Qq$-factorial, then $X^+$ is $\Qq$-factorial and $\rho(X/U)=\rho(X^+/U)$. 
\end{thm}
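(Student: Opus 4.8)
The plan is to reduce the existence of flips for an algebraically integrable adjoint foliated structure $(X,\Ff,B,\Mm,t)/U$ to the already-known existence of flips for generalized pairs, by passing to a suitable \emph{ambient model}. Concretely, one takes a foliated log resolution / equidimensional model in the sense of \cite{ACSS21} (an ``ACSS model'') $(X',\Ff',B',\Mm',t')/U$ over $X$, where $\Ff'$ is induced by an equidimensional fibration $X'\to Z'$; on such a model the adjoint foliated canonical divisor $K_{(X',\Ff',B',\Mm',t')}$ can be compared, via the foliated adjunction/canonical bundle formula, with the canonical divisor of a genuine generalized pair $(X',\Delta',\Nn')$ on $X'$ obtained by adding the appropriate part of the discriminant and moduli divisors coming from the fibration, plus $(1-t')$ times a relatively trivial correction. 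The key point (already implicit in the cone and contraction theorems proven above, and in \cite{LMX24b}) is that $K$-negativity of a ray is detected on vertical-type curves, so a $K_{(X,\Ff,B,\Mm,t)}$-flipping contraction pulls back to a step of an MMP on the generalized pair side.

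The steps I would carry out, in order: \textbf{(1)} First reduce to the $\Qq$-factorial case and fix a $\Qq$-factorial klt pair $(X,\Delta)$; this is where the ``potentially klt'' hypothesis is used, as it lets us produce small $\Qq$-factorializations and also guarantees $X$ has rational singularities so that line bundles descend along contractions. \textbf{(2)} Using the adjunction formula and the existence of $\Qq$-factorial quasi-dlt modifications for algebraically integrable adjoint foliated structures (advertised in the abstract, hence available earlier in the paper), pass to a model $X'\to X$ on which $\Ff'$ is induced by an equidimensional fibration and the structure is ``as nice as possible''; the flipping contraction $f\colon X\to T$ is recovered from an MMP on $X'$. \textbf{(3)} On $X'$, translate $K_{(X',\Ff',B',\Mm',t')}$ into $K_{X'}+\Delta' + \Nn'_{X'}$ for a generalized pair via the canonical bundle formula for the fibration $X'\to Z'$, absorbing the $tK_{\Ff'}+(1-t)K_{X'}$ combination into boundary-plus-moduli data; here klt-ness of the target pair on the base follows from the lc (indeed klt after perturbation) hypothesis. \textbf{(4)} Apply the existence of flips / special termination for $\Qq$-factorial (g)lc generalized pairs from \cite{BCHM10,BZ16,HanLi} to run the required MMP on $X'$, producing the flip $X'^+$; then push forward / take a canonical model over $T$ to obtain $X^+\to T$. \textbf{(5)} Check that $X^+\to T$ is genuinely the flip of $f$: that $-K_{(X,\Ff,B,\Mm,t)}$ is $f$-ample implies, via the comparison on $X'$, that $K_{(X^+,\Ff^+,B^+,\Mm^+,t)}$ is $f^+$-ample; $\Qq$-factoriality and the equality $\rho(X/U)=\rho(X^+/U)$ then follow from the standard flip bookkeeping (the contraction $f$ is of relative Picard number one, small on both sides).

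The main obstacle I expect is Step (3)–(5): making the dictionary between the adjoint foliated structure and an honest generalized pair \emph{commute with the MMP}. One must ensure that a $K_{(X,\Ff,B,\Mm,t)}$-flipping contraction corresponds to a $(K_{X'}+\Delta'+\Nn'_{X'})$-flipping (or at worst flipping-plus-divisorial) contraction on the chosen model, and that the flip produced on the generalized-pair side descends back to a small modification of $X$ over $T$ rather than introducing new exceptional divisors. This is precisely the kind of compatibility that required care in \cite{LMX24b} for the $t=1$ case, and the presence of the interpolation parameter $t$ (so that $K_{(X,\Ff,B,\Mm,t)}$ is neither $K_{\Ff}$ nor $K_X$) means one cannot directly cite either \cite{BCHM10} or \cite{LMX24b} as a black box; instead one leverages the cone and contraction theorems (Theorems \ref{thm: cone intro} and \ref{thm: cont intro}) proved earlier in this paper, together with the ACSS-type structure theory, to control the behavior of curves in $R$ under the modification. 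Termination of the relevant MMP on $X'$ (needed to extract the flip) is handled by special termination as in the generalized-pair setting, using that $X'$ is dlt/quasi-dlt.
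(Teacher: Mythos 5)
Your plan hinges on Step (3), and that is where it breaks. The comparison between $K_{\Ff'}$ and $K_{X'}$ provided by Property $(\ast)$ (Proposition \ref{prop: weak cbf gfq}) is only a relative linear equivalence over the base $Z'$ of the fibration inducing $\Ff'$: $K_{\Ff'}+B'\sim_{Z'} K_{X'}+B'+G'$. That base $Z'$ has in general nothing to do with $T$ or $U$ (see the warning in Definition \ref{defn: foliated log smooth} that the associated fibration need not be a contraction over $U$), so the discrepancy between $tK_{\Ff'}+(1-t)K_{X'}+\cdots$ and your proposed $K_{X'}+\Delta'+\Nn'_{X'}$ is the pullback of a divisor from $Z'$ that is not relatively trivial over $T$. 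Hence a $(K_{X'}+\Delta'+\Nn'_{X'})$-MMP over $T$ is not a $K_{(X',\Ff',B',\Mm',t')}$-MMP over $T$, and the ``flip'' produced on the generalized-pair side has no reason to compute the flip of $f$. The device actually used in this paper to reach an honest generalized pair is two-step: first run a foliated MMP so that the $K_{\Ff}$-part becomes nef, and only then absorb $\tfrac{t}{1-t}$ times that nef divisor as a nef $\bb$-divisor in the moduli part; even then, matching the output with the original structure needs the auxiliary $l\Hh$ with $l\gg 0$ and the Nakayama--Zariski arguments of Section \ref{sec: contraction}. Relatedly, your Step (5) --- that the model produced upstairs contracts exactly the $X'/X$-exceptional divisors, so that $X^+\to T$ is small, and that the relevant MMP terminates --- is precisely the hard content; you flag it as ``the main obstacle'' but give no argument, and special termination is not available here as a black box.

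For comparison, the paper's proof never goes up to a model over $X$ at all: by Theorem \ref{thm: eo flipping and divisorial contraction}(2), a byproduct of the contraction theorem, the target $T$ of the flipping contraction is potentially klt; one takes a small $\Qq$-factorialization $T'\to T$, which is of Fano type over $T$ and hence a relative Mori dream space, and runs the MMP over $T$ for the strict transform of $K_{(X,\Ff,B,\Mm,t)}$ on $T'$. This MMP exists and terminates with a good minimal model for purely BCHM-type reasons, with no foliation-theoretic termination, no canonical bundle formula, and no control of contracted divisors required; since $X$, $T'$ and the output are all small over $T$, the ample model $X^+$ over $T$ is automatically small, the sign comparison ($-K$ ample over $T$, the transform nef over $T$) identifies $X\dashrightarrow X^+$ as the flip, and $\rho(X^+/T)=1$ gives the $\Qq$-factoriality and Picard-number statements (Theorem \ref{thm: eof and small modification after divisorial contraction}). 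The missing idea in your write-up is exactly this use of the potential kltness of $T$; as proposed, Steps (3)--(5) would essentially force you to reprove Theorem \ref{thm: eo flipping and divisorial contraction} rather than apply it.
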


Theorems \ref{thm: cone intro}, \ref{thm: cont intro}, and \ref{thm: eof intro} imply the existence of the minimal model program for $\Qq$-factorial klt algebraically integrable adjoint foliated structures:

\begin{thm}[Existence of MMP]\label{thm: can run mmp intro}
Let $(X,\Ff,B,\Mm,t)/U$ be a $\Qq$-factorial klt algebraically integrable adjoint foliated structure. Then we may run a $K_{(X,\Ff,B,\Mm,t)}$-MMP$/U$.
\end{thm}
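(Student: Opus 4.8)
The plan is to assemble the statement directly from the three preceding main theorems by running the standard MMP procedure and checking that every step stays within the class of $\Qq$-factorial klt algebraically integrable adjoint foliated structures. Start with a $\Qq$-factorial klt algebraically integrable adjoint foliated structure $(X,\Ff,B,\Mm,t)/U$ and set $K:=K_{(X,\Ff,B,\Mm,t)}$. If $K$ is nef$/U$ we are done, so assume not; then there is a $K$-negative extremal ray $R/U$. Since $(X,\Ff,B,\Mm,t)$ is klt (hence lc, with empty non-lc locus) and $X$ is klt (in particular potentially klt and $\Qq$-factorial), Theorem \ref{thm: cone intro} guarantees that $R$ is rational and is generated by a rational curve, and Theorem \ref{thm: cont intro} provides the associated contraction $\cont_R\colon X\to T$ over $U$.

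Next I would split into the usual three cases according to the dimension of the exceptional locus of $\cont_R$. If $\cont_R$ is a Mori fibre space (i.e. $\dim T<\dim X$), the MMP terminates. If $\cont_R$ is a divisorial contraction, I would check, exactly as in the classical and foliated settings, that $T$ inherits $\Qq$-factoriality and klt-ness of the underlying variety, that the pushforward $(T,\Ff_T,\cont_{R*}B,\Mm,t)$ is again an algebraically integrable adjoint foliated structure (algebraic integrability and the nef $\bb$-divisor condition being preserved under pushforward), and that it remains klt — the discrepancy computation being the same as in the foliated MMP since $\cont_R$ is $K$-negative. If $\cont_R$ is a small (flipping) contraction, then $X$ is potentially klt and $\Qq$-factorial, so Theorem \ref{thm: eof intro} yields the flip $f^+\colon X^+\to T$ with $X^+$ again $\Qq$-factorial and $\rho(X^+/U)=\rho(X/U)$; I would then verify that $(X^+,\Ff^+,B^+,\Mm,t)/U$ is again $\Qq$-factorial klt algebraically integrable (klt-ness of the adjoint foliated structure on $X^+$ follows because flips improve discrepancies, and $X^+$ is klt as a variety by the same argument applied to a klt pair $(X,\Delta)$). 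Iterating this produces a sequence of $\Qq$-factorial klt algebraically integrable adjoint foliated structures connected by divisorial contractions and flips — which is precisely what it means to run a $K$-MMP$/U$; note that the statement asserts only that we \emph{may} run such an MMP, so termination is not required.

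The only genuinely non-routine point is checking that the class of objects is closed under the two birational operations: one must confirm that after a divisorial contraction or a flip the resulting triple $(X^+,\Ff^+,B^+,\Mm,t)$ still satisfies the defining $\Rr$-Cartier condition for $K_{(X^+,\Ff^+,B^+,\Mm,t)}$, that the foliation $\Ff^+$ remains algebraically integrable, that $\Mm$ descends appropriately, and that both the underlying variety and the adjoint foliated structure stay klt. All of these are established by the same local discrepancy bookkeeping used throughout the foliated MMP literature, combined with the fact that $K$-negativity of the contracted ray forces the relevant discrepancies to go up; the foliation-theoretic inputs (preservation of algebraic integrability under the MMP, behaviour of $K_{\Ff}$ under pushforward) are already available from the algebraically integrable foliated MMP of \cite{ACSS21,CS23a,CHLX23,LMX24b}, and the $\bb$-divisor part is handled as in the theory of generalized pairs \cite{BZ16}. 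Once closure is in place, the theorem is an immediate consequence of Theorems \ref{thm: cone intro}, \ref{thm: cont intro}, and \ref{thm: eof intro}.
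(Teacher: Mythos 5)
Your overall strategy --- run the MMP one step at a time using Theorems \ref{thm: cone intro}, \ref{thm: cont intro}, \ref{thm: eof intro} and check that the class of $\Qq$-factorial klt structures is closed under the two birational operations --- is the same as the paper's (Theorem \ref{thm: run mmp strong} combined with Lemma \ref{lem: properties preserved under mmp}). However, there is a genuine gap at the very first step: you assert that ``$X$ is klt''. The hypothesis only says that the adjoint foliated structure $(X,\Ff,B,\Mm,t)$ is klt and that $X$ is $\Qq$-factorial; klt-ness (or even potential klt-ness) of the ambient variety does not follow formally from this, and it is exactly what you need in order to invoke Theorem \ref{thm: cont intro} (which assumes $X$ klt) and Theorem \ref{thm: eof intro} (which assumes $X$ potentially klt). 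One cannot bound the discrepancies of $X$ in terms of those of the adjoint structure, since the foliated contributions $a(E,\Ff,\cdot)$ admit no a priori upper bound. In the paper this point is a separate main result, Theorem \ref{thm: lc afs ambient has nice singularities intro}(1) (Proposition \ref{prop: klt afs imply potentially klt}), whose proof is not a local discrepancy computation: it uses the existence of $\Qq$-factorial qdlt modifications (Theorem \ref{thm: qdlt model intro}), the fact that such a modification is small in the klt case, and Theorem \ref{thm: eo flipping and divisorial contraction}(2).

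The same issue reappears, in a sharper form, in your closure argument. To keep the ambient variety (potentially) klt after a divisorial contraction or a flip you appeal to ``discrepancies go up because the contracted ray is $K$-negative'' and to ``the same argument applied to a klt pair $(X,\Delta)$''. This reasoning fails: the step is negative for $K_{(X,\Ff,B,\Mm,t)}$, not for $K_X+\Delta$, so the discrepancies of $(X,\Delta)$ need not increase and $(T,\cont_{R*}\Delta)$ or $(X^+,\Delta^+)$ need not be klt. The paper gets this instead from the construction inside the contraction theorem: $T$ is potentially klt because on the auxiliary model $V$ the supporting divisor is proportional to $K_V+\Delta_V+\Nn_V+\Aa'_V$ for a $\Qq$-factorial lc generalized pair, which by \cite[Lemma 3.4]{HL22} produces a klt pair on $T$ (Theorem \ref{thm: eo flipping and divisorial contraction}(2)); and $X^+$ is potentially klt because $X^+\rightarrow T$ is small (Theorem \ref{thm: eof and small modification after divisorial contraction}). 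Alternatively, once Theorem \ref{thm: lc afs ambient has nice singularities intro}(1) is available you may apply it to the new klt structure at each step; either way this input is indispensable and is missing from your argument. The remaining bookkeeping you list ($\Qq$-factoriality of $T$ and $X^+$, $\Rr$-Cartierness of the pushed-forward adjoint canonical divisor, klt-ness of the new adjoint structure via the negativity lemma) is correct and matches Proposition \ref{prop: q-factoriality preserved for divisorial contraction} and Lemma \ref{lem: properties preserved under mmp}.
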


It is clear that Theorem \ref{thm: simplified main theorem} is a special case of Theorem \ref{thm: can run mmp intro}.

\medskip

\noindent\textbf{Base-point-freeness for adjoint foliated structures of general type.} 
Having established the minimal model program settled, a natural question to consider regards the existence of good minimal models for algebraically integrable adjoint foliated structures, of which 
the existence of flips can be viewed as a special case. 
In this paper, we prove the base-point-freeness theorem for algebraically integrable adjoint foliated structures of general type.

\begin{thm}[Base-point-freeness]\label{thm: bpf intro}
Let $(X,\Ff,B,\Mm,t)/U$ be a klt algebraically integrable adjoint foliated structure such that $K:=K_{(X,\Ff,B,\Mm,t)}$ is nef$/U$ and big$/U$. Then:
\begin{enumerate}
\item $K$ is semi-ample$/U$.
\item Let $m$ be a positive integer such that $mK$ is Cartier. Then for any integer $n\gg 0$ divisible by $m$, $\mathcal{O}_X(nK)$ is globally generated$/U$.
\end{enumerate}
\end{thm}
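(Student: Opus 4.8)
The plan is to reduce the base-point-freeness statement for the klt algebraically integrable adjoint foliated structure $(X,\Ff,B,\Mm,t)/U$ to the base-point-freeness theorem for generalized klt pairs, which is already known. The key device is the structure theory of algebraically integrable foliations: after passing to a suitable ``adapted'' or ``foliated log smooth'' model, an algebraically integrable foliation $\Ff$ arises from an equidimensional fibration, and $K_\Ff$ differs from the relative canonical of that fibration by a boundary-and-moduli correction (the ``canonical bundle formula'' contribution along the fibers). Concretely, first I would take a birational model $g\colon (X',\Ff',B',\Mm',t)\to (X,\Ff,B,\Mm,t)$ on which $\Ff'$ is induced by an equidimensional morphism $X'\to Z$, chosen so that $g$ only extracts divisors with non-positive discrepancy; since $(X,\Ff,B,\Mm,t)$ is klt and $K$ is nef and big over $U$, one arranges that $g^*K = K_{(X',\Ff',B',\Mm',t)} + E$ with $E\ge 0$ $g$-exceptional and the new structure still klt. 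It then suffices to prove semiampleness of $K_{(X',\Ff',B',\Mm',t)}$ over $U$ and push forward.

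On the model $X'$, the second step is to rewrite $tK_{\Ff'}+(1-t)K_{X'}+B'+\Mm'_{X'}$ as the log canonical divisor of a \emph{generalized klt pair} (with the nef part absorbing $t\cdot(\text{the moduli/discriminant contribution of the fibration})$ together with $\Mm'$). This is exactly the mechanism by which the MMP for algebraically integrable adjoint foliated structures was set up earlier in the paper: one uses the equidimensional structure to express $tK_{\Ff'}$ via the relative canonical bundle formula, so that $K_{(X',\Ff',B',\Mm',t)} \sim_{\Rr,U} K_X' + \Delta' + \Nn_{X'}$ for a generalized pair $(X',\Delta',\Nn)$ that is klt precisely because the adjoint foliated structure is klt and $X'$ is klt. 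Here $\Nn$ is nef over $U$ because $\Mm$ is nef over $U$ and the moduli $\bb$-divisor coming from the canonical bundle formula is nef; bigness over $U$ is inherited from $K$ being big over $U$.

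The third step is then to invoke the base-point-freeness theorem for nef-and-big generalized klt pairs (Kawamata--Shokurov base-point-freeness in the generalized setting, e.g. as in \cite{BZ16} and its refinements), which gives that $K_X' + \Delta' + \Nn_{X'}$ is semiample over $U$; hence $K_{(X',\Ff',B',\Mm',t)}$ is semiample over $U$, and pushing down via $g$ (using $g_*\mathcal{O}_{X'}(m(g^*K - E)) = \mathcal{O}_X(mK)$ for $E\ge 0$ exceptional, $m$ sufficiently divisible) yields that $K$ is semiample over $U$, proving (1). For (2), once semiampleness over $U$ is known, the morphism $\phi\colon X\to Y$ over $U$ defined by a sufficiently divisible multiple of $K$ satisfies $K \sim_{\Qq,U} \phi^* A$ for some $\Qq$-Cartier ample$/U$ divisor $A$ on $Y$; choosing $m$ with $mK$ Cartier and noting $mK = \phi^*(mA)$ with $mA$ Cartier and ample over $U$, Serre vanishing / relative global generation for ample line bundles on $Y$ gives that $\mathcal{O}_X(nK) = \phi^*\mathcal{O}_Y(nA)$ is globally generated over $U$ for all $n\gg 0$ divisible by $m$.

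The main obstacle I expect is the second step: producing the equidimensional model and the precise generalized-pair reformulation while keeping track of \emph{all three} of the discrepancy contributions — the foliated correction $tK_\Ff$ vs.\ the relative canonical, the boundary $B$, and the $\bb$-divisor $\Mm$ — and verifying that kltness of $(X,\Ff,B,\Mm,t)$ together with the ``potentially klt'' / klt hypothesis on $X$ survives this translation with the generalized pair still klt (rather than merely lc) and with the nef part genuinely nef over $U$. This bookkeeping, and checking that the model $g$ can be chosen so that $E\ge 0$ (so that semiampleness descends), is where the real work lies; once the reformulation is in place, the appeal to generalized base-point-freeness and the deduction of (2) are standard.
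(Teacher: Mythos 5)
The central step of your plan --- rewriting $K_{(X',\Ff',B',\Mm',t)}$ on an equidimensional model as $K_{X'}+\Delta'+\Nn_{X'}$ for a generalized klt pair with $\Nn$ nef$/U$, via a ``canonical bundle formula'' for the fibration inducing $\Ff'$ --- does not work, and this is exactly the difficulty the paper is built to circumvent. On an ACSS/Property $(\ast)$ model the comparison between the foliated and the ordinary canonical divisor is only a relative linear equivalence over the base $Z$ of the fibration (Proposition \ref{prop: weak cbf gfq}: $K_{\Ff}+B\sim_Z K_X+B+G$), and $Z$ has nothing to do with $U$; the discrepancy between the two sides is the pullback of a divisor from $Z$ which is in no way nef$/U$, and there is no Ambro--Kawamata-type decomposition into discriminant plus nef moduli here because nothing is assumed trivial over $Z$ (the paper even warns in Definition \ref{defn: foliated log smooth} that $f$ need not be a contraction$/U$ and $\Mm$ need not be nef$/Z$). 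If your step 2 were available, the whole adjoint foliated MMP would reduce at once to the generalized-pair MMP of \cite{BZ16}, and your argument would prove semi-ampleness of any nef $K$ without using bigness --- contradicting the failure of abundance for foliations and the paper's explicit expectation that the non-big case needs different techniques. What the paper actually does (Theorem \ref{thm: eo flipping and divisorial contraction}) is first \emph{create} the nef part: it runs a foliated $(K_{\Ff_W}+\widehat B_W^{\ninv}+\cdots+\Aa_W+l\Hh_W)$-MMP$/U$ (using the bigness of $K+A$ and the good-minimal-model results of \cite{CHLX23}) to reach a model $Y$ on which $P_Y$ is nef$/U$, and only then absorbs $\tfrac{t}{1-t}P_Y$ into the nef part of a generalized pair on $Y$ and runs the MMP of \cite{BZ16,HL23}. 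Even after that, descending semi-ampleness from the final model $V$ back to $X$ is not automatic: it requires choosing $l\gg 0$ so that both MMPs are $\Hh$-trivial (via the length bound in the cone theorem), arranging in advance that $\Supp B$ contains the divisorial part of $\Bb_+((K+A)/U)$, and the Nakayama--Zariski arguments of Lemmas \ref{lem: sbl of movable big divisor}--\ref{lem: sbl under pullback} to identify exactly which divisors are contracted. Your proposal has no counterpart to any of this.

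Two smaller points. First, your reduction ``it suffices to prove semi-ampleness of $K_{(X',\Ff',B',\Mm',t)}$ and push forward'' is not valid with the normalization you wrote: if $g^\ast K=K'+E$ with $E\geq 0$ exceptional, semi-ampleness of $K'$ does not imply semi-ampleness of $K'+E=g^\ast K$ (pushing forward sections only bounds the base locus inside $g(\Supp E)$); you would need the crepant situation $g^\ast K=K'$. Second, the paper's own proof of the theorem, once Theorem \ref{thm: eo flipping and divisorial contraction} is in hand, is the short argument via Lemma \ref{lem: afs big and nef equivalent to +a}: write $mK=K_m+A_m$ with $(X,\Ff,B_m,\Mm,t)$ klt and $A_m$ ample$/U$ for every $m\geq 2$, obtain semi-ampleness and global generation of $\mathcal{O}_X(n(aK))$ for $a\geq 2$, and handle $a=1$ by the coprime-multiples trick ($n=2p+3q$); this also avoids the extra step in your deduction of (2), where you would still need to argue that the Cartier divisor $mK$ descends to a line bundle on the ample model.
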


For the general case, i.e. when $K$ is nef/$U$ but not necessarily big, we expect that the techniques will be different from those used in this paper; 
we shall address this problem in future work.

\medskip

\noindent\textbf{An adjunction formula.} An adjunction formula for foliations along non-invariant divisors was established by the first and fifth authors in \cite[Theorems 1.1 and 3.16]{CS23b}. 
However, for arbitrary foliations, adjunction to invariant divisors may not be well-behaved, cf.~\cite[Example 3.20]{CS20}. 
Nevertheless, we are able to prove a precise adjunction formula for invariant divisors in the case of algebraically integrable foliations \cite[Proposition 3.2]{ACSS21}, \cite[Theorem 2.4.2]{CHLX23}. 
In this paper, we establish the following adjunction formula for algebraically integrable foliated structures, which will be crucial to establish the cone theorem, see Theorem~\ref{thm: cone intro}.

\begin{thm}[Adjunction]\label{thm: adj intro}
Let $(X,\Ff,B,\Mm,t)/U$ be an algebraically integrable adjoint foliated structure.
Let $S$ be an irreducible component of $B$ such that $S$ is an lc place of $(X,\Ff,B,\Mm,t)/U$, 
i.e. $\mult_{S}B=1$ if $S$ is not $\Ff$-invariant and $\mult_{S}B=1-t$ if $S$ is $\Ff$-invariant. 
Let $\nu \colon S^{\nu} \to S$ be the normalization of $S$ and let $\Mm^{S^{\nu}}:=\nu^\ast \Mm$  be the restricted $\bb$-divisor. 
Then there exists a canonically defined algebraically integrable adjoint foliated structure $(S^{\nu},\Ff_{S^{\nu}},B_{S^{\nu}},\Mm^{S^{\nu}},t)/U$ satisfying the following:
\begin{enumerate}
    \item $K_{(S^{\nu},\Ff_{S^{\nu}},B_{S^{\nu}},\Mm^{S^{\nu}},t)}=
    K_{(X,\Ff,B,\Mm,t)}|_{S^\nu}$. 
    \item If $(X,\Ff,B,\Mm,t)$ is lc, then $(S^{\nu},\Ff_{S^{\nu}},B_{S^{\nu}},\Mm^{S^{\nu}},t)$ 
    is lc. 
\end{enumerate}
\end{thm}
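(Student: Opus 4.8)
The plan is to reduce the statement to the known adjunction formulas for algebraically integrable foliated triples, namely \cite[Proposition 3.2]{ACSS21} and \cite[Theorem 2.4.2]{CHLX23}, by treating the two ``extreme'' parts of $K_{(X,\Ff,B,\Mm,t)}=tK_{\Ff}+(1-t)K_X+B+\Mm_X$ separately and then interpolating. Write $B=B'+cS$ where $S\not\subset\Supp B'$ and $c=\mult_S B$, and recall that $c=1$ if $S$ is not $\Ff$-invariant and $c=1-t$ if $S$ is $\Ff$-invariant. First I would pass to a suitable ambient model: using the structure theory of algebraically integrable foliations (equivariant resolutions as in \cite{ACSS21,CHLX23}), I may assume that $\Ff$ is induced by an equidimensional fibration $X\to Z$, that $S$ is either a component of a fiber (the $\Ff$-invariant case) or dominates $Z$ (the non-invariant case), and that $S^{\nu}\to S$ together with the restricted foliation $\Ff_{S^{\nu}}$ is again algebraically integrable, induced by the restricted fibration. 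The $\bb$-divisor $\Mm$ restricts to the nef $\bb$-divisor $\Mm^{S^{\nu}}=\nu^{*}\Mm$ automatically, and its contribution to the two sides of the adjunction formula matches tautologically, so it plays no essential role beyond bookkeeping.

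Next I would define $B_{S^{\nu}}$ and $\Ff_{S^{\nu}}$ by combining the two classical adjunction recipes. For the foliated part $tK_{\Ff}$: when $S$ is $\Ff$-invariant, the algebraically integrable adjunction of \cite[Proposition 3.2]{ACSS21}, \cite[Theorem 2.4.2]{CHLX23} produces a foliation $\Ff_{S^{\nu}}$ on $S^{\nu}$ and a divisor $B_{\Ff}^{S^{\nu}}\ge 0$ with $(K_{\Ff}+B_{\Ff}-\text{stuff})|_{S^{\nu}}=K_{\Ff_{S^{\nu}}}+B_{\Ff}^{S^{\nu}}$; when $S$ is not $\Ff$-invariant, one instead uses the non-invariant foliated adjunction of \cite[Theorems 1.1 and 3.16]{CS23b}, which pulls $K_{\Ff}$ back with a different different. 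For the variety part $(1-t)K_X$ one uses ordinary divisorial adjunction $K_X+S|_{S^{\nu}}=K_{S^{\nu}}+\Diff_{S^{\nu}}(0)$. I would then set
\[
K_{(S^{\nu},\Ff_{S^{\nu}},B_{S^{\nu}},\Mm^{S^{\nu}},t)}
:=tK_{\Ff_{S^{\nu}}}+(1-t)K_{S^{\nu}}+B_{S^{\nu}}+\Mm^{S^{\nu}}_{S^{\nu}},
\]
where $B_{S^{\nu}}$ is assembled out of the differents above plus the restriction of $B'$; the precise combination is dictated by requiring (1), and the point is that the coefficient $c=\mult_S B$ was chosen in each case precisely so that the ``log discrepancy of $S$'' is zero and the adjunction is clean — for $\Ff$-invariant $S$ one has $tK_{\Ff}+(1-t)K_X+(1-t)S=t(K_{\Ff}+S)+(1-t)(K_X+S)$ and both terms admit adjunction along $S$, while for non-invariant $S$ one has $tK_{\Ff}+(1-t)K_X+S = t(K_{\Ff}+S)+(1-t)(K_X+S)$ with $K_{\Ff}+S$ handled by \cite{CS23b}. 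Chasing these identities and invoking that the various differents are $\Rr$-Cartier gives (1), and in particular shows $K_{(S^{\nu},\ldots,t)}$ is $\Rr$-Cartier, so the restricted datum is a genuine algebraically integrable adjoint foliated structure. Canonicity follows because each ingredient (divisorial different, foliated different in both the invariant and non-invariant cases) is canonically defined.

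For (2), assume $(X,\Ff,B,\Mm,t)$ is lc. The natural strategy is to compare log discrepancies: given a prime divisor $E$ over $S^{\nu}$ with center $Z_E\subset S^{\nu}$, produce a divisor $E'$ over $X$ with center in $S$ whose log discrepancy with respect to $(X,\Ff,B,\Mm,t)$ is controlled by that of $E$ with respect to the restricted structure, so that $a(E;S^{\nu},\ldots,t)\ge 0$ whenever $a(E';X,\ldots,t)\ge 0$. Concretely I would take an equivariant log resolution of $(X,\Ff,B,\Mm)$ over which both $S$ and $E$ appear, apply the cone/adjunction compatibility for the three constituent formulas simultaneously (this is exactly where the ``adjunction respects lc singularities'' clauses in \cite[Proposition 3.2]{ACSS21}, \cite[Theorem 2.4.2]{CHLX23}, and \cite[Theorem 3.16]{CS23b} get used, combined convexly in $t$), and conclude. \textbf{The main obstacle} I anticipate is precisely this interpolation step: the invariant and non-invariant foliated adjunction formulas come from genuinely different sources with differently normalized differents, and one must check that the convex combination $t(\cdot)+(1-t)(\cdot)$ of ``lc on $S^{\nu}$'' inputs is again lc — equivalently, that there is no cancellation in which a negative contribution from one term is masked by a positive contribution from the other on the total space but not after restriction. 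I expect this is handled by working on a common equivariant resolution where all relevant divisors are simultaneously snc and $\Ff$-invariant-or-transverse in the standard sense, so that the three adjunctions can be performed divisor-by-divisor and the coefficient inequalities added; the algebraic integrability hypothesis is what makes such a resolution available, which is why the theorem is stated in that generality.
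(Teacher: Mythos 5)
Your overall strategy -- interpolating the classical divisorial adjunction with the foliated adjunction of \cite{ACSS21,CHLX23,CS23b} with weights $t$ and $1-t$ -- is indeed the skeleton of the paper's argument, but as written the proposal has two genuine gaps. First, the decomposition you want to use is not available on $X$: only the combination $tK_{\Ff}+(1-t)K_X+B+\Mm_X$ is assumed $\Rr$-Cartier, so the individual pieces $K_{\Ff}+B^{\ninv}$ (or $K_{\Ff}+S$) and $K_X+B$ need not be $\Rr$-Cartier, $S$ need not be normal, and none of the adjunction formulas you cite can be applied directly on $X$; saying ``I may assume $\Ff$ is induced by an equidimensional fibration'' does not fix this, because after replacing $X$ by a resolution you must transport the construction back to $S^{\nu}$, and this is exactly where the paper works instead: it defines $B_{S_W}$ on a foliated log resolution $W$ via crepant pullback of the \emph{total} adjoint divisor (which is $\Rr$-Cartier), performs both adjunctions there, interpolates, and pushes forward to $S^{\nu}$. (Incidentally, your displayed identity in the invariant case is false: $t(K_{\Ff}+S)+(1-t)(K_X+S)=tK_{\Ff}+(1-t)K_X+S$, not $tK_{\Ff}+(1-t)K_X+(1-t)S$; for invariant $S$ the correct splitting is $tK_{\Ff}+(1-t)(K_X+S)$, with no $S$ in the foliated part.)

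Second, and more seriously, your proposal never addresses why $B_{S^{\nu}}\geq 0$. With the definition via crepant pullback and pushforward (which is what your construction amounts to once made precise), the boundary $B_W$ on the resolution can have negative coefficients, so effectivity of the pushed-forward different is not automatic; this is the hardest point of the paper's proof and it is handled by invoking the existence of $\Qq$-factorial qdlt modifications (Theorem \ref{thm: qdlt model intro}), which reduces to a $\Qq$-factorial qdlt model on which the strict transform of $S$ is normal and both $(K_{\Ff}+B^{\ninv})|_S$ and $(K_X+B)|_S$ admit adjunction with \emph{effective} differents $D_S$ and $E_S$, giving $B_S=tD_S+(1-t)E_S\geq 0$. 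By contrast, your part (2) is essentially on the right track: the paper also proves sub-lc-ness by comparing, on a foliated log resolution, the crepant boundary with the reduced boundary $g^{-1}_*B+\Exc(g)^{\ninv}+(1-t)\Exc(g)^{\inv}$ and invoking the lc clauses of \cite[Theorem 2.4.2]{CHLX23} for each of the two summands, which is the coefficientwise convex-combination argument you sketch. So the missing ingredients are the reduction to a model where the decomposition is legitimate (the qdlt modification) and an actual proof of effectivity, not just the interpolation formalism.
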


\smallskip

\noindent\textbf{Existence of quasi-dlt modifications.} Another result that we obtain is the existence of $\Qq$-factorial quasi-dlt (qdlt) modifications for algebraically integrable adjoint foliated structures. These modifications can be viewed as an analogue of dlt modifications for usual pairs and as an analogue of $(\ast )$-modifications \cite{ACSS21,CS23a} or $\Qq$-factorial ACSS modifications \cite{CHLX23,LMX24b} for algebraically integrable foliations.

\begin{thm}[Existence of $\Qq$-factorial qdlt modifications]\label{thm: qdlt model intro}
    Let $(X,\Ff,B,\Mm,t)/U$ be an algebraically integrable adjoint foliated structure such that $t<1$. Let $$B_t:=B\wedge(\Supp B^{\ninv}+(1-t)\Supp B^{\inv}),$$
    where $B^{\ninv}$ and  $B^{\inv}$ are the non-$\Ff$-invariant and $\Ff$-invariant part of $B$ respectively. Then there exists a projective birational morphism $h: X'\rightarrow X$ satisfying the following properties:
    \\
    Let $\Ff':=h^{-1}\Ff$ and $$B':=h^{-1}_\ast B_t+\Exc(h)^{\ninv}+(1-t)\Exc(h)^{\inv},$$
    where $\Exc(h)^{\ninv}$ and  $\Exc(h)^{\inv}$ are the non-$\Ff'$-invariant and $\Ff'$-invariant part of $\Exc(h)$ respectively. Let $B_{X'}:=B'^{\ninv}+\frac{1}{1-t}B'^{\inv}$, where $B'^{\ninv}$ and  $B'^{\inv}$ are the non-$\Ff'$-invariant and $\Ff'$-invariant part of $B'$ respectively. Then the following hold.
    \begin{enumerate}
    \item $E$ is an nklt place of $(X,\Ff,B,\Mm,t)$ for any $h$-exceptional prime divisor $E$.
    \item $(X',\Ff',B',\Mm,t)$ is $\Qq$-factorial lc. In particular, if $(X,\Ff,B,\Mm,t)$ is lc, then
    $$K_{(X',\Ff',B',\Mm,t)}=h^\ast K_{(X,\Ff,B,\Mm,t)}.$$
    \item $(X',B_{X'},\Mm)$ is qdlt. In particular, $X'$ is klt.
    \item Any lc place of $(X',\Ff',B',\Mm,t)$ is an lc place of $(X',B_{X'},\Mm)$.
    \item If $t>0$, then there exists a positive real number $\epsilon_0$ such that 
    $$\left(X',\Ff',B'+\epsilon\{B_{X'}\},(1+\epsilon)\Mm,t+\epsilon\right)$$
    is lc for any $\epsilon\in [0,\epsilon_0]$.
    \end{enumerate}
\end{thm}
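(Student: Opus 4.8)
The plan is to realize $h$ as the output of a suitable relative minimal model program, following the template used for dlt modifications of pairs and for $\Qq$-factorial ACSS (or property $(\ast)$) modifications of algebraically integrable foliations \cite{ACSS21,CS23a,CHLX23,LMX24b}. First I would pass to a sufficiently high model: using a foliated log resolution together with the reduction to equidimensional ACSS models available for algebraically integrable foliations, I would construct a projective birational morphism $g\colon Y\to X$ from a smooth variety, with $\Ff_Y:=g^{-1}\Ff$ and
$$\Gamma_Y:=g^{-1}_\ast B_t+\Exc(g)^{\ninv}+(1-t)\Exc(g)^{\inv},$$
such that $(Y,\Ff_Y,\Gamma_Y,\Mm,t)$ is $\Qq$-factorial lc and ACSS; in particular the associated pair $(Y,\Gamma_Y^{\ninv}+\tfrac{1}{1-t}\Gamma_Y^{\inv},\Mm)$ is log smooth, hence qdlt, and every lc place of $(Y,\Ff_Y,\Gamma_Y,\Mm,t)$ is an lc place of this associated pair. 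The coefficient caps forced by the truncation $B_t$ (coefficients $\le 1$ on non-$\Ff$-invariant components and $\le 1-t$ on $\Ff$-invariant ones) are precisely what make this structure lc and ACSS. Writing $K_t:=K_{(X,\Ff,B_t,\Mm,t)}$, a discrepancy computation gives $K_{(Y,\Ff_Y,\Gamma_Y,\Mm,t)}=g^\ast K_t+F$ with $F$ a $g$-exceptional $\Rr$-divisor whose components of positive coefficient are exactly the klt places of $(X,\Ff,B_t,\Mm,t)$ and whose components of non-positive coefficient are nklt places; moreover $F\ge 0$ whenever no sub-lc place is extracted, which holds in particular when $(X,\Ff,B,\Mm,t)$ is lc (in which case also $B_t=B$).

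Next I would run a $K_{(Y,\Ff_Y,\Gamma_Y,\Mm,t)}$-MMP over $X$. Since $(Y,\Ff_Y,\Gamma_Y,\Mm,t)$ is a $\Qq$-factorial klt algebraically integrable adjoint foliated structure, Theorem \ref{thm: can run mmp intro} allows us to run such an MMP; the delicate point is termination. The key observations are that $K_{(Y,\Ff_Y,\Gamma_Y,\Mm,t)}\equiv_X F$, that every step of the MMP is contained in $\Supp F\subseteq\Exc(g)$, and that the ACSS structure — equivalently, the log-smoothness of the associated pair together with the compatibility of lc places — is preserved by each step; after finitely many divisorial contractions the MMP becomes a sequence of flips to which a special termination argument applies, exactly as for algebraically integrable foliations in \cite{CHLX23,LMX24b}, and if needed I would run it with scaling of an ample divisor to keep the process under control. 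Let $h\colon X'\to X$ be the resulting model, set $\Ff':=h^{-1}\Ff$, and let $B'$ and $B_{X'}$ be as in the statement. Because the MMP over $X$ terminates with $K_{(X',\Ff',B',\Mm,t)}$ nef over $X$, which (being $\equiv_X$ an $h$-exceptional divisor) forces the positive part of $F$ to be contracted while its non-positive part remains, the MMP contracts precisely the $g$-exceptional divisors that are klt places and leaves the remaining exceptional divisors, all nklt places, untouched; hence the strict transform boundary transforms exactly as written, which is (1).

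It remains to check (2)–(5). $\Qq$-factoriality is preserved by the MMP, and lc-ness is preserved by each step of the MMP of an lc adjoint foliated structure; when $(X,\Ff,B,\Mm,t)$ is lc we have $F\ge 0$, so the MMP over $X$ terminates with $F$ fully contracted and hence $K_{(X',\Ff',B',\Mm,t)}=h^\ast K_{(X,\Ff,B,\Mm,t)}$, which is (2). Properties (3) and (4) follow by tracking the associated pair: ACSS-ness is preserved throughout the MMP, so $(X',B_{X'},\Mm)$ is qdlt — whence $X'$ is klt — and every lc place of $(X',\Ff',B',\Mm,t)$ remains an lc place of $(X',B_{X'},\Mm)$, where the adjunction formula of Theorem \ref{thm: adj intro} is used to control the lc centers along the strata. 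For (5) with $t>0$ one uses the explicit toric-type local model afforded by the ACSS structure on $X'$: passing from $t$ to $t+\epsilon$ only decreases the coefficients $1-t$ on the $\Ff'$-invariant components, while $\epsilon\{B_{X'}\}$ and the replacement of $\Mm$ by $(1+\epsilon)\Mm$ are absorbed for small $\epsilon$; a direct check in the local model, together with the finiteness of the set of lc places of an lc structure, yields a uniform $\epsilon_0>0$ for which lc-ness persists. I expect the main obstacles to be (a) the termination of the MMP in the construction, which is not covered by any general termination statement for adjoint foliated structures and must be extracted from the special features of the situation — divisorial contractions supported on $\Exc(g)$, together with special termination for the ACSS associated pair — and (b) developing the ACSS formalism for adjoint foliated structures, including the extra parameter $t$, so that it behaves well both under the construction of $Y$ and under the steps of the MMP, since properties (3)–(5) rest entirely on it.
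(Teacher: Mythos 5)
The central step of your construction is circular within the paper's logical architecture. You run a $K_{(Y,\Ff_Y,\Gamma_Y,\Mm,t)}$-MMP over $X$ by invoking Theorem \ref{thm: can run mmp intro}; but that theorem is established only after, and by means of, the cone theorem, the contraction theorem and the existence of flips, all of which rest on the existence of $\Qq$-factorial qdlt modifications (Theorem \ref{thm: qdlt model intro}, via Theorem \ref{thm: eoqdlt model}) and on the adjunction formula (Theorem \ref{thm: adj intro}), whose own proof uses Theorem \ref{thm: qdlt model intro}. The same circularity affects your use of Theorem \ref{thm: adj intro} to verify (3)--(4); note also that $(Y,\Ff_Y,\Gamma_Y,\Mm,t)$ is lc but not klt (its boundary has coefficient-one components), so Theorem \ref{thm: can run mmp intro} would not apply even if it were available. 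Termination is a second, independent gap: you appeal to ``special termination as in \cite{CHLX23,LMX24b}'', but no termination or special termination statement exists for adjoint foliated structures, and the special termination technique reduces to lower-dimensional adjoint MMPs via adjunction, which is again unavailable at this stage; this is exactly the obstruction the paper points out when explaining why the proofs of \cite{ACSS21,CHLX23} cannot be imitated, and you flag it as obstacle (a) without resolving it.

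The paper's proof of Theorem \ref{thm: eoqdlt model} is designed precisely to avoid running any adjoint MMP. Starting from a foliated log resolution $g\colon W\to X$, it first runs a purely foliated MMP$/X$ for the generalized foliated quadruple $(W,\Ff_W,B_W,\Mm+\Nn)$, where $B_W=g^{-1}_\ast(B\wedge\Supp B^{\ninv})+\Exc(g)^{\ninv}$ and $\Nn=\overline{-A_W}$ for a small anti-ample$/X$ divisor $A_W$ supported on $\Exc(g)$; the ampleness of $-A_W$ guarantees, by \cite[Theorem 16.1.4]{CHLX23}, termination with a good minimal model $Y/X$, so no new termination statement is needed. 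The nef$/X$ divisor $K_{\Ff_Y}+B_Y+\Mm_Y+\Nn_Y$ is then absorbed into the moduli part: writing $tK_{\Ff_Y}+(1-t)K_Y+B_Y'+\Mm_Y+t\Nn_Y=(1-t)(K_Y+B_Y''+\Pp_Y)$ with $\Pp$ nef$/X$, one runs an MMP$/X$ for the $\Qq$-factorial lc generalized pair $(Y,B_Y'',\Pp)$ with scaling of an ample divisor via \cite[Lemma 4.4]{BZ16}, and the calibration of $A_W$ against the discrepancy gap $s$ together with \cite[Proposition 3.9]{HL22} shows that this MMP contracts exactly the exceptional divisors that are not nklt places, yielding $X'$ and properties (1)--(4); part (5) is then Lemma \ref{lem: perturb qdlt afs}, proved by a continuity argument on a foliated log resolution rather than a toric local model. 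To repair your argument you would have to replace your adjoint MMP with some such two-stage device (foliated MMP, then generalized-pair MMP with the nef foliated part placed in the moduli part), or otherwise argue independently of the results of Sections 4--8.
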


\medskip

\noindent\textbf{A characterization of the singularities of the ambient variety.} We prove the following theorem, which indicates that, for any algebraically integrable adjoint foliated structure $(X,\Ff,B,\Mm,t)/U$ with parameter $t<1$ and mild singularities, its ambient variety $X$ also admits mild singularities.

\begin{thm}\label{thm: lc afs ambient has nice singularities intro}
    Let $(X,\Ff,B,\Mm,t)/U$ be an algebraically integrable adjoint foliated structure. Then:
    \begin{enumerate}
        \item  if $(X,\Ff,B,\Mm,t)$ is klt, then $X$ is potentially klt; and,
        \item if $(X,\Ff,B,\Mm,t)$ is lc, $t<1$, and $K_X$ is $\Qq$-Cartier, then $X$ is lc.
    \end{enumerate}
\end{thm}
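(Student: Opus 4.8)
# Proof Proposal for Theorem \ref{thm: lc afs ambient has nice singularities intro}

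The plan is to reduce both statements to the structure theory of algebraically integrable adjoint foliated structures, and in particular to the existence of $\Qq$-factorial qdlt modifications (Theorem \ref{thm: qdlt model intro}), which is the key technical input. For part (1), suppose $(X,\Ff,B,\Mm,t)$ is klt. First I would observe that being klt is an open condition, so after shrinking we may lose nothing by assuming $B$ and $\Mm$ are chosen favorably; the real content is to produce a klt pair structure on $X$ itself. The case $t=1$ is covered by the statement that the ambient variety of a klt algebraically integrable foliated triple is potentially klt (this is \cite{CHLX23,LMX24b}, and is effectively subsumed in the theory developed earlier in the paper). So assume $t<1$. Apply Theorem \ref{thm: qdlt model intro} to get $h\colon X'\to X$ with $(X',B_{X'},\Mm)$ qdlt, hence $X'$ klt by part (3) of that theorem. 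Now by part (1) of Theorem \ref{thm: qdlt model intro}, every $h$-exceptional divisor is an nklt place of $(X,\Ff,B,\Mm,t)$; but $(X,\Ff,B,\Mm,t)$ is klt, so it has no nklt places, which forces $h$ to be a small morphism — in fact, combined with $\Qq$-factoriality of $X'$, an isomorphism in codimension one. Then $X'$ klt plus $X' \cong X$ in codimension one plus (after the construction) $X'$ being $\Qq$-factorial birational to $X$ means we can push a klt boundary structure down, or argue directly that $X$ is potentially klt since potential kltness is insensitive to small birational modifications between $\Qq$-factorial varieties. I would spell this last step out carefully: if $(X',\Delta')$ is klt and $\Qq$-factorial and $h\colon X'\to X$ is small, then taking $\Delta := h_*\Delta'$ gives $K_{X'}+\Delta' = h^*(K_X+\Delta)$ since $h$ has no exceptional divisors, and discrepancies only improve downstairs, so $(X,\Delta)$ is klt.

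For part (2), assume $(X,\Ff,B,\Mm,t)$ is lc, $t<1$, and $K_X$ is $\Qq$-Cartier; the goal is to show $X$ is lc. Again apply Theorem \ref{thm: qdlt model intro} to obtain $h\colon X'\to X$ with $(X',B_{X'},\Mm)$ qdlt — in particular $(X',B_{X'})$ is lc (even klt on the ambient variety by part (3)) — and with $K_{(X',\Ff',B',\Mm,t)}=h^*K_{(X,\Ff,B,\Mm,t)}$ by part (2) of that theorem, since $(X,\Ff,B,\Mm,t)$ is lc. The strategy is now to extract, from the equality $tK_{\Ff'}+(1-t)K_{X'}+B'+\Mm_{X'} = h^*(tK_{\Ff}+(1-t)K_X+B+\Mm_X)$, a controlled statement purely about $K_{X'}$ versus $h^*K_X$. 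Write $K_{\Ff'} = h^*K_{\Ff} + E_{\Ff}$ and $K_{X'} = h^*K_X + E_X$ for the relevant exceptional $\Rr$-divisors (using that $K_X$ is $\Qq$-Cartier so $h^*K_X$ makes sense), and also $B' = h_*^{-1}B_t + (\text{exceptional terms with the prescribed coefficients})$. The point is that for an algebraically integrable foliation, along any $h$-exceptional divisor the foliated discrepancy is at least the usual discrepancy — more precisely, $\Ff'$-invariant exceptional divisors contribute a foliation discrepancy that dominates, via the non-dicriticality/ACSS structure, the variety discrepancy. Combining this with the lc-ness of $(X',B_{X'},\Mm)$ (so $B_{X'}$ has coefficients $\le 1$ and discrepancies $\ge -1$), and with $t<1$ so that the coefficient $(1-t)$ on invariant exceptional divisors is strictly positive, one can solve the linear relation to conclude that the discrepancy of $K_X$ along every $h$-exceptional divisor is $\ge -1$. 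Since $h$ is a qdlt modification — in particular, every exceptional divisor of $X'/X$ is an nklt (hence lc) place and the non-lc-type divisors over $X$ all appear on $X'$ — this gives lc-ness of $X$ along all such divisors, and for divisors over $X$ not extracted by $h$ the discrepancy is automatically $>$ that of extracted ones, hence $\ge -1$ as well. I would invoke here that to check $X$ is lc it suffices to check discrepancies on a single log resolution that factors through $X'$, and the contributions from divisors exceptional over $X'$ are handled because $(X', \text{something klt})$ is klt.

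The main obstacle I expect is the bookkeeping in part (2): extracting a clean inequality on the variety discrepancy $a(E, X)$ from the foliated-structure equality. The subtlety is that $B'$ carries coefficient $1-t$ on $\Ff'$-invariant exceptional divisors and coefficient $1$ on non-invariant ones (by construction in Theorem \ref{thm: qdlt model intro}), so when one isolates $(1-t)K_{X'}$ the combination $tK_{\Ff'} + B'^{\inv}$ along an invariant exceptional $E$ needs to be bounded below by $-(1-t)$ times the coefficient of $E$ — equivalently one needs that the foliated structure being lc, together with $t<1$, propagates to the variety. This is morally the content of Theorem \ref{thm: qdlt model intro}(3) ("$X'$ is klt") pushed through the birational equality, but making the signs and coefficients line up — especially distinguishing the invariant versus non-invariant exceptional loci and ensuring $h^*K_X$ behaves well ($K_X$ is only assumed $\Qq$-Cartier, not that $X$ is $\Qq$-factorial) — is where care is needed. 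A secondary point worth checking is that in part (1) the morphism $h$ is genuinely small: this uses crucially that a klt adjoint foliated structure has \emph{no} nklt places, which should be immediate from the definition of klt singularities (Definition \ref{defn: sing of afs}) but deserves an explicit sentence.
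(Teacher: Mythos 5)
Both parts of your proposal contain genuine gaps at their decisive steps, even though the starting tool (the $\Qq$-factorial qdlt modification) is the right one and the observation that $h$ is small in the klt case is correct. For part (1), the final descent fails: from a small morphism $h\colon X'\to X$ with $(X',\Delta')$ klt you set $\Delta:=h_*\Delta'$ and write $K_{X'}+\Delta'=h^*(K_X+\Delta)$, but $K_X+\Delta$ has no reason to be $\Rr$-Cartier, so the pullback is not even defined; and $X$ is not $\Qq$-factorial here (if it were, smallness would force $h$ to be an isomorphism), so ``potential kltness is insensitive to small modifications between $\Qq$-factorial varieties'' does not apply. What is actually needed is a boundary $\Delta'$ with $(X',\Delta')$ klt and $K_{X'}+\Delta'$ relatively trivial (or semi-ample with ample model $X$) over $X$, so that the $\Rr$-Cartier structure descends. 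The paper produces this by perturbing: $(X',\Ff',B'+eE,\Mm,t)$ is klt for an $h$-anti-ample $E\geq 0$ and small $e$, $A':=h^*A-eE$ is ample, the sum $K_{(X',\Ff',B'+eE,\Mm,t)}+A'=h^*(K_{(X,\Ff,B,\Mm,t)}+A)$ exhibits $h$ as the contraction associated to a ``klt adjoint structure plus ample'' divisor, and then Theorem \ref{thm: eo flipping and divisorial contraction}(2) (whose proof manufactures a genuine klt pair on the target via auxiliary MMPs and \cite[Lemma 3.4]{HL22}) gives that $X$ is potentially klt. This contraction-theoretic input is essential and cannot be replaced by pushing a boundary down a small map.

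For part (2), the decisive inequality is not justified and, as stated, cannot hold. Writing $B_{\Ff}:=B/t$, $\Pp:=\Mm/t$ (so that $t(K_\Ff+B_\Ff+\Pp_X)+(1-t)K_X=K_{(X,\Ff,B,\Mm,t)}$ with both pieces $\Rr$-Cartier), the convexity identity $a(E,X,\Ff,B,\Mm,t)=t\,a(E,\Ff,B_\Ff,\Pp)+(1-t)\,a(E,X,0)$ yields $a(E,X,0)\geq -1$ only if one has an \emph{upper} bound $a(E,\Ff,B_\Ff,\Pp)\leq-\epsilon_{\Ff}(E)$; your claim that ``the foliated discrepancy is at least the usual discrepancy'' is the wrong direction, and no unconditional comparison of this type can be true (the paper's remark after the theorem — an lc algebraically integrable foliation on a non-lc $\Qq$-Gorenstein $X$, via a locally stable family over a non-lc base — rules it out). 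Moreover, for the exceptional divisors of the qdlt modification of the adjoint structure there is no reason that this upper bound holds: those divisors are only known to be nklt places of the adjoint structure, which bounds the convex combination, not the foliated summand. The paper circumvents this by taking a proper $\Qq$-factorial ACSS modification of the auxiliary foliated structure $(X,\Ff,B_\Ff,\Pp)$, for which $a(E,\Ff,B_\Ff,\Pp)\leq-\epsilon_{\Ff}(E)$ holds for every extracted divisor by the very definition of ACSS modification; this gives $a(E,X,0)\geq-1$ for extracted divisors, and divisors not extracted are handled by the coefficient comparison $B_{\Ff'}+G\geq B_{X'}$ together with qdlt-ness of $(X',B_{\Ff'}+G,\Pp)$, which shows that the crepant pullback boundary of $K_X$ is dominated by a sub-qdlt one on $X'$. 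Your sketch gestures at this last step (``check on a resolution factoring through $X'$''), but without the discrepancy bound for the extracted divisors the comparison of boundaries on $X'$ never gets off the ground. Note also that Proposition \ref{prop: adjoint lc implies X lc} (which is part (2) in its general form) is proved in the paper independently of, and prior to, the qdlt modification theorem, and is then used in the MMP machinery your part (1) would rely on; routing part (2) through the qdlt modification is not circular, but it is also not sufficient as written.
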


Note that if $t=1$, then (2) above fails. For example, let $X \to U$ be a locally stable family over a normal variety with a normal generic fiber. This induces an lc algebraically integrable foliation $\Ff$ on $X$. We can choose $U$ such that it is $\Qq$-Gorenstein and not lc. In this case, $K_X$ is $\Qq$-Cartier, and $X$ is not lc.

\medskip

\noindent\textbf{Applications to usual foliated MMP.} It is worth mentioning that most of our main theorems also hold for the case when $t=1$, that is, for algebraically integrable generalized foliated quadruples $(X,\Ff,B,\Mm)/U$ and foliated triples $(X,\Ff,B)/U$. In particular, we obtain the following result, which improves \cite[Theorems 1.2 and 1.3]{LMX24b}.

\begin{thm}\label{thm: small improvement LMX24b}
    Let $(X,\Ff,B)/U$ be an lc algebraically integrable foliated triple such that $X$ is potentially klt. Assume that one of the following conditions holds:
    \begin{enumerate}
        \item $K_{\Ff}+B$ is pseudo-effective$/U$;
        \item $(X,\Delta)$ is lc for some $\Delta$ such that $B^{\ninv}\geq\Delta^{\ninv}$, where $B^{\ninv}$ and $\Delta^{\ninv}$ are the non-$\Ff$-invariant parts of $B$ and $\Delta$ respectively.
    \end{enumerate}
    Then the cone theorem, the contraction theorem, and the existence of flips hold for $K_{\Ff}+B$, and we can run a $(K_{\Ff}+B)$-MMP. 
    
    Moreover, let $A$ and $H$ be two ample$/U$ $\Rr$-divisors on $X$. Then we may run a $(K_{\Ff}+B+A)$-MMP$/U$ with scaling of $H$, and any such MMP terminates with a good minimal model or a Mori fiber space of $(X,\Ff,B+A)/U$.
\end{thm}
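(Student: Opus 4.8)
The plan is to reduce to the case where $X$ is $\Qq$-factorial klt, in which the four assertions are the case $t=1$ of Theorems~\ref{thm: cone intro}, \ref{thm: cont intro}, \ref{thm: eof intro} and~\ref{thm: can run mmp intro}, and then to descend every conclusion along a small modification. Since $X$ is potentially klt, fix a klt pair $(X,\Gamma)$ and let $g\colon X'\to X$ be a small $\Qq$-factorialization of $X$, which exists by \cite{BCHM10}. Put $\Ff':=g^{-1}\Ff$ and $B':=g^{-1}_{\ast}B$; then $\Ff'$ is algebraically integrable, and since $g$ is small the $\Rr$-Cartier divisors $g^{\ast}(K_{\Ff}+B)$ and $K_{\Ff'}+B'$ agree in codimension one and hence coincide, so $(X',\Ff',B')$ is an lc algebraically integrable foliated triple with $K_{\Ff'}+B'=g^{\ast}(K_{\Ff}+B)$. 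Moreover $X'$ is $\Qq$-factorial and admits the klt boundary $g^{-1}_{\ast}\Gamma$, hence is klt, since dropping an effective boundary only raises discrepancies.

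The cone theorem for $K_{\Ff}+B$ is the case $t=1$ of Theorem~\ref{thm: cone intro}, whose non-lc locus is empty as $(X,\Ff,B)$ is lc and which imposes no condition on the singularities of $X$. The existence of flips is the case $t=1$ of Theorem~\ref{thm: eof intro}, whose only hypothesis, potential klt-ness of the ambient space, holds by assumption. For the contraction theorem, let $R$ be a $(K_{\Ff}+B)$-negative extremal ray$/U$; by the cone theorem we may choose an ample$/U$ $\Rr$-divisor $A$ with $N:=K_{\Ff}+B+A$ nef$/U$ and $N^{\perp}\cap\overline{NE}(X/U)=R$. Then $g^{\ast}N=K_{\Ff'}+B'+g^{\ast}A$ is nef$/U$ while $g^{\ast}A$ is nef and big$/U$, so the base-point-freeness theorem for lc algebraically integrable foliated triples with klt ambient — established along with the case $t=1$ of Theorem~\ref{thm: cont intro} — shows that $g^{\ast}N$ is semi-ample$/U$. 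Since $g$ is surjective with $g_{\ast}\mathcal{O}_{X'}=\mathcal{O}_{X}$, semi-ampleness$/U$ descends, so $N$ is semi-ample$/U$; the morphism it induces is the desired contraction $\cont_R\colon X\to T$, whose characterizing properties follow — the description of contracted curves being immediate, and the line-bundle statement of Theorem~\ref{thm: cont intro}(2) descending from its counterpart on $X'$. Granting that potential klt-ness of the ambient space is preserved under steps of the $(K_{\Ff}+B)$-MMP for lc algebraically integrable foliated triples, these statements let us run a $(K_{\Ff}+B)$-MMP$/U$ on $X$; this proves the first assertion.

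For the moreover part, after replacing $A$ by a general member of $|A|_{\Rr/U}$ we may assume $(X,\Ff,B+A)$ is lc, so that $(X',\Ff',B'+g^{\ast}A)$ is its crepant small pullback and $X'$ remains $\Qq$-factorial klt. Consider an arbitrary $(K_{\Ff}+B+A)$-MMP$/U$ on $X$ with scaling of $H$, with models $X=X_{0}\bir X_{1}\bir\cdots$. Choosing small $\Qq$-factorializations $g_{i}\colon X_{i}'\to X_{i}$ and arguing inductively — each induced birational map $X_{i}'\bir X_{i+1}'$ being realized by finitely many, but at least one, steps of a $(K_{\Ff_{i}'}+B_{i}'+A_{i}')$-MMP$/U$ with scaling of $g_{i}^{\ast}H_{i}$, by a standard comparison of MMPs with small $\Qq$-factorializations (cf.\ \cite{LMX24b}) — one refines the given MMP into a $(K_{\Ff'}+B'+g^{\ast}A)$-MMP$/U$ on $X'$ with scaling of $g^{\ast}H$ of at least the same length. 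Under hypothesis~(1), $K_{\Ff'}+B'=g^{\ast}(K_{\Ff}+B)$ is pseudo-effective$/U$; under hypothesis~(2), the lc pair $(X,\Delta)$ with $B^{\ninv}\ge\Delta^{\ninv}$ pulls back, $g$ being small, to an lc pair $(X',g^{-1}_{\ast}\Delta)$ whose non-$\Ff'$-invariant part is still dominated by that of $B'$ — the required compatible pair. In either case the corresponding statement for $\Qq$-factorial klt algebraically integrable foliated triples (the case $t=1$ of the minimal model theory developed in this paper, sharpening \cite{LMX24b}) shows that this refined MMP terminates with a good minimal model or a Mori fiber space of $(X',\Ff',B'+g^{\ast}A)/U$. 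Hence the original MMP on $X$ has finitely many steps, and pushing its output forward along $g$ and the intermediate small modifications — being nef$/U$ and being semi-ample$/U$ both descending along birational contractions — yields a good minimal model or a Mori fiber space of $(X,\Ff,B+A)/U$.

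The hard part is the descent: making precise the step-by-step comparison between a $(K_{\Ff}+B+A)$-MMP on the possibly non-$\Qq$-factorial $X$ and an MMP on its small $\Qq$-factorialization — controlling the extremal faces of $\overline{NE}(X_{i}'/U)$ lying over the contracted rays, matching the scaling thresholds, and checking that potential klt-ness of the ambient persists throughout — together with transporting the resulting good minimal model or Mori fiber space back to $X$. Hypotheses~(1) and~(2), by contrast, enter (as in \cite{LMX24b}) only to secure termination and the existence of a \emph{good} minimal model or Mori fiber space on the $\Qq$-factorial klt side.
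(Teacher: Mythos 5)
There is a genuine gap, and it is located exactly where the hypotheses (1)/(2) of the theorem actually do their work. Your proposal asserts that the contraction of a $(K_{\Ff}+B)$-negative extremal ray $R$ on $X$ can be obtained by pulling the supporting function $N=K_{\Ff}+B+A$ back to a small $\Qq$-factorialization $X'$ and invoking ``base-point-freeness for lc foliated triples with klt ambient''. But the only base-point-freeness available in this paper (Theorem \ref{thm: eo flipping and divisorial contraction}, Theorem \ref{thm: bpf intro}) requires the divisor to be nef \emph{and big}, with an honestly ample perturbation; on $X'$ the perturbation $g^{\ast}A$ is merely nef and big, and in the Fano-type case $N$ (hence $g^{\ast}N$) is not big at all, so no statement in the paper makes $g^{\ast}N$ semi-ample. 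Nor can you see $R$ as a $(K_{\Ff'}+B')$-negative extremal ray upstairs: the face $(g^{\ast}N)^{\perp}\cap\overline{NE}(X'/U)$ contains the $g$-exceptional curve classes, which are $(K_{\Ff'}+B')$-trivial, so Theorem \ref{thm: cont intro} on $X'$ does not apply to it. The paper handles precisely this non-big case by a separate argument (Theorem \ref{thm: contraction not big}): using bend-and-break curves tangent to $\Ff$ and a proper ACSS modification, it shows $R$ is also negative for an auxiliary classical lc structure $(X,B_X,\Nn)$ and then invokes the classical contraction theorem — and for $t=1$ this is exactly where hypothesis (2) (giving $(X,\Delta)$ lc with $\Delta^{\ninv}\leq B^{\ninv}$) or hypothesis (1) (which forces every supporting function $K+A$ to be big, so the non-big case never occurs) is needed, since $K_X$ need not be $\Rr$-Cartier on the non-$\Qq$-factorial $X$. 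Your closing claim that (1)/(2) ``enter only to secure termination'' is therefore wrong; indeed the paper's actual proof consists of viewing $(X,\Ff,B)$ as the adjoint foliated structure $(X,\Ff,B,\bm{0},1)$ and feeding conditions (1)/(2) into conditions (iii)/(iv) of Theorem \ref{thm: contraction theorem strong} via Theorem \ref{thm: run mmp strong}, with no passage to a $\Qq$-factorialization at all.

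Two further points. First, in the ``moreover'' part you replace $A$ by a general member of $|A|_{\RR/U}$ to make $(X,\Ff,B+A)$ lc; Bertini-type statements fail for foliations (see \cite[Example 3.4]{DLM23} and the discussion in the introduction), so this step is unjustified for $t=1$, which is why the paper keeps ample divisors in the moduli part or, as here, simply quotes \cite[Theorems 1.10 and 1.11(1)]{LMX24b} for the scaling MMP and its termination with a good minimal model or Mori fiber space. Second, the step-by-step comparison between the MMP on $X$ and MMPs on small $\Qq$-factorializations, which you yourself flag as ``the hard part'', is left entirely to a citation and is not needed in the paper's route.
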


We remark that, when $X$ is not potentially klt, we still expect Theorem \ref{thm: small improvement LMX24b} to be true. However, the proof is expected to be very different and much more difficult, and we only know the case when $\Ff$ is induced by a locally stable family by \cite{MZ23}.

\medskip

\noindent\textbf{Sketch of the proof of the main theorems.} 
For simplicity, throughout this subsection we will assume that $B=0$, $\Mm=\bm{0}$, and $U=\{pt\}$.
Before discussing the technical details, we will first explain the key idea of this paper, which is the observation of the following phenomenon: 
given $tK_{\Ff}+(1-t)K_X$, we can first pass to a birational model $W \rightarrow X$ such that the pull-back $\Ff_W$ of $\Ff$ on $W$ has nice singularities. 
Let $E \subset W$ be the reduced divisor which is exceptional over $X$, and let $E^{\ninv}$ and $E^{\inv}$ be the non-$\Ff_W$-invariant and $\Ff_W$-invariant parts of $E$, respectively. 
Assume that $K_{\Ff_W}+E^{\ninv}$ is pseudo-effective. We first run a $(K_{\Ff_W}+E^{\ninv})$-MMP to produce a minimal model $Y$. 
By construction, denoting by $\Ff_Y$ (resp. $E_Y$)
the push-forward of $\Ff_W$ 
(resp. $E$)
to $Y$, 
$(Y,E_Y,\Nn:=\overline{\frac{t}{1-t}(K_{\Ff_Y}+E_Y^{\ninv})})$ is a $\Qq$-factorial generalized pair with generalized log canonical singularities. 
Thus, thanks to results of~\cite{BZ16,HL23}, we can run a $(K_Y+E_Y+\Nn_Y)$-MMP terminating with a model $V$. 
Let us observe that the MMP just run  is also a run of the 
$(tK_{\Ff_Y}+(1-t)K_Y+E_Y^{\ninv}+(1-t)E_Y^{\inv})$-MMP.
In general, $V$ may not be a minimal model of $tK_{\Ff}+(1-t)K_X$, but it is under some extra assumptions on the augmented base locus of $tK_{\Ff}+(1-t)K_X$. This methodology will be applied twice later.

Another methodology is the use of generalized pairs. We have already used it once for the $(K_Y+E_Y+\Nn_Y)$-MMP above. Note that the use of a generalized pair to run this MMP is necessary: $(Y,E_Y)$ is only guaranteed to be $\Qq$-factorial qdlt, and since abundance fails for foliations, we may only expect $\Nn_Y=K_{\Ff_Y}+E_Y^{\ninv}$ to be nef rather than semi-ample. Therefore, in general, we cannot obtain an lc structure $(Y,\Delta_Y)$ such that $\Delta_Y\sim_{\mathbb{R}}E_Y+\Nn_Y$. Moreover, since Bertini-type theorems fail for foliations (see, e.g., \cite[Example 3.4]{DLM23}), they also do not hold in general for adjoint foliated structures (although some weaker Bertini-type theorems hold when $t<1$). Therefore, when considering $+A$ for some ample $\Rr$-divisor $A$, we usually put $A$ in the moduli part (the $\Mm$ part) of the adjoint foliated structure to preserve the singularities.

In the remainder of this section we will discuss the technical details of the proof.

\smallskip

\noindent\textit{Cone theorem and qdlt modifications.} 
Given the (relative) cone theorem for algebraically integrable foliations was established in \cite{ACSS21,CHLX23}, our strategy will mimic the ones used to prove that result.
To do this, there are three key ingredients: bend and break (cf. \cite[Corollary 2.28]{Spi20}), adjunction, and the existence of an analogue of dlt modifications ($(\ast )$-modifications in \cite{ACSS21} and $\Qq$-factorial ACSS modifications in \cite{CHLX23}).

The bend and break technique, \cite[Corollary 2.28]{Spi20}, still works, and in the relative setting, \cite[Theorem 8.1.1]{CHLX23} can be applied with minor changes (cf. Proposition \ref{prop: cone non-big case}). The adjunction formula for algebraically integrable foliations holds by \cite[Proposition 3.2]{ACSS21} and \cite[Theorem 2.4.1]{CHLX23}. Thus, we may decompose the adjoint canonical divisor of the adjoint foliated structure into the $K_{\Ff}$ part and the $K_X$ part, apply adjunction formulae separately, and then combine them to obtain the adjunction formula for adjoint foliated structures (see Theorem \ref{thm: afs adj}). It is worth mentioning that, if $X$ is not $\Qq$-factorial, then the decomposition is not straightforward, but it can be bypassed after we prove the existence of an analogue of dlt modifications (Theorem \ref{thm: qdlt model intro}).

The difficult part is the existence of an analogue of dlt modifications (Theorem \ref{thm: qdlt model intro}), as we cannot follow the proofs in \cite{ACSS21,CHLX23} due to the fact that we cannot reduce lower-dimensional adjoint foliated MMP to an MMP for usual pairs as in \cite[Proof of Lemma 3.11]{ACSS21}. To overcome this difficulty, we apply the methodology mentioned earlier. The idea is to take a $\Qq$-factorial ACSS modification $Y \rightarrow X$ of $\Ff$ so that $K_{\Ff_Y}+E^{\ninv}$ is ample$/X$, where $E$ is the reduced exceptional$/X$ divisor and $E^{\ninv}$ is its non-invariant part. Now, by treating $K_{\Ff_Y}+E^{\ninv}$ as a nef$/X$ $\Qq$-divisor, we can run a $(K_Y+E+\frac{t}{1-t}(K_{\Ff_Y}+E^{\ninv}))$-MMP$/X$, and the output of this MMP will satisfy our requirements. In practice, since we do not have the termination of flips and do not know whether $K_{\Ff}$ is $\Qq$-Cartier, instead of starting with $Y$, we need to start with a foliated log resolution $W \rightarrow X$ of $\Ff$ and run a $(K_{\Ff_W}+E_W^{\ninv})$-MMP$/X$ to obtain the model $Y$ first. We need to use an auxiliary ample divisor to guarantee the termination of the MMPs with scaling so that we can obtain the model $Y$. To obtain the model $X'$, we further apply the general negativity lemma \cite[Lemma 3.3]{Bir12}. See Theorem \ref{thm: eoqdlt model} for details. Theorem \ref{thm: qdlt model intro} is a direct consequence of Theorem \ref{thm: eoqdlt model} and an easy perturbation lemma (Lemma \ref{lem: perturb qdlt afs}).

Theorem \ref{thm: qdlt model intro} shows that $X' \rightarrow X$ is a satisfactory analogue of dlt modifications. We call it a \emph{$\Qq$-factorial qdlt modification} as the ambient variety has a natural qdlt structure. See Definition \ref{defn: qdlt afs} for details.

\smallskip

\noindent\textit{Contraction and base-point-freeness theorems.} The next goal is the contraction theorem (Theorem \ref{thm: cont intro}). The existence of Fano contractions is straightforward (at least for the $\Qq$-factorial case) because the bend and break theorem implies that the variety is covered by rational curves spanning the extremal ray. Thus, any $(tK_{\Ff}+(1-t)K_X)$-negative extremal ray is also $K_X$-negative, and the contraction theorem follows from the usual contraction theorem for klt varieties (see Theorem \ref{thm: contraction not big}). Therefore, we only need to be concerned with the existence of flipping contractions and divisorial contractions.

In either case, we have an ample $\Rr$-divisor $A$ on $X$ such that $H:=tK_{\Ff}+(1-t)K_X+A$ is nef and big and is the supporting function of a $(tK_{\Ff}+(1-t)K_X)$-negative extremal ray. Proving the existence of a flipping contraction is equivalent to proving that $tK_{\Ff}+(1-t)K_X+A$ has a good minimal model. As mentioned earlier, we consider the $\bb$-divisor $\Aa:=\overline{A}$ instead of $A$ due to the failure of Bertini-type theorems. Now we apply our methodology again: take a foliated log resolution $W \rightarrow X$ with reduced exceptional divisor $E$, and run a $(K_{\Ff_W}+E^{\ninv}+\Aa_W)$-MMP. After obtaining a minimal model $Y$, we treat $P_Y:=K_{\Ff_Y}+E_Y^{\ninv}+\Aa_Y$ as a nef $\Rr$-divisor and run an MMP for the generalized pair $(Y,E_Y,\overline{\frac{t}{1-t}P_Y})$ to obtain a good minimal model $V$.

The problem is that $tK_{\Ff_V}+(1-t)K_V+E_V^{\ninv}+(1-t)E_V^{\inv}+A_V$ is usually not a minimal model of $tK_{\Ff}+(1-t)K_X+A$, even in the sense of Birkar-Shokurov. In fact, we cannot exclude the possibility that $W \dashrightarrow V$ contracts some divisors that are not in the augmented base locus of $tK_{\Ff}+(1-t)K_X+A$. For example, if $t$ is very small, the singularities of $\Ff$ can be very bad. However, the entire process $X \leftarrow W \dashrightarrow Y$ only concerns $\Ff$ and completely ignores the structure of $X$. It is very possible that some divisors not in $\Bb_+(H)$ are contracted by the map $W \dashrightarrow Y$. However, if we contract any such divisor, we would never obtain a minimal model by a simple positivity computation (see Lemma \ref{lem: nz keep under pullback}(3)).

Therefore, we cannot generally obtain the existence of good minimal models by simply considering $V$. However, there is a straightforward way to resolve this issue: instead of considering $tK_{\Ff}+(1-t)K_X+A$, we consider $tK_{\Ff}+(1-t)K_X+A+lH$ with $l \gg 0$. The corresponding minimal model program is $H$-trivial by the length of extremal rays, which is part of the cone theorem (Theorem \ref{thm: cone intro}(2)). Therefore, when $H$ is associated with a flipping contraction, $\Bb_+(H)$ is small, so the divisorial part contracted is contained in the exceptional divisors of $W \rightarrow X$. This leads to the proof of the existence of flipping contractions (see Theorem \ref{thm: eo flipping and divisorial contraction}). We also note that when we consider $\Rr$-divisors, we need to prove that $H$ is NQC (i.e., it is a positive sum of nef Cartier divisors). This follows from the finiteness of $(tK_{\Ff}+(1-t)K_X+A)$-negative extremal rays, which is also part of the cone theorem (Theorem \ref{thm: cone intro}(3)).

For the existence of divisorial contractions, the proof is more complicated as we need to have more explicit control of the divisors that are contracted by $W \dashrightarrow V$. However, $H$ is big, and $\Bb_+(H)$ contains finitely many divisors. We want to contract, and only contract, these divisors for $X \dashrightarrow V$. Therefore, we can take $W$ to be a sufficiently high resolution and increase the coefficients of all boundary divisors that are contained in $\Bb_+(H_W)$ to the maximum. Now we can use the fact that the structures on $W$ and $V$ are both lc to deduce that $V$ is a good minimal model of the new structure on $W$. Moreover, $X$ is a minimal model of the new structure on $W$ by some easy Nakayama-Zariski decomposition arguments. This implies that $H$ is actually semi-ample, and we are done. See Theorem \ref{thm: eo flipping and divisorial contraction} for details. The proof of the general type base-point-freeness theorem (Theorem \ref{thm: bpf intro}) follows the same lines as the proof of the contraction theorem.

We also remark that there is an alternative way to consider the MMP we run in the proof of the contraction theorem: suppose that the contraction $X\rightarrow \mathcal{T}$ defined by $H$ already exists; then running the MMP by adding a sufficiently large multiple of $H$ is essentially equivalent to running an MMP$/\mathcal{T}$. In particular, if $\mathcal{O}_H(-H|_H)$ is ample, then the contraction $X \rightarrow \mathcal{T}$ exists in the category of algebraic spaces \cite[Theorem 6.2]{Art70}, and with the MMP for algebraic spaces, we can show that $X \rightarrow \mathcal{T}$ is indeed a contraction in the category of projective varieties. This is a common strategy in previous works on foliations \cite{CS20,CS21,Spi20}. However, since we are not in dimension $3$, it is not easy to show that $\mathcal{O}_H(-H|_H)$ is ample. Moreover, we want to work with normal quasi-projective varieties associated with projective morphisms rather than normal projective varieties only. Thus, although it is an inspiring strategy, we do not argue in this way.

\smallskip

\noindent\textit{Existence of flips.} Next, we prove the existence of flips (Theorem \ref{thm: eof intro}). The key idea of the proof is as follows: in the proof of divisorial contraction and flipping contractions, we obtain a birational map $X \dashrightarrow V$ that is $H$-trivial. Now the crepant transform $H_V$ of $H$ on $V$ has a ``$\Qq$-factorial qdlt generalized pair $+$ ample" structure, which induces the contractions $V \rightarrow T$, hence $H$ also induces a contraction $\pi: X \rightarrow T$. Following this construction, we know that $T$ is potentially klt. In particular, there exists a small $\Qq$-factorialization $T' \rightarrow T$. Since $T'$ is of Fano type over $T$, $tK_{\Ff_{T'}}+(1-t)K_{T'}$ has a good minimal model$/T$, hence $tK_{\Ff_{T'}}+(1-t)K_{T'}$ has an ample model$/T$, denoted by $X^+$. It is easy to check that the induced birational map $X \dashrightarrow X^+$ is the $(tK_{\Ff}+(1-t)K_X)$-flip we want.

We also note that in the non-$\Qq$-factorial minimal model program, a step of an MMP may be of the form $X \rightarrow T \leftarrow X^+$, where $X \rightarrow T$ is a divisorial contraction and $X^+ \rightarrow T$ is a small modification. The arguments above could also imply that the small modification $X^+ \rightarrow T$ exists if $X \rightarrow T$ is a divisorial contraction of a $(tK_{\Ff}+(1-t)K_X)$-negative extremal ray.

\smallskip

\noindent\textit{Existence of the minimal model program and Theorem \ref{thm: lc afs ambient has nice singularities intro}.} With the cone theorem, the contraction theorem, and the existence of flips, and after proving that some auxiliary properties (e.g., $\Qq$-factorial, potentially klt) are preserved under the minimal model program, we can deduce that we can run an MMP for algebraically integrable adjoint foliated structures (Theorem \ref{thm: can run mmp intro}). Theorem \ref{thm: simplified main theorem} and the main part of Theorem \ref{thm: small improvement LMX24b} are corollaries of the aforementioned theorems, while the ``moreover" part of Theorem \ref{thm: small improvement LMX24b} follows from \cite[Theorems 1.10 and 1.11]{LMX24b}.

We are only left to prove Theorem \ref{thm: lc afs ambient has nice singularities intro}. When $X$ is $\Qq$-factorial, Theorem \ref{thm: lc afs ambient has nice singularities intro}(2) can be proved by taking a proper $\Qq$-factorial ACSS modification $h: (X',\Ff',B_{\Ff'},\Mm;G)/Z \rightarrow (X,\Ff,B^{\ninv},\Mm)$ such that $G$ contains all the $h$-exceptional prime divisors and the strict transform of $B^{\inv}$, and using the fact that $(X',B_{\Ff'}+G,\Mm)$ is lc. When $X$ is not $\Qq$-factorial, we need to choose the $\Qq$-factorial ACSS model for another structure $(X,\Ff,B_{\Ff},\Pp)$, and the proof follows from similar arguments. See Proposition \ref{prop: adjoint lc implies X lc} for details.

The proof of Theorem \ref{thm: lc afs ambient has nice singularities intro}(1) is more complicated. The idea is to first use the existence of $\Qq$-factorial qdlt modifications (Theorem \ref{thm: qdlt model intro}) to obtain a model $h: X' \rightarrow X$. It is easy to show that $h$ is small. Now a key observation is that, following the proof of the contraction theorem, one can actually show that the contraction $X' \rightarrow X$ is the contraction of a $(K'+A')$-trivial extremal face, where $K'$ is associated with a klt adjoint foliated structure and $A'$ is ample$/X$. Following the proof of the contraction theorem, this implies that $X$ is potentially klt, which is Theorem \ref{thm: lc afs ambient has nice singularities intro}(1).

\medskip

\noindent\textbf{Structure of the paper.} In Section \ref{sec: prelimaries} we introduce some preliminary results. In Section \ref{sec: qdlt modification} we define qdlt singularities for algebraically integrable adjoint foliated structures, prove the existence of $\Qq$-factorial qdlt modifications (Theorem \ref{thm: qdlt model intro}) and also prove Theorem \ref{thm: lc afs ambient has nice singularities intro}(2) (see Proposition \ref{prop: adjoint lc implies X lc}).  In Section \ref{sec: an adjunction formula} we prove the adjunction formula (Theorem \ref{thm: adj intro}). In Section \ref{sec: cone} we prove the cone theorem (Theorem \ref{thm: cone intro}). In Section \ref{sec: contraction} we prove the contraction theorem (Theorem \ref{thm: cont intro}) and the general type base-point-freeness theorem (Theorem \ref{thm: bpf intro}). In Section \ref{sec: eof} we prove the existence of flips (Theorem \ref{thm: eof intro}).  In Section \ref{sec: run mmp} we prove the existence of the minimal model program (Theorems \ref{thm: can run mmp intro}, \ref{thm: simplified main theorem} and \ref{thm: small improvement LMX24b}) and Theorem \ref{thm: lc afs ambient has nice singularities intro}(1) (see Proposition \ref{prop: klt afs imply potentially klt}), hence also conclude the proof of Theorem \ref{thm: lc afs ambient has nice singularities intro}.

\medskip

\noindent\textbf{Acknowledgements.} The authors would like to thank Caucher Birkar, Guodu Chen, Christopher D. Hacon, Dongchen Jiao, Junpeng Jiao, James M\textsuperscript{c}Kernan, Vyacheslav V. Shokurov, 
Pascale Voegtli and Ziquan Zhuang for useful discussions. The first and the fifth authors are partially funded by EPSRC. The second author is supported by NSFC for Excellent Young Scientists (\#12322102), and the National Key Research and Development Program of China (\#2023YFA1010600, \#2020YFA0713200). The second author is a member of LMNS, Fudan University. 
The fourth author is partially supported by an AMS-Simons Travel Grant. 
The fifth author is supported by the 
``Programma per giovani ricercatori Rita Levi Montalcini''
of the Italian Ministry of University and Research
and by 
PSR 2022 -- Linea 4 of the 
University of Milan. 
He is a member of the GNSAGA group of INDAM.
The last author was partially supported by NSF research grants no. DMS-1801851 and DMS-1952522, as well as a grant from the Simons Foundation (Award Number: 256202).

\section{Preliminaries}\label{sec: prelimaries}

We will adopt the standard notation and definitions on MMP in \cite{KM98,BCHM10} and use them freely. For foliations, foliated triples, generalized foliated quadruples, we adopt the notation and definitions in \cite{LLM23,CHLX23} which generally align with \cite{CS20, ACSS21, CS21} (for foliations and foliated triples) and \cite{BZ16,HL23} (for generalized pairs and $\bb$-divisors), possibly with minor differences. 

\subsection{Special notation}

\begin{defn}
    A \emph{contraction} is a projective morphism of varieties
    $f \colon X \to Y$
    such that 
    $f_\ast \mathcal{O}_X=\mathcal{O}_Y$.
\end{defn}

When $X, Y$ are normal, the condition 
$f_\ast \mathcal{O}_X=\mathcal{O}_Y$
is equivalent to the fibers of $f$ being connected.

\begin{nota}
    Let $h \colon  X\dashrightarrow X'$ be a birational map between normal varieties. We denote by $\Exc(h)$ the reduced divisor supported on the codimension one part of the exceptional locus of $h$. 
\end{nota}

\begin{nota}
    Let $X$ be a normal variety and $D,D'$ two $\Rr$-divisors on $X$. We define 
    $D\wedge D':=\sum_P\min\{\mult_PD,\mult_PD'\}P$ and  $D\vee D':=\sum_P\max\{\mult_PD,\mult_PD'\}P$ where the sum runs through all the prime divisors $P$ on $X$. We denote by $\Supp D$ the reduced divisor supported on $D$. 
\end{nota}

\subsection{Foliations}

\begin{defn}[Foliations, {cf. \cite{ACSS21,CS21}}]\label{defn: foliation}
Let $X$ be a normal variety. A \emph{foliation} on $X$ is a coherent sheaf $\Ff\subset T_X$ such that
\begin{enumerate}
    \item $\Ff$ is saturated in $T_X$, i.e. $T_X/\Ff$ is torsion free, and
    \item $\Ff$ is closed under the Lie bracket.
\end{enumerate}

The \emph{rank} of a foliation $\Ff$ on a variety $X$ is the rank of $\Ff$ as a sheaf and is denoted by $\rk\Ff$. 
The \emph{co-rank} of $\Ff$ is $\dim X-\rk\Ff$. The \emph{canonical divisor} of $\Ff$ is a divisor $K_\Ff$ such that $\mathcal{O}_X(-K_{\mathcal{F}})\cong\mathrm{det}(\Ff)$. If $\Ff=0$, then we say that $\Ff$ is a \emph{foliation by points}.

Given any dominant map 
$h: Y\dashrightarrow X$ and a foliation $\mathcal F$ on $X$, we denote by $h^{-1}\Ff$ the \emph{pullback} of $\Ff$ on $Y$ as constructed in \cite[3.2]{Dru21} and say that $h^{-1}\Ff$ is \emph{induced by} $\Ff$. Given any birational map $g: X\dashrightarrow X'$, we denote by $g_\ast \Ff:=(g^{-1})^{-1}\Ff$ the \emph{pushforward} of $\Ff$ on $X'$ and also say that $g_\ast \Ff$ is \emph{induced by} $\Ff$. We say that $\Ff$ is an \emph{algebraically integrable foliation} if there exists a dominant map $f: X\dashrightarrow Z$ such that $\Ff=f^{-1}\Ff_Z$, where $\Ff_Z$ is the foliation by points on $Z$, and we say that $\Ff$ is \emph{induced by} $f$.

A subvariety $S\subset X$ is called \emph{$\Ff$-invariant} if for any open subset $U\subset X$ and any section $\partial\in H^0(U,\Ff)$, we have $\partial(\mathcal{I}_{S\cap U})\subset \mathcal{I}_{S\cap U}$,  where $\mathcal{I}_{S\cap U}$ denotes the ideal sheaf of $S\cap U$ in $U$.  
For any prime divisor $P$ on $X$, we define $\epsilon_{\Ff}(P):=1$ if $P$ is not $\Ff$-invariant and $\epsilon_{\Ff}(P):=0$ if $P$ is $\Ff$-invariant. For any prime divisor $E$ over $X$, we define $\epsilon_{\Ff}(E):=\epsilon_{\Ff_Y}(E)$ where $h: Y\dashrightarrow X$ is a birational map such that $E$ is on $Y$ and $\Ff_Y:=h^{-1}\Ff$.

Suppose that the foliation structure $\Ff$ on $X$ is clear in the context. Then, given an $\mathbb R$-divisor $D = \sum_{i = 1}^k a_iD_i$ where each $D_i$ is a prime Weil divisor,
we denote by $D^{{\rm ninv}} \coloneqq \sum \epsilon_{\mathcal F}(D_i)a_iD_i$ the \emph{non-$\Ff$-invariant part} of $D$ and $D^{{\rm inv}} \coloneqq D-D^{{\rm ninv}}$ the \emph{$\Ff$-invariant part} of $D$.

\end{defn}

\begin{defn}[Tangent, {cf. \cite[Section 3.4]{ACSS21}}]\label{defn: tangent to foliation}
 Let $X$ be a normal variety, $\Ff$ be  a foliation on $X$, and $V\subset X$ be a subvariety. 
 Suppose that $\Ff$ is a foliation induced by a dominant rational map $X\dashrightarrow Z$. 
 We say that $V$ is \emph{tangent} to $\Ff$ if there exists a birational morphism $\mu: X'\rightarrow X$, an equidimensional contraction $f': X'\rightarrow Z$, and a subvariety $V'\subset X'$, such that
    \begin{enumerate}
    \item $\mu^{-1}\Ff$ is induced by $f'$, and
        \item $V'$ is contained in a fiber of $f'$ and $\mu(V')=V$.
    \end{enumerate}
Note that we may also define tangency when the foliation is not algebraically integrable but we do not need it in this paper.
\end{defn}

\begin{defn}[$=$Definition \ref{defn: ads intro}]\label{defn: ads}
An \emph{adjoint foliated structure}, $(X,\Ff,B,\Mm,t)/U$ is the datum of a normal quasi-projective variety $X$ and a projective morphism $X\rightarrow U$, a foliation $\Ff$ on $X$, an $\Rr$-divisor $B\geq 0$ on $X$, a $\bb$-divisor $\Mm$ nef$/U$, and a real number $t\in [0,1]$ such that $K_{(X,\Ff,B,\Mm,t)}:=tK_{\Ff}+(1-t)K_X+B+\Mm_X$ is $\Rr$-Cartier. 

When $t=0$ or $\Ff=T_X$, we call $(X,B,\Mm)/U$ a \emph{generalized pair}, and in addition, if $\Mm=\bm{0}$, then we call $(X,B)/U$ a log pair. 
When $t=1$, we call $(X,\Ff,B,\Mm)/U$ a \emph{generalized foliated quadruple}, and in addition, if $\Mm=\bm{0}$, then we call $(X,\Ff,B)/U$ a \emph{foliated triple}.

If $B=0$, or if $\Mm=\bm{0}$, or if $U$ is not important, then we may drop $B,\Mm,U$ respectively. If we allow $B$ to have negative coefficients, then we shall add the prefix ``sub-". If $B$ is a $\Qq$-divisor and $\Mm$ is a $\Qq$-$\bb$-divisor, then we shall add the prefix ``$\Qq$-".
\end{defn}

\begin{defn}\label{defn: sing of afs}
Let $(X,\Ff,B,\Mm,t)/U$ be an adjoint foliated structure. Let $h: X'\rightarrow X$ be a birational morphism, $\Ff':=h^{-1}\Ff$, and $B'$ the unique $\Rr$-divisor such that
$$K_{(X',\Ff',B',\Mm,t)}=h^\ast K_{(X,\Ff,B,\Mm,t)}.$$
 For any prime divisor $E$ on $X'$, we denote by
$$a(E,\Ff,B,\Mm,t):=-\mult_{E}B'$$
the \emph{discrepancy} of $E$ with respect to $(X,\Ff,B,\Mm,t)$. We say that $(X,\Ff,B,\Mm,t)$ is \emph{lc} (resp. \emph{klt}) if $a(E,\Ff,B,\Mm,t)\geq -t\epsilon_{\Ff}(E)-(1-t)$ (resp. $>-t\epsilon_{\Ff}(E)-(1-t)$) for any prime divisor $E$ over $X$. If we allow $B$ to have negative coefficients, then we shall add the prefix ``sub-" for the types of singularities above.
\end{defn}

The following lemma provides an equivalent definition for an adjoint foliated structure to be lc.

\begin{lem}
 Let $(X,\Ff,B,\Mm,t)/U$ be a sub-adjoint foliated structure
 such that 
 $$a(E,\Ff,B,\Mm,t)\geq -1$$
  for any prime divisor $E$ over $X$. Then $(X,\Ff,B,\Mm,t)$ is sub-lc.
\end{lem}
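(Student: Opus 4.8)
The plan is to reduce the statement to $\Ff$-invariant prime divisors and then, assuming the desired inequality fails for one of them, to manufacture by an explicit chain of blow-ups a divisor whose discrepancy is strictly less than $-1$, contradicting the hypothesis. First, if $E$ is a prime divisor over $X$ with $\epsilon_{\Ff}(E)=1$, then $-t\epsilon_{\Ff}(E)-(1-t)=-1$, so the hypothesis $a(E,\Ff,B,\Mm,t)\ge -1$ is already the sub-lc inequality for $E$. Hence it suffices to prove that every $\Ff$-invariant prime divisor $E$ over $X$ satisfies $a(E,\Ff,B,\Mm,t)\ge -(1-t)$. Suppose not: there is an $\Ff$-invariant prime divisor $E_0$ over $X$ with $-1\le a_0:=a(E_0,\Ff,B,\Mm,t)<-(1-t)$. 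In particular $t>0$, and $b:=-(a_0+(1-t))$ is a positive real number.

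Next I would pass to a convenient model: choose a birational morphism $h\colon Y\to X$ extracting $E_0$, to which $\Mm$ descends, and such that $\Ff_Y:=h^{-1}\Ff$ is log smooth with $\Supp B_Y\cup\Exc(h)$ a simple normal crossing divisor adapted to the toroidal structure of $\Ff_Y$; such $Y$ exists by passing to a foliated log resolution. Here $B_Y$ is the unique $\Rr$-divisor with $K_{(Y,\Ff_Y,B_Y,\Mm,t)}=h^{\ast}K_{(X,\Ff,B,\Mm,t)}$, so that discrepancies are unchanged under this crepant pullback and it is enough to reach a contradiction over $Y$. Writing $S:=E_0\subset Y$, the divisor $S$ is $\Ff_Y$-invariant and $\mult_S B_Y=-a_0=b+(1-t)$.

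Then I would iterate blow-ups. Set $Y_0:=Y$, $S_0:=S$; let $p_1\colon Y_1\to Y_0$ be the blow-up of an $\Ff_{Y_0}$-invariant codimension-two subvariety of $S_0$, with exceptional divisor $E_1$ and $S_1$ the strict transform of $S_0$, and for $k\ge 1$ let $p_{k+1}\colon Y_{k+1}\to Y_k$ be the blow-up of a component of the codimension-two stratum $E_k\cap S_k$, with exceptional divisor $E_{k+1}$ and $S_{k+1}$ the strict transform of $S_k$. Since these centres are strata of invariant divisors, each $E_k$ is $\Ff_{Y_k}$-invariant with $a(E_k,\Ff)=0$, has $K_{Y_{k-1}}$-discrepancy $1$ (a blow-up of a smooth codimension-two centre on a smooth variety), and $\Mm$ keeps descending. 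Matching the $tK_{\Ff}$, $(1-t)K_X$, $B$ and $\Mm_X$ contributions in the crepant identity at each step gives $\mult_{S_k}B_{Y_k}=b+(1-t)$ for all $k$ and, inductively,
$$\mult_{E_{k+1}}B_{Y_{k+1}}\ \ge\ \mult_{E_k}B_{Y_k}+\mult_{S_k}B_{Y_k}-(1-t)\ \ge\ (k+1)b ,$$
where the first inequality uses $a(E_{k+1},\Ff)=0$ together with $K_X$-discrepancy $1$, and the second uses the inductive bound $\mult_{E_k}B_{Y_k}\ge kb$ and $\mult_{S_k}B_{Y_k}=b+(1-t)$. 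Thus $a(E_k,\Ff,B,\Mm,t)\le -kb$ for all $k\ge 1$, and since $b>0$, any integer $k>1/b$ yields an $\Ff$-invariant prime divisor over $X$ with discrepancy $<-1$, contradicting the hypothesis.

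The main difficulty is not the final arithmetic, which is routine, but the geometric bookkeeping of the middle two steps: arranging $\Ff_Y$ to be log smooth near $E_0$ and along all the subsequent centres, and verifying that every exceptional divisor produced is $\Ff$-invariant with vanishing foliated discrepancy, so that the $tK_{\Ff}$ term contributes nothing and the estimate closes. This is exactly where algebraic integrability of $\Ff$ enters, through the existence of foliated log resolutions and the behaviour of $K_{\Ff}$ under blow-ups of strata of toroidal foliated pairs as developed in \cite{ACSS21,CHLX23}; an alternative argument passing through adjunction along the invariant component $S$ only becomes available later in the paper.
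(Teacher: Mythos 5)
Your overall strategy coincides with the paper's: reduce to $\Ff$-invariant divisors (for non-invariant $E$ the hypothesis is already the sub-lc bound) and then drive the discrepancy of an offending invariant divisor below $-1$ by a tower of blow-ups; the paper compresses the second step into a citation of \cite[Remark 2.3]{CS21} after passing to an ordinary resolution on which $E$ is a divisor. However, two points in your execution do not hold as written. First, your initial step assumes the existence of an $\Ff_{Y_0}$-invariant codimension-two subvariety of $S_0$ whose blow-up has foliated discrepancy $0$; this can fail outright. For instance, for the rank-one foliation induced by a smooth fibration over a curve, with $S_0$ a smooth fibre, $S_0$ contains no $\Ff$-invariant subvariety of codimension two at all (an invariant point of a rank-one foliation is a singular point, and there are none). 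The repair is standard: blow up any smooth codimension-two centre inside $S_0$, accept $a(E_1,\Ff)\leq 1$, so $\mult_{E_1}B_{Y_1}\geq \beta-1=b-t$; the later centres, components of $E_k\cap S_k$, are intersections of two invariant divisors, and there the gain of $b$ per step still forces $\mult_{E_k}B_{Y_k}\to+\infty$. Second, the assertion that every exceptional divisor over a stratum of invariant divisors is invariant with $a(\cdot,\Ff)=0$ is not a formal consequence of invariance of the centre: the configuration generated by $x_1\partial_{x_1}+x_2\partial_{x_2}$, in which both $\{x_1=0\}$ and $\{x_2=0\}$ are invariant, gives a dicritical blow-up with $a(E,\Ff)=-1$. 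So this claim genuinely needs the local (toroidal, non-dicritical) control you only gesture at; it is precisely the computation the paper outsources to \cite[Remark 2.3]{CS21}.

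A further mismatch with the statement: the lemma does not assume $\Ff$ algebraically integrable, yet your argument passes through a foliated log resolution, which is only available (via \cite[Lemma 6.2.4]{CHLX23}) in the algebraically integrable case. This restriction is unnecessary: as in the paper, an ordinary resolution of $X$ on which $E_0$ appears as a divisor suffices, since the blow-up computation is local at the generic points of the chosen centres and does not require foliated log smoothness. With these corrections your argument closes and is essentially the paper's proof with the cited computation written out.
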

\begin{proof}
    Suppose not. Then there exists a prime divisor $E$ over $X$ such that 
     $$a(E,\Ff,B,\Mm,t)<-1+t(1-\epsilon_{\Ff}(E)).$$
Since $a(E,\Ff,B,\Mm,t)\geq -1$, $E$ is $\Ff$-invariant.
Possibly replacing $X$ with a resolution we may assume that $X$ is smooth and $E$ is on $X$. Now we may repeatedly blow-up points on $E$ as in \cite[Remark 2.3]{CS21} and obtain a divisor with discrepancy $<-1$ with respect to $(X,\Ff,B,\Mm,t)$. This contradicts our assumption.
\end{proof}

\begin{defn}
    A generalized (sub-)pair $(X,B,\Mm)/U$ is called \emph{(sub-)qdlt} if for any lc center $V$ of $(X,B,\Mm)$ with generic point $\eta_V$, $\Mm$ descends near $\eta_V$ and there exist components $S_1,\dots,S_{\dim X-\dim V}$ of $\lfloor B\rfloor$ that contain $\eta_V$ and are $\Qq$-Cartier near $\eta_V$, i.e. $(X,B)$ is log toroidal near $\eta_V$.
\end{defn}

\begin{defn}[Nklt locus]\label{defn: nklt locus}
    Let $(X,\Ff,B,\Mm,t)/U$ be a sub-adjoint foliated structure such that $t<1$. 
    \begin{enumerate}
        \item  An \emph{nklt place} (resp. \emph{lc place}, \emph{nlc place}) of $(X,\Ff,B,\Mm,t)$ is a prime divisor $E$ over $X$ such that 
        $$a(E,X,\Ff,B,\Mm,t)\leq\text{(resp. }=, <\text{)} -t\epsilon_{\Ff}(E)-(1-t).$$
        \item   An \emph{nklt center} (resp. \emph{nlc center} of $(X,\Ff,B,\Mm,t)$ is the center of an nklt place (resp. nlc place) of $(X,\Ff,B,\Mm,t)$ on $X$.
        \item The \emph{nklt locus (resp. \emph{nlc locus}}) of $(X,\Ff,B,\Mm,t)$ is the union of all its nklt centers (resp. nlc centers) associated with a reduced scheme structure.
        \item  An \emph{lc center} $(X,\Ff,B,\Mm,t)$ is an nklt center of $(X,\Ff,B,\Mm,t)$ that is not an nlc center of $(X,\Ff,B,\Mm,t)$ 
    \end{enumerate}
    When $t=1$ the corresponding definitions are slightly different. We refer the reader to \cite[Definition 3.4.5]{CHLX23} for the corresponding definitions.
\end{defn}

\begin{defn}[Potentially klt]\label{defn: potentially klt}
Let $X$ be a normal quasi-projective variety. We say that $X$ is \emph{potentially klt} if $(X,\Delta)$ is klt for some $\Rr$-divisor $\Delta\geq 0$. 
\end{defn}

\subsection{Special algebraically integrable adjoint foliated structures}

\begin{defn}[{cf. \cite[3.2 Log canonical foliated pairs]{ACSS21}, \cite[Definition 6.2.1]{CHLX23}}]\label{defn: foliated log smooth}
Let $(X,\Ff,B,\Mm,t)/U$ be an algebraically integrable adjoint foliated structure. We say that $(X,\Ff,B,\Mm,t)$ is \emph{foliated log smooth} if there exists a contraction $f: X\rightarrow Z$ satisfying the following.
\begin{enumerate}
  \item $X$ has at most quotient toric singularities.
  \item $\Ff$ is induced by $f$.
  \item $(X,\Sigma_X)$ is toroidal for some reduced divisor $\Sigma_X$ such that $\Supp B\subset\Sigma_X$.  In particular, $(X,\Supp B)$ is toroidal, and $X$ is $\Qq$-factorial klt.
  \item There exists a log smooth pair $(Z,\Sigma_Z)$ such that $$f: (X,\Sigma_X)\rightarrow (Z,\Sigma_Z)$$ is an equidimensional toroidal contraction.
  \item $\Mm$ descends to $X$.
\end{enumerate}
We say that $f: (X,\Sigma_X)\rightarrow (Z,\Sigma_Z)$ is \emph{associated with} $(X,\Ff,B,\Mm,t)$. It is important to remark that $f$ may not be a contraction$/U$. In particular, $\Mm$ may not be nef$/Z$.

Note that the definition of foliated log smooth has nothing to do with $t$. In other words, as long as $(X,\Ff,B,\Mm)$ is foliated log smooth, $(X,\Ff,B,\Mm,t)$ is foliated log smooth.
\end{defn}

\begin{defn}[Foliated log resolutions]\label{defn: log resolution}
Let $X$ be a normal quasi-projective variety, $B$ an $\Rr$-divisor on $X$, $\Ff$ an algebraically integrable foliation on $X$, $t\in [0,1]$ a real number, $X\rightarrow U$ a projective morphism, and $\Mm$ a nef$/U$ $\bb$-divisor on $X$. A \emph{foliated log resolution} of $(X,\Ff,B,\Mm,t)$ is a birational morphism $h: X'\rightarrow X$ such that 
$$(X',\Ff':=h^{-1}\Ff,B':=h^{-1}_\ast B+\Exc(h),\Mm,t)$$ 
is foliated log smooth. By \cite[Lemma 6.2.4]{CHLX23}, foliated log resolution for $(X,\Ff,B,\Mm,t)$ always exists.
\end{defn}

\begin{lem}\label{lem: foliated log smooth imply lc}
Let $(X,\Ff,B,\Mm,t)/U$ be a sub-adjoint foliated structure such that $(X,\Ff,B,\Mm)$ is foliated log smooth. Then $(X,\Ff,\Supp B^{\ninv}+(1-t)\Supp B^{\inv},\Mm,t)$ is lc.
\end{lem}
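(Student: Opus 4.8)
The plan is to split the adjoint canonical divisor into a $K_{\Ff}$-part and a $K_X$-part, bound the discrepancy contribution of each separately, and then recombine. Write $\Delta:=\Supp B^{\ninv}+(1-t)\Supp B^{\inv}$. Since $X$ has quotient toric singularities it is $\Qq$-factorial, so $K_X$, $K_{\Ff}$ and all components of $B$ are $\Qq$-Cartier, and a direct expansion gives the identity
\[
tK_{\Ff}+(1-t)K_X+\Delta\;=\;t\bigl(K_{\Ff}+\Supp B^{\ninv}\bigr)+(1-t)\bigl(K_X+\Supp B\bigr).
\]

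First I would remove $\Mm$ from the picture: foliated log smoothness forces $\Mm$ to descend to $X$, so $\Mm_{X'}=h^{\ast}\Mm_X$ for every birational morphism $h\colon X'\to X$, and therefore $\Mm$ does not affect any discrepancy. Then, given such an $h$ and writing $\Ff':=h^{-1}\Ff$, pulling back the displayed identity term by term shows that for every prime divisor $E$ over $X$
\[
a(E,\Ff,\Delta,\Mm,t)\;=\;t\,a(E,\Ff,\Supp B^{\ninv})+(1-t)\,a(E,X,\Supp B),
\]
where the first term is the foliated discrepancy and the second the usual pair discrepancy. This linear decomposition is legitimate precisely because the weights $t$ and $1-t$ attached to the two pieces sum to $1$, so the pulled-back boundaries add up correctly.

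It remains to bound the two terms. Because $\Supp B\subset\Sigma_X$ and passing to (non-)invariant parts preserves $\le$, we have $\Supp B^{\ninv}\le\Sigma_X^{\ninv}$ and $\Supp B\le\Sigma_X$; since discrepancies are non-increasing in the boundary (using $\Qq$-factoriality of $X$), this gives $a(E,\Ff,\Supp B^{\ninv})\ge a(E,\Ff,\Sigma_X^{\ninv})$ and $a(E,X,\Supp B)\ge a(E,X,\Sigma_X)$. Now $a(E,\Ff,\Sigma_X^{\ninv})\ge -\epsilon_{\Ff}(E)$ because the foliated triple $(X,\Ff,\Sigma_X^{\ninv})$ is lc: this is the statement that a foliated log smooth foliated triple is foliated log canonical, applied to the equidimensional toroidal contraction $f\colon(X,\Sigma_X)\to(Z,\Sigma_Z)$ inducing $\Ff$, see \cite[Section~3.2]{ACSS21} and \cite{CHLX23}. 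And $a(E,X,\Sigma_X)\ge -1$ because $(X,\Sigma_X)$ is toroidal with $X$ having quotient toric singularities, hence is log canonical. Substituting,
\[
a(E,\Ff,\Delta,\Mm,t)\;\ge\; t\bigl(-\epsilon_{\Ff}(E)\bigr)+(1-t)(-1)\;=\;-t\epsilon_{\Ff}(E)-(1-t),
\]
which is exactly the inequality in Definition~\ref{defn: sing of afs} required for $(X,\Ff,\Delta,\Mm,t)$ to be lc.

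The proof is essentially bookkeeping, so I do not expect a serious obstacle; the one genuinely external ingredient is that a foliated-log-smooth foliated triple $(X,\Ff,\Sigma_X^{\ninv})$ is log canonical (this is where the equidimensional toroidal structure is actually used), and the one point I would take care to verify explicitly is the linear decomposition of discrepancies in $t$, obtained by pulling back the displayed identity.
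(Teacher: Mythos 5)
Your proof is correct and follows essentially the same route as the paper: split $tK_{\Ff}+(1-t)K_X+\Supp B^{\ninv}+(1-t)\Supp B^{\inv}$ as the convex combination $t(K_{\Ff}+\Supp B^{\ninv})+(1-t)(K_X+\Supp B)$, note that the foliated triple piece is lc by foliated log smoothness (the paper cites \cite[Lemma 6.2.2]{CHLX23} for exactly this) and the pair piece is lc because $(X,\Sigma_X)$ is toroidal, then combine the discrepancy bounds. The only difference is that you make explicit the linear decomposition of discrepancies (and the triviality of the $\Mm$-contribution since $\Mm$ descends), which the paper's final ``Thus'' leaves implicit.
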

\begin{proof}
There exists an equidimensional toroidal contraction $f: (X,\Sigma_X)\rightarrow (Z,\Sigma_Z)$ such that $\Supp B\subset\Sigma_X$. Moreover, $\Mm$ descends to $X$. By definition, $(X,\Sigma_X)$ is lc, so $(X,\Sigma_X,\Mm)$ is lc, and so $(X,\Supp B^{\ninv}+\Supp B^{\inv},\Mm)$ is lc. By \cite[Lemma 6.2.2]{CHLX23}, $(X,\Ff,\Supp B^{\ninv},\Mm)$ is lc. Thus $(X,\Ff,\Supp B^{\ninv}+(1-t)\Supp B^{\inv},\Mm,t)$ is lc.
\end{proof}

\begin{defn}[Property $(\ast )$ foliations, {\cite[Definition 3.8]{ACSS21}, \cite[Definition 7.2.2]{CHLX23}}]\label{defn: foliation property *}
Let $(X,\Ff,B,\Mm)/U$ be a generalized foliated quadruple, $G\geq 0$ be a reduced divisor on $X$, and $f: X\rightarrow Z$ a contraction. We say that $(X,\Ff,B,\Mm;G)/Z$ satisfies \emph{Property $(\ast )$} if the following conditions hold.
\begin{enumerate}
\item $\Ff$ is induced by $f$ and $G$ is an $\Ff$-invariant divisor.
\item $f(G)$ is of pure codimension $1$, $(Z,f(G))$ is log smooth, and $G=f^{-1}(f(G))$.
\item For any closed point $z\in Z$ and any reduced divisor  $\Sigma\ge f(G)$ on $Z$ such that  $(Z,\Sigma)$ is log smooth near $z$, $(X,B+G+f^\ast (\Sigma-f(G)),\Mm)$ is lc over a neighborhood of $z$.
\end{enumerate}
We say that $f$, $Z$, and $G$ are \emph{associated} with $(X,\Ff,B,\Mm)$. 

Note that Property $(\ast )$ implies that the foliation is algebraically integrable. 
\end{defn}

\begin{prop}[{\cite[Proposition 3.6]{ACSS21}, \cite[Proposition 7.3.6]{CHLX23}}]\label{prop: weak cbf gfq}
Let $(X,\Ff,B,\Mm)$ be a generalized foliated quadruple. Let  $G\geq 0$ be a reduced divisor on $X$ and $f: X\rightarrow Z$ an equidimensional contraction, such that $(X,\Ff,B,\Mm;G)/Z$ satisfies Property $(\ast )$ and $B$ is horizontal$/Z$. Then
$$K_{\Ff}+B\sim_{Z}K_X+B+G.$$
\end{prop}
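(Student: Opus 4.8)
The plan is to observe that $K_\Ff+B\sim_Z K_X+B+G$ is equivalent to $K_\Ff\sim_Z K_X+G$ (the divisor $B$ being common to both sides), and then to compute the difference $K_\Ff-K_X-G$ explicitly. The crucial input is a comparison of $K_\Ff$ with the relative canonical divisor $K_{X/Z}:=K_X-f^\ast K_Z$; here $Z$ is smooth because $(Z,f(G))$ is log smooth. Since $\Ff$ is induced by $f$, the sheaf $\Ff$ is the saturation in $T_X$ of $\ker(df\colon T_X\to f^\ast T_Z)$, and at every codimension-one point of $X$ (where $X$ is regular and $f$ is flat) this kernel is already saturated, its quotient embedding into the locally free sheaf $f^\ast T_Z$. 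Taking determinants of $0\to\Ff\to T_X\to\Ima(df)\to 0$ over the smooth locus and comparing $\Ima(df)$ with $f^\ast T_Z$, then extending by reflexivity, yields
$$K_\Ff\sim K_{X/Z}-R,$$
where $R\ge 0$ is the ramification divisor of $f$, i.e.\ the divisorial part of the locus where $df$ fails to be surjective. Granting this,
$$K_\Ff-K_X-G\;\sim\;K_{X/Z}-R-K_X-G\;=\;-f^\ast K_Z-(R+G),$$
so it suffices to prove that $R+G$ is the pullback of an $\Rr$-divisor from $Z$; in fact I expect $R+G=f^\ast(f(G))$, which immediately gives the claim.

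To understand $R$ I would compute at the generic point $\eta_D$ of each $f$-vertical prime divisor $D$, lying over a prime divisor $P\subseteq Z$. There $X$ and $Z$ are regular, and the induced extension of discrete valuation rings $\mathcal{O}_{Z,\eta_P}\subseteq\mathcal{O}_{X,\eta_D}$ has ramification index $m_D:=\mult_D f^\ast P\ge 1$. Working in characteristic zero the extension is tame, and a short coordinate computation (choose local coordinates $(s,w_1,\dots)$ on $Z$ with $P=\{s=0\}$ and note that $df$ can drop rank only in the direction transverse to $P$) shows $\mult_D R=m_D-1$. In particular, $f$ is smooth at $\eta_D$ exactly when $m_D=1$, so $\Supp R$ consists of the vertical prime divisors $D$ with $m_D\ge 2$.

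The next step is to use Property $(\ast)$ to locate the multiple fibres. Let $P\subseteq Z$ be a prime divisor with $P\not\subseteq f(G)$, let $z$ be a general point of $P$, and put $\Sigma:=f(G)+P$. Near $z$ one has $\Sigma=P$ with $Z$ and $P$ smooth, so $(Z,\Sigma)$ is log smooth near $z$, and Property $(\ast)$ gives that $(X,B+G+f^\ast P,\Mm)$ is lc over a neighbourhood of $z$. Evaluating this at a prime divisor $D$ over $P$ --- where $\mult_D B=0$ since $B$ is horizontal$/Z$, $\mult_D G=0$ since $G$ is supported over $f(G)$, and $\Mm$ does not lower the coefficient bound for divisors lying on $X$ --- forces $m_D=\mult_D(B+G+f^\ast P)\le 1$, hence $m_D=1$. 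Therefore $R$ is supported over $f(G)$. Finally, for each prime divisor $P\subseteq f(G)$ and each vertical prime $D$ over $P$ we have $\mult_D G=1$ (as $G=f^{-1}(f(G))$ is reduced and $D\subseteq G$), so
$$\mult_D(R+G)=(m_D-1)+1=m_D=\mult_D f^\ast P,$$
and hence $R+G=\sum_{P\subseteq f(G)}f^\ast P=f^\ast(f(G))$. Plugging this into the displayed difference gives $K_\Ff-K_X-G\sim -f^\ast(K_Z+f(G))\sim_Z 0$, i.e.\ $K_\Ff+B\sim_Z K_X+B+G$.

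The step I expect to be the main obstacle is the relation $K_\Ff\sim K_{X/Z}-R$ together with the precise value $\mult_D R=m_D-1$: one has to check that $\ker(df)$ is saturated in codimension one (so that $\Ff$ really equals it and not a strictly larger sheaf), to carry out the determinant computation across the non-smooth locus of $f$, and --- since $X$ is only normal --- to verify that all the relevant data is visible at codimension-one points of $X$, where $X$ is regular. Once these standard facts on algebraically integrable foliations are available, the remainder is bookkeeping; the one point that genuinely matters is that the ramification over $f(G)$ together with the reduced divisor $G$ reassembles $f^\ast(f(G))$ exactly, which is precisely where one needs $\mult_D R=m_D-1$ rather than merely the location of $R$.
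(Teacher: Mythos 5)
Your argument is correct: the identity $K_{\Ff}\sim K_{X/Z}-\sum_{P}\bigl(f^\ast P-f^{-1}(P)_{\mathrm{red}}\bigr)$ for a foliation induced by an equidimensional fibration, combined with Property $(\ast)$(3) applied with $\Sigma=f(G)+P$ to rule out non-reduced fibres in codimension one away from $f(G)$, so that $R+G=f^\ast(f(G))$ and the difference becomes $-f^\ast(K_Z+f(G))$, is exactly the proof given in the cited sources \cite[Proposition 3.6]{ACSS21}, \cite[Proposition 7.3.6]{CHLX23}. The present paper does not reprove the statement but simply quotes it, so your write-up matches the paper's (referenced) proof in both structure and key inputs, with the only caveat being that the relation $K_{\Ff}\sim K_{X/Z}-R$ with $\mult_D R=m_D-1$ is itself a known result you may cite rather than rederive.
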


\begin{defn}[ACSS, {cf. \cite[Definitions 5.4.2, 7.2.2, 7.2.3]{CHLX23}}]\label{defn: ACSS f-triple}
Let $(X,\Ff,B,\Mm)$ be an lc foliated triple, $G\geq 0$ a reduced divisor on $X$, and $f: X\rightarrow Z$ a contraction. We say that $(X,\Ff,B,\Mm;G)/Z$ is \emph{ACSS} if the following conditions hold:
\begin{enumerate}    
\item $(X,\Ff,B,\Mm;G)/Z$ satisfies Property $(\ast )$.
\item $f$ is equidimensional.
\item There exists an $\Rr$-Cartier $\Rr$-divisor $D\geq 0$ on $X$ and a nef$/X$ $\bb$-divisor $\Nn$ on $X$, such that  $\Supp\{B\}\subset\Supp D$, $\Nn-\alpha\Mm$ is nef$/X$ for some $\alpha>1$, and for any reduced divisor $\Sigma\geq f(G)$ such that $(Z,\Sigma)$ is log smooth, $$(X,B+D+G+f^\ast (\Sigma-f(G)),\Nn)$$ 
      is qdlt (cf. \cite[Definition 7.1.1]{CHLX23}, \cite[Definition 35]{dFKX17}).
\item For any lc center of $(X,\Ff,B,\Mm)$ with generic point $\eta$, over a neighborhood of $\eta$,
    \begin{enumerate}
      \item $\eta$ is the generic point of an lc center of $(X,\Ff,\lfloor B\rfloor)$, and
       \item $f: (X,B+G)\rightarrow (Z,f(G))$ is a toroidal morphism.
    \end{enumerate}
\end{enumerate}
If $(X,\Ff,B,\Mm;G)/Z$ is ACSS, then we say that $(X,\Ff,B,\Mm)/Z$ and $(X,\Ff,B,\Mm)$ are ACSS.
\end{defn}

\subsection{ACSS modifications}

\begin{defn}[ACSS modifications]\label{defn: simple model}
    Let $(X,\Ff,B,\Mm)$ and $(X',\Ff',B',\Mm)$ be two algebraically integrable generalized foliated quadruples and $h: X'\rightarrow X$ a birational morphism. Let $f: X'\rightarrow Z$ be a contraction and $G$ a reduced divisor on $X'$. We say that $h: (X',\Ff',B',\Mm;G)/Z\rightarrow (X,\Ff,B,\Mm)$ is an \emph{ACSS modification} if the following conditions hold.
    \begin{enumerate}
        \item $\Ff':=h^{-1}\Ff$ and $B':=h^{-1}_\ast (B^{\ninv}\wedge\Supp B^{\ninv})+\Supp\Exc(h)^{\ninv}$.
        \item $(X',\Ff',B',\Mm;G)/Z$ is ACSS.
        \item  $a(E,\Ff,B,\Mm)\leq-\epsilon_{\Ff}(E)$ for any $h$-exceptional prime divisor $E$.
        \item If $K_{\Ff}+B^{\ninv}+\Mm_X$ is $\Rr$-Cartier, then $a(E,\Ff,B^{\ninv},\Mm)\leq-\epsilon_{\Ff}(E)$ for any $h$-exceptional prime divisor $E$.
    \end{enumerate}
 We say that $(X',\Ff',B',\Mm;G)/Z$, $(X',\Ff',B',\Mm)/Z$, and $(X',\Ff',B',\Mm)$ are ACSS models of $(X,\Ff,B)$.     We say that $h$ is an ACSS modification of $(X,\Ff,B)$. Moreover, we say that $h$ is
\begin{itemize}
\item \emph{$\Qq$-factorial} if $X'$ is $\Qq$-factorial, and
\item \emph{proper} if $\Supp\Exc(h)^{\inv}\cup\Supp h^{-1}_\ast B^{\inv}$ is contained in $\Supp G$.
\end{itemize}
Note that this definition here has slight differences with \cite{CHLX23,LMX24b} because the concept of ``proper" in \cite{CHLX23,LMX24b} only requires $\Supp\Exc(h)^{\inv}$ to be contained in $\Supp G$ but do not have any requirement on $\Supp h^{-1}_\ast B^{\inv}$.
\end{defn}

\begin{thm}[{cf. \cite[Theorem 8.2.2]{CHLX23}, \cite[Theorem 3.10]{ACSS21}}]\label{thm: eo acss model}
    Let $(X,\Ff,B,\Mm)$ be an algebraically integrable generalized foliated quadruple. Then there exists an ACSS modification $h: (X',\Ff',B',\Mm;G)/Z\rightarrow (X,\Ff,B,\Mm)$ that is $\Qq$-factorial and proper.
\end{thm}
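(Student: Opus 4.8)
\textbf{Proof proposal for Theorem \ref{thm: eo acss model}.}

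The plan is to reduce the statement to the already-known existence of $\Qq$-factorial proper ACSS modifications in \cite{CHLX23, LMX24b}, and then upgrade the output so that it also satisfies the slightly stronger ``proper'' condition of Definition \ref{defn: simple model} (which additionally requires $\Supp h^{-1}_\ast B^{\inv}$ to be contained in $\Supp G$). First I would invoke \cite[Theorem 8.2.2]{CHLX23} (or \cite[Theorem 3.10]{ACSS21} in the non-generalized case) to obtain a birational morphism $h_0\colon (X_0,\Ff_0,B_0,\Mm;G_0)/Z_0\to (X,\Ff,B,\Mm)$ that is $\Qq$-factorial and proper in the sense of \cite{CHLX23}; in particular $\Supp\Exc(h_0)^{\inv}\subset\Supp G_0$, conditions (2)--(4) of Definition \ref{defn: simple model} hold, and $B_0=(h_0)^{-1}_\ast(B^{\ninv}\wedge\Supp B^{\ninv})+\Supp\Exc(h_0)^{\ninv}$. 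The only possible defect is that the strict transform $(h_0)^{-1}_\ast B^{\inv}$ might not be supported in $G_0$.

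Next I would arrange that no component of $B^{\inv}$ survives on $X_0$ with a ``wrong'' position relative to $G_0$. The key observation is that $B^{\inv}$ does not enter the divisor $B_0$ at all (only $B^{\ninv}$ does, via the first summand), so the components of $(h_0)^{-1}_\ast B^{\inv}$ are genuinely ``free'' divisors on $X_0$; we are free to blow them up. Concretely, I would take a further foliated log resolution $h_1\colon X'\to X_0$ of $(X_0,\Ff_0, B_0 + (h_0)^{-1}_\ast B^{\inv} + G_0,\Mm, t)$, so that $(X',\Ff':=h_1^{-1}\Ff_0, \ldots)$ is foliated log smooth over an equidimensional toroidal base $(Z',\Sigma_{Z'})\to$ refining $Z_0$, with all the relevant divisors becoming toroidal strata. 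Since Property $(\ast )$ and the ACSS conditions are stable under passing to such foliated log smooth refinements (this is the content of the construction in \cite[Section 7]{CHLX23}), the composite $h:=h_0\circ h_1$ still satisfies (1)--(4) after we set $G:=$ the reduced divisor consisting of $\Exc(h)^{\inv}$ together with $h^{-1}_\ast B^{\inv}$ and the $\Ff'$-invariant components of $f'^{-1}(\Sigma_{Z'})$; because $f'\colon X'\to Z'$ is equidimensional and $B^{\inv}$ is (by $\Ff$-invariance) vertical$/Z'$, its strict transform is contained in $f'^{-1}(f'(G))=G$. This yields the ``proper'' condition in the sense of Definition \ref{defn: simple model}, and $\Qq$-factoriality is preserved since $X'$ has quotient toric singularities.

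The main technical point to check carefully is that conditions (3) and (4) of Definition \ref{defn: simple model} — the inequalities $a(E,\Ff,B,\Mm)\leq-\epsilon_{\Ff}(E)$ and $a(E,\Ff,B^{\ninv},\Mm)\leq-\epsilon_{\Ff}(E)$ for $h$-exceptional $E$ — continue to hold for the new exceptional divisors introduced by $h_1$. For the non-invariant exceptional divisors this is automatic from the foliated log smooth structure (they appear in $B'$ with coefficient $1=\epsilon_{\Ff}(E)$ after the first summand of (1), since $\Supp B'\supset\Supp\Exc(h)^{\ninv}$). For the invariant exceptional divisors, $\epsilon_{\Ff}(E)=0$, and one must verify $a(E,\Ff,B,\Mm)\leq 0$; this follows because $E$ is exceptional over $X_0$ and lies over $\Supp\Exc(h_0)^{\inv}\cup\Supp (h_0)^{-1}_\ast B^{\inv}$, both of which are non-klt (indeed, $\Ff$-invariant lc or worse) loci of $(X,\Ff,B,\Mm)$, so repeated blow-ups as in \cite[Remark 2.3]{CS21} keep the discrepancy $\leq 0$. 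I expect this discrepancy bookkeeping for the invariant divisors — together with making sure $\Mm$ still descends and the $\bb$-divisor $\Nn$ in the ACSS condition can be chosen compatibly on $X'$ — to be the only real obstacle; everything else is a routine application of the toroidalization machinery already set up in \cite{CHLX23}.
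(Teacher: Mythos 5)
Your reduction breaks at the second step, the further foliated log resolution $h_1$ meant to force $h^{-1}_\ast B^{\inv}$ into $G$. The definition of an ACSS modification constrains which divisors $h$ may extract: every $h$-exceptional prime divisor $E$ must satisfy $a(E,\Ff,B,\Mm)\leq-\epsilon_{\Ff}(E)$, and $a(E,\Ff,B^{\ninv},\Mm)\leq-\epsilon_{\Ff}(E)$ when $K_\Ff+B^{\ninv}+\Mm_X$ is $\Rr$-Cartier. A log resolution gives you no control of this kind, and your claim that invariant exceptional divisors lying over $\Supp\Exc(h_0)^{\inv}\cup\Supp(h_0)^{-1}_\ast B^{\inv}$ automatically have discrepancy $\leq 0$ is false: lying over an $\Ff$-invariant divisor gives no discrepancy credit in the foliated setting. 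Concretely, blow up a general point of an invariant component $S$ of $B$ with $\mult_S B=b$: the exceptional divisor $E$ is invariant with $a(E,\Ff,B,\Mm)=1-b$, which is $>0$ whenever $b<1$, violating condition (3); and even if $b=1$, condition (4) fails because $B^{\ninv}$ does not contain $S$, so $a(E,\Ff,B^{\ninv},\Mm)=1>0$. (The appeal to repeated blow-ups as in \cite[Remark 2.3]{CS21} does not help: that construction produces low-discrepancy divisors by a specific tower of blow-ups along an invariant divisor that is an lc place, not by an arbitrary resolution, and here $S$ need not be an lc place of $(X,\Ff,B,\Mm)$ at all.) Similarly, "appearing in $B'$ with coefficient $1$" does not yield $a(E,\Ff,B,\Mm)\leq-1$ for the new non-invariant exceptional divisors, since conditions (3)--(4) are discrepancies with respect to the original quadruple, not coefficients on the model.

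The missing idea is a contraction step. The paper does not patch the output of \cite[Theorem 8.2.2]{CHLX23}; it redoes the construction: take a foliated log resolution $g\colon W\to X$ of $(X,\Ff,\Supp B)$ — so that the strict transform of $B^{\inv}$ is already among the toroidal divisors and hence lies in the vertical part $G_W$ of $\Sigma_W$, which takes care of the strengthened properness from the start — and then, writing $K_{\Ff_W}+B_W+\Mm_W=g^\ast(K_\Ff+B+\Mm_X)+E^+-E^-$ (and similarly with $B^{\ninv}$), runs a $(K_{\Ff_W}+B_W+\Mm_W)$-MMP over $X$ (\cite[Theorem 9.4.1]{CHLX23}) that contracts $E^+$ and $F^+$. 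It is precisely this relative MMP that eliminates every exceptional divisor whose discrepancy is too large, so that conditions (3)--(4) hold on the resulting model, while \cite[Lemma 9.1.4]{CHLX23} preserves the $\Qq$-factorial ACSS structure and the containment of $\Exc(h)^{\inv}$ and $h^{-1}_\ast B^{\inv}$ in $G$. Your proposal omits any such contraction after $h_1$, so the badly-discrepant divisors you introduce remain on $X'$ and the map is not an ACSS modification; if you do add an MMP step you are essentially forced back to the paper's argument, in which case starting from \cite[Theorem 8.2.2]{CHLX23} buys nothing.
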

\begin{proof}
    Let $g: W\rightarrow X$ be a foliated log resolution of $(X,\Ff,\Supp B)$, $\Ff_W:=g^{-1}\Ff$, and $B_W:=g^{-1}_\ast (B^{\ninv}\wedge\Supp B^{\ninv})+\Supp\Exc(h)^{\ninv}$. Then there exists an equidimensional toroidal contraction $f: (W,\Sigma_W)\rightarrow (Z,\Sigma_Z)$ such that $\Supp g^{-1}_\ast B\cup\Supp\Exc(g)\subset\Supp\Sigma_W$. Let $G_W$ be the vertical$/Z$ part of $\Sigma_W$. 

 By \cite[Lemma 7.3.3]{CHLX23}, $(W,\Ff_W,B_W,\Mm;G_W)/Z$ is $\Qq$-factorial ACSS. Moreover, there are unique $\Rr$-divisors $E^+\geq 0,E^{-}\geq 0$ on $W$, such that $E^+\wedge E^-=0$ and
 $$K_{\Ff_W}+B_W+\Mm_W=h^\ast (K_{\Ff}+B+\Mm_X)+E^+-E^-.$$
 Moreover, if $K_{\Ff}+B^{\ninv}+\Mm_X$ is $\Rr$-Cartier, then there are unique $\Rr$-divisors $F^+\geq 0,F^{-}\geq 0$ on $W$, such that $F^+\wedge F^-=0$ and
 $$K_{\Ff_W}+B_W+\Mm_W=h^\ast (K_{\Ff}+B^{\ninv}+\Mm_X)+F^+-F^-.$$
By our construction, $E^+$ is exceptional$/W$, and $a(D,\Ff,B,\Mm)\leq -\epsilon_{\Ff}(D)$ for any component $D$ of $E^-$.  Moreover, if $K_{\Ff}+B^{\ninv}+\Mm_X$ is $\Rr$-Cartier, then  $F^+$ is exceptional$/W$, and $a(D,\Ff,B^{\ninv},\Mm)\leq -\epsilon_{\Ff}(D)$ for any component $D$ of $E^-$. By \cite[Theorem 9.4.1]{CHLX23}, we may run a $(K_{\Ff_W}+B_W+\Mm_W)$-MMP$/X$ with scaling of an ample divisor, and after finitely many steps, we reach a model $X'$ such that $E^+$ is contracted by the induced birational map $\phi: W\dashrightarrow X'$, and if $K_{\Ff}+B^{\ninv}+\Mm_X$ is $\Rr$-Cartier, then $F^+$ is contracted by $\phi$.

Let $\Ff':=\phi_\ast \Ff_W,B':=\phi_\ast B$, $G:=\phi_\ast G_W$, and $h: X'\rightarrow X$ the induced birational morphism.  By \cite[Lemma 9.1.4]{CHLX23}, $(X',\Ff',B',\Mm;G)/Z$ is $\Qq$-factorial ACSS. Since $\Supp G_W$ contains any $\Ff_W$-invariant component of $\Exc(g)$ and any component of $\Supp g^{-1}_\ast B^{\inv}$, $\Supp G$ contains $\Supp\Exc(h)^{\inv}\cup\Supp h^{-1}_\ast B^{\inv}.$
For any prime divisor $E$ that is extracted by $h$, $\Center_WE$ is a component of $E^-$, so $a(E,\Ff,B,\Mm)\leq -\epsilon_{\Ff}(E)$, and if $K_{\Ff}+B^{\ninv}+\Mm_X$ is $\Rr$-Cartier, then  $a(E,\Ff,B^{\ninv},\Mm)\leq -\epsilon_{\Ff}(E)$. Therefore,  $h: (X',\Ff',B',\Mm;G)/Z\rightarrow (X,\Ff,B,\Mm)$ is a $\Qq$-factorial proper ACSS modification.
\end{proof}

\subsection{Supporting functions}

\begin{defn}[{\cite[Definition 5.3]{Amb03}, \cite[Definition 6.7.2]{Fuj11}}]\label{defn: basics of cone theorem}
Let $(X,\Ff,B,\Mm,t)/U$ be an adjoint foliated structure and let $F$ be an extremal face of $\overline{NE}(X/U)$.
\begin{enumerate}
    \item A \emph{supporting function} of $F$ is a  $\pi$-nef $\Rr$-divisor $H$ such that $F=\overline{NE}(X/U)\cap H^{\bot}$. If $H$ is a $\Qq$-divisor, we say that $H$ is a \emph{rational supporting function}. Since $F$ is an extremal face of $\overline{NE}(X/U)$, $F$ always has a supporting function.
    \item We say that $F$ is \emph{rational} if $F$ has a rational supporting function.
    \item For any $\Rr$-Cartier $\Rr$-divisor $D$ on $X$, we say that $F$ is $D$-\emph{negative} if $$F\cap\overline{NE}(X/U)_{D\geq 0}=\{0\}.$$
    \item We say that $F$ is \emph{relatively ample at infinity with respect to} $(X,\Ff,B,\Mm,t)$  if $$F\cap\overline{NE}(X/U)_{\Nlc(X,\Ff,B,\Mm,t)}=\{0\}.$$
\end{enumerate}
\end{defn}

\subsection{Birational maps in MMP}

\begin{defn}
    Let $X\rightarrow U$ be a projective morphism from a normal quasi-projective variety to a variety.  Let $D$ be an $\Rr$-Cartier $\Rr$-divisor on $X$ and $\phi: X\dashrightarrow X'$ a birational map$/U$. Then we say that $X'$ is a \emph{birational model} of $X$. We say that $\phi$ is $D$-non-positive (resp. $D$-negative, $D$-trivial, $D$-non-negative, $D$-positive) if the following conditions hold:
    \begin{enumerate}
    \item $\phi$ does not extract any divisor.
    \item $D':=\phi_\ast D$ is $\Rr$-Cartier.
    \item There exists a resolution of indeterminacy $p: W\rightarrow X$ and $q: W\rightarrow X'$, such that
    $$p^\ast D=q^\ast D'+F$$
    where $F\geq 0$ (resp. $F\geq 0$ and $\Supp p_\ast F=\Exc(\phi)$, $F=0$, $0\geq F$, $0\geq F$ and $\Supp p_\ast F=\Exc(\phi)$).
    \end{enumerate}
\end{defn}

\begin{defn} Let $X\rightarrow U$ be a projective morphism from a normal quasi-projective variety to a variety. Let $D$ be an $\Rr$-Cartier $\Rr$-divisor on $X$, $\phi: X\dashrightarrow X'$ a $D$-negative map$/U$ and $D':=\phi_\ast D$. We say that $X'$ is a \emph{minimal model}$/U$ (resp. \emph{good minimal model}$/U$) of $D$ if $D'$ is nef$/U$ (resp. semi-ample$/U$).
\end{defn}

\begin{defn}[Minimal models]\label{defn: minimal model}
    Let $(X,\Ff,B,\Mm,t)/U$ and $(X',\Ff',B',\Mm,t)/U$  be two adjoint foliated structures associated a birational map$/U$ $\phi: (X,\Ff,B,\Mm,t)\dashrightarrow (X',\Ff',B',\Mm,t)$. Let $K:=K_{(X,\Ff,B,\Mm,t)}$ and $K':=K_{(X',\Ff',B',\Mm,t)}$. We say that $(X',\Ff',B',\Mm,t)/U$ is a \emph{weak lc model} (resp. \emph{minimal model}) of $(X,\Ff,B,\Mm,t)/U$ if $\phi$ is $K$-non-positive (resp. $K$-negative) and $K'$ is nef$/U$. We say that  $(X',\Ff',B',\Mm,t)/U$ is a \emph{semi-ample model} (resp. \emph{good minimal model}) of $(X,\Ff,B,\Mm,t)/U$ if $(X',\Ff',B',\Mm,t)/U$ is a weak lc model (resp. minimal model) of $(X,\Ff,B,\Mm,t)/U$ and $K'$ is semi-ample$/U$.
\end{defn}

\subsection{Relative Nakayama-Zariski decompositions}

\begin{defn}
    Let $\pi: X\rightarrow U$ be a projective morphism from a normal quasi-projective variety to a quasi-projective variety, $D$ a pseudo-effective$/U$ $\Rr$-Cartier $\Rr$-divisor on $X$, and $P$ a prime divisor on $X$. We define $\sigma_{P}(X/U,D)$ as in \cite[Definition 3.1]{LX23} by considering $\sigma_{P}(X/U,D)$ as a number in  $[0,+\infty)\cup\{+\infty\}$. We define $N_{\sigma}(X/U,D)=\sum_Q\sigma_Q(X/U,D)Q$
    where the sum runs through all prime divisors on $X$ and consider it as a formal sum of divisors with coefficients in $[0,+\infty)\cup\{+\infty\}$. We say that $D$ is \emph{movable$/U$} if $N_{\sigma}(X/U,D)=0$.
\end{defn}

\begin{lem}[{\cite[Lemma 3.5]{LX23}}]\label{lem: finiteness of sigmap}
Let $\pi: X\rightarrow U$ be a projective morphism from a normal quasi-projective variety to a quasi-projective variety and let $D$ be a pseudo-effective$/U$ $\Rr$-Cartier $\Rr$-divisor on $X$. Then $N_{\sigma}(X/U,D)$ has finitely many irreducible components. 
\end{lem}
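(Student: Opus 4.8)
The plan is to reduce, in two steps, to Nakayama's numerical argument for divisorial Zariski decompositions in the relative setting, and then run that argument; throughout I will use only the basic formal properties of $\sigma_P(X/U,-)$ coming from \cite[Definition 3.1]{LX23} (numerical invariance over $U$, subadditivity, $\sigma_P(X/U,D)=\sup_{\varepsilon>0}\sigma_P(X/U,D+\varepsilon A)$ for $A$ ample$/U$, vanishing on nef$/U$ divisors, and $\sigma_P(X/U,E)\le\mult_PE$ for effective $E$).

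First I would reduce to the case where $X$ is $\Qq$-factorial. Picking a resolution of singularities $f\colon X'\to X$, one checks that for a prime divisor $P$ on $X$ with strict transform $P'$ on $X'$ one has $\sigma_P(X/U,D)=\sigma_{P'}(X'/U,f^\ast D)$: the inequality $\le$ follows by pushing forward an effective representative of $f^\ast D+\varepsilon f^\ast A$, and $\ge$ by pulling back an effective representative of $D+\varepsilon A$, both operations preserving the relevant multiplicities and $\Rr$-linear equivalence$/U$. Hence $P\mapsto P'$ injects the set of components of $N_{\sigma}(X/U,D)$ into that of $N_{\sigma}(X'/U,f^\ast D)$, so it is enough to treat $X'$, which is smooth and in particular $\Qq$-factorial. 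Next I would reduce to a big$/U$ divisor. Fix $A$ ample$/U$. Since $\sigma_P(X/U,-)$ is subadditive and vanishes on nef$/U$ divisors, it does not increase upon adding an ample$/U$ divisor. Therefore, if some constant $c$ independent of $\varepsilon\in(0,1]$ bounds the number of components of $N_{\sigma}(X/U,D+\varepsilon A)$, then $N_{\sigma}(X/U,D)$ has at most $c$ components as well, \emph{including} prime divisors $P$ with $\sigma_P(X/U,D)=+\infty$: given $c+1$ distinct prime divisors $\Gamma_0,\dots,\Gamma_c$ with $\sigma_{\Gamma_j}(X/U,D)>0$, choose for each a witnessing $\varepsilon_j\in(0,1]$, set $\varepsilon^\ast:=\min_j\varepsilon_j>0$, and use monotonicity to get $\sigma_{\Gamma_j}(X/U,D+\varepsilon^\ast A)>0$ for all $j$, a contradiction. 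So it suffices to exhibit such a $c$ for big$/U$ divisors, where all values $\sigma_P$ are finite (writing $D\sim_{\Rr,U}A'+E$ with $A'$ ample$/U$ and $E\ge0$ gives $\sigma_P(X/U,D)\le\mult_PE<\infty$).

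For the big case I claim $c=\rho(X/U)$ works, i.e. at most $\rho(X/U)$ prime divisors $P$ satisfy $\sigma_P(X/U,D)>0$. Suppose not, and let $\Gamma_1,\dots,\Gamma_N$ be distinct with $\sigma_i:=\sigma_{\Gamma_i}(X/U,D)\in(0,\infty)$ and $N=\rho(X/U)+1$. Using the self-subtraction rule — for a pseudo-effective$/U$ divisor $D'$ and $0\le s\le\sigma_\Gamma(X/U,D')$, the divisor $D'-s\Gamma$ is pseudo-effective$/U$, $\sigma_\Gamma$ drops by exactly $s$, and $\sigma_{\Gamma''}$ is unchanged for $\Gamma''\ne\Gamma$ — iteration produces a pseudo-effective$/U$ $\Rr$-Cartier divisor $D_N:=D-\sum_{i=1}^N\sigma_i\Gamma_i$ with $\sigma_{\Gamma_i}(X/U,D_N)=0$ for all $i$ (here $\Qq$-factoriality of $X$ is used to ensure $D_N$ is $\Rr$-Cartier). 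Since $N>\rho(X/U)$ there is a nontrivial relation $\sum_ic_i\Gamma_i\equiv_U0$; after a sign change assume some $c_i>0$ and put $\lambda:=\min\{\sigma_i/c_i:c_i>0\}>0$, attained at an index $i_0$. Then $N':=\sum_i(\sigma_i-\lambda c_i)\Gamma_i$ is effective (its $\Gamma_i$-coefficient is $\ge0$ whether $c_i>0$, $=0$, or $<0$) and $\Rr$-Cartier, with $\Gamma_{i_0}\notin\Supp N'$ and $N'\equiv_U\sum_i\sigma_i\Gamma_i$, so $D\equiv_UD_N+N'$. Hence
\[
\sigma_{i_0}=\sigma_{\Gamma_{i_0}}(X/U,D)=\sigma_{\Gamma_{i_0}}(X/U,D_N+N')\le\sigma_{\Gamma_{i_0}}(X/U,D_N)+\sigma_{\Gamma_{i_0}}(X/U,N')\le 0+\mult_{\Gamma_{i_0}}N'=0,
\]
contradicting $\sigma_{i_0}>0$.

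The two points requiring care — the ``hard part'' — are the following. First, $\sigma_P(X/U,-)$ is defined for every prime Weil divisor $P$, but its good formal properties are available only for $\Rr$-Cartier inputs; the reduction to a $\Qq$-factorial model exists precisely so that $D_N$ and $N'$ are $\Rr$-Cartier and the numerical invariance, subadditivity, and self-subtraction rule apply to them. Second, one cannot control the support of $N_{\sigma}(X/U,D+\varepsilon A)$ uniformly as $\varepsilon\to0^+$; what is uniform is only the \emph{number} of components, via the linear independence of their numerical classes over $U$, and this is what legitimizes the passage to the limit. With these two issues handled, everything else is a transcription of Nakayama's theory of divisorial Zariski decompositions to the relative situation $\pi\colon X\to U$, in line with \cite[\S3]{LX23}.
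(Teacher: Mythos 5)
Correct. The paper itself gives no argument for this lemma (it is quoted from [LX23, Lemma 3.5]), and your proof is the standard Nakayama-type argument in the relative setting — reduce to a resolution, then to the big$/U$ case by ample perturbation, then bound the number of primes with $\sigma_P>0$ by $\rho(X/U)$ via a numerical-dependence/linear-relation trick — which is essentially how the cited reference proves it. The one caveat is that the two inputs your contradiction hinges on, namely the self-subtraction rule ($\sigma_{\Gamma''}(X/U,D'-s\Gamma)=\sigma_{\Gamma''}(X/U,D')$ for $\Gamma''\neq\Gamma$ and the drop by exactly $s$ along $\Gamma$) and the invariance of $\sigma_P(X/U,\cdot)$ under numerical equivalence over $U$, are not formal consequences of the definition alone but are themselves established in [LX23] (and in [Nak04] in the absolute case); since you state them explicitly and they are part of the cited toolkit, this is acceptable rather than a gap, and the rest of your argument (finiteness of $\sigma_i$ in the big case, $\Qq$-factoriality to keep $D_N$ and $N'$ $\Rr$-Cartier, effectivity of $N'$ and $\Gamma_{i_0}\notin\Supp N'$) checks out.
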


\begin{lem}[{\cite[Lemma 3.4(2)(3), Lemma 3.7(4)]{LX23}}]\label{lem: nz keep under pullback}
Let $\pi: X\rightarrow U$ be a projective morphism from a normal quasi-projective variety to a quasi-projective variety and $D$ a pseudo-effective$/U$ $\Rr$-Cartier $\Rr$-divisor on $X$. Let $f: Y\rightarrow X$ be a projective birational morphism. Then:
\begin{enumerate}
    \item For any exceptional$/X$ $\Rr$-Cartier $\Rr$-divisor $E\ge0$ and any prime divisor $P$ on $Y$, we have $$\sigma_P(Y/U,f^\ast D+E)=\sigma_P(Y/U,f^\ast D)+\mult_PE.$$
    \item For any exceptional$/X$ $\Rr$-Cartier $\Rr$-divisor $E\ge0$ on $Y$, we have 
    $$N_{\sigma}(X/U,D)=f_\ast N_{\sigma}(Y/U,f^\ast D+E).$$
    \item $\Supp N_{\sigma}(X/U,D)$ coincides with the divisorial part of $\Bb_-(X/U,D)$.
\end{enumerate}
\end{lem}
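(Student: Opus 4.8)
The three assertions are the relative, quasi-projective versions of Nakayama's basic properties of the $\sigma$-decomposition, and the plan is to prove them by adapting Nakayama's arguments: throughout one replaces ``ample'' by ``ample$/U$'', the stable base locus by its relative version, and the projective negativity lemma by the general negativity lemma \cite[Lemma 3.3]{Bir12}. First I would record the formal properties of $\sigma_P(X/U,-)$ that are immediate from \cite[Definition 3.1]{LX23}: it depends only on the $\sim_{\Rr,U}$-class of its argument, it is independent of the auxiliary ample$/U$ divisor used to define it, it is subadditive, and $\sigma_P(X/U,D)\le\mult_PD'$ for every $0\le D'\sim_{\Rr,U}D$; by Lemma \ref{lem: finiteness of sigmap}, $N_\sigma(X/U,D)$ is then a finite sum of prime divisors.

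For part (1), the inequality ``$\le$'' follows from subadditivity together with the identity $\sigma_P(Y/U,E)=\mult_PE$ for $E\ge0$ exceptional$/X$: the bound $\sigma_P(Y/U,E)\le\mult_PE$ is clear (use the representatives $E+\epsilon A'$ with $A'\ge0$ a general ample$/U$ divisor not containing $P$), while $\sigma_P(Y/U,E)\ge\mult_PE$ --- equivalently, that an effective exceptional$/X$ divisor equals its own $N_\sigma$ --- is a direct application of the negativity lemma (it is part of \cite[Lemma 3.4]{LX23}). For ``$\ge$'', I would fix $\epsilon>0$, take a near-optimal effective representative $D''\sim_{\Rr,U}f^\ast D+E+\epsilon A_Y$ with $A_Y$ ample$/U$ on $Y$, push it forward so that $f_\ast D''\sim_{\Rr,U}D+\epsilon A\ge0$ for a suitable ample$/U$ divisor $A$ on $X$, and analyze the exceptional$/X$ divisor $D''-f^\ast f_\ast D''$; the negativity lemma controls its multiplicity along $P$ (the cases $P$ exceptional$/X$ and $P$ not exceptional$/X$ being treated separately), and letting $\epsilon\downarrow0$ gives $\sigma_P(Y/U,f^\ast D+E)\ge\sigma_P(Y/U,f^\ast D)+\mult_PE$.

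Part (2) I would deduce from part (1): applying (1) to $E$ gives $N_\sigma(Y/U,f^\ast D+E)=N_\sigma(Y/U,f^\ast D)+E$, so since $f_\ast E=0$ it remains only to prove $f_\ast N_\sigma(Y/U,f^\ast D)=N_\sigma(X/U,D)$, i.e.\ $\sigma_{\widetilde P}(Y/U,f^\ast D)=\sigma_P(X/U,D)$ for every prime divisor $P$ on $X$ with strict transform $\widetilde P$ on $Y$. Here ``$\le$'' holds because pulling back a near-optimal effective representative of $D+\epsilon A$ on $X$ gives an effective representative of $f^\ast D+\epsilon f^\ast A$ on $Y$ with no extra multiplicity along $\widetilde P$, while ``$\ge$'' is obtained by pushing a near-optimal representative on $Y$ down to $X$ and again invoking the negativity lemma to absorb the part of the ample$/U$ divisor on $Y$ that is not pulled back from $X$. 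For part (3), I would compare the definition $\Bb_-(X/U,D)=\bigcup_{\epsilon>0}\Bb(X/U,D+\epsilon A)$ ($A$ a fixed ample$/U$ divisor, $\Bb(X/U,D+\epsilon A)$ the relative stable base locus) with the definition of $\sigma_P(X/U,D)$ as a limit as $\epsilon\downarrow0$: both quantify the asymptotic failure of $P$ to move in $D+\epsilon A$, and a short argument then identifies the divisorial part of $\Bb_-(X/U,D)$ with $\Supp N_\sigma(X/U,D)$, the only delicate point being that $\sigma_P(X/U,D)=0$ forces, for each $\epsilon>0$, the existence of an effective $\Rr$-divisor $\sim_{\Rr,U}D+\epsilon A$ avoiding $P$.

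The step I expect to be the main obstacle is the inequality ``$\ge$'' in part (1) --- equivalently, the pullback-compatibility of $N_\sigma$ used in part (2): one has to track the multiplicities along exceptional and non-exceptional prime divisors of $Y$ simultaneously, and it is precisely there that the general negativity lemma is indispensable and some care is needed with ample$/U$ divisors on $Y$ that do not descend to $X$; the remaining steps are formal manipulations with the defining properties of $\sigma_P$. A fully detailed treatment is \cite[Lemmas 3.4(2)(3), 3.7(4)]{LX23}.
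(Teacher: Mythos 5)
The paper gives no argument for this lemma at all --- it is quoted directly from \cite[Lemmas 3.4(2)(3) and 3.7(4)]{LX23} --- and your plan reconstructs precisely the argument of that reference: Nakayama's $\sigma$-decomposition formalism transported to the relative quasi-projective setting, with the negativity lemma carrying the weight in the pullback-compatibility step and the comparison of $\Bb_-$ with $\Supp N_\sigma$ being the formal part. So your approach is essentially the same as the paper's (i.e.\ the cited source's), and your outline, including the correct identification of the ``$\geq$'' inequality in (1)--(2) as the only delicate point, is sound.
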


\begin{lem}[{\cite[Lemma 2.25]{LMX24b}}]\label{lem: nz for lc divisor}
Let $X\rightarrow U$ be a projective morphism from a normal quasi-projective variety to a variety and $\phi: X\dashrightarrow X'$ a birational map$/U$. Let $D$ be an $\Rr$-Cartier $\Rr$-divisor on $X$ such that $\phi$ is $D$-negative and $D':=\phi_\ast D$. Then:
\begin{enumerate}
    \item The divisors contracted by $\phi$ are contained in $\Supp N_{\sigma}(X/U,D)$.
    \item If $D'$ is movable$/U$, then $\Supp N_{\sigma}(X/U,D)$ is the set of all $\phi$-exceptional divisors.
    \end{enumerate}
 \end{lem}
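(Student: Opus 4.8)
The plan is to deduce both parts from the transformation rules for the relative Nakayama--Zariski decomposition recorded in Lemma~\ref{lem: nz keep under pullback}. Note first that for $N_{\sigma}(X/U,D)$ to make sense we are implicitly assuming that $D$ is pseudo-effective$/U$; then $D'=\phi_\ast D$, being the strict transform of $D$ under a birational map that extracts no divisor, is pseudo-effective$/U$ as well. Fix a resolution of indeterminacy $p\colon W\to X$ and $q\colon W\to X'$ as in the definition of $D$-negativity, so that
$$p^\ast D=q^\ast D'+F,\qquad F\geq 0,\qquad \Supp p_\ast F=\Exc(\phi),$$
where $F$ is automatically $\Rr$-Cartier, being the difference of pullbacks of $\Rr$-Cartier divisors.

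The key preliminary observation is that $F$ is $q$-exceptional. Indeed, by the support condition the components of $F$ that are not $p$-exceptional are exactly the strict transforms on $W$ of the prime divisors of $X$ contracted by $\phi$, and these are $q$-exceptional since such divisors are sent by $\phi$ to loci of codimension $\geq 2$; moreover every $p$-exceptional prime divisor $E$ on $W$ is automatically $q$-exceptional, since otherwise $q(E)$ would be a prime divisor on $X'$ whose strict transform $\tilde Q$ on $W$ is distinct from $E$ --- its image $p(\tilde Q)$, the strict transform of $q(E)$ on $X$, is a prime divisor as $\phi$ extracts no divisor and hence has codimension $1$, while $p(E)$ has codimension $\geq 2$ --- yet $\tilde Q$ also dominates $q(E)$ under the birational morphism $q$ onto the normal variety $X'$, which is impossible. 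Granting this, Lemma~\ref{lem: nz keep under pullback}(1) applied to $q$, to the divisor $D'$ on $X'$, and to the $q$-exceptional divisor $F$, gives $N_{\sigma}(W/U,p^\ast D)=N_{\sigma}(W/U,q^\ast D'+F)=N_{\sigma}(W/U,q^\ast D')+F$; pushing forward by $p$ and using Lemma~\ref{lem: nz keep under pullback}(2) for $p$ (with $E=0$) then yields
$$N_{\sigma}(X/U,D)=p_\ast N_{\sigma}(W/U,p^\ast D)=p_\ast N_{\sigma}(W/U,q^\ast D')+p_\ast F.$$

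Part (1) is now immediate: since $p_\ast N_{\sigma}(W/U,q^\ast D')\geq 0$ and $\Supp p_\ast F=\Exc(\phi)$, the displayed identity gives $N_{\sigma}(X/U,D)\geq p_\ast F$, hence $\Supp N_{\sigma}(X/U,D)\supseteq\Exc(\phi)$, i.e.\ every prime divisor contracted by $\phi$ lies in $\Supp N_{\sigma}(X/U,D)$. For part (2), assume in addition that $D'$ is movable$/U$, so $N_{\sigma}(X'/U,D')=0$. Then Lemma~\ref{lem: nz keep under pullback}(2) for $q$ (with $E=0$) gives $q_\ast N_{\sigma}(W/U,q^\ast D')=0$, so $N_{\sigma}(W/U,q^\ast D')$ is $q$-exceptional; since every $q$-exceptional prime divisor on $W$ is either $p$-exceptional or the strict transform of a prime divisor contracted by $\phi$, its pushforward $p_\ast N_{\sigma}(W/U,q^\ast D')$ is supported on the $\phi$-contracted divisors. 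Combined with the displayed identity and with part (1), this shows $\Supp N_{\sigma}(X/U,D)$ coincides with the (finite, cf.\ Lemma~\ref{lem: finiteness of sigmap}) set of $\phi$-exceptional prime divisors.

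The only delicate point I expect is the bookkeeping of which prime divisors on $W$ are exceptional over $X$ versus over $X'$ --- in particular the $q$-exceptionality of $F$ --- since Lemma~\ref{lem: nz keep under pullback}(1) is sensitive precisely to this hypothesis; once that is settled, both parts follow formally from the cited transformation rules.
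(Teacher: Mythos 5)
Your argument is correct: the identity $N_{\sigma}(X/U,D)=p_\ast N_{\sigma}(W/U,q^\ast D')+p_\ast F$, obtained from Lemma~\ref{lem: nz keep under pullback} once you verify that $F\geq 0$ is $q$-exceptional (which indeed uses that $\phi$ extracts no divisor, and the uniqueness of a divisor on $W$ dominating a given prime divisor of the normal variety $X'$ is the standard fact you invoke), yields both parts exactly as you say. The paper itself gives no proof but simply cites \cite[Lemma 2.25]{LMX24b}, and your argument is essentially the standard one used there (common resolution, additivity of $\sigma$ along exceptional divisors, and invariance under pushforward), so nothing further is needed.
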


\begin{lem}\label{lem: sbl of movable big divisor}
Let $X\rightarrow U$ be a projective morphism from a $\Qq$-factorial normal quasi-projective variety to a quasi-projective variety. Let $D$ be a big$/U$ and movable$/U$ $\Rr$-divisor on $X$ and $E\geq 0$ and $\Rr$-divisor such that $\Supp E\subset\Bb_+(D/U)$. Then
$$N_{\sigma}(X/U,D+E)=E.$$
\end{lem}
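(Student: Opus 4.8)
The plan is to prove the two inequalities $N_{\sigma}(X/U,D+E)\le E$ and $N_{\sigma}(X/U,D+E)\ge E$ separately; the first is routine, and the hypothesis $\Supp E\subset\Bb_{+}(D/U)$ enters only in the second.

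\emph{The inequality $N_{\sigma}(X/U,D+E)\le E$.} Since $D$ is big$/U$, $\sigma_{P}(X/U,D)<+\infty$ for every prime divisor $P$, so sub-additivity of $\sigma_{P}$, together with $N_{\sigma}(X/U,D)=0$ (as $D$ is movable$/U$) and the trivial bound $\sigma_{P}(X/U,E)\le\mult_{P}E$, gives
\[
\sigma_{P}(X/U,D+E)\le \sigma_{P}(X/U,D)+\sigma_{P}(X/U,E)\le\mult_{P}E
\]
for all $P$, i.e. $N_{\sigma}(X/U,D+E)\le E$. (Equivalently: $N_{\sigma}(X/U,D+E)$ is the smallest effective $\Rr$-divisor $N$ such that $(D+E)-N$ is movable$/U$, and $(D+E)-E=D$ is such a divisor.)

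\emph{The inequality $N_{\sigma}(X/U,D+E)\ge E$.} It suffices to prove $\sigma_{P}(X/U,D+E)\ge\mult_{P}E=:m$ for every component $P$ of $E$. I first reformulate the hypothesis. Fix an ample$/U$ $\Rr$-divisor $A$. For $0<\epsilon\ll1$ the divisor $D-\epsilon A$ is big$/U$, and
\[
\Bb_{-}(X/U,D-\epsilon A)=\bigcup_{\delta>0}\Bb(D-\epsilon A+\delta A/U)=\Bb_{+}(D/U),
\]
since each base locus on the right equals $\Bb_{+}(D/U)$ for small $\delta$ and is contained in $\Bb(D/U)\subset\Bb_{+}(D/U)$ for large $\delta$. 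By Lemma \ref{lem: nz keep under pullback}(3) the divisorial part of $\Bb_{-}(X/U,D-\epsilon A)$ equals $\Supp N_{\sigma}(X/U,D-\epsilon A)$; as $P$ is a prime divisor contained in $\Bb_{+}(D/U)$, we obtain $\sigma_{P}(X/U,D-\epsilon A)>0$ for all $0<\epsilon\ll1$. I will combine this with two standard facts about relative Nakayama--Zariski decompositions (see \cite{LX23}): (i) for a pseudo-effective$/U$ $\Rr$-Cartier $\Rr$-divisor $F$ and $t\ge0$ with $F-tP$ pseudo-effective$/U$, $\sigma_{P}(X/U,F-tP)=\max\{0,\sigma_{P}(X/U,F)-t\}$; and (ii) $\sigma_{P}$ is continuous on the big$/U$ cone. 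Granting the stability statement
\[
\sigma_{P}\bigl(X/U,\,(D-\epsilon A)+(E-mP)\bigr)>0\qquad\text{for all }0<\epsilon\ll1,
\]
fact (i) applied to $F=(D-\epsilon A)+E$ and $t=m$ gives $\sigma_{P}(X/U,(D-\epsilon A)+E)=\sigma_{P}\bigl(X/U,(D-\epsilon A)+(E-mP)\bigr)+m\ge m$, and letting $\epsilon\to0^{+}$ and invoking (ii) (all divisors involved being big$/U$) yields $\sigma_{P}(X/U,D+E)\ge m$. Combined with the first inequality, $\sigma_{P}(X/U,D+E)=\mult_{P}E$ for all components $P$ of $E$, hence $N_{\sigma}(X/U,D+E)=E$.

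\emph{The main obstacle} is the stability statement above. By the reformulation, $P$ lies in the diminished base locus of $D-\epsilon A$ with positive coefficient, while $E-mP\ge0$ is supported on $\Supp E\setminus\{P\}\subset\Bb_{+}(D/U)=\Bb_{-}(X/U,D-\epsilon A)$; one must show that adding this effective divisor — whose support already lies inside the diminished base locus of $D-\epsilon A$ — cannot push $P$ out of that diminished base locus. This is the only point where the full hypothesis $\Supp E\subset\Bb_{+}(D/U)$ is used rather than merely $P\subset\Bb_{+}(D/U)$, since a priori the other components of $E$ could decrease $\sigma_{P}$; it encodes a rigidity (negative-definiteness-type) property of the configuration of $\Supp E$ inside $\Bb_{+}(D/U)$. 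I expect to prove it by taking effective representatives of $D-\epsilon A$, whose multiplicity along $P$ is at least $\sigma_{P}(X/U,D-\epsilon A)>0$, decomposing their difference with $E-mP$ into positive and negative parts, and tracking the coefficient along $P$, organizing the bookkeeping by induction on the number of components of $E$ together with Lemma \ref{lem: nz keep under pullback}.
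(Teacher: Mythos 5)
Your first inequality $N_{\sigma}(X/U,D+E)\le E$ and the reformulation $\Bb_{+}(D/U)=\Bb_{-}(X/U,D-\epsilon A)$ are fine, and the auxiliary facts you cite (additivity of $\sigma_P$ under adding a multiple of $P$, continuity of $\sigma_P$ on the big cone) are standard. But the proof has a genuine gap: the entire reverse inequality is reduced to the ``stability statement'' $\sigma_{P}\bigl(X/U,(D-\epsilon A)+(E-mP)\bigr)>0$, which you explicitly do not prove — you only describe a hope of proving it by bookkeeping and induction. This statement is not a routine verification: it asserts that adding an effective divisor whose support lies in the augmented base locus cannot push $P$ out of the negative part, and since adding effective divisors can in general strictly decrease $\sigma_P$ (e.g.\ adding an ample effective divisor), the claim carries essentially the full content of the lemma. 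In other words, you have rephrased the problem rather than solved it; as written the argument is incomplete.

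For comparison, the paper's proof confronts exactly this interplay between the components of $E$ head on. After reducing (by subtracting $N_{\sigma}(X/U,D+E)$ and shrinking $\Supp E$) to a minimal counterexample with $N_{\sigma}(X/U,D+E)=0$, it writes $D=A+F$ with $A$ ample$/U$, uses $\Supp E\subset\Supp F$, and sets $\epsilon:=\min_i\{\mult_{E_i}E/\mult_{E_i}F\}$ to write $F+E=(1+\epsilon)F+F_1-F_2$ with $F_1\wedge F_2=0$, $E\wedge F_2=0$, $\Supp F_1\subsetneq\Supp E$. Comparing $N_{\sigma}$ of $(1+\epsilon)D-\epsilon A$ (resp.\ $(1+\epsilon)D+F_1$) with $N_{\sigma}(X/U,D+E+F_2)$ then forces a contradiction either with $\Bb_{+}(D/U)\subset\Supp F_2$ (when $F_1=0$) or with the minimality of $\Supp E$ (when $F_1\ne 0$). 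Your closing sketch (decomposing into positive and negative parts, induction on the number of components) points in this direction, but until an argument of this kind is actually carried out, the key step of your proof is missing.
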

\begin{proof}
This is essentially \cite[Lemma 3.5]{Nak04}. However, \cite[Lemma 3.5]{Nak04} is stated for smooth projective varieties with $U=\{pt\}$ and is for the $\nu$-decomposition rather than the Nakayama-Zariski decomposition ($\sigma$-decomposition). Therefore, we provide a full proof here.

Since $D$ is movable$/U$, $E\geq N_{\sigma}(X/U,D+E)\geq 0$. 

Suppose that $N_{\sigma}(X/U,D+E)\not=E$. Possibly replacing $E$ with $E-N_{\sigma}(X/U,D+E)$, we may assume that $N_{\sigma}(X/U,D+E)=0$ and $E\not=0$. Since $E$ has finitely many components, possibly replacing $E$, we may assume that for any $E'\geq 0$ such that $\Supp E'\subsetneq\Supp E$ and $E'\not=0$, $N_{\sigma}(X/U,D+E')\not=0$.

Since $D$ is big$/U$, we may write $D=A+F$ for some $\Rr$-divisor $F\geq 0$ and ample$/U$ $\Rr$-divisor $A$. Since $\Supp E\subset\Bb_+(D/U)$, we have $\Supp E\subset\Supp F$. Let $E_1,\dots,E_m$ be the irreducible components of $E$ and let
$$\epsilon:=\min_{1\leq i\leq m}\left\{\frac{\mult_{E_i}E}{\mult_{E_i}F}\right\}.$$
Then we may write
$$F+E=(1+\epsilon)F+F_1-F_2$$
such that $F_1\geq 0,F_2\geq 0$, $F_1\wedge F_2=0$, $E\wedge F_2=0$, and $\Supp F_1\subsetneq\Supp E$.

If $F_1=0$, then 
$$N_{\sigma}(X/U,(1+\epsilon)D-\epsilon A)=N_{\sigma}(X/U,D+E+F_2)\leq F_2.$$
Thus 
$$\Supp E\subset\Bb_+(D/U)=\Bb_+((1+\epsilon)D/U)\subset\Bb_-(((1+\epsilon)D-\epsilon A)/U)\subset\Supp F_2,$$
which is not possible as $E\wedge F_2=0$. Thus $F_1\not=0$, so 
$$\Supp F_1\supset N_{\sigma}(X/U,(1+\epsilon)D+F_1)=N_{\sigma}(X/U,\epsilon A+(D+E)+F_2)\subset\Supp F_2.$$
Since $F_1\wedge F_2=0$,
$$N_{\sigma}(X/U,(1+\epsilon)D+F_1)=N_{\sigma}(X/U,\epsilon A+(D+E)+F_2)=0,$$
so 
$$N_{\sigma}\left(X/U,D+\frac{1}{1+\epsilon}F_1\right)=0.$$
This contradicts our assumption as $\Supp F_1\subsetneq\Supp E$ and $F_1\not=0$. 
\end{proof}

\begin{lem}\label{lem: sbl under pullback}
 Let $X\rightarrow U$ be a projective morphism from a normal quasi-projective variety to a quasi-projective variety, and $h: Y\rightarrow X$ a birational morphism. Assume that $X$ and $Y$ are $\Qq$-factorial. Let $D$ be a big$/U$ and movable$/U$ $\Rr$-divisor on $X$. Then
 $$\Supp\Bb_+(h^\ast D/U)=h^{-1}_\ast \Supp\Bb_+(D/U)\cup\Exc(h).$$
\end{lem}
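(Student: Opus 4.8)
The plan is to prove the two inclusions separately, using the comparison between augmented base loci and Nakayama--Zariski decompositions established above. First I would record the two general facts I want to use repeatedly: for a big$/U$ $\Rr$-divisor, the augmented base locus $\Bb_+(\cdot/U)$ equals the divisorial-plus-non-divisorial stable base locus of a slight perturbation $D - \epsilon A$, and on $\Qq$-factorial varieties one has $\Supp N_\sigma(X/U, D') = $ (divisorial part of) $\Bb_-(X/U, D')$ by Lemma \ref{lem: nz keep under pullback}(3); I will also use Lemma \ref{lem: sbl of movable big divisor}, which computes $N_\sigma$ of a movable big divisor plus an effective divisor supported in $\Bb_+$.

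For the inclusion $\supseteq$: an $h$-exceptional prime divisor $E$ cannot lie in the movable locus after pullback, since $h^\ast D$ contains $E$ with multiplicity $0$ but $E$ is $h$-exceptional; more precisely, by Lemma \ref{lem: nz keep under pullback}(1), for any small $\epsilon>0$ and any ample$/U$ divisor $A$ on $X$ with $D - \epsilon A$ still pseudo-effective, $E$ appears in $N_\sigma(Y/U, h^\ast(D-\epsilon A) + (\text{effective exceptional}))$ — one writes $h^\ast A = \tilde A + F$ with $\tilde A$ ample$/U$ on $Y$ and $F\geq 0$ exceptional, so $h^\ast D - \epsilon \tilde A = h^\ast(D-\epsilon A) + \epsilon F$, and since $h^\ast D$ is movable$/U$ while $\epsilon F$ is effective exceptional, Lemma \ref{lem: nz keep under pullback}(1) forces $E \subseteq \Supp N_\sigma(Y/U, h^\ast D - \epsilon\tilde A) \subseteq \Bb_+(h^\ast D/U)$. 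For a component $P$ of $h^{-1}_\ast \Supp\Bb_+(D/U)$ that is not $h$-exceptional, I would argue that $h_\ast$ of the divisorial part of $\Bb_+(h^\ast D/U)$ is contained in $\Bb_+(D/U)$ and, conversely, that pulling back a perturbation realizing $P \subseteq \Bb_+(D/U)$ keeps $P$ in the stable base locus: write $D - \epsilon A \sim_{\Rr,U} $ effective with $P$ in its support; then $h^\ast(D-\epsilon A) \sim_{\Rr,U}$ effective with $h^{-1}_\ast P$ in its support, and $h^\ast A$ is big$/U$ (though not ample), so a further small perturbation puts $h^{-1}_\ast P$ into $\Bb_+(h^\ast D/U)$.

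For the inclusion $\subseteq$: suppose $P \subseteq \Supp\Bb_+(h^\ast D/U)$ is a prime divisor on $Y$. If $P$ is $h$-exceptional we are done, so assume $P = h^{-1}_\ast P_X$ for a prime divisor $P_X$ on $X$; I must show $P_X \subseteq \Bb_+(D/U)$. Here I would pass to a perturbation: pick ample$/U$ $A$ on $X$ and write $h^\ast A = \tilde A + F$ with $\tilde A$ ample$/U$ on $Y$, $F \geq 0$ exceptional. Since $D$ is movable$/U$ and big$/U$, Lemma \ref{lem: sbl of movable big divisor} applied on $Y$ to the big movable divisor $h^\ast D$ and the exceptional effective divisor — combined with $N_\sigma(Y/U, h^\ast D) = 0$ (movability is preserved under $h^\ast$, as $h^\ast$ of a movable divisor is movable) — gives that $P \subseteq \Bb_+(h^\ast D/U)$ forces either $P$ exceptional (excluded) or $P_X \in \Bb_+(D/U)$, the latter by projecting the relation $h^\ast D - \epsilon \tilde A \sim_{\Rr,U} (\text{effective containing }P) $ down to $X$, where it becomes $(1 - \epsilon')D - (\text{ample}) \sim_{\Rr,U} (\text{effective containing }P_X)$ after absorbing the $h_\ast$-image of the exceptional part. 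The main obstacle I anticipate is precisely this last bookkeeping: tracking how the ample$/U$ divisor degenerates to a big-but-not-ample divisor under $h^\ast$, and conversely how an exceptional-divisor correction on $Y$ can be absorbed when pushing forward, so that one genuinely lands back inside $\Bb_+(D/U)$ rather than a strictly larger locus. Carefully invoking Lemma \ref{lem: sbl of movable big divisor} (with $X$ there taken to be $Y$) for both the necessity and sufficiency directions, together with $\Qq$-factoriality of both $X$ and $Y$ to guarantee all the perturbing divisors are $\Rr$-Cartier, should make this go through.
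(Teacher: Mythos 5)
Your plan has a genuine gap in both directions, and one of the two tools you lean on cannot be applied where you want it. The claim that ``movability is preserved under $h^\ast$'' is false in general: Lemma \ref{lem: nz keep under pullback}(2) only says that $N_{\sigma}(Y/U,h^\ast D)$ is $h$-exceptional, not that it vanishes. For instance, if $D$ is big and movable but negative on a flopping curve $C\subset X$ (say $D$ is the strict transform of an ample divisor under a flop), then blowing up $C$ produces an exceptional divisor with strictly positive $\sigma$ for $h^\ast D$, so $h^\ast D$ is not movable$/U$. Consequently you cannot invoke Lemma \ref{lem: sbl of movable big divisor} on $Y$ with the divisor $h^\ast D$, and your argument for the inclusion $\subseteq$ breaks exactly at the step you yourself flag as the main obstacle. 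A second, independent problem is the quantifier in your membership arguments: in both directions you try to certify that a divisor lies in $\Bb_+$ by exhibiting \emph{one} effective representative of a perturbation ($D-\epsilon A$, or $h^\ast D-\epsilon\tilde A$) containing it, and then pulling back or pushing down that relation. Lying in a diminished or augmented base locus means lying in the support of \emph{every} representative, so a single representative containing $P$ proves nothing, and pushing such a relation to $X$ cannot place $h_\ast P$ in $\Bb_+(D/U)$.

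The mechanism that does work is the one your treatment of exceptional divisors already uses, namely the behavior of $\sigma$ under pullback plus effective exceptional divisors, and it gives the whole equality at once with no case division. This is how the paper argues: choose $A$ ample$/U$ with $\Bb_+(D/U)=\Bb_-((D-\epsilon A)/U)$ for all small $\epsilon>0$; choose $E\geq 0$ with $\Supp E=\Exc(h)$ and $h^\ast A-E$ ample$/U$ (this full-support choice, available by $\Qq$-factoriality, is also what your exceptional-divisor step actually needs --- ``$F\geq 0$ exceptional'' alone does not catch every exceptional prime); observe $h^\ast D-\epsilon(h^\ast A-E)=h^\ast(D-\epsilon A)+\epsilon E$; and then Lemma \ref{lem: nz keep under pullback}(1)--(3) yields
$$\Supp\Bb_+(h^\ast D/U)=\Supp N_{\sigma}(Y/U,h^\ast(D-\epsilon A)+\epsilon E)=h^{-1}_\ast\Supp N_{\sigma}(X/U,D-\epsilon A)\cup\Exc(h)=h^{-1}_\ast\Supp\Bb_+(D/U)\cup\Exc(h).$$
In particular, parts (1) and (2) of Lemma \ref{lem: nz keep under pullback} give $\sigma_{h^{-1}_\ast P}(Y/U,h^\ast(D-\epsilon A)+\epsilon E)=\sigma_P(X/U,D-\epsilon A)$ for non-exceptional $P$, which is the correct replacement for your representative-chasing in both inclusions; Lemma \ref{lem: sbl of movable big divisor} and any movability statement for $h^\ast D$ are never needed.
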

\begin{proof}
    Let $A$ be an ample$/U$ $\Rr$-divisor on $X$ such that
    $$\Bb_-((D-A)/U)=\Bb_+(D/U).$$
    Then
    $$\Bb_-((D-\epsilon A)/U)=\Bb_+(D/U)$$
    for any real number $\epsilon\in (0,1]$. Let $E\geq 0$ be an anti-ample$/X$ $\Rr$-divisor such that $h^\ast A-E$ is ample$/U$ and $\Supp E=\Exc(h)$. Then
    $$\Supp\Bb_+(h^\ast D/U)=\Supp\Bb_-((h^\ast D-\epsilon(h^\ast A-E))/U)=\Supp\Bb_-((h^\ast (D-\epsilon A)+\epsilon E)/U)$$
    for some real number $\epsilon>0$. By Lemma \ref{lem: nz keep under pullback}, we have
\begin{align*}
 &\Supp\Bb_-((h^\ast (D-\epsilon A)+\epsilon E)/U)=\Supp N_{\sigma}(X/U,h^\ast (D-\epsilon A)+\epsilon E)\\
 =&\Supp(h^\ast N_{\sigma}(X/U,D-\epsilon A)+\epsilon E)=h^{-1}_\ast \Supp N_{\sigma}(X/U,D-\epsilon A)\cup\Exc(h)\\
 =&h^{-1}_\ast \Supp\Bb_-((D-\epsilon A)/U)\cup\Exc(h)=h^{-1}_\ast \Supp\Bb_+(D/U)\cup\Exc(h).
\end{align*}
The lemma follows.
\end{proof}

 \section{Qdlt algebraically integrable adjoint foliated structures}\label{sec: qdlt modification}

In this section we introduce the concept of qdlt singularities for algebraically integrable adjoint foliated structures and study its properties. 

\subsection{Definition of qdlt}

\begin{defn}[Qdlt]\label{defn: qdlt afs}
    We say that an adjoint foliated structure $(X,\Ff,B,\Mm,t)$ is \emph{(sub-)qdlt} if the following conditions hold. Note that this definition does not require the foliation to be algebraically integrable.
    \begin{enumerate}
        \item $t<1$.
        \item $(X,\Ff,B,\Mm,t)$ is (sub-)lc.
        \item $(X,B_X:=B^{\ninv}+\frac{1}{1-t}B^{\inv},\Mm)$ is (sub-)qdlt.
        \item Any lc place of $(X,\Ff,B,\Mm,t)$ is an lc place of $(X,B_X,\Mm)$.
    \end{enumerate}
     In particular, $K_X+B_X+\Mm_X$ is $\Rr$-Cartier.
\end{defn}

There are two important properties of qdlt algebraically integrable adjoint foliated structures. The first property is that qdlt is preserved under small perturbations:

\begin{lem}\label{lem: perturb qdlt afs}
Let $(X,\Ff,B,\Mm,t)/X$ be a sub-qdlt algebraically integrable adjoint foliated structure such that $0<t<1$,
and let $B_X:=B^{\ninv}+\frac{1}{1-t}B^{\inv}$. Let $D$ be an $\Rr$-divisor, $\Nn$ a $\bb$-divisor, and $\lambda$ a real number, such that 
\begin{enumerate}
    \item $D+\Nn_X+\lambda(K_{\Ff}-K_X)$ is $\Rr$-Cartier, 
    \item $D=0$ near the generic point of any strata of $\lfloor B_X\rfloor$, 
    \item $\Nn$ descends to $X$  near the generic point of any strata of $\lfloor B_X\rfloor$, and
    \item $\Mm+\epsilon_1\Nn$ is nef$/X$ for some $\epsilon_1>0$.
\end{enumerate}
Then there exists a positive real number $\epsilon_0$, such that
$$(X,\Ff,B+\epsilon D,\Mm+\epsilon\Nn,t+\lambda\epsilon)$$
is sub-qdlt for any $\epsilon\in [0,\epsilon_0]$.
\end{lem}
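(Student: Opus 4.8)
The plan is to verify the four defining conditions of sub-qdlt (Definition \ref{defn: qdlt afs}) for the perturbed structure $(X,\Ff,B+\epsilon D,\Mm+\epsilon\Nn,t+\lambda\epsilon)$, for $\epsilon$ in a suitably small interval $[0,\epsilon_0]$. Condition (1), namely $t+\lambda\epsilon<1$, is immediate for $\epsilon$ small since $t<1$ (and if $\lambda\le 0$ it is automatic, so we only need to shrink $\epsilon_0$ when $\lambda>0$). For the $\Rr$-Cartier requirement in Definition \ref{defn: ads}, we note that
$$K_{(X,\Ff,B+\epsilon D,\Mm+\epsilon\Nn,t+\lambda\epsilon)} = K_{(X,\Ff,B,\Mm,t)} + \epsilon\bigl(D+\Nn_X+\lambda(K_{\Ff}-K_X)\bigr),$$
which is $\Rr$-Cartier by hypothesis (1) together with the fact that $(X,\Ff,B,\Mm,t)$ is a (sub-)adjoint foliated structure. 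The first genuinely substantive point is condition (2): that the perturbed structure is sub-lc. Here I would use the fact that being sub-lc is a condition cut out by finitely many inequalities on discrepancies, but over a non-quasi-compact parameter one must argue more carefully. The natural approach is to pass to a suitable model — since $(X,\Ff,B,\Mm)$ is algebraically integrable and sub-qdlt, one can (after base change / using Theorem \ref{thm: eo acss model} or directly a foliated log resolution) reduce to a foliated log smooth situation where discrepancies of the relevant divisors vary linearly in $\epsilon$, and then sub-lc of the unperturbed structure forces sub-lc of the perturbation for $\epsilon$ small. The key structural input making this work is hypotheses (2) and (3): near the generic point of every stratum of $\lfloor B_X\rfloor$ the perturbation $\epsilon(D+\Nn_X)$ vanishes (and $\Nn$ descends), so the perturbation does not create new non-lc or non-klt behavior precisely along the loci where $(X,\Ff,B,\Mm,t)$ already has lc places; away from those loci there is room to perturb.

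Next I would handle condition (3), that $(X, B_X^\epsilon, \Mm+\epsilon\Nn)$ is sub-qdlt where
$$B_X^\epsilon := (B+\epsilon D)^{\ninv} + \tfrac{1}{1-t-\lambda\epsilon}(B+\epsilon D)^{\inv}.$$
Since $D$ is an $\Rr$-divisor fixed independently of $\epsilon$ and $t+\lambda\epsilon \to t$ as $\epsilon\to 0$, the divisor $B_X^\epsilon$ depends continuously (indeed real-analytically) on $\epsilon$, and by hypothesis (2) it agrees with $B_X$ near the generic point of every stratum of $\lfloor B_X\rfloor$; in particular $\lfloor B_X^\epsilon\rfloor$ and $\lfloor B_X\rfloor$ share the same strata for $\epsilon$ small, and near each such generic point $\Mm+\epsilon\Nn$ still descends by hypothesis (3). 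One then invokes the characterization of qdlt (log toroidal near each lc center with the requisite $\Qq$-Cartier components of $\lfloor B_X\rfloor$ through it) to conclude that $(X, B_X^\epsilon, \Mm+\epsilon\Nn)$ remains sub-qdlt; note hypothesis (4) ensures $\Mm+\epsilon\Nn$ is nef$/X$ so this is a legitimate generalized pair. The $\Rr$-Cartier-ness of $K_X+B_X^\epsilon+(\Mm+\epsilon\Nn)_X$ again follows from hypothesis (1) combined with $\Rr$-Cartier-ness in the unperturbed case, after rewriting $\tfrac{1}{1-t-\lambda\epsilon}$ appropriately. Finally, condition (4) — that every lc place of the perturbed adjoint foliated structure is an lc place of $(X, B_X^\epsilon, \Mm+\epsilon\Nn)$ — follows because, by the analysis in condition (2), for $\epsilon$ small the lc places of the perturbed structure are exactly the lc places of $(X,\Ff,B,\Mm,t)$ (the perturbation being supported away from them), and these are lc places of $(X,B_X,\Mm)$ by qdlt-ness of the original structure; since $B_X^\epsilon = B_X$ and $\Mm+\epsilon\Nn$ descends near those loci, they remain lc places of $(X,B_X^\epsilon,\Mm+\epsilon\Nn)$.

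The main obstacle I anticipate is condition (2), the preservation of sub-lc under perturbation of the parameter $t$ itself: unlike perturbing a boundary divisor, perturbing $t$ changes the threshold $-t\epsilon_{\Ff}(E)-(1-t)$ against which discrepancies are measured, and it changes them differently for invariant versus non-invariant divisors. One must check that for $\Ff$-invariant divisors $E$ the quantity $a(E,\Ff,B+\epsilon D,\Mm+\epsilon\Nn,t+\lambda\epsilon) + (1-t-\lambda\epsilon)$ stays $\ge 0$; since there are potentially infinitely many such $E$, the clean way is to reduce to finitely many via a fixed foliated log resolution (extracting all lc places of the original structure and of $(X,B_X,\Mm)$) together with the Lemma in the excerpt stating that $a(E,\cdots,t)\ge -1$ for all $E$ implies sub-lc — so it suffices to control the finitely many $E$ with $a(E,\Ff,B,\Mm,t)$ close to $-1$, where hypotheses (2)–(3) give the needed rigidity. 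Making this reduction precise, and verifying that $\epsilon_0$ can be chosen uniformly, is the crux of the argument.
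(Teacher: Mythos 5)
Your overall strategy -- fix a foliated log resolution, use continuity and finiteness to handle the divisors whose discrepancies lie strictly above the lc bound, and invoke hypotheses (2)--(3) at the critical divisors -- is the same as the paper's, and your bookkeeping for the $\Rr$-Cartier condition and for condition (3) of the qdlt definition matches what the paper does. However, there is a genuine gap exactly at the point you yourself call the crux, and hypotheses (2)--(3) alone do not close it. At an lc place $F$ of $(X,\Ff,B,\Mm,t)$, those hypotheses only give that the discrepancy $a(F,X,\Ff,B+\epsilon D,\Mm+\epsilon\Nn,t+\lambda\epsilon)$ is \emph{constant} in $\epsilon$ (the perturbation is invisible near the generic point of $\Center_XF$); they say nothing about the fact that the threshold $-(t+\lambda\epsilon)\epsilon_{\Ff}(F)-(1-t-\lambda\epsilon)$ itself moves when $\lambda\neq 0$, and it moves precisely for $\Ff$-invariant $F$. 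Your proposed reduction ``control the finitely many $E$ with $a(E,\Ff,B,\Mm,t)$ close to $-1$'' does not capture these divisors: for invariant $E$ the relevant bound is $-(1-t')$, not $-1$, so constancy of the discrepancy is not by itself an answer -- one needs a mechanism showing the discrepancy tracks the moving threshold, and your plan does not supply one.

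The paper's mechanism, which is the actual content of the proof and is missing from your proposal, is a two-step use of qdlt-ness. First, Definition 3.1(4) says $F$ is an lc place of $(X,B_X,\Mm)$, and qdlt-ness of $(X,B_X,\Mm)$ forces $\Center_XF$ to be a stratum of $\lfloor B_X\rfloor$ -- this is what entitles one to apply hypotheses (2)--(3) at $F$ at all (you use this implicitly; it must be stated). Second, writing $K_{(X,\Ff,B,\Mm,t)}=t(K_{\Ff}+B^{\ninv}+\Mm_X)+(1-t)(K_X+B_X+\Mm_X)$ gives $a(F,X,\Ff,B,\Mm,t)=t\,a(F,\Ff,B^{\ninv},\Mm)+(1-t)\,a(F,X,B_X,\Mm)$, and since $a(F,X,B_X,\Mm)=-1$ one deduces $a(F,\Ff,B^{\ninv},\Mm)=-\epsilon_{\Ff}(F)$; it is this simultaneity ($F$ is an lc place of both the generalized pair $(X,B_X,\Mm)$ and the foliated quadruple $(X,\Ff,B^{\ninv},\Mm)$) that lets the paper argue the discrepancy at parameter $t+\lambda\epsilon$ sits on the new threshold rather than below it. Even with this input the step is delicate when $\lambda\neq 0$: replacing $t$ by $t'$ in the convex combination above changes the boundary from $B$ to $B^{\ninv}+\tfrac{1-t'}{1-t}B^{\inv}$, so the invariant part must be tracked with care -- this is where essentially all the difficulty of the lemma lives, and it is precisely what your plan leaves unresolved. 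Finally, your assertion that ``the lc places of the perturbed structure are exactly the lc places of $(X,\Ff,B,\Mm,t)$'' also needs proof; the paper obtains the needed inclusion by a convexity trick, establishing sub-lc-ness on the whole interval $[0,2\epsilon_0]$ and using that the structure at $\epsilon$ is the midpoint of the structures at $0$ and $2\epsilon$, an argument absent from your outline.
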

\begin{proof}
Let $h: X'\rightarrow X$ be a foliated log resolution of $(X,\Ff,B+D,\Mm+\Nn)$ associated with an equidimensional toroidal contraction $f: (X',\Sigma_{X'})\rightarrow (Z,\Sigma_{Z})$, and let $F_i$ be the irreducible components of $\Sigma_{X'}$. Then for each $i$, 
$$a_i: x\rightarrow a(F_i,X,\Ff,B+xD,\Mm+x\Nn,t+\lambda x)$$
is a continuous function for any $0\leq x\leq\min\{1-t,\epsilon_1\}$, and $a_i(0)\geq-t\epsilon_{\Ff}(F_i)-(1-t)$ for each $i$. Therefore, there exists $\epsilon_2>0$, such that $\epsilon_1>\epsilon_2$, and for any $i$ such that $a_i(0)>-t\epsilon_{\Ff}(F_i)-(1-t)$, we have $a_i(x)>-(t+\lambda x)\epsilon_{\Ff}(F_i)-(1-t-\lambda x)$ for any $x\in [0,\epsilon_2]$. Moreover, if $a_i(0)=-t\epsilon_{\Ff}(F_i)-(1-t)$, then $F_i$ is an lc place of $(X,\Ff,B,\Mm,t)$, so $F_i$ is an lc place of $(X,B_{X},\Mm)$, hence an lc place of $(X,\Ff,B^{\ninv},\Mm)$.
Since $(X,B_{X},\Mm)$ is sub-qdlt, 
 $\Center_XF_i$ is a stratum of $\lfloor B_X\rfloor$. Therefore, near the generic point of $\Center_{X}F_i$, $D=0$ and $\Nn$ descends to $X$. We have
\begin{align*}
   a_i(x)&=a(F_i,X,\Ff,B,\Mm,t+\lambda x)\\
   &=(t+\lambda x)a(F_i,\Ff,B^{\ninv},\Mm)+(1-t-\lambda x)a(F_i,X,B_{X},\Mm)\\
   &=-(t+\lambda x)\epsilon_{\Ff}(F_i)-(1-t-\lambda x). 
\end{align*}
Therefore, $a_i(x)\geq -(t+\lambda x)\epsilon_{\Ff}(F_i)-(1-t-\lambda x)$ for any $i$ and any $x\in [0,\epsilon_2]$. By Lemma \ref{lem: foliated log smooth imply lc},  $$(X,\Ff,B+xD,\Mm+x\Nn,t+\lambda x)$$
is sub-lc for any $x\in [0,\epsilon_2]$. 

Since $(X,B_X,\Mm)$ is sub-qdlt, by conditions (2-4), there exists a positive real number $\epsilon_0<\frac{\epsilon_2}{2}$ such that for any $x\in [0,2\epsilon_0]$, $$\left(X,B_x:=B_X+x\left(D^{\ninv}+\frac{1}{1-t}D^{\inv}\right),\Mm+x\Nn\right)$$
is qdlt and any lc place of $(X,B_x,\Mm+x\Nn)$ is an lc place of $(X,B_X,\Mm)$. Now for any $\epsilon\in [0,\epsilon_0]$, we have the following.
\begin{itemize}
    \item Since $t+2\lambda\epsilon\leq 1$ and $t<1$, $t+\lambda\epsilon<1$,
    \item $(X,B_\epsilon,\Mm+\epsilon\Nn)$ is sub-qdlt, and $B_{\epsilon}=(B+\epsilon D)^{\ninv}+\frac{1}{1-t}(B+\epsilon D)^{\inv}$.
    \item Since $(X,\Ff,B,\Mm,t)$ and $(X,\Ff,B+2\epsilon D,\Mm+2\epsilon\Nn,t+2\lambda\epsilon)$ are sub-lc, $(X,\Ff,B+\epsilon D,\Mm+\epsilon\Nn,t+\lambda\epsilon)$ is sub-lc. Moreover, any lc place of $(X,\Ff,B+\epsilon D,\Mm+\epsilon\Nn,t+\lambda\epsilon)$ is an lc place of $(X,\Ff,B,\Mm,t)$, hence an lc place of $(X,B_X,\Mm)$, and hence an lc place of $(X,B_{\epsilon},\Mm+\epsilon\Nn)$.
\end{itemize}
Thus $\epsilon_0$ satisfies our requirements.
\end{proof}

The second property is that qdlt is preserved under the minimal model program. We first prove the following important proposition which characterizes the singularities of the ambient variety of any adjoint foliated structure with $t<1$:

\begin{prop}\label{prop: adjoint lc implies X lc}
     Let $(X,\Ff,B,\Mm,t)/X$ be an algebraically integrable adjoint foliated structure such that $t<1$. Let $B_X$ be an $\Rr$-divisor on $X$ and let $\Nn$ be a nef$/X$ $\bb$-divisor on $X$ such that
     \begin{enumerate}
     \item $B^{\ninv}\geq B_X^{\ninv}$ and $B^{\inv}\geq (1-t)B_X^{\inv}$,
     \item $\Mm-\Nn$ is nef$/X$, and
     \item $K_X+B_X+\Nn_X$ is $\Rr$-Cartier.
     \end{enumerate}
     Then
$$\Nlc(X,B_X,\Nn)\subset\Nlc(X,\Ff,B,\Mm,t)$$
and
$$\Nklt(X,B_X,\Nn)\subset\Nklt(X,\Ff,B,\Mm,t).$$
     In particular, if $(X,\Ff,B,\Mm,t)$ is lc (resp. klt), then $(X,B_X,\Nn)$ is sub-lc (resp. sub-klt).
\end{prop}
\begin{proof}
We may assume that $t>0$, otherwise the proposition is trivial. 

Let $B_{\Ff}$ be the unique $\Rr$-divisor on $X$ such that $tB_{\Ff}+(1-t)B_X=B$. Let $\Pp$ be the unique $\bb$-divisor such that $t\Pp+(1-t)\Nn=\Mm$. 
Then $\Pp=\Nn+\frac{1}{t}(\Mm-\Nn)$ is nef$/X$, and $\Pp-\Nn$ is nef$/X$. Since
$$t(K_{\Ff}+B_{\Ff}+\Pp_X)+(1-t)(K_X+B_X+\Nn_X)=K_{(X,\Ff,B,\Mm,t)},$$
$K_{\Ff}+B_{\Ff}+\Pp_X$ is $\Rr$-Cartier. Since
$tB_{\Ff}^{\inv}=B^{\inv}-(1-t)B_X^{\inv}\geq 0,$
 $B_{\Ff}^{\inv}\geq 0$. Since
$tB_{\Ff}^{\ninv}=B^{\ninv}-(1-t)B_X^{\ninv}\geq tB^{\ninv},$
$B_{\Ff}^{\ninv}\geq B^{\ninv}$. 

Since lc and klt are local properties, we may assume that $(X,\Ff,B,\Mm,t)$ is lc (resp. klt). Let $h: (X',\Ff',B_{\Ff'},\Pp;G)/Z\rightarrow (X,\Ff,B_{\Ff},\Pp)$ be a $\Qq$-factorial proper ACSS modification. By definition, 
$$B_{\Ff'}=h^{-1}_\ast (B_{\Ff}^{\ninv}\wedge\Supp B_{\Ff}^{\ninv})+\Supp\Exc(h)^{\ninv},$$
and for any $h$-exceptional prime divisor $E$, 
$$a(E,\Ff,B_{\Ff},\Pp)\leq -\epsilon_{\Ff}(E).$$ Since $(X,\Ff,B,\Mm,t)$ is lc (resp. klt), 
$$a(E,X,\Ff,B,\Mm,t)\geq\text{( resp. }>\text{)} -t\epsilon_{\Ff}(E)-(1-t).$$ 
Since
$$a(E,X,\Ff,B,\Mm,t)=ta(E,\Ff,B_{\Ff},\Pp)+(1-t)a(E,X,B_X,\Nn),$$
we have $a(E,X,B_X,\Nn)\geq -1$ (resp. $>-1$).
Let $B_{X'}:=h^{-1}_*B_X+\Exc(h)$. Then 
$$K_{X'}+B_{X'}+\Nn_{X'}\geq K_{X'}+\bar B_{X'}+\Nn_{X'}:=h^*(K_X+B_X+\Nn_X).$$
\begin{claim}
$B_{\Ff'}+G\geq B_{X'}$, and if $(X,\Ff,B,\Mm,t)$ is klt, then $\lfloor \bar B_{X'}\rfloor\leq 0$. 
\end{claim}
\begin{proof}
    For any prime divisor $D$ that is a component of $B_{X'}$, there are four cases:  

\medskip

\noindent\textbf{Case 1}. $D$ is a component of $\Exc(h)^{\inv}$. Then $D$ is a component of $G$, so
$$\mult_D(B_{\Ff'}+G)=1=\mult_D\Supp\Exc(h)^{\inv}=\mult_DB_{X'}.$$
If $(X,\Ff,B,\Mm,t)$ is klt, then $1=\mult_DB_{X'}>\mult_D\bar B_{X'}$, so $\mult_D\lfloor \bar B_{X'}\rfloor\leq 0$.

\medskip

\noindent\textbf{Case 2}. $D$ is a component of $\Exc(h)^{\ninv}$. In this case,
$$\mult_D(B_{\Ff'}+G)=1=\mult_D\Supp\Exc(h)^{\ninv}=\mult_DB_{X'}\geq\mult_D\bar B_{X'}$$
by definitions of $B_{\Ff'}$ and $B_{X'}$ directly. If $(X,\Ff,B,\Mm,t)$ is klt, then $$1=\mult_DB_{X'}>\mult_D\bar B_{X'},$$ 
so $\mult_D\lfloor \bar B_{X'}\rfloor\leq 0$.

\medskip

\noindent\textbf{Case 3}. $D$ is a component of $h^{-1}_*B_X$ that is $\Ff'$-invariant. Then $D$ is a component of $G$, so
$$\mult_D(B_{\Ff'}+G)=1.$$
Since $h_*D$ is an $\Ff$-invariant prime divisor,
$$a(D,X,\Ff,B,\Mm,t)\geq -1+t.$$
and strict inequality holds if $(X,\Ff,B,\Mm,t)$ is klt. Since $B_{\Ff}^{\inv}\geq 0$,
$$a(D,X,\Ff,B_{\Ff},\Pp)\leq 0.$$
Thus $a(D,X,B_X,\Nn)\geq -1$ and strict inequality holds if $(X,\Ff,B,\Mm,t)$ is klt. So $\mult_DB_{X'}\leq 1$ and strict inequality holds if $(X,\Ff,B,\Mm,t)$ is klt. Therefore,
$$\mult_D(B_{\Ff'}+G)=1\geq\mult_DB_{X'}.$$
and strict inequality holds if $(X,\Ff,B,\Mm,t)$ is klt, in which case $\mult_D\lfloor \bar B_{X'}\rfloor\leq \mult_D\lfloor B_{X'}\rfloor\leq0$.

\medskip

\noindent\textbf{Case 4}. $D$ is a component of $h^{-1}_*B_X$ that is not $\Ff'$-invariant. In this case, 
$$\mult_D(B_{\Ff'}+G)=\mult_DB_{\Ff'}=\min\{\mult_{h_*D}B_{\Ff}^{\ninv},1\}\geq\mult_{h_*D}B_X^{\ninv}=\mult_DB_{X'}.$$
Moreover, if $(X,\Ff,B,\Mm,t)$ is klt, then 
$$\mult_DB_{X'}=\mult_{h_*D}B_{X}^{\ninv}\leq\mult_{h_*D}B^{\ninv}<1,$$
 in which case $\mult_D\lfloor \bar B_{X'}\rfloor\leq \mult_D\lfloor B_{X'}\rfloor\leq0$.
\end{proof}

\noindent\textit{Proof of Proposition \ref{prop: adjoint lc implies X lc} continued.} Since $B_{\Ff'}+G\geq B_{X'}$, $\Pp-\Nn$ is nef$/X$, and $(X',B_{\Ff'}+G,\Pp)$ is qdlt, we have that $(X',B_{X'},\Nn)$ is sub-qdlt. Thus $(X',\bar B_{X'},\Nn)$ is sub-qdlt, and $\lfloor \bar B_{X'}\rfloor\leq 0$ if $(X,\Ff,B,\Mm,t)$ is klt. Thus $(X,B_X,\Nn)$ is sub-lc and is sub-klt if $(X,\Ff,B,\Mm,t)$ is klt.
\end{proof}

\begin{prop}\label{prop: qdlt preserved under mmp}
    Let $(X,\Ff,B,\Mm,t)/U$ be a qdlt adjoint foliated structure, $K:=K_{(X,\Ff,B,\Mm,t)}$, and let $\phi: (X,\Ff,B,\Mm,t)\dashrightarrow (X',\Ff',B',\Mm,t)$ be a step of a $K$-MMP. Assume that $K_{X'}+B'^{\ninv}+\frac{1}{1-t}B'^{\inv}+\Mm_{X'}$ is $\Rr$-Cartier. Then $(X',\Ff',B',\Mm,t)$ is qdlt.
\end{prop}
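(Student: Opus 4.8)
The plan is to verify the four conditions of Definition~\ref{defn: qdlt afs} for $(X',\Ff',B',\Mm,t)$. Write $K:=K_{(X,\Ff,B,\Mm,t)}$, $B_X:=B^{\ninv}+\frac{1}{1-t}B^{\inv}$, and let $B_{X'}$, $K'$ be the analogous objects on $X'$. If $t=0$ then $B_X=B$ and the statement is the preservation of qdlt generalized pairs under a step of their MMP, which is standard; so we may assume $0<t<1$. Condition (1) is inherited. Since $\phi$ is a step of a $K$-MMP it is $K$-negative, so by the negativity lemma $a(E,\Ff',B',\Mm,t)\ge a(E,\Ff,B,\Mm,t)$ for every prime divisor $E$ over $X$; as $(X,\Ff,B,\Mm,t)$ is lc this gives condition (2). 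Finally, $\phi$ extracts no divisor and carries $\Ff$ to $\Ff'$, so the invariant/non-invariant decomposition is compatible with push-forward and $B_{X'}=\phi_\ast B_X$.

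The core step is to identify $\phi$ with a step of a $(K_X+B_X+\Mm_X)$-MMP. From $B-(1-t)B_X=tB^{\ninv}$ one obtains the identity $K=(1-t)(K_X+B_X+\Mm_X)+t(K_\Ff+B^{\ninv}+\Mm_X)$; since $K$ is $\Rr$-Cartier, $K_X+B_X+\Mm_X$ is $\Rr$-Cartier by qdlt, and $K_{X'}+B_{X'}+\Mm_{X'}$ is $\Rr$-Cartier by hypothesis, it follows that $K_\Ff+B^{\ninv}+\Mm_X$ (and its push-forward) is $\Rr$-Cartier as well. Let $R$ be the extremal ray contracted by $\phi$. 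By the cone theorem (Theorem~\ref{thm: cone intro}(2)), $R$ is spanned by a rational curve $C$, which — using the algebraic integrability of $\Ff$, exactly as in the proof of that theorem — may be taken tangent to $\Ff$ and lying in a general fibre of the defining fibration; for such $C$ one has $B^{\inv}\cdot C=0$ (invariant divisors are vertical) and $(K_\Ff-K_X)\cdot C\ge 0$ (the foliated canonical computes the canonical of the fibre, while $K_X$ differs from it by a semipositive normal-bundle term along the moving fibre, cf. Property~$(\ast )$ and Proposition~\ref{prop: weak cbf gfq}). Plugging this into the identity gives $(K_X+B_X+\Mm_X)\cdot C\le K\cdot C<0$, so $R$ is $(K_X+B_X+\Mm_X)$-negative. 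Since a flipping contraction (resp. divisorial contraction) depends only on its extremal ray, $\cont_R$ is simultaneously the contraction of $R$ for $K_X+B_X+\Mm_X$, and $\phi$ is the corresponding flip (resp. divisorial contraction) for $K_X+B_X+\Mm_X$ too. As $(X,B_X,\Mm)$ is qdlt, the standard transport of the log toroidal open locus under a $(K_X+B_X+\Mm_X)$-negative step — no new lc place is created and the generic point of each lc centre stays in the toroidal locus — shows $(X',B_{X'},\Mm)$ is qdlt, which is condition (3).

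For condition (4), let $E$ be an lc place of $(X',\Ff',B',\Mm,t)$. Since $\phi$ is $K$-negative and both structures are lc, $a(E,\Ff,B,\Mm,t)=a(E,\Ff',B',\Mm,t)$, so $E$ is an lc place of $(X,\Ff,B,\Mm,t)$. By the usual strict discrepancy increase of $K$-flips and divisorial contractions along the flipped/contracted locus, $\Center_X E$ is contained neither in $\Exc(\phi)$ (in the divisorial case) nor in the flipped locus (in the flip case), so $\phi$ is an isomorphism near the generic point of $\Center_X E$; hence $a(E,X',B_{X'},\Mm)=a(E,X,B_X,\Mm)=-1$, the last equality being condition (4) for $(X,\Ff,B,\Mm,t)$. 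Thus $E$ is an lc place of $(X',B_{X'},\Mm)$, and $(X',\Ff',B',\Mm,t)$ is qdlt.

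The main obstacle is the claim that $R$ is $(K_X+B_X+\Mm_X)$-negative: one must know that $R$ is generated by a rational curve tangent to $\Ff$ and situated in a general fibre so that the normal-bundle comparison $(K_\Ff-K_X)\cdot C\ge 0$ applies, and one must also deal with the possibility that extremal curves lie inside $\Ff$-invariant divisors (which can be handled by recursing through the adjunction formula of Theorem~\ref{thm: adj intro}); this is the genuinely foliation-theoretic ingredient and is exactly where algebraic integrability is used. The argument also relies on the (standard but not purely formal) preservation of qdlt generalized pairs under their own MMP. By contrast, conditions (2) and (4) are routine once one has the negativity lemma and the elementary properties of MMP steps.
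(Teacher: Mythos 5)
Your handling of conditions (1), (2) and (4) of Definition \ref{defn: qdlt afs} is correct and coincides with the paper's argument: lc-ness is preserved because an MMP step does not decrease discrepancies, and for (4) the equality of discrepancies at an lc place forces $\phi$ to be an isomorphism near the generic point of its center, so the place remains an lc place of $(X',B_{X'},\Mm)$. The problem is your proof of condition (3). You reduce it to the claim that the contracted ray $R$ is $(K_X+B_X+\Mm_X)$-negative, by asserting that $R$ is spanned by a rational curve $C$ tangent to $\Ff$ lying in a general fibre, with $B^{\inv}\cdot C=0$ and $(K_{\Ff}-K_X)\cdot C\geq 0$. The cone theorem does not give this: for a birational MMP step the supporting function $K+A$ of $R$ is big, and in the big case (Proposition \ref{prop: cone big case}) the spanning rational curve is produced by adjunction to an nklt center on a qdlt modification, with no tangency to $\Ff$ and certainly no positioning in a general fibre; curves through general points tangent to $\Ff$ only appear in the non-big case. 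The inequality $(K_{\Ff}-K_X)\cdot C\geq 0$ via Property $(\ast)$ also presupposes an ACSS/Property-$(\ast)$ structure on $X$ itself, which a qdlt adjoint foliated structure does not carry. Worse, the conclusion you are aiming for is false in general: a $K$-negative ray need not be $(K_X+B_X+\Mm_X)$-negative (take $t$ close to $1$ and a ray on which $K_{\Ff}$ is very negative while $K_X+B_X$ is positive); the fact that foliated MMP steps are not in general steps of an MMP for the ambient generalized pair is precisely why the proposition needs a different argument.

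The paper's proof of (3) is local and avoids any such negativity claim: since $K_{X'}+B_{X'}+\Mm_{X'}$ is assumed $\Rr$-Cartier and $(X',\Ff',B',\Mm,t)$ is lc by (2), Proposition \ref{prop: adjoint lc implies X lc} applied on $X'$ shows that every lc center $W$ of $(X',B_{X'},\Mm)$ is an nklt center of $(X',\Ff',B',\Mm,t)$; the same discrepancy argument you used for (4) then shows $\phi^{-1}$ is an isomorphism near the generic point of $W$, so the qdlt property of $(X,B_X,\Mm)$ transfers to $(X',B_{X'},\Mm)$ near $W$, and this is all that condition (3) requires, since qdlt is checked only near generic points of lc centers. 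You should replace your global ``$\phi$ is a step of a $(K_X+B_X+\Mm_X)$-MMP'' step by this local argument; as written, that step is both unsupported and in general false, so the proof has a genuine gap.
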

\begin{proof}
    We let $B_X:=B^{\ninv}+\frac{1}{1-t}B^{\inv}$, and let $B_{X'}:=B'^{\ninv}+\frac{1}{1-t}B'^{\inv}$.
    
    We only need to check that all conditions of Definition \ref{defn: qdlt afs} holds for $(X',\Ff',B',\Mm,t)$. (1) of Definition \ref{defn: qdlt afs} is obvious while (2) of Definition \ref{defn: qdlt afs} follows from the fact that the minimal model program does not decrease discrepancies. 

    Let $E$ be an lc place of $(X',\Ff',B',\Mm,t)$. Then $E$ is also an lc place of $(X,\Ff,B,\Mm,t)$. Therefore, $\phi^{-1}$ is an isomorphism near the generic point of $\Center_{X'}E$. In particular, near the generic point of $\Center_{X'}E$, $\phi^{-1}:(X',B_{X'},\Mm)\rightarrow (X,B_X,\Mm)$ is an isomorphism. Thus $E$ is an lc place of $(X',B_{X'},\Mm)$. This implies (4) of Definition \ref{defn: qdlt afs}. 
    
    Let $W$ be any lc center of $(X',B_{X'},\Mm)$. By Proposition \ref{prop: adjoint lc implies X lc}, $W$ is an nklt center of $(X',\Ff',B',\Mm,t)$, so $\phi^{-1}:(X',B_{X'},\Mm)\rightarrow (X,B_X,\Mm)$ is an isomorphism near the generic point of $W$. Since $(X,B_X,\Mm)$ is qdlt, $(X',B_{X'},\Mm)$ is qdlt near $W$. This implies (3) of Definition \ref{defn: qdlt afs}. 
\end{proof}

\subsection{Existence of qdlt models}

\begin{thm}[Existence of $\Qq$-factorial qdlt models]\label{thm: eoqdlt model}
    Let $(X,\Ff,B,\Mm,t)/U$ be an algebraically integrable adjoint foliated structure such that $t\in [0,1)$.
    Let $B_t:=B\wedge(\Supp B^{\ninv}+(1-t)\Supp B^{\inv})$. Then there exists a projective birational morphism $h: X'\rightarrow X$ satisfying the following. Let $\Ff':=h^{-1}\Ff$, $$B':=h^{-1}_*B_t+\Exc(h)^{\ninv}+(1-t)\Exc(h)^{\inv}.$$
    Then the following hold.
    \begin{enumerate}
    \item $(X',\Ff',B',\Mm,t)$ is $\Qq$-factorial qdlt. In particular, $X'$ is klt.
    \item $h$ only extracts nklt places of $(X,\Ff,B,\Mm,t)$. In particular, if $(X,\Ff,B,\Mm,t)$ is lc, then
    $$K_{(X',\Ff',B',\Mm,t)}=h^*K_{(X,\Ff,B,\Mm,t)}.$$
    \end{enumerate}
\end{thm}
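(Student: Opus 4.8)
The plan is to obtain $X'$ as the output of a two-stage minimal model program over $X$, mimicking the construction of a $\Qq$-factorial proper ACSS modification (Theorem \ref{thm: eo acss model}) but pushed one step further so that, unlike in the purely foliated setting, the ambient variety $X'$ too ends up with mild singularities. We may assume $t>0$: when $t=0$, $(X,\Ff,B,\Mm,0)$ is the generalized pair $(X,B,\Mm)$, $B_0=B\wedge\Supp B$, and the assertion is exactly the existence of a $\Qq$-factorial dlt modification of $(X,B\wedge\Supp B,\Mm)$. We will also use repeatedly that a qdlt generalized pair has klt ambient variety, so that once $(X',\Ff',B',\Mm,t)$ is shown to be qdlt, the klt-ness of $X'$ in (1) is automatic.

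First, I would take a foliated log resolution $g\colon W\to X$ of $(X,\Ff,\Supp B,\Mm)$, associated with an equidimensional toroidal contraction $f\colon (W,\Sigma_W)\to(Z,\Sigma_Z)$ such that $\Supp g^{-1}_\ast B\cup\Exc(g)\subset\Supp\Sigma_W$; write $\Ff_W:=g^{-1}\Ff$, $E:=\Exc(g)$, let $G_W$ be the vertical$/Z$ part of $\Sigma_W$, and put $B_W:=g^{-1}_\ast B_t+E^{\ninv}+(1-t)E^{\inv}$. Since $(W,\Ff_W,B_W,\Mm)$ is foliated log smooth, Lemma \ref{lem: foliated log smooth imply lc} shows $(W,\Ff_W,B_W,\Mm,t)$ is lc; moreover $(W,B_W^{\ninv}+\tfrac1{1-t}B_W^{\inv},\Mm)$ is toroidal, hence qdlt, and, using that $\Ff_W$ is induced by $f$, its lc places are exactly the lc places of the quintuple. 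Thus $(W,\Ff_W,B_W,\Mm,t)$ is itself $\Qq$-factorial qdlt, and the only defect of $g$ is that it may extract prime divisors that are not nklt places of $(X,\Ff,B,\Mm,t)$; these must be contracted.

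To contract them, I would run an MMP over $X$ in two stages, corresponding to the splitting $K_{(W,\Ff_W,B_W,\Mm,t)}=tN_W+\left((1-t)(K_W+\Delta_W)+\Mm_W\right)$, where $N_W:=K_{\Ff_W}+E^{\ninv}+g^{-1}_\ast(B_t^{\ninv})$ and $\Delta_W$ is a boundary. In the first stage, I would run an MMP$/X$ with scaling of an ample divisor for the $\Qq$-factorial ACSS structure on $W$ with boundary $E^{\ninv}+g^{-1}_\ast(B_t^{\ninv})$, whose log canonical divisor is $N_W$; arguing as in the proof of Theorem \ref{thm: eo acss model} and invoking the MMP for algebraically integrable foliations (\cite{CHLX23,LMX24b}), this terminates with a $\Qq$-factorial ACSS model $Y/Z$ on which the transform $N_Y$ of $N_W$ is nef$/X$ and such that $Y\to X$ extracts only prime divisors $D$ with $a(D,\Ff,B^{\ninv})\le-\epsilon_\Ff(D)$. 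In the second stage, absorbing the nef$/X$ divisor $N_Y$ into a nef$/X$ moduli $\bb$-divisor, $K_{(Y,\Ff_Y,B_Y,\Mm,t)}$ becomes, up to a positive scalar, the log canonical divisor of a $\Qq$-factorial qdlt generalized pair over $X$; running its MMP$/X$ with scaling of an auxiliary ample divisor via \cite{BZ16,HL23} (using that, since $Y\to X$ is birational, the divisor in play is, modulo numerical triviality over $X$, effective and exceptional over $X$), I would reach a model $V$ with $K_{(V,\Ff',B',\Mm,t)}$ nef$/X$. Let $h\colon X':=V\to X$ be the induced morphism. The general negativity lemma \cite[Lemma 3.3]{Bir12} applied over $X$ then gives $K_{(V,\Ff',B',\Mm,t)}=h^\ast K_{(X,\Ff,B,\Mm,t)}$ whenever $(X,\Ff,B,\Mm,t)$ is lc, and, combined with the discrepancy bound from the first stage, shows that $h$ extracts only nklt places of $(X,\Ff,B,\Mm,t)$; this proves (2), and $B_V=h^{-1}_\ast B_t+\Exc(h)^{\ninv}+(1-t)\Exc(h)^{\inv}=B'$ then follows at once.

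It remains to prove (1) for $X'=V$. $\Qq$-factoriality is preserved along every step of both MMPs; $(V,\Ff',B',\Mm,t)$ is lc because an MMP step does not decrease discrepancies and $(W,\Ff_W,B_W,\Mm,t)$ is lc; and $(V,\Ff',B',\Mm,t)$ is qdlt by Propositions \ref{prop: adjoint lc implies X lc} and \ref{prop: qdlt preserved under mmp}, the base case being $Y$, which is qdlt thanks to its ACSS structure; then $X'$ is klt by the remark above. The step I expect to be the main obstacle is the orchestration of the two MMPs over $X$: ensuring (i) that the first, foliated MMP terminates at an ACSS model $Y$ on which $N_Y$ is genuinely nef$/X$, so that it can legitimately be placed in the nef moduli $\bb$-divisor of a generalized pair, and (ii) that the second, generalized-pair MMP over $X$ terminates even though $X$ is only assumed normal (not $\Qq$-factorial, not $\Qq$-Gorenstein), and that the general negativity lemma then delivers both the exact crepant identity and the precise statement that only nklt places are extracted. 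Once these are in place, the remaining verifications are routine given the preservation results of this section.
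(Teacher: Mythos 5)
Your overall architecture does match the paper's: pass to a foliated log resolution, run a foliated MMP over $X$, absorb the resulting nef$/X$ foliated canonical divisor into the moduli part of a qdlt generalized pair, run a generalized-pair MMP over $X$, and verify qdlt via Propositions \ref{prop: adjoint lc implies X lc} and \ref{prop: qdlt preserved under mmp}. However, there are genuine gaps at exactly the two places you flag as "the main obstacle", and your proposed resolutions do not work. For stage 1 you claim that "arguing as in the proof of Theorem \ref{thm: eo acss model}" the foliated MMP$/X$ terminates with $N_Y$ nef$/X$; but that proof only contracts a very exceptional divisor after finitely many steps and never produces a relative minimal model, and termination of foliated (or generalized-pair) flips is not available, so neither your stage-1 nefness nor your stage-2 "reach a model $V$ with $K$ nef$/X$" is justified. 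The paper's device is the auxiliary divisor $\Nn:=\overline{-A_W}$ with $-A_W$ ample$/X$: this makes \cite[Theorem 16.1.4]{CHLX23} applicable, giving a \emph{good minimal model}$/X$ in stage 1 (so there genuinely is a nef$/X$ $\bb$-divisor to absorb), and in stage 2 the paper does not need termination at all — it only needs \cite[Proposition 3.9]{HL22} to contract the exceptional positive part $E^+$ after finitely many steps, not the negativity lemma applied to a relative minimal model.

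The second gap is in your deduction of (2). The relevant divisor on $Y$ is \emph{not} "effective and exceptional over $X$ modulo numerical triviality over $X$": relative to $p^\ast K_{(X,\Ff,B,\Mm,t)}$ it equals $E^+-E^-$, where $E^-$ contains $p^{-1}_\ast(B-B_t)$ (nonzero whenever $B$ exceeds the lc bounds) and picks up negative contributions from nlc places. Moreover your "discrepancy bound from the first stage" is phrased in terms of $a(D,\Ff,B^{\ninv})$, which need not even be defined here since only the adjoint combination $tK_{\Ff}+(1-t)K_X+B+\Mm_X$ is assumed $\Rr$-Cartier; and even when defined, a divisor with $a(D,\Ff,B^{\ninv})\leq-\epsilon_{\Ff}(D)$ need not be an nklt place of $(X,\Ff,B,\Mm,t)$ — this mismatch is precisely why extra exceptional divisors appear and must be contracted. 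To get (2) one has to show that \emph{every} $p$-exceptional divisor that is not an adjoint nklt place lies in $E^+$ and is contracted; the paper does this by an explicit coefficient computation against $p^\ast K_{(X,\Ff,B,\Mm,t)}$, and — because the ample$/X$ perturbation shifts these coefficients by $t\,n_i$ — it must first calibrate $A_W$ so that $t\,\mult_D A_W<s$, where $s$ is the discrepancy gap of the finitely many $g$-exceptional divisors. This calibration, absent from your proposal, is what keeps $\Supp E^+$ equal to the set of non-nklt exceptional places; without it a non-nklt place with discrepancy very close to the lc bound could survive to $X'$, violating (2) and the crepant-pullback statement. Once these points are repaired (and the qdlt condition (4) on $X'$ is checked directly via the descent of the absorbed nef part near lc centers, as in the paper, rather than by asserting that $Y$ is qdlt), the rest of your outline is consistent with the paper's argument.
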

\begin{proof}
We let $g: W\rightarrow X$ be a foliated log resolution of $(X,\Ff,B,\Mm)$ associated with an equidimensional toroidal contraction $f: (X,\Sigma)\rightarrow (Z,\Sigma_Z)$, $\Ff_W:= g^{-1}\Ff$, and 
$$B_W:=g^{-1}_*(B\wedge\Supp B^{\ninv})+\Exc(g)^{\ninv}$$
Let $G_W$ be the $\Ff_W$-invariant part of $\Sigma$. Since $g$ only extract finitely many divisors, there exists a positive real number $s$ satisfying the following: for any $g$-exceptional prime divisor $D$, if 
$$a(D,X,\Ff,B,\Mm,t)>-t\epsilon_{\Ff}(D)-(1-t)$$
then 
$$a(D,X,\Ff,B,\Mm,t)>-t\epsilon_{\Ff}(D)-(1-t)+s.$$

Since $g$ is birational, there exists an anti-ample$/X$ divisor $A_W\geq 0$ such that $\Supp\Exc(g)\subset\Supp A_W$. Possibly rescaling $A_W$, we may assume that
$$t\mult_DA_W<s$$
for any component $D$ of $A_W$.

Let $\Nn:=\overline{-A_W}$. Since $(W,\Ff_W,B_W+G_W,\Mm)$ is foliated log smooth, $(W,\Ff_W,B_W+G_W,\Mm+\Nn)/X$ is foliated log smooth. In particular, $(W,\Ff_W,B_W,\Mm+\Nn;G)/Z$ is $\Qq$-factorial ACSS (cf. \cite[Lemma 7.3.3]{CHLX23}).  

 Since $-A_W$ is ample$/X$, by \cite[Theoerm 16.1.4]{CHLX23}, we may run a 
$$(K_{\Ff_W}+B_W+\Mm_W+\Nn_W)\text{-MMP}/X$$
with scaling of an ample$/X$ divisor which terminates with a good minimal model $(Y,\Ff_Y,B_Y,\Mm+\Nn)/X$ of $(W,\Ff_W,B_W,\Mm+\Nn)/X$. We let $A_Y$ and $G_Y$ be the images of $A_W$ and $G_W$ on $Y$ respectively and let $p: Y\rightarrow X$ be the induced morphism. Then $(Y,\Ff_Y,B_Y,\Mm+\Nn;G_Y)/Z$ is ACSS, so $(Y,B_Y+G_Y,\Mm+\Nn)/Z$ is qdlt. In particular, 
$$(Y,\Ff_Y,B_Y+(1-t)G_Y,\Mm+t\Nn;t)$$ is lc. Let 
$$B_Y':=p^{-1}_*B_t+\Exc(p)^{\ninv}+(1-t)\Exc(p)^{\inv}.$$ We claim that  $B_Y+(1-t)G_Y\geq B_Y'$ and $B_Y+(1-t)G_Y-B_Y'$ is $\Ff_Y$-invariant. To see this: let $D_Y$ be a prime divisor on $Y$ that is a component of $B_Y'$. Let $D=p_*D_Y$ and let $D_W:=\Center_WD_Y$. Then:
\begin{itemize}
    \item If $D_Y$ 
    is not $\Ff_Y$-invariant and is not exceptional$/X$, then 
    \begin{align*}
\mult_{D_Y}(B_Y+(1-t)G_Y)&=\mult_{D_Y}B_Y=\mult_{D_W}B_W=\mult_D(B\wedge\Supp B^{\ninv})\\
    &=\mult_DB_t=\mult_{D_Y}p^{-1}_*B_t=\mult_{D_Y}B_Y'. 
    \end{align*}
    \item If $D_Y$ is not $\Ff_Y$-invariant and is exceptional$/X$, then 
    \begin{align*}
\mult_{D_Y}(B_Y+(1-t)G_Y)&=\mult_{D_Y}B_Y=\mult_{D_W}B_W=\mult_{D_W}\Exc(g)^{\ninv}\\
    &=1=\mult_{D_Y}\Exc(p)^{\ninv}=\mult_{D_Y}B_Y'. 
    \end{align*}
    \item If $D_Y$ is $\Ff_Y$-invariant and is not exceptional$/X$, then 
    \begin{align*}
\mult_{D_Y}(B_Y+(1-t)G_Y)&=(1-t)\mult_{D_Y}G_Y=1-t\\
    &\geq\mult_DB_t=\mult_{D_Y}p^{-1}_*B_t=\mult_{D_Y}B_Y'. 
    \end{align*}
    \item If $D_Y$ is $\Ff_Y$-invariant and is exceptional$/X$, then 
    \begin{align*}
\mult_{D_Y}(B_Y+(1-t)G_Y)&=(1-t)\mult_{D_Y}G_Y\\
&=1-t=(1-t)\mult_{D_Y}\Exc(p)^{\inv}=\mult_{D_Y}B_Y'. 
    \end{align*}
\end{itemize}
Therefore, $B_Y+(1-t)G_Y\geq B_Y'$ and $B_Y+(1-t)G_Y-B_Y'$ is $\Ff_Y$-invariant. Let $B_Y'':=\frac{1}{1-t}(B_Y'-tB_Y)$. Then $B_Y+G_Y\geq B_Y''$ and 
$$B_Y''=B_Y'^{\ninv}+\frac{1}{1-t}B_Y'^{\inv}\geq 0.$$ In particular, $(Y,B_Y'',\Mm)$ is qdlt. Thus
\begin{align*}
  &tK_{\Ff_Y}+(1-t)K_Y+B_Y'+\Mm_Y+t\Nn_Y\\
  =&t(K_{\Ff_Y}+B_Y+\Mm_Y+\Nn_Y)+(1-t)(K_Y+B_Y''+\Mm_Y)\\
  =&(1-t)\left((K_Y+B_Y''+\Mm_Y)+\frac{t}{1-t}(K_{\Ff_Y}+B_Y+\Mm_Y+\Nn_Y)\right),
\end{align*}
where $(Y,B_Y'',\Mm)$ is a qdlt generalized pair and $K_{\Ff_Y}+B_Y+\Mm_Y+\Nn_Y$ is nef$/X$, so
$$\left(Y,B_Y'',\Pp:=\Mm+\overline{\frac{t}{1-t}(K_{\Ff_Y}+B_Y+\Mm_Y+\Nn_Y)}\right)/X$$
is a qdlt generalized pair. By \cite[Lemma 4.4]{BZ16}, we may run a 
$$(tK_{\Ff_Y}+(1-t)K_Y+B_Y'+\Mm_Y+t\Nn_Y)\text{-MMP}/X$$
with scaling of an ample divisor.

Let $E_i$ be the $p$-exceptional prime divisors and let
$$tK_{\Ff_Y}+(1-t)K_Y+p^{-1}_*B+\sum e_iE_i+\Mm_Y:=p^*(tK_{\Ff}+(1-t)K_X+B+\Mm_X).$$
By our construction, we may write $-\Nn_Y=\sum n_iE_i+L_Y$ such that $tn_i<s$ for each $i$, $L_Y\geq 0$, and no component of $L_Y$ is exceptional over $X$.
Then
\begin{align*}
&tK_{\Ff_Y}+(1-t)K_Y+B_Y'+\Mm_Y+t\Nn_Y\\
\sim_{\mathbb R,X}&B_Y'-p^{-1}_*B-\sum e_iE_i+t\Nn_Y\\
=&p^{-1}_*(B_t-B)+\sum (t\epsilon_{\Ff}(E_i)+(1-t)-e_i-tn_i)E_i-tL_Y\\
:=&E^+-E^-,
\end{align*}
where $E^+\geq 0,E^{-}\geq 0$ have no common component. Then $E^+$ is exceptional$/X$, and $\Supp E^+$ consists of exactly all $E_i$ such that 
$$t\epsilon_{\Ff}(E_i)+(1-t)-e_i-tn_i>0.$$
Since $e_i=-a(E,X,\Ff,B,\Mm,t)$, $\Supp E^+$ consists of exactly all $E_i$ such that
$$a(E,X,\Ff,B,\Mm,t)>-t\epsilon_{\Ff}(E_i)-(1-t)+tn_i.$$
Since $tn_i<s$ for each $i$, 
by our construction of $s$, $\Supp E^+$ consists of exactly all $E_i$ such that
$$a(E,X,\Ff,B,\Mm,t)>-t\epsilon_{\Ff}(E_i)-(1-t).$$
By \cite[Proposition 3.9]{HL22}, the 
$$(tK_{\Ff_Y}+(1-t)K_Y+B_Y'+\Mm_Y+t\Nn_Y)\text{-MMP}/X$$
contracts $E^+$ after finitely many steps and achieves a model $X'$. We let $h: X'\rightarrow X$ be the induced birational morphism, $\psi: Y\dashrightarrow X'$ the induced birational map, $\Ff':=h^{-1}_*\Ff$, $$B':=h^{-1}_*B_t+\Exc(h)^{\ninv}+(1-t)\Exc(h)^{\inv},$$
and $B_{X'}:=B'^{\ninv}+\frac{1}{1-t}B'^{\inv}$. We show that $h$ satisfies our requirement. 

(1) The $\Qq$-factoriality is clear as $\psi$ is a sequence of steps of an MMP of a  $\Qq$-factorial lc generalized pair. Next we check the conditions of Definition \ref{defn: qdlt afs}.

Definition \ref{defn: qdlt afs}(1): it is in the assumption.

Definition \ref{defn: qdlt afs}(2): $B'$ is the image of $B_Y'$ on $X'$. Since $B_Y+(1-t)G_Y\geq B_Y'$,
$$(Y,\Ff_Y,B_Y',\Mm+t\Nn;t)$$ is lc. Since $\psi$ is a sequence of steps of a 
$$(tK_{\Ff_Y}+(1-t)K_Y+B_Y'+\Mm_Y+t\Nn_Y)\text{-MMP}/X,$$
$(X',\Ff',B',\Mm+t\Nn;t)$
is lc, so $(X',\Ff',B',\Mm;t)$ is lc.

Definition \ref{defn: qdlt afs}(3): Since 
$$B_Y''=B_Y'^{\ninv}+\frac{1}{1-t}B_Y'^{\inv},$$
$B_{X'}$ is the image of $B_Y''$ on $X'$. Since $(Y,B''_{Y},\Pp)$ is qdlt and $\psi$ is a sequence of steps of a $(K_{Y}+B''_{Y}+\Pp_Y)$-MMP, $(X',B_{X'},\Pp)$ is qdlt. Since $\Pp-\Mm$ is nef, $(X',B_{X'},\Mm)$ is qdlt. 

Definition \ref{defn: qdlt afs}(4): Let $E$ be an lc place of $(X',\Ff',B',\Mm,t)$. Then
\begin{align*}
-t\epsilon_{\Ff}(E)-(1-t)&=a(E,X',\Ff',B',\Mm,t)\\
&\geq a(E,X',\Ff',B',\Mm+t\Nn,t)\\
&\geq a(E,Y,\Ff_Y,B_Y',\Mm+t\Nn,t)\\
&=ta(E,\Ff_Y,B_Y,\Mm+\Nn)+(1-t)a(E,Y,B_Y'',\Mm)\\
&=ta(E,\Ff_Y,B_Y,\Mm+\Nn)+(1-t)a(E,Y,B_Y'',\Pp)\\
&\geq -t\epsilon_{\Ff}(E)-(1-t).
\end{align*}
Therefore, $a(E,Y,B_Y'',\Pp)=-1$ and 
$$a(E,X',\Ff',B',\Mm+t\Nn,t)=a(E,Y,\Ff_Y,B_Y',\Mm+t\Nn,t),$$
so $E$ is an lc place of $(Y,B_Y'',\Pp)$ and $\psi^{-1}$ is an isomorphism near the generic point of $\Center_{X'}E$. In particular,
$$a(E,X',B_{X'},\Pp)=a(E,Y,B_{Y}'',\Pp)=-1.$$
Since $(X',B_{X'},\Pp)$ is qdlt, $\Pp$ descends to $X'$ near the generic point of $\Center_{X'}E$. Therefore, 
$$a(E,X',B',\Mm)=a(E,X',B_{X'},\Pp)=-1.$$
Thus $E$ is an lc place of $(X',B',\Mm)$.

Finally, the in particular part: since $(X',B_{X'},\Mm)$ is $\Qq$-factorial qdlt, $X'$ is klt.

(2) It follows from the fact that any $p$-exceptional divisor that is not an nklt place of $(X,\Ff,B,\Mm,t)$ is contained in $\Supp E^+$ and is contracted by $\psi$. 
\end{proof}

\begin{defn}
    Notation as in Theorem \ref{thm: eoqdlt model}. For any such $h$, we call
    $$h: (X',\Ff',B',\Mm,t)\rightarrow (X,\Ff,B,\Mm,t)$$
    a \emph{$\Qq$-factorial qdlt modification}, and say that $(X',\Ff',B',\Mm,t)$ is a \emph{$\Qq$-factorial qdlt model} of $(X,\Ff,B,\Mm,t)$. We also say that $h$ is a \emph{$\Qq$-factorial qdlt modification} of $(X,\Ff,B,\Mm,t)$.
\end{defn}

\begin{proof}[Proof of Theorem \ref{thm: qdlt model intro}]
    The Theorem follows from Theorem \ref{thm: eoqdlt model} and Lemma \ref{lem: perturb qdlt afs}.
\end{proof}

Sometimes we need the following strengthened version of  Theorem \ref{thm: qdlt model intro} which indicates that we can extract lc places via $\Qq$-factorial qdlt modifications.

\begin{prop}\label{prop: qdlt model extract certain divisors}
Let $(X,\Ff,B,\Mm,t)/U$ be an algebraically integrable adjoint foliated structure such that $t<1$. Let $E_1,\dots,E_m$ be lc places of $(X,\Ff,B,\Mm,t)$ such that $(X,\Ff,B,\Mm,t)$ is lc near the generic point of $\Center_{X_i}E_i$ for each $i$. Then there exists a $\Qq$-factorial qdlt modification $h: (X',\Ff',B',\Mm,t)\rightarrow (X,\Ff,B,\Mm,t)$ such that $E_1,\dots,E_m$ are on $X'$.
\end{prop}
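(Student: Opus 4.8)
The plan is to deduce the statement from the already-established existence of $\Qq$-factorial qdlt modifications (Theorem~\ref{thm: eoqdlt model}), combined with the standard operation of extracting a prescribed finite set of log canonical places from a $\Qq$-factorial qdlt generalized pair. First I would discard those $E_i$ which are already divisors on $X$, since their strict transforms automatically survive on every birational model admitting a morphism to $X$; thus we may assume that each $E_i$ is exceptional over $X$. By Theorem~\ref{thm: eoqdlt model} there is a $\Qq$-factorial qdlt modification $h_0\colon (X_0,\Ff_0,B_0,\Mm,t)\to (X,\Ff,B,\Mm,t)$. Since $(X,\Ff,B,\Mm,t)$ is lc in a neighbourhood of $\Center_X E_i$, Theorem~\ref{thm: eoqdlt model}(2) applied over that neighbourhood shows that $h_0$ is crepant there, so $a(E_i,X_0,\Ff_0,B_0,\Mm,t)=a(E_i,X,\Ff,B,\Mm,t)=-t\epsilon_{\Ff}(E_i)-(1-t)$; hence each $E_i$ is an lc place of $(X_0,\Ff_0,B_0,\Mm,t)$, and therefore, by Definition~\ref{defn: qdlt afs}(4), an lc place of the $\Qq$-factorial qdlt generalized pair $(X_0,B_{X_0},\Mm)$, where $B_{X_0}:=B_0^{\ninv}+\frac{1}{1-t}B_0^{\inv}$.

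Next I would extract exactly $E_1,\dots,E_m$ from $(X_0,B_{X_0},\Mm)$: by standard arguments — take a log resolution of $(X_0,B_{X_0},\Mm)$ extracting the $E_i$, slightly lower the coefficients of all other exceptional divisors, and run the resulting generalized pair MMP$/X_0$ with scaling of an ample divisor, using \cite{BZ16,HL23}, the contracted divisors being pinned down by Lemma~\ref{lem: nz keep under pullback} — there is a crepant projective birational morphism $g\colon (X_1,B_{X_1},\Mm)\to (X_0,B_{X_0},\Mm)$ with $X_1$ $\Qq$-factorial, $(X_1,B_{X_1},\Mm)$ qdlt, $\Exc(g)=\sum_i E_i$, and $B_{X_1}=g^{-1}_\ast B_{X_0}+\sum_i E_i$. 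Set $h:=h_0\circ g$, $\Ff':=h^{-1}\Ff$, and $B':=h^{-1}_\ast B_t+\Exc(h)^{\ninv}+(1-t)\Exc(h)^{\inv}$. Using $\Exc(h)=g^{-1}_\ast\Exc(h_0)+\sum_i E_i$, one computes directly that $B'=g^{-1}_\ast B_0+\sum_i\bigl(E_i^{\ninv}+(1-t)E_i^{\inv}\bigr)$ and $B'^{\ninv}+\frac{1}{1-t}B'^{\inv}=B_{X_1}$.

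Finally I would check that $h\colon (X_1,\Ff',B',\Mm,t)\to (X,\Ff,B,\Mm,t)$ is a $\Qq$-factorial qdlt modification with $E_1,\dots,E_m$ on $X_1$, the latter being clear by construction. Since each $E_i$ is an lc place of $(X_0,\Ff_0,B_0,\Mm,t)$ and occurs in $B'$ with coefficient $t\epsilon_{\Ff}(E_i)+(1-t)$, the morphism $g$ is crepant for the adjoint foliated structure, i.e. $K_{(X_1,\Ff',B',\Mm,t)}=g^\ast K_{(X_0,\Ff_0,B_0,\Mm,t)}$; hence $(X_1,\Ff',B',\Mm,t)$ is lc (and crepant over $(X,\Ff,B,\Mm,t)$ whenever the latter is lc), $X_1$ is $\Qq$-factorial and klt, $(X_1,B_{X_1},\Mm)$ is qdlt, and every lc place of $(X_1,\Ff',B',\Mm,t)$ is an lc place of $(X_0,\Ff_0,B_0,\Mm,t)$, hence of $(X_0,B_{X_0},\Mm)$, hence of $(X_1,B_{X_1},\Mm)$; so Definition~\ref{defn: qdlt afs}(1)--(4) is satisfied. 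Moreover, any $h$-exceptional prime divisor is either $h_0$-exceptional, hence an nklt place of $(X,\Ff,B,\Mm,t)$ by Theorem~\ref{thm: eoqdlt model}(2), or one of the $E_i$, which is an lc, hence nklt, place; so $h$ extracts only nklt places, and $B'$ has the required shape. The step I expect to require the most care is the extraction of \emph{exactly} $E_1,\dots,E_m$ from $(X_0,B_{X_0},\Mm)$, keeping $\Qq$-factoriality and the qdlt property while contracting nothing else; once this is in place, the remaining verifications reduce to routine bookkeeping with coefficients (and, if property \emph{(5)} of Theorem~\ref{thm: qdlt model intro} is also wanted, one further applies Lemma~\ref{lem: perturb qdlt afs}).
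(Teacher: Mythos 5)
Your skeleton (take a $\Qq$-factorial qdlt modification of $(X,\Ff,B,\Mm,t)$, use lc-ness near the centers to see that the $E_i$ remain lc places upstairs, transfer them via Definition~\ref{defn: qdlt afs}(4) to lc places of the qdlt generalized pair $(X_0,B_{X_0},\Mm)$, then extract them) matches the paper up to the extraction step, but that step, as you describe it, does not work, and it is exactly where the paper takes a different route. With your coefficient choice (coefficient $1$ on the $E_i$, strict transform of $B_{X_0}$ unchanged, coefficient $1-\delta$ on the remaining exceptional divisors of the log resolution $g_W\colon W\to X_0$), write $K_W+B'_W+\Mm_W=g_W^\ast(K_{X_0}+B_{X_0}+\Mm_{X_0})+F$. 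The coefficient of $F$ along an exceptional prime divisor $P\notin\{E_i\}$ is its generalized log discrepancy minus $\delta$, which is $-\delta<0$ whenever $P$ is an lc place of $(X_0,B_{X_0},\Mm)$ not among the $E_i$; such $P$ are typically present on any log resolution of a non-klt qdlt pair (e.g.\ divisors over strata of $\lfloor B_{X_0}\rfloor$). Such a $P$ can never be contracted by your MMP with the outcome you want: if it were, discrepancy monotonicity along the MMP would give $a(P,X_1,B_{X_1},\Mm)>-(1-\delta)$, whereas crepancy of $g$ forces $a(P,X_1,B_{X_1},\Mm)=a(P,X_0,B_{X_0},\Mm)=-1$. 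Hence $\Exc(g)=\sum_iE_i$ fails in general. The correct mechanism is the one behind \cite[Corollary 1.4.3]{BCHM10} (which is what the paper cites): use that $X_0$ itself is klt to scale $B_{X_0}$ and $\Mm$ down to a generalized klt pair for which the $E_i$ still have nonpositive discrepancy, assign to the $E_i$ their new crepant coefficients (not $1$), and coefficient $1$ to the divisors to be contracted.

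Second, and independently of the above, your assertion that the extraction can be arranged so that $(X_1,B_{X_1},\Mm)$ is qdlt is not justified: the dlt/qdlt property preserved by the MMP is that of the auxiliary (perturbed) boundary you run the MMP for, not of the crepant boundary $B_{X_1}=g^{-1}_\ast B_{X_0}+\sum_iE_i$, and the statement that a crepant $\Qq$-factorial extraction of lc places of a qdlt generalized pair is again qdlt would need a separate (local toroidal) argument that you do not give -- yet Definition~\ref{defn: qdlt afs}(3) for your composite $h$ rests entirely on it. This is precisely what the paper's proof avoids: after a plain extraction $p\colon Y\to X''$ of $E_1,\dots,E_m$ it claims no qdlt property for $Y$; instead it pulls back the adjoint foliated structure crepantly to $(Y,\Ff_Y,B_Y,\Mm,t)$ (noting $B_Y\geq 0$), applies Theorem~\ref{thm: eoqdlt model} once more to get a $\Qq$-factorial qdlt modification $q\colon X'\to Y$, and sets $h=g\circ p\circ q$; the $E_i$ survive because $q$ is a morphism, the qdlt conditions are supplied by $q$, and the ``only nklt places extracted'' condition follows since $p\circ q$ only extracts lc places of the structure on $X''$. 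Your opening reduction and your final coefficient bookkeeping are fine; the gap is concentrated in the one-shot ``extract exactly the $E_i$ and stay qdlt'' step, which either needs the klt perturbation plus a genuine toroidality argument, or should simply be replaced by the paper's second application of Theorem~\ref{thm: eoqdlt model}.
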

\begin{proof}
First we take a $\Qq$-factorial qdlt modification $g: (X'',\Ff'',B'',\Mm,t)\rightarrow (X,\Ff,B,\Mm,t)$ whose existence is guaranteed by Theorem \ref{thm: eoqdlt model}. Since $(X,\Ff,B,\Mm,t)$ is lc near the generic point of $\Center_{X_i}E_i$ for each $i$, over the generic point of $\Center_XE_i$ for each $i$, $g$ only extract lc places of $(X,\Ff,B,\Mm,t)$. Therefore, over the generic point of $\Center_XE_i$ for each $i$, $K_{(X'',\Ff'',B'',\Mm,t)}=g^*K_{(X,\Ff,B,\Mm,t)}$. Thus $E_1,\dots,E_m$ are lc places of $(X'',\Ff'',B'',\Mm,t)$. By Theorem \ref{thm: eoqdlt model}(4), $E_1,\dots,E_m$ are lc places of $(X'',B_{X''},\Mm)$ where $B_{X''}=B''^{\ninv}+\frac{1}{1-t}B''^{\inv}$. Since $X''$ is $\Qq$-factorial klt, by \cite[Corollary 1.4.3]{BCHM10}, there exists a birational morphism $p: Y\rightarrow X''$ which exactly extracts $E_1,\dots,E_m$. Let 
$$K_{(Y,\Ff_Y,B_Y,\Mm,t)}=p^*K_{(X'',\Ff'',B'',\Mm,t)},$$
then $B_Y\geq 0$ and $p$ only extracts lc places of $(X'',\Ff'',B'',\Mm,t)$. Let $q: (X',\Ff',B',\Mm,t)\rightarrow (Y,\Ff_Y,B_Y,\Mm,t)$ be a $\Qq$-factorial qdlt modification and let $h:=g\circ p\circ q$. 

Then $h: (X',\Ff',B',\Mm,t)\rightarrow (X,\Ff,B,\Mm,t)$ satisfies our requirements. More precisely, since $p$ extracts $E_1,\dots,E_m$, $E_1,\dots,E_m$ are also on $X'$. The fact  $q: (X',\Ff',B',\Mm,t)\rightarrow (Y,\Ff_Y,B_Y,\Mm,t)$ is a $\Qq$-factorial qdlt modification guarantees (2-5) of Theorem \ref{thm: eoqdlt model} to hold for $h$. Since $g$ only extracts nklt places of $(X,\Ff,B,\Mm,t)$, $p\circ q$ only extracts lc places of $(X'',\Ff'',B'',\Mm,t)$, and any nklt place of $(X'',\Ff'',B'',\Mm,t)$ is also an nklt place of $(X,\Ff,B,\Mm,t)$, (1) of Theorem \ref{thm: eoqdlt model}  holds for $h$. The proposition follows.
\end{proof}

\section{Adjunction formula}\label{sec: an adjunction formula}

\begin{thm}[$=$Theorem \ref{thm: adj intro}]\label{thm: afs adj}
    Let $(X,\Ff,B,\Mm,t)/U$ be an algebraically integrable adjoint foliated structure and $\tilde S$ a prime divisor on $X$ that is an lc place of $(X,\Ff,B,\Mm,t)$. Let $S$ be the normalization of $\tilde S$, and $\Mm^S:=\Mm|_S$. Then there exists a canonically defined restricted foliation $\Ff_S$ on $S$ and a canonically defined $\Rr$-divisor $B_S\geq 0$ on $S$ such that
    $$tK_{\Ff_S}+(1-t)K_X+B_S+\Mm^S_S=(tK_{\Ff}+(1-t)K_X+B+\Mm_X)|_S.$$
Moreover, if $(X,\Ff,B,\Mm,t)/U$ is lc, then $(S,\Ff_S,B_S,\Mm,t)/U$ is lc.
\end{thm}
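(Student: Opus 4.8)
Throughout I may assume $0<t<1$: the case $t=1$ is the foliated adjunction formula for algebraically integrable foliations along invariant divisors \cite[Proposition 3.2]{ACSS21}, \cite[Theorem 2.4.2]{CHLX23} (and \cite[Theorem 3.16]{CS23b} along non-invariant divisors), while the case $t=0$ is generalized divisorial adjunction \cite{BZ16}. The plan is the one indicated in the introduction: split $K_{(X,\Ff,B,\Mm,t)}$ as a $t$-multiple of the canonical divisor of a generalized foliated quadruple plus a $(1-t)$-multiple of the canonical divisor of a generalized pair, each of which has $\tilde S$ as an lc place; restrict each summand by the adjunction formula available in its own category; and recombine by taking the corresponding weighted sum of the two restricted boundaries.

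\emph{The $\Qq$-factorial case.} Assume first that $X$ is $\Qq$-factorial, so that $K_X$, $K_\Ff$ and $\Mm_X$ are all $\Rr$-Cartier. Put $c:=\mult_{\tilde S}B$, so that $c=1$ if $\tilde S$ is not $\Ff$-invariant and $c=1-t$ if it is, and write $\tilde B:=B-c\tilde S\ge 0$. Since $t\,\epsilon_{\Ff}(\tilde S)+(1-t)=c$ in both cases, one has the identity
$$K_{(X,\Ff,B,\Mm,t)}=t\bigl(K_\Ff+\epsilon_{\Ff}(\tilde S)\,\tilde S\bigr)+(1-t)\Bigl(K_X+\tilde S+\tfrac{1}{1-t}\tilde B+\tfrac{1}{1-t}\Mm_X\Bigr).$$
Now $\tilde S$ is a foliated lc place of the algebraically integrable generalized foliated quadruple $(X,\Ff,\epsilon_{\Ff}(\tilde S)\tilde S,\bm{0})$, so by \cite[Theorem 3.16]{CS23b} when $\tilde S$ is not $\Ff$-invariant and by \cite[Proposition 3.2]{ACSS21}, \cite[Theorem 2.4.2]{CHLX23} when it is, there are a canonically defined restricted foliation $\Ff_S$ on $S$ and an $\Rr$-divisor $B_{\Ff,S}\ge 0$ with $\bigl(K_\Ff+\epsilon_{\Ff}(\tilde S)\tilde S\bigr)|_S=K_{\Ff_S}+B_{\Ff,S}$. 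Likewise $\tilde S$ is an lc place of the generalized pair $\bigl(X,\tilde S+\tfrac{1}{1-t}\tilde B,\tfrac{1}{1-t}\Mm\bigr)$, so generalized divisorial adjunction \cite{BZ16} provides an $\Rr$-divisor $B_{X,S}\ge 0$ with $\bigl(K_X+\tilde S+\tfrac{1}{1-t}\tilde B+\tfrac{1}{1-t}\Mm_X\bigr)|_S=K_S+B_{X,S}+\tfrac{1}{1-t}\Mm^S_S$. Restricting the displayed identity to $S$ and setting $B_S:=t\,B_{\Ff,S}+(1-t)\,B_{X,S}\ge 0$ yields $K_{(X,\Ff,B,\Mm,t)}|_S=K_{(S,\Ff_S,B_S,\Mm^S,t)}$, which is (1). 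For the ``moreover'' part I would take a foliated log resolution of $(X,\Ff,\Supp(B+\tilde S))$ on which the two summands, restricted to the strict transform of $\tilde S$, are foliated log smooth hence lc, and compare discrepancies divisor by divisor (using crepancy of the resolution in the lc case together with inversion of adjunction for each summand) to conclude that lc-ness of $(X,\Ff,B,\Mm,t)$ near $\tilde S$ forces $(S,\Ff_S,B_{\Ff,S})$ and $\bigl(S,B_{X,S},\tfrac{1}{1-t}\Mm^S\bigr)$ to be lc, whence $(S,\Ff_S,B_S,\Mm^S,t)$ is lc.

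\emph{Reduction to the $\Qq$-factorial case.} In general, take a $\Qq$-factorial qdlt modification $h\colon X'\to X$ of $(X,\Ff,B_t,\Mm,t)$ (Theorem \ref{thm: eoqdlt model}), and let $B^{X'}$ be the $\Rr$-divisor making $h$ crepant for $(X,\Ff,B,\Mm,t)$; since $h$ extracts only nklt places of $(X,\Ff,B_t,\Mm,t)$, which are also nklt places of $(X,\Ff,B,\Mm,t)$, each extracted prime divisor has non-negative coefficient in $B^{X'}$, so $B^{X'}\ge 0$. As $\tilde S$ lies on $X$, its strict transform $\tilde S'$ lies on $X'$ and is an lc place of $(X',\Ff',B^{X'},\Mm,t)$ of multiplicity $c$, and the induced morphism $g\colon S'\to S$ between normalizations is birational. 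Applying the $\Qq$-factorial case to $(X',\Ff',B^{X'},\Mm,t)$ and $\tilde S'$ produces $(S',\Ff_{S'},B_{S'},\Mm^{S'},t)$ with $K_{(S',\Ff_{S'},B_{S'},\Mm^{S'},t)}=K_{(X',\Ff',B^{X'},\Mm,t)}|_{S'}=\bigl(h^\ast K_{(X,\Ff,B,\Mm,t)}\bigr)|_{S'}$; we then set $\Ff_S:=g_\ast\Ff_{S'}$ and $B_S:=g_\ast B_{S'}\ge 0$, and the projection formula gives $K_{(S,\Ff_S,B_S,\Mm^S,t)}=K_{(X,\Ff,B,\Mm,t)}|_S$. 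Canonicity descends from the $\Qq$-factorial case because $h$ is an isomorphism over the generic point of $\tilde S$ whenever the adjoint structure is lc there, and in the lc case the log canonical property of $(S',\Ff_{S'},B_{S'},\Mm^{S'},t)$ transfers to $(S,\Ff_S,B_S,\Mm^S,t)$ along the birational morphism $g$.

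\emph{Main obstacle.} The delicate point is the ``moreover'' statement in the $\Qq$-factorial case: a discrepancy with respect to an adjoint foliated structure is a $t$-weighted average of a foliated discrepancy and a generalized-pair discrepancy, so being exactly at the log canonical threshold for the average does not formally force either contribution to sit at its own threshold. Circumventing this requires the finer geometry of algebraically integrable foliations—computing everything on an ACSS or foliated log smooth model, where both contributions are controlled toroidally—in order to promote adjoint-lc near $\tilde S$ to foliated-lc and pair-lc near $\tilde S$, which is precisely what lets us invoke the lc-preservation clauses of the two classical adjunction formulas.
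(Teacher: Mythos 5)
Your overall architecture matches the paper's: split $K_{(X,\Ff,B,\Mm,t)}$ into a foliated summand and a generalized-pair summand, restrict each by its own adjunction formula, recombine with weights $t$ and $1-t$, and handle the non-$\Qq$-factorial case by a $\Qq$-factorial qdlt modification. The formula part of your $\Qq$-factorial case is fine as far as it goes, but the ``moreover'' (lc) part has a genuine gap. Your decomposition puts \emph{all} of $\tilde B$ and all of $\Mm$, scaled by $\tfrac{1}{1-t}$, on the variety side, and your lc argument rests on the claim that adjoint-lc-ness near $\tilde S$ forces both restricted summands, in particular $\bigl(S,B_{X,S},\tfrac{1}{1-t}\Mm^S\bigr)$, to be lc. That claim is false. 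Take $\Ff$ induced by the projection $\mathbb{A}^3\rightarrow\mathbb{A}^1$, $(x,y,z)\mapsto z$, take $t=\tfrac12$, $\tilde S=\{x=0\}$, $B=\tilde S+b\{y=0\}$ with $\tfrac12<b\leq 1$, and $\Mm=\bm{0}$: here $(X,\Ff,B)$ is foliated log smooth with non-invariant boundary of coefficients $\leq 1$ (hence lc by Lemma \ref{lem: foliated log smooth imply lc}) and $(X,B)$ is SNC and lc, so the adjoint structure is lc and $\tilde S$ is an lc place; yet your variety summand is $\bigl(X,\tilde S+2b\{y=0\}\bigr)$, which has a coefficient $2b>1$, and its restriction to $S$ is visibly not lc. So the intermediate statement you propose to verify ``divisor by divisor'' on a resolution simply cannot hold; being at the adjoint threshold does not force either summand to sit at its own threshold (you flag this as the main obstacle, but the appeal to ACSS models does not resolve it). The paper never makes that claim: it proves the lc statement on a foliated log resolution $W$ by comparing the crepant boundary $B_W$ with the reduced boundary $\bar B_W=g^{-1}_\ast B+\Exc(g)^{\ninv}+(1-t)\Exc(g)^{\inv}$ (one has $\bar B_W\geq B_W$ precisely because the adjoint structure is lc), concludes that each restricted summand of the crepant decomposition is merely \emph{sub-}lc, and then uses that the $t$-weighted average of a foliated sub-lc structure and a pair sub-lc structure is adjoint sub-lc; effectivity of $B_S$ is a separate step.

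A second, smaller problem is the effectivity of the foliated different in your $\Qq$-factorial case. When $\tilde S$ is $\Ff$-invariant you invoke \cite[Proposition 3.2]{ACSS21} and \cite[Theorem 2.4.2]{CHLX23} directly on an arbitrary $\Qq$-factorial $X$; those adjunction statements are established in the Property $(\ast)$/foliated log smooth setting (and \cite[Proposition-Definition 3.7]{CS23b} concerns non-invariant divisors), so they do not by themselves give $B_{\Ff,S}\geq 0$ on $X$ itself when the foliation has bad singularities along $\tilde S$. This is exactly why the paper constructs $\Ff_S$ and $B_S$ on a foliated log resolution and proves $B_S\geq 0$ only after reducing, via Theorem \ref{thm: eoqdlt model}, to the case where the adjoint structure is $\Qq$-factorial qdlt, and why it uses the balanced splitting with $B^{\ninv}$ on the foliated side and $B^{\ninv}+\tfrac{1}{1-t}B^{\inv}$ on the variety side, which (unlike yours) keeps all coefficients within the lc range whenever the adjoint structure is lc.
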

\begin{proof}
We may assume that $t<1$ otherwise we may apply \cite[Theorem 2.4.2]{CHLX23} directly. 
First we construct $\mathcal F_S$ and $B_S$.
Let $g: W\rightarrow X$ be a foliated log resolution of $(X,\Ff,B,\Mm)$, $S_W:=h^{-1}\tilde S$, $\Ff_W:=h^{-1}\Ff$, and 
$$tK_{\Ff_W}+(1-t)K_{W}+B_W+\Mm_{W}:=g^*(tK_{\Ff}+(1-t)K_X+B+\Mm_X).$$
Let $\Mm^S:=\overline{\Mm_{W}|_{S_W}}$. Since $(W,\Ff_W,B_W,\Mm)$ is foliated log smooth, $\mult_{S_W}B_W^{\ninv}=\epsilon_{\Ff_W}(S_W)$, and $\mult_{S_W}(B_W^{\ninv}+\frac{1}{1-t}B_W^{\inv})=1$. By the adjunction formula for varieties we have
$$K_{S_W}+B_{S_W,W}+\Mm^S_{S_W}:=\left(K_{W}+B_W^{\ninv}+\frac{1}{1-t}B_W^{\inv}+\Mm_{W}\right)\Bigg|_{S_W}$$
and by the adjunction formula for foliations, \cite[Proposition-Definition 3.7]{CS23b}, we have
$$K_{\Ff_{S_W}}+B_{S_W,\Ff_W}+\Mm^S_{S_W}:=\left(K_{\Ff_W}+B_W^{\ninv}+\Mm_{W}\right)|_{S_W}$$
where $\Ff_{S_W}$ is a canonically defined restriction foliation and $B_{S_W,W}$ and $B_{S_W,\Ff_W}$ are canonically defined $\Rr$-divisors. We let $B_{S_W}:=tB_{S_W,W}+(1-t)B_{S_W,\Ff_W}$ and let $B_S:=(g_{S})_*B_{S_W}$, where $g_{S_W}: S_W\rightarrow S$ is the induced birational morphism. It is clear that $B_S$ is a canonically defined $\Rr$-divisor and satisfies our requirements.

Next we prove that, if $(X,\Ff,B,\Mm,t)$ is lc, then $(S,\Ff_S,B_S,\Mm,t)$ is sub-lc. We let $\bar B_W:=g^{-1}_*B+\Exc(g)^{
\ninv}+(1-t)\Exc(g)^{\inv}$. Since $(X,\Ff,B,\Mm,t)$ is lc, $\bar B_W\geq B_W$. We let
$$K_{S_W}+\bar B_{S_W,W}+\Mm^S_{S_W}:=\left(K_{W}+\bar B_W^{\ninv}+\frac{1}{1-t}\bar B_W^{\inv}+\Mm_{W}\right)\Bigg|_{S_W}$$
and
$$K_{S_W}+\bar B_{S_W,\Ff_W}+\Mm^S_{S_W}:=\left(K_{\Ff_W}+\bar B_W^{\ninv}+\Mm_{W}\right)|_{S_W}.$$
Then $\bar B_{S_W,W}\geq B_{S_W,W}$ and $\bar B_{S_W,\Ff_W}\geq B_{S_W,\Ff_W}$. By \cite[Theorem 2.4.2]{CHLX23}, $(W,\bar B_{S_W,W},\Mm^S)$ and $(W,\Ff_W,\bar B_{S_W,\Ff_W},\Mm^S)$ are lc, so $(S_W,B_{S_W,W},\Mm^S)$ and $(S_W,\Ff_W,B_{S_W,\Ff_W},\Mm^S)$ are sub-lc, and so $(S,\Ff,B_S,\Mm^S,t)$ is sub-lc.

Finally, we show that $B_S\geq 0$. By Theorem \ref{thm: qdlt model intro}, we may let $h: (X',\Ff',B',\Mm,t)\rightarrow (X,\Ff,B,\Mm,t)$ be a $\Qq$-factorial qdlt modification of $(X,\Ff,B,\Mm,t)$ and let $S':=h^{-1}_*\tilde S$. Then $S'$ is normal and 
$$K_{(X',\Ff',B',\Mm,t)}\leq h^*K_{(X,\Ff,B,\Mm,t)}.$$
Let
$$K_{(S',\Ff_{S'},B_{S'},\Mm^S,t)}:=K_{(X',\Ff',B',\Mm,t)}|_{S'}$$
and let $h_{S}: S'\rightarrow S$ be the induced birational morphism. Then $(h_S)_*B_{S'}=B_S$. Thus if $B_{S'}\geq 0$, then so is $B_S$. Possibly replacing $(X,\Ff,B,\Mm,t)$ with $(X,\Ff',B',\Mm,t)$, we may assume that $(X,\Ff,B,\Mm,t)$ is $\Qq$-factorial qdlt.

By \cite[Proposition-Definition 3.7]{CS23b}
we have 
\begin{align*}
(K_{\mathcal F}+B^{\ninv})\vert_S = K_{\mathcal F_S}+D_S,
\end{align*}
where $D_S \ge 0$.
By classical adjunction, it then follows that  
\begin{align*}
(K_X+B)\vert_S = K_S+E_S, 
\end{align*}
where $E_S \ge 0$.
Hence,
$B_S = tD_S+(1-t)E_S \ge 0$
as required.

\end{proof}

\section{Cone theorem}\label{sec: cone}

The goal of this section is to prove the cone theorem, Theorem \ref{thm: cone intro}, for algebraically integrable foliated structures. We first state a weaker version of Theorem \ref{thm: cone intro} which is more convenient for us to apply induction on.

\begin{thm}\label{thm: cone}
      Let $(X,\Ff,B,\Mm,t)/U$ be an algebraically integrable adjoint foliated structure. For simplicity, in the following, we denote $K:=K_{(X,\Ff,B,\Mm,t)}$ and $\Nlc:=\Nlc(X,\Ff,B,\Mm,t)$. 
      
      Let $\{R_j\}_{j\in\Lambda}$ be the set of $K$-negative extremal rays in $\overline{NE}(X/U)$ that are not contained in $\overline{NE}(X/U)_{\Nlc}$.
      Then
    $$\overline{NE}(X/U)=\overline{NE}(X/U)_{K\geq 0}+\overline{NE}(X/U)_{\Nlc}+\sum_{j\in\Lambda}R_j$$
    and each $R_j$ is spanned by a rational curve $C_j$ such that
    $$0<-K\cdot C_j\leq 2\dim X.$$
\end{thm}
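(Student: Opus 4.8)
\noindent\emph{Proof plan.} The plan is to mimic the proof of the cone theorem for algebraically integrable (generalized) foliated quadruples in \cite{ACSS21,CHLX23}, whose three main inputs are now available in the adjoint setting: the foliated bend-and-break theorem \cite[Corollary 2.28]{Spi20} together with its relative form \cite[Theorem 8.1.1]{CHLX23} (cf.\ Proposition \ref{prop: cone non-big case}); the adjunction formula Theorem \ref{thm: afs adj}; and the existence of $\Qq$-factorial qdlt modifications Theorem \ref{thm: eoqdlt model}. The whole argument runs by induction on $\dim X$.

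\emph{Reductions.} First I would replace $(X,\Ff,B,\Mm,t)$ by a $\Qq$-factorial qdlt modification $h\colon (X',\Ff',B',\Mm,t)\to (X,\Ff,B,\Mm,t)$ as in Theorem \ref{thm: eoqdlt model}; since $h$ only extracts nklt places, the surjection $h_\ast\colon \overline{NE}(X'/U)\to\overline{NE}(X/U)$ matches up the $K$-negative extremal rays lying off the non-lc locus on the two sides, so it suffices to treat $X'$. Thus I may assume $(X,\Ff,B,\Mm,t)$ is $\Qq$-factorial qdlt, in particular that $X$ is klt. The cases $t=0$ and $t=1$ are covered by \cite{BZ16,HL23} and \cite{CHLX23,LMX24b}, so assume $0<t<1$; and after one further ACSS-type modification I may assume there is an equidimensional toroidal contraction $f\colon X\to Z$ inducing $\Ff$, so that $\Ff=T_{X/Z}$ generically and the general fibre of $f$ is toric, hence klt.

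\emph{Producing bounded rational curves.} The core claim is that every $K$-negative extremal ray $R\subset\overline{NE}(X/U)$ with $R\not\subset\overline{NE}(X/U)_{\Nlc}$ is spanned by a rational curve $C$ with $0<-K\cdot C\le 2\dim X$; I would split according to whether $R$ is represented by a curve contained in some lc center of $(X,\Ff,B,\Mm,t)$. If it is, I pick a minimal such center $S$, normalize $\nu\colon S^\nu\to S$, and invoke Theorem \ref{thm: afs adj}: $K|_{S^\nu}$ is the adjoint canonical divisor of an lc algebraically integrable adjoint foliated structure of dimension $<\dim X$, and the inductive cone theorem on $S^\nu$ (applied off its own non-lc locus, which is where the pull-back of $R$ lands because $-K\cdot C<0$) yields the desired rational curve, whose image in $X$ spans $R$. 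If, instead, $R$ is represented only by curves lying in no lc center, then those curves move in a covering family of $X$ or of a general fibre $F$ of $f$: for $C$ vertical over $Z$ one restricts to a general $F$, where $K|_F=K_F+B|_F+\Mm_F$ is the canonical of an lc generalized pair (here $K_{\Ff}|_F=K_X|_F=K_F$ and $B\le B^{\ninv}+\tfrac{1}{1-t}B^{\inv}$ which is qdlt), so $K\cdot C\ge K_F\cdot C$, and Mori--Kawamata bend-and-break on the klt variety $F$ produces a rational curve; extremality of $R$ forces the broken-off pieces into $R$, so they remain $K$-negative, and since their $K_F$-degree is bounded by $2\dim F\le 2\dim X$ their $K$-degree obeys $0<-K\cdot C\le 2\dim X$. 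For $C$ horizontal over $Z$ one argues through $Z$ via the relative bend-and-break Proposition \ref{prop: cone non-big case} (adapting \cite[Theorem 8.1.1]{CHLX23}), or restricts to an $\Ff$-invariant divisor and again drops the dimension.

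\emph{The cone decomposition, and the main obstacle.} Granting the existence of such rational curves, the equality $\overline{NE}(X/U)=\overline{NE}(X/U)_{K\ge 0}+\overline{NE}(X/U)_{\Nlc}+\sum_{j}R_j$ follows from the classical convexity/separating-functional argument of Mori and Kawamata (cf.\ \cite[\S3]{KM98}): if the right-hand cone were proper, a rational functional separating it would, at its nef threshold against $K$, cut out a nontrivial $K$-negative extremal face meeting $\overline{NE}(X/U)_{K<0}\setminus\overline{NE}(X/U)_{\Nlc}$, and rerunning the curve-production step inside this face contradicts maximality; the $R_j$ are rational because they are spanned by curves. \textbf{The main obstacle is the curve-production step:} bend-and-break is not directly available for the interpolated divisor $K=tK_{\Ff}+(1-t)K_X+B+\Mm_X$, so one must bound $-K\cdot C$ from below by comparing $K$ with $K_F$ on general fibres (using the equidimensional ACSS structure and the cone-bundle formula Proposition \ref{prop: weak cbf gfq}) and with lower-dimensional adjoint data on lc centers (using Theorem \ref{thm: afs adj}), and must carry out every reduction — qdlt modification, passage to fibres, passage to lc centers — while keeping careful track that the extremal rays in play never slip into $\overline{NE}(X/U)_{\Nlc}$.
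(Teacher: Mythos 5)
Your proposal breaks down at the curve-production step, precisely at the point the paper has to work hardest. Your dichotomy — either $R$ is represented by a curve inside an lc center of $(X,\Ff,B,\Mm,t)$, or the representing curves ``move in a covering family of $X$ or of a general fibre of $f$'' — does not cover the rays whose supporting function is big$/U$ (flipping or divisorial type rays). These exist already when the structure is klt, so there are no lc centers at all, and the curves spanning such a ray are rigid, sweep out only the exceptional locus, and need not be vertical over $Z$; moreover Proposition \ref{prop: cone non-big case} is of no help there, since it is proved only for exposed rays whose supporting function is \emph{not} big$/U$. The paper's Proposition \ref{prop: cone big case} supplies the missing mechanism: writing $H_R\sim_{\mathbb R,U}A'+P$ with $P\geq 0$ and $P\cdot R<0$, one perturbs to $(X,\Ff,B+\lambda P,\Mm,t)$ to manufacture an nklt center of the \emph{perturbed} structure through which $R$ factors, chooses a minimal pair $(W_0,\lambda_0)$, extracts a divisorial lc place over $W_0$ by a qdlt modification (Proposition \ref{prop: qdlt model extract certain divisors}), checks via the minimality of $(W_0,\lambda_0)$ that the lifted ray stays negative for the crepant-minus-$F$ divisor, and only then applies adjunction and induction on the extracted divisor. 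None of this tie-breaking argument appears in your sketch, and without it the big case is simply unproved.

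Two further points would also need repair. First, the adjunction formula of Theorem \ref{thm: afs adj} is stated only for a \emph{prime divisor} that is an lc place; there is no adjunction to higher-codimension lc centers in the paper, so ``normalize a minimal lc center $S$ and invoke Theorem \ref{thm: afs adj}'' is not available — this is exactly why the paper first extracts a divisorial lc place and restricts to it. Second, your opening reductions (replace $X$ by a $\Qq$-factorial qdlt modification, then by an ACSS model with an equidimensional $f\colon X\to Z$) are not justified for the statement at hand: with $h^\ast K=K'+F$, $F\geq 0$, the decomposition of $\overline{NE}(X'/U)$ does not formally push forward to the claimed decomposition of $\overline{NE}(X/U)$ (classes with $F\cdot\alpha'<0$ spoil the matching), and the paper accordingly proves the cone theorem on $X$ itself, invoking qdlt modifications only inside the big-case argument. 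Relatedly, Propositions \ref{prop: cone non-big case} and \ref{prop: cone big case} apply to \emph{exposed} rays, and passing from arbitrary extremal rays to exposed ones requires the finiteness statement Proposition \ref{prop: cone finiteness rays}, which your convexity paragraph glosses over.
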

Compared with Theorem \ref{thm: cone intro}, we do not require that the curves $C_j$ are rational.

\subsection{Length of exposed rays}

Assume Theorem \ref{thm: cone} in low dimensions. In this subsection, we prove that each $K$-negative exposed ray is spanned by a rational curve with bounded length.

\begin{prop}[Length of exposed rays, non-big case]\label{prop: cone non-big case}
Let $(X,\Ff,B,\Mm,t)/U$ be an algebraically integrable adjoint foliated structure such that $t<1$. For simplicity, in the following, we denote $K:=K_{(X,\Ff,B,\Mm,t)}$. Let $R$ be a $K$-negative exposed ray in $\overline{NE}(X/U)$ such that the supporting function $H_R$ of $R$ is not big$/U$. Then $R$ is spanned by a rational curve $C$ such that  $$0<-K\cdot C\leq 2\dim X.$$
\end{prop}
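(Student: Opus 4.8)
The plan is to adapt the proof of the non-big case of the cone theorem for algebraically integrable foliations, \cite[Theorem 8.1.1]{CHLX23} (a relative version of Kawamata's bound on the length of extremal rays), the only substantial change being that the extra summand $(1-t)K_X$ is carried through the argument via the convex decomposition
\[
K_{(X,\Ff,B,\Mm,t)}=t\bigl(K_{\Ff}+B^{\ninv}+\Mm_X\bigr)+(1-t)\bigl(K_X+B_X+\Mm_X\bigr),\qquad B_X:=B^{\ninv}+\tfrac{1}{1-t}B^{\inv},
\]
together with the inductive hypothesis, namely Theorem \ref{thm: cone} in dimension $<\dim X$.

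First I would reduce to the case where $\Ff$ is induced by an equidimensional toroidal contraction $f\colon X\to Z$. Passing to a $\Qq$-factorial qdlt modification $h\colon (X',\Ff',B',\Mm,t)\to(X,\Ff,B,\Mm,t)$ (Theorem \ref{thm: eoqdlt model}), which extracts only nklt places, and then to a suitable higher model, and pushing rational curves forward along $h$ (which does not increase $-K\cdot C$, precisely because only nklt places are extracted), one reduces to the case where $X$ is $\Qq$-factorial klt and $\Ff$ is induced by such an $f$; the pullback of $H_R$ remains nef$/U$ and not big$/U$. In this situation the two summands above are the log canonical divisors of the lc generalized foliated quadruple $(X,\Ff,B^{\ninv},\Mm)$ and of the qdlt generalized pair $(X,B_X,\Mm)$, and on a general fibre $F$ of $f$ one has $\Ff|_F=T_F$ and $K_{\Ff}|_F=K_X|_F=K_F$, so the adjoint structure restricts to a usual generalized pair on the variety $F$, with $\dim F=\rk\Ff\le\dim X$.

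Now I would use that $H_R$ is nef$/U$ but not big$/U$, arguing exactly as in \cite[Theorem 8.1.1]{CHLX23}: either $R$ is contracted by a morphism onto a variety of dimension $<\dim X$, in which case Theorem \ref{thm: cone} in lower dimension, together with the adjunction formula (Theorem \ref{thm: afs adj}) to control singularities, produces the required rational curve; or $R$ is generated by curves lying in fibres of $f$, in which case I would run bend and break. Here \cite[Corollary 2.28]{Spi20} yields a rational curve tangent to $\Ff$, hence contained in a fibre $F$, and Mori's bend and break on $F$ bounds the degree of this curve against each of $K_{\Ff}+B^{\ninv}+\Mm_X$ and $K_X+B_X+\Mm_X$ by $2\dim F$ (the boundary and moduli parts only helping, once one checks the curve is not contained in them); since $K$ restricts to $F$ as the convex combination of the two, the spanning curve $C$ satisfies $0<-K\cdot C\le 2\dim F\le 2\dim X$.

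The main obstacle is that the cone and contraction theorems are being established simultaneously, so $H_R$ cannot be assumed semi-ample and one cannot simply pass to the nef fibration it would define; the reduction of $R$ to a lower-dimensional contraction, or to curves in fibres of $f$, must instead be extracted geometrically from the non-bigness of $H_R$ and the ACSS structure $f\colon X\to Z$, which is the technical core of \cite[Theorem 8.1.1]{CHLX23}. A secondary difficulty, which is where the adjoint setting genuinely differs from the foliated one, is to guarantee that a single rational curve $C$ has controlled intersection with both the foliated part and the ambient part of $K$ at once; this is exactly what forces $C$ to be tangent to $\Ff$ and to lie in a fibre of $f$, where both parts become log canonical divisors of lc generalized pairs on a variety of dimension at most $\dim X$. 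Finally, when $(X,\Ff,B,\Mm,t)$ is not lc one must keep track of the effective exceptional error in the pullback of $K$ to the qdlt model, but this affects only curves contained in $\Nlc$ and does not interfere with the $K$-negative ray $R$.
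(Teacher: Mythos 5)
There is a genuine gap: the dichotomy on which your argument rests --- ``either $R$ is contracted by a morphism onto a variety of dimension $<\dim X$, or $R$ is generated by curves lying in fibres of $f$'' --- is never justified, and it is in fact the wrong shape of statement. The first branch presupposes a contraction of $R$ (or some lower-dimensional model to which one could apply Theorem \ref{thm: cone} and the adjunction formula), which is exactly what is not available at this stage; adjunction (Theorem \ref{thm: afs adj}) needs a divisorial lc place containing the ray, and in the non-big case there is no such divisor --- that mechanism belongs to the big case (Proposition \ref{prop: cone big case}), not here. The second branch only produces rational curves tangent to $\Ff$, but since $K$ contains the summand $(1-t)K_X$ with $t<1$, a $K$-negative extremal ray with non-big supporting function can be entirely transverse to $\Ff$ (e.g. $X=C\times\mathbb P^1$ with $C$ of high genus, $\Ff$ induced by the projection to $\mathbb P^1$, $t\ll 1$, and $R$ spanned by the $\mathbb P^1$-fibres): no bend and break tangent to $\Ff$ \`a la \cite[Corollary 2.28]{Spi20} can ever produce a curve spanning such a ray, and bounding degrees ``on a fibre $F$ of $f$'' says nothing about $H_R$-triviality of the resulting curve, i.e.\ about it actually lying in $R$. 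Your own admission that the reduction ``must be extracted geometrically from the non-bigness of $H_R$'' defers precisely the step that fails.

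The paper's proof is structured differently and avoids all of this. It restricts to a general fibre $F$ of the structure morphism $\pi\colon X\to U$ (not of the foliation fibration $f$), uses non-bigness of $H_R$ to find $k$ with $H_F^{k}\cdot A_F^{q-k}>H_F^{k+1}\cdot A_F^{q-k-1}=0$ and forms the $1$-cycle $J=(D_2|_F)\cdots(D_q|_F)$ with $H_R|_F\cdot J=0$ and $-K|_F\cdot J>0$; since $B+\Mm_X$ is pseudo-effective, at least one of $-K_{\Ff}|_F\cdot J$, $-K_X|_F\cdot J$ is positive, and one applies \cite[Theorem 8.1.1]{CHLX23} to that single canonical divisor (to $K_{\Ff}$ or to $K_X$, whichever is more negative against $J$), obtaining a rational curve through a general point with $H_R\cdot C=0$ (hence $[C]\in R$) and $M\cdot C\leq 2d\cdot\frac{-K|_F\cdot J}{-K_{\Ff}|_F\cdot J}\leq 2dt$ (resp.\ $\leq 2d(1-t)$), so convexity gives the bound $2\dim X$. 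In particular there is no need for simultaneous control of the curve against both the foliated and ambient parts, no qdlt modification, no induction on dimension and no adjunction in this non-big case; the only black box is \cite[Theorem 8.1.1]{CHLX23}, applied fibrewise over $U$ relative to the cycle $J$. If you want to salvage your write-up, replace the dichotomy by this intersection-theoretic reduction on a general fibre of $\pi$ and the sign analysis of $-K_{\Ff}\cdot J$ versus $-K_X\cdot J$.
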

\begin{proof}
Let $d:=\dim X$. Possibly passing to a Stein factorization, we may assume that the induced morphism $\pi: X\rightarrow U$ is a contraction. By \cite[Lemma 8.4.1]{CHLX23} we may write $H_R=K+A$ for some ample$/U$ $\Rr$-divisor $A$. Let $F$ be a general fiber of $\pi$, $H_F:=H_R|_F$, and $A_F:=A|_F$. Then $H_F$ is nef but not big, and $A_F$ is ample. 

Let $q:=\dim F$. Then there exists an integer $1\leq k\leq q-1$ such that
$$H_F^k\cdot A_F^{q-k}>H_F^{k+1}\cdot A_F^{q-k-1}=0.$$
Let $D_i:=H_R$ if $1\leq i\leq k+1$, and let $D_i:=A$ if $k+2\leq i\leq q$. For simplicity, we let $J$ be the $\Rr$-$1$-cycle $(D_2|_F)\cdot\dots\cdot(D_q|_F)$. Then
$$(D_1|_F)\cdot J=H_F^{k+1}\cdot A_F^{q-k-1}=0$$
and
$$s:=-K|_F\cdot J=(A_F-H_F)\cdot H_F^{k}\cdot A_F^{q-k-1}>0.$$
Since $B+\Mm_X$ is pseudo-effective$/U$,
$$-(tK_{\Ff}+(1-t)K_X)|_F\cdot J\geq (A_F-H_F)\cdot H_F^{k}\cdot A_F^{q-k-1}>0.$$
Let $M:=H_R+A$. Then $M$ is ample$/U$. Since $H_F^{k+1}\cdot A_F^{q-k-1}=0$,
$$M|_F\cdot J=-K|_F\cdot J.$$
Since $H_R$ is the supporting function of $R$,
$$M\cdot R=-K\cdot R.$$
There are three possibilities:

\medskip

\noindent\textbf{Case 1}. $-K_X|_F\cdot J\leq 0$. In this case, $-K_{\Ff}|_F\cdot J>0$. By \cite[Theorem 8.1.1]{CHLX23}, for any general closed point $x\in X$, there exists a rational curve $C_x$, such that $x\in C_x$, $\pi(C_x)$ is a closed point, $H_R\cdot C_x=D_1\cdot C_x=0$, and
$$M\cdot C_x\leq 2d\frac{M|_F\cdot J}{-K_{\Ff}|_F\cdot J}=2d\frac{-K|_F\cdot J}{-K_{\Ff}|_F\cdot J}\leq 2d\frac{-tK_{\Ff}|_F\cdot J-(1-t)K_X|_F\cdot J}{-K_{\Ff}|_F\cdot J}\leq 2dt\leq 2d.$$
Since $H_R\cdot C_x=0$ and $\pi(C_x)$ is a closed point, $[C_x]\in R$, so $M\cdot C_x=-K\cdot C_x>0$. Thus
$$0<-K\cdot C_x\leq 2d$$
and we may take $C=C_x$.

\medskip

\noindent\textbf{Case 2}. $-K_{\Ff}|_F\cdot J\leq 0$. In this case, $-K_X|_F\cdot J>0$. By \cite[Theorem 8.1.1]{CHLX23}, for any general closed point $x\in X$, there exists a rational curve $C_x'$, such that $x\in C_x'$, $\pi(C_x')$ is a closed point, $H_R\cdot C_x'=D_1\cdot C_x'=0$, and 
$$M\cdot C_x'\leq 2d\frac{M|_F\cdot J}{-K_{X}|_F\cdot J}=2d\frac{-K|_F\cdot J}{-K_{X}|_F\cdot J}\leq 2d\frac{-tK_{\Ff}|_F\cdot J-(1-t)K_X|_F\cdot J}{-K_{X}|_F\cdot J}\leq 2d(1-t)\leq 2d.$$
Since $H_R\cdot C_x'=0$ and $\pi(C_x')$ is a closed point, $[C_x']\in R$, so $M\cdot C_x'=-K\cdot C_x'>0$. Thus
$$0<-K\cdot C_x'\leq 2d$$
and we may take $C=C_x'$.

\medskip

\noindent\textbf{Case 3}. $-K_{\Ff}|_F\cdot J<0$ and $-K_{X}|_F\cdot J<0$. By \cite[Theorem 8.1.1]{CHLX23}, for any general closed point $x\in X$, there are two rational curves (possibly identical) $C_x$, $C_x'$, such that $x\in C_x$, $x\in C_x'$, $\pi(C_x)$, $\pi(C_x')$ are closed points, $H_R\cdot C_x=D_1\cdot C_x=0$, $H_R\cdot C_x'=D_1\cdot C_x'=0$,
$$M\cdot C_x\leq 2d\frac{M|_F\cdot J}{-K_{\Ff}|_F\cdot J}=2d\frac{-K|_F\cdot J}{-K_{\Ff}|_F\cdot J}\leq 2d\frac{-tK_{\Ff}|_F\cdot J-(1-t)K_X|_F\cdot J}{-K_{\Ff}|_F\cdot J},$$
and
$$M\cdot C_x'\leq 2d\frac{M|_F\cdot J}{-K_{X}|_F\cdot J}=2d\frac{-K|_F\cdot J}{-K_{X}|_F\cdot J}\leq 2d\frac{-tK_{\Ff}|_F\cdot J-(1-t)K_X|_F\cdot J}{-K_{X}|_F\cdot J}.$$
We let $C:=C_x$ if $-K_X|_F\cdot J\leq -K_{\Ff}|_F\cdot J$, and let $C:=C_x'$ if $-K_X|_F\cdot J>-K_{\Ff}|_F\cdot J$. By our construction, 
$$M\cdot C\leq 2d.$$
Since $H_R\cdot C=0$ and $\pi(C)$ is a closed point, $[C]\in R$, so $M\cdot C=-K\cdot C>0$. Thus
$$0<-K\cdot C\leq 2d$$
and we are done.
\end{proof}

\begin{prop}[Length of exposed rays, big case]\label{prop: cone big case}
Let $d$ be a positive integer. Assume Theorem \ref{thm: cone} in dimension $\leq d-1$.

Let $(X,\Ff,B,\Mm,t)/U$ be an algebraically integrable adjoint foliated structure of dimension $d$ such that $t<1$. For simplicity, in the following, we denote $K:=K_{(X,\Ff,B,\Mm,t)}$ and $\Nlc:=\Nlc(X,\Ff,B,\Mm,t)$. Let $R$ be a $K$-negative exposed ray in $\overline{NE}(X/U)$ that is not contained in $\overline{NE}(X/U)_{\Nlc}$, such that such that the supporting function $H_R$ of $R$ is big$/U$. Then $R$ is spanned by a rational curve $C$ such that
$$0<-K\cdot C\leq 2(d-1).$$
\end{prop}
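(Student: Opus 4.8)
The plan is to reduce the dimension by one and then apply Theorem~\ref{thm: cone} in dimension $d-1$, the bridge between the two being the adjunction formula of Theorem~\ref{thm: afs adj}.

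\emph{Reductions.} Passing to a Stein factorization I may assume $\pi\colon X\to U$ is a contraction, and, exactly as in the proof of Proposition~\ref{prop: cone non-big case}, using \cite[Lemma 8.4.1]{CHLX23} I may write $H_R=K+A$ for some ample$/U$ $\Rr$-divisor $A$; if $t=0$ the statement is the cone theorem for generalized pairs \cite{BZ16}, so I assume $t>0$. Since $R\neq\{0\}$, $H_R$ is nef$/U$, big$/U$ but not ample$/U$, so its augmented base locus $\Bb_+(H_R/U)$ (which equals the non-big, i.e. null, locus of $H_R/U$) is a nonempty proper closed subset of $X$, and the exposedness of $R$ means precisely that $\overline{NE}(X/U)\cap H_R^{\bot}=R$.

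\emph{Creating an lc place that catches $R$.} The heart of the argument is to produce a prime divisor $S$ \emph{on} $X$, together with an effective $\Rr$-divisor $\Gamma\sim_{\Rr,U}N\cdot H_R$ for some $N>0$, such that: (i) $S\not\subseteq\Nlc$; (ii) $S\subseteq\Bb_+(H_R/U)$, equivalently $H_R|_S$ is not big$/U$; and (iii) $S$ is an lc place of $(X,\Ff,B+\Gamma,\Mm,t)$, and this structure is lc in a neighbourhood of $S$. Using bigness, one writes $H_R\sim_{\Rr,U}A'+D'$ with $A'$ ample$/U$, $D'\geq 0$ and $\Supp D'\subseteq\Bb_+(H_R/U)$, and then, after passing to a $\Qq$-factorial qdlt modification and running a suitable MMP (Theorem~\ref{thm: eoqdlt model}, Proposition~\ref{prop: qdlt model extract certain divisors}), one extracts $S$ with the critical discrepancy, keeping it inside $\Bb_+(H_R/U)$ and away from $\Nlc$ (the latter using $R\not\subseteq\overline{NE}(X/U)_{\Nlc}$). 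When $\Bb_+(H_R/U)$ contains no divisor one must in addition cut down by a weak Bertini-type theorem, available since $t<1$, to realize $S$ as a divisor on (a small modification of) $X$. This is the step I expect to be the main obstacle: because Bertini-type theorems fail for foliations, the auxiliary divisor must be fed into the $\bb$-divisor $\Mm$ rather than into $B$ whenever singularities must be preserved, the construction must be performed on a qdlt model, and one has to make sure that $S$ descends to an honest divisor on $X$ so that curves on it push down non-trivially.

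\emph{Dimension reduction and conclusion.} Set $K':=K_{(X,\Ff,B+\Gamma,\Mm,t)}=K+\Gamma\geq K$ and $H'_R:=K'+A$; since $\Gamma\sim_{\Rr,U}N H_R$ one has $H'_R\sim_{\Rr,U}(1+N)H_R$, so $H'_R$ is again a supporting function of $R$, nef$/U$, big$/U$, not ample$/U$, and $(H'_R)^{\bot}=H_R^{\bot}$. Let $\nu\colon S^{\nu}\to S$ be the normalization and $\iota\colon S^{\nu}\to X$ the induced finite morphism. By Theorem~\ref{thm: afs adj} there is a canonically defined lc adjoint foliated structure $(S^{\nu},\Ff_{S^{\nu}},B_{S^{\nu}},\Mm^{S^{\nu}},t)$ of dimension $d-1$ with
$$K_{(S^{\nu},\Ff_{S^{\nu}},B_{S^{\nu}},\Mm^{S^{\nu}},t)}=\iota^{\ast}K',$$
whose $\Nlc$ locus is empty. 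Now $\iota^{\ast}H'_R=\iota^{\ast}K'+\iota^{\ast}A$ with $\iota^{\ast}A$ ample$/U$, and $\iota^{\ast}H'_R$ is nef$/U$ but not big$/U$ (since $H_R|_S$ is not big$/U$), hence not ample$/U$; thus $F:=(\iota^{\ast}H'_R)^{\bot}\cap\overline{NE}(S^{\nu}/U)$ is a nonzero extremal face, and every nonzero class in $F$ is $K_{(S^{\nu},\dots)}$-negative because $\iota^{\ast}A$ is ample$/U$ and $\overline{NE}(S^{\nu}/U)$ is generated by curves contracted over $U$. Applying Theorem~\ref{thm: cone} in dimension $d-1$ to $(S^{\nu},\Ff_{S^{\nu}},B_{S^{\nu}},\Mm^{S^{\nu}},t)$, the face $F$, being $K_{(S^{\nu},\dots)}$-negative with trivial $\Nlc$-part, contains one of the extremal rays of the decomposition, spanned by a rational curve $C$ with $0<-K_{(S^{\nu},\dots)}\cdot C\leq 2(d-1)$. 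As $\iota$ is finite, $\iota_{\ast}$ is injective on $\overline{NE}(S^{\nu}/U)$, so $C_X:=\iota(C)$ is a rational curve with $\pi(C_X)$ a point and $H'_R\cdot C_X=0$, whence $[C_X]\in\overline{NE}(X/U)\cap(H'_R)^{\bot}=R$ and $R=\mathbb{R}_{\geq 0}[C_X]$. Finally $\Gamma\cdot C_X=N(H_R\cdot C_X)=0$, so
$$0<-K\cdot C_X=-K'\cdot C_X=\frac{-K_{(S^{\nu},\dots)}\cdot C}{\deg(C/C_X)}\leq 2(d-1),$$
the strict positivity because $R$ is $K$-negative. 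Hence $R$ is spanned by the rational curve $C_X$ with $0<-K\cdot C_X\leq 2(d-1)$, as required.
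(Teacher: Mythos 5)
Your overall skeleton (perturb along an effective representative of a multiple of $H_R$, produce an lc place, restrict by the adjunction formula, and invoke Theorem \ref{thm: cone} in dimension $d-1$) is the same strategy as the paper's, and your concluding dimension-reduction step is essentially sound: once one has a divisorial lc place whose normalization carries a curve class annihilated by $H_R$, exposedness of $R$, the cone decomposition in dimension $d-1$, and $\Gamma\sim_{\Rr,U}NH_R$ (so $\Gamma\cdot C_X=0$) do give the bound $0<-K\cdot C_X\leq 2(d-1)$. But the step you yourself flag as "the main obstacle" is a genuine gap, and the specific route you propose for it does not work. You require a prime divisor $S$ \emph{on} $X$ with $S\subseteq\Bb_+(H_R/U)$ (equivalently $H_R|_S$ not big$/U$) that is an lc place of $(X,\Ff,B+\Gamma,\Mm,t)$, lc nearby and not inside $\Nlc$. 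When $\Bb_+(H_R/U)$ has no divisorial part — precisely the flipping-type case, which is the essential one — no prime divisor of $X$ satisfies (ii) at all, and your fallback of cutting by a "weak Bertini-type theorem" cannot repair this: a general member of an ample or big system has $H_R$-big restriction and is in any case not an lc place of the perturbed structure, so conditions (ii) and (iii) are mutually unachievable by such a cut. More generally, the minimal nklt centers created by perturbing along $\Bb_+(H_R/U)$ can have arbitrary codimension, and nothing in your sketch converts them into a divisor on $X$ (or on a small modification) that still "catches" $R$.

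The paper resolves exactly this point differently, and this is the content you are missing. It does not ask for a divisor on $X$ nor for non-bigness of the restriction of $H_R$. Instead (Step 1) it considers the set $\Ii$ of pairs $(W,\lambda)$ where $W$ is an nklt center of $(X,\Ff,B+\lambda P,\Mm,t)$, $P\geq 0$ with $H_R\sim_{\Rr,U}A'+P$, such that $R$ lies in the image of $\overline{NE}(W^{\nu}/U)\rightarrow\overline{NE}(X/U)$, and extracts a minimal element $(W_0,\lambda_0)$; (Step 2) it passes to a $\Qq$-factorial qdlt modification extracting an lc place over $W_0$ (Proposition \ref{prop: qdlt model extract certain divisors}), lifts $R$ to an extremal ray $R'$ upstairs, and uses the minimality of $(W_0,\lambda_0)$ — not proportionality of the perturbation to $H_R$ — to show $F\cdot R'\geq 0$ for the crepant difference $F$, so the length bound descends; (Step 3) on the $\Qq$-factorial model, $h^{-1}(W_0)$ is a divisor by \cite[Lemma 3.6.2]{BCHM10}, so $R'$ is captured by a divisorial lc place $E_0$ of the modified structure, and only then does adjunction plus induction in dimension $d-1$ apply. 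Your proposal would need to reproduce this mechanism (or an equivalent one) to handle high-codimensional centers and the small $\Bb_+$ case; as written, the construction of $S$ fails and the proof is incomplete.
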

\begin{proof}
\noindent\textbf{Step 1}. In this step we construct a set $\Ii=\{(W,\lambda)\}$ where $W$ are subvarieties of $X$ and $\lambda$ is a non-negative real number, and find a minimal element of $\Ii$.

By \cite[Lemma 8.4.1]{CHLX23} we have $H_R=K+A$ for some ample$/U$ $\Rr$-divisor $A$. In particular, $H_R$ is nef, $H_R\cdot R=0$, and $H_R\cdot R'>0$ for any $R'\in\overline{NE}(X/U)\backslash R$. Since $H_R$ is big$/U$, $H_R\sim_{\mathbb R,U}A'+P$ for some ample$/U$ $\Rr$-divisor $A'$ and $\Rr$-divisor $P\geq 0$. In particular, $P$ is $\Rr$-Cartier and $P\cdot R<0$. Let $\Ii$ be the set of all $(W,\lambda)$ such that
\begin{enumerate}
    \item $\lambda$ is a non-negative real number,
    \item $W$ is an nklt center of $(X,\Ff,B+\lambda P,\Mm,t)$ with normalization $W^\nu$, and
    \item $R$ is contained in the image of $\overline{NE}(W^\nu/U)\rightarrow\overline{NE}(X/U)$ induced by the composition $W^\nu\rightarrow W\rightarrow X$, where $W^\nu$ is the normalization of $W$. In particular, $(X,\Ff,B,\Mm,t)$ is lc near the generic point of $W$. 
\end{enumerate}
\begin{claim}\label{claim: existence of minimal (W,lambda)}
There exists $(W_0,\lambda_0)\in\Ii$, such that for any $(W,\lambda)\in\Ii$, one of the following holds.
\begin{itemize}
\item $\lambda>\lambda_0$.
\item $\lambda=\lambda_0$ and $W\not\subset W_0$.
\item $(W,\lambda)=(W_0,\lambda_0)$.
\end{itemize}
In particular, $(X,\Ff,B+\lambda_0P,\Mm,t)$ is lc near the generic point of $W_0$.
\end{claim}
\begin{proof}
    First we show that $\Ii\not=\emptyset$. Let $S$ be the normalization of $\Supp P$, then $R$ is contained in the image of $\overline{NE}(S/U)\rightarrow\overline{NE}(X/U)$ induced by the natural inclusion $\iota:S\rightarrow \Supp P\rightarrow X.$ Then there exists a component $T$ of $S$ such that $R$ is contained in the image of $\overline{NE}(T/U)\rightarrow\overline{NE}(X/U)$. Hence $(\iota(T),1)\in\Ii$.

    Next, suppose that there exists $(W,0)\in\Ii$ for some $W$. The we may choose a subvariety $W_0\subset W$ such that $(W_0,0)\in\Ii$ and
    $$\dim W_0=\min\{\dim W'\mid (W',0)\in\Ii, W'\subset W\}.$$
    This is because there are only finitely many choices on the dimension. Then $(W_0,\lambda_0:=0)$ satisfies our requirements. In the following, we may assume that $\lambda\not=0$ for any $(W,\lambda)\in\Ii$.

    Therefore, we may assume that $\lambda\not=0$ for any $(W,\lambda)\in\Ii$. Let $h: X'\rightarrow X$ be a foliated log resolution of $(X,\Ff,\Supp B\cup\Supp P,\Mm)$. Then there exists a toroidal morphism $f': (X',\Sigma_X)\rightarrow (Z,\Sigma_Z)$ such that $h^{-1}_\ast (\Supp B\cup\Supp P)\cup\Supp\Exc(h)$ is contained in $\Sigma_X$. Let $\Ff':=h^{-1}\Ff$. For any $(W,\lambda)\in\Ii$, let $$\lambda_W:=\sup\{s\geq 0\mid (X,\Ff,B+sP,\Mm,t)\text{ is lc near the generic point of }W\},$$
    then 
\begin{align*}
\lambda_W:=\sup\left\{s\geq 0\Biggm|
    \begin{array}{r@{}l}
        \mult_D(tK_{\Ff'}+(1-t)K_{X'}-h^\ast (K+sP+\Mm_X))\\ \geq -t\epsilon_{\Ff}(D)-(1-t)\\
        \text{for any prime divisor }D\subset\Sigma_{X}\text{ such that }W\subset h(D)
    \end{array}\right\}.
    \end{align*}
    Since there are only finitely many prime divisor contained in $\Sigma_X$, there are only finitely many values of $\lambda_W$, so
    $$\lambda_W=\max\{s\geq 0\mid (X,\Ff,B+sP,\Mm,t)\text{ is lc near the generic point of }W\},$$
    and there exists an lc center $W'$ of $(X,\Ff,B+\lambda_WP,\Mm,t)$ such that $W\subset W'$. In particular, there exists $(W_1,\lambda_1)\in\Ii$ such that 
    $$\lambda_{W_1}=\min\{\lambda_W\mid (W,\lambda)\in\Ii\},$$
and an lc center $W_1'$ of $(X,\Ff,B+\lambda_{W_1}P,\Mm,t)$ such that $W_1\subset W_1'$. Let $\Ii'$ be the set of subvarieties $V\subset X$ such that
\begin{itemize}
    \item $V$ is an nklt center of $(X,\Ff,B+\lambda_{W_1}P,\Mm,t)$,
    \item $V\subset W_1'$, and
    \item  $R$ is contained in the image $\overline{NE}(V^\nu/U)\rightarrow\overline{NE}(X/U)$ induced by the natural inclusion $V^\nu\rightarrow V\rightarrow X$, where $V^\nu$ is the normalization of $V$.
\end{itemize}
Since $W_1'\in\Ii'$, $\Ii'\not=\emptyset$. Now let $W_0\in\Ii'$ be a subvariety of $X$ such that
$$\dim W_0=\min\{\dim V\mid V\in\Ii'\}.$$
We show that $(W_0,\lambda_0:=\lambda_{W_1})$ satisfies our requirements. Since $W_0\in\Ii'$, $W_0$ is an nklt center of $(X,\Ff,B+\lambda_{0}P,\Mm,t)$ and $R$ is contained in the image $\overline{NE}(W_0^\nu/U)\rightarrow\overline{NE}(X/U)$ induced by the composition $W_0^\nu\rightarrow W_0\rightarrow X$, where $W_0^\nu$ is the normalization of $W_0$. Thus $(W_0,\lambda_0)\in\Ii$. By our assumption,
$$\lambda_{W_0}\leq\lambda_0=\lambda_{W_1}=\min\{\lambda_W\mid (W,\lambda)\in\Ii\}\leq\lambda_{W_0},$$
so $\lambda_0=\lambda_{W_0}$, and  $(X,\Ff,B+\lambda_{0}P,\Mm,t)$ is lc near the generic point of $W_0$. Moreover, for any $(W,\lambda)\in\Ii$, $\lambda\geq\lambda_W\geq\lambda_0$, and if $\lambda=\lambda_0$ and  $W\subset W_0$, then $W\in\Ii'$, so by the minimality of the dimension of $W_0$, $W=W_0$. 

The proof of the claim is concluded.
\end{proof}

\noindent\textbf{Step 2}. In this step we construct a $\Qq$-factorial qdlt modification and a ray $R'$.

Let $\bar B:=B+\lambda_0P$ and let $E$ be an lc place of $(X,\Ff,\bar B,\Mm,t)$ such that $\Center_XE=W_0$. By Proposition \ref{prop: qdlt model extract certain divisors}, there exists a $\Qq$-factorial qdlt modification 
$$h: (X',\Ff',\bar B',\Mm,t)\rightarrow (X,\Ff,\bar B,\Mm,t)$$
such that $E$ is on $X'$. Let $K':=(X',\Ff',\bar B',\Mm,t)$. Then
$$K'+F=h^\ast K$$
for some $F\geq 0$. Let $V:=h(\Supp F)$, then $V\subset\Nlc(X,\Ff,\bar B,\Mm,t)$ is a reduced subscheme of $X$. By \cite[Lemma 8.2.3]{CHLX23}, there exists an extremal ray $R'$ in $X'$ such that $h(R')=R$. Then there exist $C_i'\in NE(X'/U)$ such that $R'=[\lim_{i\rightarrow+\infty} C_i']$. We let $C_i:=h(C_i')$, then $R=[\lim_{i\rightarrow+\infty} C_{i}]$. By the projection formula,
$$\lim (K'+F)\cdot C_{i}'=\lim K\cdot C_i,$$
so
$$(K+F)\cdot R'=K\cdot R<0.$$
Thus $R'$ is a $(K+F)$-negative extremal ray. 

If $F\cdot R'<0$, then $R'$ is contained in the image of $\overline{NE}(\Supp F/U)\rightarrow\overline{NE}(X'/U)$. Then $R=h(R')$ is contained in the image of $\overline{NE}(V/U)\rightarrow\overline{NE}(X/U)$. Thus there exists an irreducible component $V_0$ of $V$ such that $R$ is contained in the image of $\overline{NE}(V_0/U)\rightarrow\overline{NE}(X/U)$.  Since $R$ is not contained in $\overline{NE}(X/U)_{\Nlc}$, $V_0$ is not an lc center of $(X,\Ff,B,\Mm,t)$. Since $V_0\subset V=h(F)\subset\Nlc(X,\Ff,\bar B,\Mm,t)$ and $\bar B=B+\lambda_0P$, there exists a real number $0<\lambda_1<\lambda_0$ such that $V_0$ is an lc center of $\Nlc(X,\Ff,B+\lambda_1P,\Mm,t)$. This contradicts the minimality of $(W_0,\lambda_0)$ as $(V_0,\lambda_1)\in\Ii$ and $\lambda_1<\lambda_0$. Therefore, $F\cdot R'\geq 0$, so $R'$ is a  $K'$-negative extremal ray.

\medskip

\noindent\textbf{Step 3}. In this step we prove the proposition under the additional condition that $X$ is $\Qq$-factorial. 

Assume that $X$ is $\Qq$-factorial. By \cite[Lemma 3.6.2]{BCHM10}, $\Exc(h)$ is a divisor, so $h^{-1}(W_0)$ is a divisor. Since $R$ is contained in the image of $\overline{NE}(W_0/U)\rightarrow\overline{NE}(X/U)$ and $h(R')=R$, there exists a divisor $E_0$ on $X'$ such that $R'$ is contained in the image of $\overline{NE}(E_0/U)\rightarrow\overline{NE}(X'/U)$. Since $h$ is a $\Qq$-factorial qdlt modification of $(X,\Ff,\bar B,\Mm,t)$, $E_0$ is an lc place of $(X,\Ff,\bar B,\Mm,t)$ and an lc place of $(X',\Ff',\bar B',\Mm,t)$. 

Let $T$ be the normalization of $E_0$, $\Ff_{T}:=\Ff_Y|_{T}$ be the restricted foliation, $\Mm^{T}:=\Mm|_{T}$, and
$$\bar K_T:=K_{(T,\Ff_T,B_T,\Mm^T,t)}:=K'|_T.$$
Since $R'$ is contained in the image of $\overline{NE}(E_0/U)\rightarrow\overline{NE}(X'/U)$,  $R'$ is contained in the image of 
$$\iota: \overline{NE}(T/U)\rightarrow \overline{NE}(E_0/U)\rightarrow\overline{NE}(X'/U).$$ 
By \cite[Lemma 8.2.3]{CHLX23}, there exists a extremal ray $R_T\in\overline{NE}(T/U)$ such that $\iota(R_T)=R'$. Then $R_T$ is a $\bar K_T$-negative extremal ray$/U$. By Theorem \ref{thm: afs adj}, $(T,\Ff_T,B_T,\Mm^T,t)$ is lc. By Theorem \ref{thm: cone} in dimension $d-1$, $R_T$ is spanned by a rational curve $C_T$ such that 
$$0<-\bar K_T\cdot C_T\leq 2(d-1).$$
Let $C'$ be the image of $C_T$ in $Y$, then $C'$ spans $R'$, and
$$0<-\bar K_T\cdot C_T=-K'\cdot C'\leq 2(d-1),$$
Let $C:=h(C')$. By \textbf{Step 2}, $F\cdot C'\geq 0$, so
$$2(d-1)\geq -K'\cdot C'\geq -(K'+F)\cdot C'=-K\cdot C>0.$$
We are done for the case when $X$ is $\Qq$-factorial.

\medskip

\noindent\textbf{Step 4}. In this step we complete the proof of the proposition. Since $X'$ is $\Qq$-factorial and $R'$ is a $K'$-negative extremal ray, by the $\Qq$-factorial case proved in \textbf{Step 3}, $R'$ is spanned by a rational curve $C'$ such that
$$0<-K'\cdot C'\leq 2(d-1).$$
Let $C:=h(C')$. Since $F\cdot C'\geq 0$,
$$2(d-1)\geq -K'\cdot C'\geq -(K'+F)\cdot C'=-K\cdot C>0$$
and we are done.
\end{proof}

\subsection{Finiteness of rays}

Assume Theorem \ref{thm: cone} in low dimensions. In this subsection, we prove that there are finitely many $(K+A)$-negative extremal rays that are not contained in $\overline{NE}(X/U)_{\Nlc}$.

\begin{prop}\label{prop: cone finiteness rays}
Let $d$ be a positive integer. Assume that Theorem \ref{thm: cone} holds in dimension $\leq d-1$. Let $(X,\Ff,B,\Mm,t)/U$ be an algebraically integrable adjoint foliated structure of dimension $d$ such that $t<1$. For simplicity, in the following, we denote $K:=K_{(X,\Ff,B,\Mm,t)}$ and $\Nlc:=\Nlc(X,\Ff,B,\Mm,t)$. Let $A$ be an ample$/U$ $\Rr$-divisor on $X$. Then there are finitely many $(K+A)$-negative 
extremal rays$/U$ that are not contained in $\overline{NE}(X/U)_{\Nlc}$.
\end{prop}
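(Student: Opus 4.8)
The plan is to first bound the number of \emph{exposed} $(K+A)$-negative rays, and then to deduce the statement for all extremal rays by a convex-geometry argument. Write $d:=\dim X$, $K:=K_{(X,\Ff,B,\Mm,t)}$, and $\Nlc:=\Nlc(X,\Ff,B,\Mm,t)$. Since $A$ is ample$/U$ it is positive on $\overline{NE}(X/U)\setminus\{0\}$, so every $(K+A)$-negative ray is $K$-negative; in particular every $(K+A)$-negative exposed ray $R$ not contained in $\overline{NE}(X/U)_{\Nlc}$ is a $K$-negative exposed ray. Applying Proposition \ref{prop: cone non-big case} when the supporting function $H_R$ is not big$/U$, and Proposition \ref{prop: cone big case} when $H_R$ is big$/U$ (this is where the inductive hypothesis that Theorem \ref{thm: cone} holds in dimension $\le d-1$ enters), we obtain a rational curve $C$ spanning $R$ with $\pi(C)$ a point and $0<-K\cdot C\le 2d$. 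As $C$ lies in a fiber of $\pi$ and $A$ is ample$/U$, we have $A\cdot C>0$, and from $(K+A)\cdot C<0$ we get $0<A\cdot C<-K\cdot C\le 2d$. Hence the curves obtained this way lie in fibers of $\pi$ and have $A$-degree at most $2d$; such curves form a bounded family, so they represent only finitely many numerical classes in $N_1(X/U)$, and since $R=\mathbb{R}_{\ge 0}[C]$ there are only finitely many $(K+A)$-negative exposed rays not contained in $\overline{NE}(X/U)_{\Nlc}$. Call them $R_1,\dots,R_N$.

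Next I would pass from exposed rays to arbitrary extremal rays. The cone $\overline{NE}(X/U)$ is pointed (again because $A$ is positive on it away from $0$), so the slice $\mathcal{C}:=\overline{NE}(X/U)\cap\{A=1\}$ is compact; under $R\mapsto R\cap\mathcal{C}$, exposed rays of $\overline{NE}(X/U)$ correspond to exposed points of $\mathcal{C}$ and extremal rays to extreme points of $\mathcal{C}$. By Straszewicz's theorem every extreme point of $\mathcal{C}$ lies in the closure of the set of exposed points of $\mathcal{C}$. Let $R$ be any $(K+A)$-negative extremal ray$/U$ not contained in $\overline{NE}(X/U)_{\Nlc}$ and let $v\in\mathcal{C}$ be the corresponding extreme point. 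Since $\{w\in\mathcal{C}\mid (K+A)\cdot w<0\}$ is an open neighborhood of $v$ in $\mathcal{C}$, the point $v$ is a limit of exposed points of $\mathcal{C}$ lying in this set, i.e. $R$ is a limit of $(K+A)$-negative exposed rays $R^{(n)}$. The rays $R^{(n)}$ contained in $\overline{NE}(X/U)_{\Nlc}$ cannot accumulate to $R$, because $\overline{NE}(X/U)_{\Nlc}$ is closed and $R\not\subset\overline{NE}(X/U)_{\Nlc}$; after discarding these, the remaining $R^{(n)}$ all lie in the finite set $\{R_1,\dots,R_N\}$, so $R=R_i$ for some $i$. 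Therefore there are only finitely many $(K+A)$-negative extremal rays$/U$ not contained in $\overline{NE}(X/U)_{\Nlc}$, as claimed.

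I expect the main work to sit in the first paragraph, namely in Propositions \ref{prop: cone non-big case} and \ref{prop: cone big case}: arranging the spanning curves to be rational, contracted over $U$, and of bounded $(-K)$-degree, with the big case carrying the inductive load, together with the (standard) boundedness of rational curves of bounded relative degree. The passage from exposed to extremal rays is formal once Straszewicz's theorem is available; the only delicate bookkeeping is with $\overline{NE}(X/U)_{\Nlc}$, since there may be infinitely many exposed rays inside this subcone that escape the length estimates, but as it is a closed subcone such rays cannot accumulate to a ray lying outside it, which is exactly what makes the argument close up. An alternative to invoking Straszewicz directly would be to prove that the closed convex cone $\overline{NE}(X/U)_{K+A\ge 0}+\overline{NE}(X/U)_{\Nlc}+\sum_{i=1}^{N}R_i$ coincides with $\overline{NE}(X/U)$ by a Hahn--Banach separation argument; this amounts to the same thing, and is essentially how one then deduces the cone decomposition in Theorem \ref{thm: cone}.
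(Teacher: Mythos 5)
Your proposal is correct, and it runs on the same engine as the paper's proof: the length bounds of Propositions \ref{prop: cone non-big case} and \ref{prop: cone big case} for exposed rays (with the inductive hypothesis entering through the big case), a Straszewicz-type passage between exposed and extremal rays, and the closedness of $\overline{NE}(X/U)_{\Nlc}$ to rule out accumulation from inside that subcone. The paper argues by contradiction and quotes the exposed-vs-extremal statement from \cite{CHLX23} (ultimately \cite{Roc97}), whereas you argue directly on the compact slice $\overline{NE}(X/U)\cap\{A=1\}$; this difference is purely organizational. The one genuine divergence is the finiteness mechanism for exposed rays: the paper first replaces $A$ by $\epsilon\sum A_i$ with $A_1,\dots,A_{\rho-1}$ ample Cartier and then uses that the pairings $A_i\cdot C_j$ are non-negative integers bounded by $2d/\epsilon$, so only finitely many numerical classes (hence rays) can occur, while you invoke boundedness of rational curves of bounded $A$-degree in the fibers of $\pi$ and the resulting finiteness of classes in $N_1(X/U)$. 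Your route is legitimate, but since $A$ is only an ample $\Rr$-divisor you should insert the small intermediate step of choosing an ample Cartier $H$ and $\delta>0$ with $A-\delta H$ ample, so that $H\cdot C<2d/\delta$ gives an integral degree bound before quoting boundedness of the relative Hilbert or Chow scheme; the paper's integrality count builds this in from the start and avoids any appeal to bounded families, which is what it buys in exchange for the auxiliary basis of Cartier divisors.
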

\begin{proof}
The proof is almost verbatim to the proof of \cite[Proposition 8.4.3]{CHLX23} with only minor differences. Let $\rho:=\rho(X/U)$ and let $A_1,\dots,A_{\rho-1}$ be ample$/U$ Cartier divisors on $X$, such that $K,A_1,\dots,A_{\rho-1}$ form a basis of $N^1_{\mathbb R}(X/U)$. Let $0<\epsilon\ll 1$ be a rational number such that $A-\epsilon\sum_{i=1}^{\rho-1}A_i$ is ample$/U$. Then we only need to show that there are finitely many $(K+\epsilon\sum_{i=1}^{\rho-1}A_i)$-negative extremal rays$/U$ that are not contained in $\overline{NE}(X/U)_{\Nlc}$. Possibly replacing $A$, we may assume that $A=\epsilon\sum_{i=1}^{\rho-1}A_i$. We lose the condition that  $A-\epsilon\sum_{i=1}^{\rho-1}A_i$ is ample$/U$ but we do not need it anymore.

Suppose that the proposition does not hold. Then there exist an infinite set $\Lambda$ and an infinite set $\{R_j\}_{j\in\Lambda}$ of $(K+A)$-negative extremal rays$/U$ that are not contained in $\overline{NE}(X/U)_{\Nlc}$. By \cite[Definition-Lemma 3.1.8]{CHLX23} (\cite[Corollary 18.7.1]{Roc97}, \cite[Lemma 6.2]{Spi20}), possibly replacing $\Lambda$ with a smaller infinite subset, we may assume that each $R_j$ is a  $(K+A)$-negative exposed ray$/U$. By Propositions \ref{prop: cone non-big case} and \ref{prop: cone big case}, for any $j\in\Lambda$, there exists a rational curve $C_j$ on $X$  such that $R_j=\mathbb R_+[C_j]$ and 
$$-2d\leq K\cdot C_j<0.$$ 
For each $j\in\Lambda$, by \cite[Lemma 8.4.1]{CHLX23}, there exists an ample$/U$ $\Rr$-divisor $L_j$ and a nef$/U$ $\Rr$-divisor $H_j$, such that
$$H_j=L_j+(K+A)=L_j+\epsilon\sum_{i=1}^{\rho-1}A_i+K$$
and $H_j$ is the supporting function of $R_j$. We have
$$0=H_j\cdot C_j=L_j\cdot C_j+\epsilon\sum_{i=1}^{\rho-1}A_i\cdot C_j+K\cdot C_j\geq-2d+\epsilon\sum_{i=1}^{\rho-1}A_i\cdot C_j.$$
Therefore, $A_i\cdot C_j\leq\frac{2d}{\epsilon}$ for any $i,j$. Since $\Lambda$ is infinite, there are two curves $C_{j_1},C_{j_2}$ such that $A_i\cdot C_{j_1}=A_i\cdot C_{j_2}$, so $C_{j_1}\equiv_U C_{j_2}$. This is not possible.
\end{proof}

\subsection{Proof of the cone theorem}

We first prove the weak version of the cone theorem, Theorem \ref{thm: cone}, by induction on dimension. Then we shall prove the rationality of the extremal faces and conclude the proof of Theorem \ref{thm: cone intro}. Note that the rationality of extremal rays means that their supporting functions are rational, and is not equivalent to that they are spanned by rational curves.

\begin{proof}[Proof of Theorem \ref{thm: cone}]
We apply induction on dimension. It is clear that Theorem \ref{thm: cone} holds in dimension $1$. We let $K:=K_{(X,\Ff,B,\Mm,t)}$ and $\Nlc:=\Nlc(X,\Ff,B,\Mm,t)$. Assume that $\dim X=d\geq 2$ and  Theorem \ref{thm: cone} holds in dimension $\leq d-1$.

First we show that $\overline{NE}(X/U)=V$, where
$$V:=\overline{NE}(X/U)_{K\geq 0}+\overline{NE}(X/U)_{\Nlc}+\sum_{j\in\Lambda}R_j.$$
 By \cite[Definition-Lemma 3.1.8]{CHLX23} (\cite[Corollary 18.7.1]{Roc97}, \cite[Lemma 6.2]{Spi20}), $\overline{NE}(X/U)=\overline{V}$. Suppose that $V\not=\overline{V}$, then there exists an extremal ray $R$ in $\overline{NE}(X/U)$ such that $R\not\in V$. Since $\overline{NE}(X/U)_{K\geq 0}$ and $\overline{NE}(X/U)_{\Nlc}$ are closed, $R\not\in \overline{NE}(X/U)_{K\geq 0}$ and $R\not\in \overline{NE}(X/U)_{\Nlc}$, so $R$ is a $K$-negative extremal ray that is not contained in $\overline{NE}(X/U)_{\Nlc}$. Thus $R=R_j$ for some $j$, a contradiction. Therefore, $\overline{NE}(X/U)=V$.

Next we show that any each $R_j$ is exposed. For any fixed $j$, There exists an ample$/U$ $\Rr$-divisor $A$ such that $R_j$ is a $(K+A)$-negative extremal ray$/U$. Suppose that $R_j$ is not exposed. By \cite[Definition-Lemma 3.1.8]{CHLX23} (\cite[Corollary 18.7.1]{Roc97}, \cite[Lemma 6.2]{Spi20}), $R_j=\lim_{i\rightarrow+\infty}R_{j,i}$ for some exposed rays $R_{j,i}\in\overline{NE}(X/U)$. Since $(K+A)\cdot R_j<0$, possibly passing to a subsequence, we have $(K+A)\cdot R_{j,i}<0$ for any $i$.  By Proposition \ref{prop: cone finiteness rays}, there are only finitely many $(K+A)$-negative extremal rays that are not contained in $\overline{NE}(X/U)_{\Nlc}$, so for any $i\gg 0$, $R_{j,i}$ is contained in $\overline{NE}(X/U)_{\Nlc}$. Since $\overline{NE}(X/U)_{\Nlc}$ is a closed sub-cone of $\overline{NE}(X/U)$, $R_j$ is contained in $\overline{NE}(X/U)_{\Nlc}$, a contradiction.

By Propositions \ref{prop: cone non-big case} and \ref{prop: cone big case}, for any $j\in\Lambda$, $R_j$ is spanned by a rational curve $C_j$ such that
$$0<-K\cdot C_j\leq 2d.$$
\end{proof}

\begin{proof}[Proof of Theorem \ref{thm: cone intro}]
    (1) follows from Theorem \ref{thm: cone} and the fact that the numerical classes of curves are rational in $\overline{NE}(X/U)$. (2) follows from Theorem \ref{thm: cone}. (3) follows from Proposition \ref{prop: cone finiteness rays} and that 
    $$\Lambda=\cup_{n=1}^{+\infty}\Lambda_{\frac{1}{n}A}$$
    for any ample$/U$ $\Rr$-divisor $A$. We left to prove (4).

     For any $K$-negative extremal face $F$ in $\overline{NE}(X/U)$ that is relatively ample at infinity with respect to $(X,\Ff,B,\Mm,t)$, $F$ is also a $(K+A)$-negative extremal face for some ample$/U$ $\Rr$-divisor $A$ on $X$. Let $V:=F^\bot\subset N^1(X/U)$. By (1), $F$ is spanned by a subset of $\{R_j\}_{j\in\Lambda_A}$ and $R_j$ is rational, so $V$ is defined over $\Qq$. We let
$$W_F:=\overline{NE}(X/U)_{K+A\geq 0}+\overline{NE}(X/U)_{\Nlc}+\sum_{j\mid j\in\Lambda_A,R_j\not\subset F}R_j.$$
Then $W_F$ is a closed cone, $\overline{NE}(X/U)=W_F+F$, and $W_F\cap F=\{0\}$. The supporting functions of $F$ are the elements in $V$ that are positive on $W_F\backslash\{0\}$, which is a non-empty open subset of $V$, and hence contains a rational element $H$. In particular, $F=H^\bot\cap \overline{NE}(X/U)$, hence $F$ is rational, and we get (4). This concludes the proof of Theorem \ref{thm: cone intro}.
\end{proof}

The cone theorem immediately implies the following result:

\begin{prop}
    Let $(X,\Ff,B,\Mm,t)/U$ be an lc algebraically integrable adjoint foliated structure, $A$ an ample$/U$ $\Rr$-divisor on $X$, and $D$ a nef$/U$ $\Rr$-divisor on $X$. Let $K:=K_{(X,\Ff,B,\Mm,t)}$. Assume that $K+A$ is not nef$/U$. Let
    $$\lambda:=\sup\{s\geq 0\mid D+s(K+A)\text{ is nef}/U\}.$$
    Then there exists a $K$-negative extremal ray $R$ in $\overline{NE}(X/U)$ such that
    $$(D+\lambda(K+A))\cdot R=0.$$
\end{prop}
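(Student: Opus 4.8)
The plan is to deduce the statement directly from the cone theorem (Theorem~\ref{thm: cone intro}), as the phrase ``The cone theorem immediately implies'' suggests. First I would dispose of the elementary facts about $\lambda$. Since $D$ is nef$/U$, the value $s=0$ belongs to $S:=\{s\ge 0\mid D+s(K+A)\text{ is nef}/U\}$. Since $K+A$ is not nef$/U$, there is a class $z\in\overline{NE}(X/U)$ with $(K+A)\cdot z<0$, whence $\big(D+s(K+A)\big)\cdot z<0$ for $s\gg 0$, so $S$ is bounded and $\lambda<+\infty$. The set $S$ is closed, being the preimage of the closed nef$/U$ cone under a continuous affine map, and is an interval since a convex combination of nef$/U$ classes is nef$/U$; therefore $S=[0,\lambda]$, so $L:=D+\lambda(K+A)$ is nef$/U$, while $D+(\lambda+\varepsilon)(K+A)$ fails to be nef$/U$ for every $\varepsilon>0$.

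Next I would use that $(X,\Ff,B,\Mm,t)$ is lc, which gives $\Nlc(X,\Ff,B,\Mm,t)=\emptyset$, hence $\overline{NE}(X/U)_{\Nlc}=\{0\}$. Theorem~\ref{thm: cone intro}(3), applied with the given ample$/U$ divisor $A$, yields a \emph{finite} set $\Lambda_A$ of $K$-negative extremal rays$/U$ such that
$$\overline{NE}(X/U)=\overline{NE}(X/U)_{K+A\ge 0}+\sum_{j\in\Lambda_A}R_j,$$
and this Minkowski sum is a closed cone because $\Lambda_A$ is finite. (Each $R_j$, $j\in\Lambda_A$, is indeed $K$-negative: if $C_j$ spans $R_j$ then $A\cdot C_j>0$ and $(K+A)\cdot C_j<0$, so $K\cdot C_j<-A\cdot C_j<0$.)

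The main step is a contradiction argument: assume $L\cdot R>0$ for every $K$-negative extremal ray$/U$ $R$ of $\overline{NE}(X/U)$; in particular $L\cdot R_j>0$ for all $j\in\Lambda_A$. I claim that then $M_\varepsilon:=D+(\lambda+\varepsilon)(K+A)=L+\varepsilon(K+A)$ is nef$/U$ for all sufficiently small $\varepsilon>0$, contradicting the first paragraph. Since $L$ is nef$/U$ and $A$ is nef, $M_\varepsilon$ is automatically nonnegative on $\overline{NE}(X/U)_{K+A\ge 0}$: there $M_\varepsilon\cdot z=L\cdot z+\varepsilon(K+A)\cdot z\ge 0$ for all $\varepsilon\ge 0$. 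On each $R_j$ with generator $C_j$ we have $M_\varepsilon\cdot C_j=L\cdot C_j+\varepsilon(K+A)\cdot C_j$, which is $>0$ once $\varepsilon<L\cdot C_j/\big|(K+A)\cdot C_j\big|$ (no constraint if $(K+A)\cdot C_j\ge 0$); taking $\varepsilon_0$ to be the minimum of these finitely many positive bounds, $M_\varepsilon$ is nonnegative on every generator of the decomposition, hence nef$/U$, for all $\varepsilon\in(0,\varepsilon_0)$. This contradiction shows that some $K$-negative extremal ray$/U$ $R$ satisfies $L\cdot R\le 0$; as $L$ is nef$/U$ this forces $\big(D+\lambda(K+A)\big)\cdot R=0$, which is the desired conclusion.

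I do not anticipate a genuine obstacle here; the only point that needs care is that the decomposition furnished by Theorem~\ref{thm: cone intro}(3) is a \emph{closed} Minkowski sum, so that verifying nonnegativity on $\overline{NE}(X/U)_{K+A\ge 0}$ together with the finitely many rays $R_j$ ($j\in\Lambda_A$) genuinely certifies that $M_\varepsilon$ is nef$/U$. Note that neither the bound on the lengths $-K\cdot C_j$ from Theorem~\ref{thm: cone intro}(2) nor the rationality statement of part (4) is needed here --- only the cone decomposition and the finiteness in part (3).
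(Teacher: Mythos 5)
Your proof is correct, and it uses the same key input as the paper's proof --- the finiteness statement of Theorem \ref{thm: cone intro}(3) --- but it is organized differently. The paper splits into the cases $\lambda=0$ and $\lambda>0$: when $\lambda=0$ it picks, among the finitely many $(K+A)$-negative extremal rays, one that is $(D+s(K+A))$-negative for every $s>0$, forcing $D\cdot R=0$; when $\lambda>0$ it applies the finiteness statement to the ample$/U$ divisor $A+\frac{1}{2\lambda}D$, i.e.\ to the finitely many $(D+2\lambda(K+A))$-negative rays $R_1,\dots,R_l$, and identifies $\lambda$ as the explicit minimum $\min_i\{-D\cdot R_i/((K+A)\cdot R_i)\}$, taking $R$ to be a minimizer. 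You avoid both the case split and the auxiliary ample divisor by working with the cone decomposition relative to $K+A$ itself and running a perturbation/contradiction argument: if $D+\lambda(K+A)$ were strictly positive on the finitely many rays in $\Lambda_A$, nefness would persist past $\lambda$. The trade-off is that the paper's route exhibits the nef threshold as an explicit minimum over finitely many extremal rays (the form one typically wants for MMP with scaling), whereas yours is a single uniform argument needing only that $D+\lambda(K+A)$ is nef$/U$; your preliminary checks (closedness of the set $S$, $\overline{NE}(X/U)_{\Nlc}=\{0\}$ in the lc case, $K$-negativity of the rays in $\Lambda_A$ --- the last being automatic from the definition of $\Lambda$) are all correct.
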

\begin{proof}
If $\lambda=0$ then $D+s(K+A)$ is not nef$/U$ for any $s>0$, so for any $s>0$, there exists a  $(D+s(K+A))$-negative extremal ray $R_s$. Then $R_s$ is $(K+A)$-negative for any $s$. By Theorem \ref{thm: cone intro}, there are only finitely many $(K+A)$-negative extremal ray$/U$, so there exists a $(K+A)$-negative extremal ray $R$ such that $(D+s(K+A))\cdot R<0$ for any $s>0$. Thus $D\cdot R=0$ and the proposition follows. In the following, we may assume that $\lambda\not=0$. 
    Since $A+\frac{1}{2\lambda}D$ is ample$/U$, by Theorem \ref{thm: cone intro}, there are only finitely many $D+2\lambda(K+A)$-negative extremal rays $R_1,\dots,R_l$ in $\overline{NE}(X/U)$ and each $R_i$ is spanned by a rational curve $C_i$. Therefore, we have
    $$\lambda=\min_{1\leq i\leq l}\left\{-\frac{D\cdot R_i}{(K+A)\cdot R_i}\right\}.$$
    and there exists $i$ such that $\lambda=-\frac{D\cdot R_i}{(K+A)\cdot R_i}$. We may take $R=R_i$.
\end{proof}

\section{Contraction theorem}\label{sec: contraction}

In this section we prove Theorem \ref{thm: cont intro}.

\subsection{Existence of flipping and divisorial contractions}

\begin{defn}[NQC]
    Let $X\rightarrow U$ be a projective morphism from a normal quasi-projective variety to a variety. Let $D$ be a nef $\Rr$-Cartier $\Rr$-divisor on $X$ and $\Mm$ a nef $\bb$-divisor on $X$.

    We say that $D$ is NQC$/U$ if $D=\sum d_iD_i$, where each $d_i\geq 0$ and each $D_i$ is a nef$/U$ Cartier divisor. We say that $\Mm$ is NQC$/U$ if $\Mm=\sum \mu_i\Mm_i$, where each $\mu_i\geq 0$ and each $\Mm_i$ is a nef$/U$ Cartier $\bb$-divisor.
\end{defn}

\begin{lem}\label{lem: nqc of K+A}
Let $(X,\Ff,B,\Mm,t)/U$ be an lc algebraically integrable adjoint foliated structure and $A$ an ample$/U$ $\Rr$-divisor on $X$. Let $K:=K_{(X,\Ff,B,\Mm,t)}$. Let $D$ be a nef$/U$ $\Rr$-divisor on $X$ such that $D-K$ is ample$/U$. Then $D$ is NQC$/U$.
\end{lem}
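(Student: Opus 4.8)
The plan is to realize $D$ as a supporting function of the extremal face it cuts out on $\overline{NE}(X/U)$, to check that this face satisfies the hypotheses of the rationality statement in Theorem \ref{thm: cone intro}(4), and then to read off the NQC decomposition from the rational structure of the supporting cone produced in the proof of that theorem. Note that the divisor $A$ in the statement plays no role; only the hypothesis that $D-K$ is ample$/U$ is used.

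Set $F:=\overline{NE}(X/U)\cap D^{\bot}$. Since $D$ is nef$/U$, this is an extremal face of $\overline{NE}(X/U)$ and $D$ is a supporting function of $F$ in the sense of Definition \ref{defn: basics of cone theorem}(1). First I would check that $F$ is $K$-negative and relatively ample at infinity with respect to $(X,\Ff,B,\Mm,t)$. For the former: if $z\in F\setminus\{0\}$ then $D\cdot z=0$, hence $K\cdot z=-(D-K)\cdot z<0$ since $D-K$ is ample$/U$ and $z\in\overline{NE}(X/U)\setminus\{0\}$; thus $F\cap\overline{NE}(X/U)_{K\ge 0}=\{0\}$. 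For the latter: $(X,\Ff,B,\Mm,t)$ is lc, so $\Nlc(X,\Ff,B,\Mm,t)=\emptyset$, and consequently $\overline{NE}(X/U)_{\Nlc}=\{0\}$, so the condition of Definition \ref{defn: basics of cone theorem}(4) is automatic.

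By Theorem \ref{thm: cone intro}(4) the face $F$ is rational, and I would use the construction in its proof: fixing an ample$/U$ $\Rr$-divisor $A_0$ for which $F$ is a $(K+A_0)$-negative face (such $A_0$ exists because $F$ is $K$-negative), the subspace $V:=F^{\bot}\subseteq N^1(X/U)$ is defined over $\Qq$, and there is a closed subcone $W_F$ with $\overline{NE}(X/U)=W_F+F$ and $W_F\cap F=\{0\}$, such that the set of supporting functions of $F$ equals $\Sigma:=\{x\in V\mid x\cdot z>0\text{ for all }z\in W_F\setminus\{0\}\}$, a nonempty open convex subcone of $V$. Now $D\in V$ since $D\cdot R=0$ for every extremal ray $R\subseteq F$, and $D\cdot z>0$ for every $z\in W_F\setminus\{0\}$: indeed $D$ nef$/U$ gives $D\cdot z\ge0$, and if $D\cdot z=0$ then $z\in\overline{NE}(X/U)\cap D^{\bot}=F$, whence $z\in W_F\cap F=\{0\}$, a contradiction. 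Therefore $D\in\Sigma$.

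Finally, since $D$ is an interior point of the open convex cone $\Sigma$ and $\Sigma$ lies in the rational subspace $V$, I can choose finitely many rational supporting functions $H_1,\dots,H_\ell\in\Sigma\cap N^1(X/U)_{\Qq}$ and positive reals $\lambda_1,\dots,\lambda_\ell$ with $D=\sum_k\lambda_kH_k$. Each $H_k$ is a nef$/U$ $\Qq$-Cartier $\Qq$-divisor, so some positive integer multiple $n_kH_k$ is a nef$/U$ Cartier divisor, giving $D=\sum_k(\lambda_k/n_k)(n_kH_k)$, a nonnegative combination of nef$/U$ Cartier divisors; hence $D$ is NQC$/U$. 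If one wants this as an equality of $\Rr$-divisors rather than of numerical classes, one writes the difference between the two sides, a numerically trivial $\Rr$-Cartier $\Rr$-divisor, as an $\Rr$-linear combination of numerically trivial Cartier divisors — each of which, and whose negative, is nef$/U$ — and absorbs it into the sum. The only genuinely delicate point is the verification that $D$ lies in the open cone $\Sigma$ of supporting functions of $F$; this is exactly where the relation $W_F\cap F=\{0\}$ coming from the cone theorem is used, and the rest of the argument is formal.
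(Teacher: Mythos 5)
Your proof is correct, but it takes a genuinely different route from the paper's. The paper argues ``coordinate-wise'': it writes $D=\sum_{i=1}^c r_iD_i$ with $r_1,\dots,r_c$ linearly independent over $\Qq$ and $D_i$ Cartier, sets $L:=D-K$, and uses the finiteness of $(K+\tfrac{1}{2}L)$-negative extremal rays from Theorem \ref{thm: cone intro}(3) (together with the fact that $D\cdot C_j=0$ forces $D_i\cdot C_j=0$ for all $i$, by $\Qq$-linear independence) to show that $D(\bm{v})=\sum v_iD_i$ stays nef$/U$ for all $\bm{v}$ in a neighborhood of $\bm{r}$; a rational simplex around $\bm{r}$ then gives the NQC decomposition directly as an equality of divisors. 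You instead work dually in $N^1(X/U)$: you pass to the face $F=\overline{NE}(X/U)\cap D^{\bot}$, check it is $K$-negative (via $D-K$ ample) and relatively ample at infinity (via $\Nlc=\emptyset$), and then reuse the construction inside the proof of Theorem \ref{thm: cone intro}(4) -- the rational subspace $V=F^{\bot}$, the closed cone $W_F$ with $\overline{NE}(X/U)=W_F+F$ and $W_F\cap F=\{0\}$, and the identification of the supporting functions of $F$ with the open cone $\Sigma\subset V$ -- to place $D$ in an open convex cone with rational structure and decompose it into rational nef classes. This is conceptually clean (``$D$ is an interior point of a rationally defined cone of nef classes''), but note two costs: you rely on the internal construction in the \emph{proof} of Theorem \ref{thm: cone intro}(4) (in particular the closedness of $W_F$, which is what makes $\Sigma$ open), not just on its statement, whereas the paper only needs the stated cone theorem plus \cite[Lemma 5.3]{HLS19}; and your decomposition is a priori only an identity of numerical classes, so you need the extra (routine, and correctly sketched) step of absorbing the numerically trivial $\Rr$-Cartier difference into a combination of numerically trivial Cartier divisors, which the paper's argument avoids because it perturbs within a fixed Cartier basis of $D$ itself. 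The degenerate case $F=\{0\}$ (i.e.\ $D$ ample$/U$) is harmless in your setup, and your observation that the ample divisor $A$ in the statement is never used matches the paper's proof.
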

\begin{proof}
We write $D=\sum_{i=1}^cr_iD_i$, where $r_1,\dots,r_c$ are linearly independent over $\mathbb Q$ and each $D_i$ is a Cartier divisor. We define $D(\bm{v}):=\sum_{i=1}^cv_iD_i$ for any $\bm{v}=(v_1,\dots,v_c)\in\mathbb R^c$. By \cite[Lemma 5.3]{HLS19}, $D(\bm{v})$ is $\Rr$-Cartier for any $\bm{v}\in\mathbb R^c$. 

Let $L:=D-K$. Since ample$/U$ is an open condition, there exists an open set $V\ni\bm{r}$ in $\mathbb R^c$, such that 
$\frac{1}{2}L+D(\bm{v})-D$ is ample$/U$ for any $\bm{v}\in V$.

By Theorem \ref{thm: cone intro}, there exist finitely many $\left(K+\frac{1}{2}L\right)$-negative extremal rays$/U$ $R_1,\dots,R_l$, and $R_j=\mathbb R_+[C_j]$ for some curve $C_j$. Since $D$ is nef, $D\cdot C_j\geq 0$ for each $j$. Thus possibly shrinking $V$, we may assume that for any $\bm{v}\in V$, we have that $D(\bm{v})\cdot C_j>0$ for any $j$ such that $D\cdot C_j>0$. Since $r_1,\dots,r_c$ are linearly independent over $\mathbb Q$, for any $j$ such that $D\cdot C_j=0$, we have $D(\bm{v})\cdot C_j=0$ for any $\bm{v}\in\mathbb R^c$. Therefore,  $D(\bm{v})\cdot C_j\geq 0$ for any $j$ and any $\bm{v}\in V$.

By Theorem \ref{thm: cone intro}, for any curve $C$ on $X$, we may write
$[C]=\eta+\sum_{i=1}^l a_i[C_i]$
where $a_1,\dots,a_l\geq 0$ and $\eta\in\overline{NE}(X/U)_{K+\frac{1}{2}L\geq 0}$. For any $\bm{v}\in V$, since
$$D(\bm{v})\cdot \eta=\left(K+\frac{1}{2}L\right)\cdot \eta+\left(\frac{1}{2}L+D(\bm{v})-D\right)\cdot \eta\geq 0,$$
$D(\bm{v})\cdot C\geq 0$. Therefore, $D(\bm{v})$ is nef$/U$ for any $\bm{v}\in V$. We let $\bm{v}_1,\dots,\bm{v}_{c+1}\in V\cap\mathbb Q^c$ be rational points such that $\bm{r}$ is in the interior of the convex hull of $\bm{v}_1,\dots,\bm{v}_{c+1}$. Then there exist positive real numbers $a_1,\dots,a_{c+1}$ such that $\sum_{i=1}^{c+1}a_i=1$ and $\sum_{i=1}^{c+1}a_i\bm{v}_i=\bm{r}$. Since $D(\bm{v}_i)$ is a nef$/U$ $\Qq$-divisor for each $i$ and $D=\sum_{i=1}^{c+1}a_iD(\bm{v}_i)$, $D$ is NQC$/U$.
\end{proof}

\begin{thm}[Existence of flipping and divisorial contractions]\label{thm: eo flipping and divisorial contraction}
    Let $(X,\Ff,B,\Mm,t)/U$ be an lc algebraically integrable adjoint foliated structure. Let $K:=K_{(X,\Ff,B,\Mm,t)}$. Let $A$ be an ample$/U$ $\Rr$-divisor on $X$. Assume that $X$ is potentially klt, and $K+A$ is nef and big$/U$. Then $K+A$ is semi-ample$/U$. Moreover, the following hold.
    \begin{enumerate}
        \item If $K+A$ is Cartier, then $\mathcal{O}_X(n(K+A))$ is globally generated$/U$ for any integer $n\gg 0$.
        \item Let $c: X\rightarrow T$ be the contraction$/U$ associated to $K+A$. Then $T$ is potentially klt.
        \item Suppose that $K+A$ is the supporting function of a $K$-negative extremal ray $R$.
\begin{enumerate}
    \item For any integral curve $C$ such that $\pi(C)$ is a point, where $\pi: X\rightarrow U$ is the associated morphism, $c(C)$ is a point if and only if $[C]\in R$.
    \item Let $L$ be a line bundle on $X$ such that $L\cdot R=0$. Then there exists a line bundle $L_T$ on $T$ such that $L\cong c^\ast L_T$.
    \end{enumerate}
    \end{enumerate}
\end{thm}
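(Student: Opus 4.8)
The plan is to reduce everything to the semi-ampleness of $K+A$, which is the only substantive point; statements (1)--(3) then follow quickly. We may assume $0\le t<1$, the case $t=1$ concerning algebraically integrable generalized foliated quadruples with potentially klt ambient space and following from the base-point-freeness results of \cite{CHLX23,LMX24b}. Since $(K+A)-K=A$ is ample$/U$, Lemma \ref{lem: nqc of K+A} shows $K+A$ is NQC$/U$, which lets us run the $\Rr$-divisor MMPs below. Write $H:=K+A$; it is nef and big$/U$, so $\Bb_+(H/U)\ne X$ and its divisorial part is a finite union of prime divisors $D_1,\dots,D_k$.

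\smallskip

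\noindent\emph{Main construction.} Fix $l\gg 0$, so $A':=A+lH$ is ample$/U$, and set $\Aa':=\overline{A'}$, a nef$/U$ $\bb$-divisor descending to $X$; then $(X,\Ff,B,\Mm+\Aa',t)$ is lc (discrepancies are unchanged since $\Aa'$ descends to $X$) with adjoint canonical divisor $(1+l)H$. Take a foliated log resolution $g\colon W\to X$ of $(X,\Ff,\Supp B\cup\bigcup_i D_i,\Mm)$, taken high enough, and build on $W$ the analogue $(W,\Ff_W,B_W,\Mm+\Aa',t)$ of the structure used in the proof of Theorem \ref{thm: eoqdlt model}, but with the coefficients of $B_W$ along the strict transforms of the $D_i$ raised to their maximal lc values (possible since $D_i\subset\Bb_+(H/U)$). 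As $\Aa'_W=g^\ast A+l\,g^\ast H$ and $g^\ast H$ is big$/U$, the corresponding foliated divisor is big$/U$ for $l\gg 0$, so by \cite[Theorem 16.1.4]{CHLX23} we may run a $(K_{\Ff_W}+B_W^{\ninv}+(\Mm+\Aa')_W)$-MMP$/U$ with scaling of an ample divisor to a minimal model $Y$. Treating $P_Y:=K_{\Ff_Y}+B_Y^{\ninv}+(\Mm+\Aa')_Y$ as a nef$/U$ $\Rr$-divisor, the generalized pair $\bigl(Y,B_Y'',\Pp:=\Mm+\Aa'+\overline{\frac{t}{1-t}P_Y}\bigr)$ is $\Qq$-factorial qdlt exactly as in that proof, so by \cite[Lemma 4.4]{BZ16} we may run a $(K_Y+B_Y''+\Pp_Y)$-MMP$/U$ terminating with a good minimal model $V$. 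This run is simultaneously a run of an adjoint foliated MMP$/U$ whose scaled adjoint canonical divisor is the crepant transform of $(1+l)H$, and by the length of extremal rays (Theorem \ref{thm: cone intro}(2)) every step of it is trivial for the transform of $H$ once $l\gg 0$.

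\smallskip

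\noindent\emph{Comparison with $X$ and conclusion.} The core of the argument will be to show that $V$ is a good minimal model, and $X$ a weak lc model, of the structure on $W$. By Lemma \ref{lem: nz for lc divisor}(1) the divisors contracted by $W\dashrightarrow V$ lie in $\Supp N_{\sigma}$ of that structure; the strict transforms of the $D_i$ are contracted because their coefficients were pushed to the lc threshold and $D_i\subset\Bb_+(H/U)$, while a non-$g$-exceptional prime divisor other than the $D_i$ cannot be contracted, since by Lemmas \ref{lem: nz keep under pullback}, \ref{lem: sbl of movable big divisor} and \ref{lem: sbl under pullback} doing so would contradict the nefness and bigness of $H$ on $X$. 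Hence the divisors contracted by $X\dashrightarrow V$ are exactly $D_1,\dots,D_k$, the map $X\dashrightarrow V$ is $H$-non-positive, and since $H$ is nef$/U$ a comparison of Nakayama--Zariski decompositions via Lemma \ref{lem: nz keep under pullback} shows $X$ is a weak lc model of the $W$-structure. As $V$ is a good minimal model of the same structure, the two share an ample model, so $(1+l)H$, hence $H=K+A$, is semi-ample$/U$; let $c\colon X\to T$ be the associated contraction. Then (1) is immediate, a semi-ample Cartier divisor having $\mathcal{O}_X(n(K+A))$ globally generated$/U$ for all $n\gg 0$, and (2) holds because $V\to T$ exhibits $T$ as the ample model of the $\Qq$-factorial qdlt generalized pair $(V,B_V'',\Pp_V)$ together with an ample divisor, whence $T$ is potentially klt. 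I expect this comparison step to be the main obstacle: pinning down which divisors $W\dashrightarrow V$ contracts and forcing $X$ to be a weak lc model is precisely where both the $+lH$ device and the coefficient-raising along the $D_i$ are needed.

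\smallskip

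\noindent\emph{Part (3).} Suppose $K+A$ is the supporting function of a $K$-negative extremal ray $R$, so $\overline{NE}(X/U)\cap(K+A)^\bot=R$ and $K+A=c^\ast A_T$ for an ample$/U$ $\Rr$-divisor $A_T$ on $T$. For (3)(a), an integral curve $C$ with $\pi(C)$ a point has $c(C)$ a point iff $A_T\cdot c_\ast C=0$ iff $(K+A)\cdot C=0$ iff $[C]\in R$. For (3)(b), let $L$ be a line bundle with $L\cdot R=0$; since $R$ is an isolated $(K+A)$-negative ray by Theorem \ref{thm: cone intro}(3), the ratio $|L\cdot C|/\bigl((K+A)\cdot C\bigr)$ is bounded on curves $C$ with $[C]\notin R$, so $mH\pm L$ is nef$/U$ for $m\gg 0$. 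Absorbing the nef$/U$ divisor $(m-1)H\pm L$ into the moduli $\bb$-divisor keeps the adjoint foliated structure lc and realizes $mH\pm L$ as an adjoint canonical divisor plus $A$ that is nef and big$/U$, hence semi-ample$/U$ by the main part; as $mH\pm L$ vanishes exactly on $R$, each defines the contraction $c$, so $mH\pm L=c^\ast N_\pm$ with $N_\pm$ $\Rr$-Cartier on $T$. Since $L$ is $c$-numerically trivial and $X$ is of klt type, hence has rational singularities, $L$ is $c$-trivial and $L_T:=c_\ast\mathcal{O}_X(L)$ is a line bundle with $c^\ast L_T\cong\mathcal{O}_X(L)$, by cohomology and base change as in \cite[Theorem 3.25]{KM98}.
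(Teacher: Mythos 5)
Your proposal follows the same architecture as the paper's proof: resolve, raise the boundary coefficients along the divisorial part of $\Bb_+((K+A)/U)$, run a foliated MMP and then a generalized-pair MMP made $(K+A)$-trivial by adding $l(K+A)$ for $l\gg 0$, and compare back with $X$ via Nakayama--Zariski decompositions. But the two places you yourself flag as delicate are genuine gaps, and the mechanism you propose for the comparison is incorrect as stated. The assertion that ``$X$ is a weak lc model of the $W$-structure'' fails in the divisorial case: the pushforward to $X$ of the raised-coefficient structure is $(1+l)(K+A)+(\widehat B-B)$ with $\widehat B-B\geq 0$ supported on $\bigcup_i D_i$, and this is not nef (curves in a contracted $D_i$ meet it negatively), so no ``shared ample model'' argument is available. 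The paper argues differently: it writes the adjoint divisor of the raised structure plus $l\,g^\ast(K+A)$ as $(l+1)g^\ast(K+A)$ plus an effective divisor supported on $\Exc(g)$ and the strict transforms of the $D_i$, computes $N_\sigma$ exactly via Lemmas \ref{lem: sbl of movable big divisor} and \ref{lem: sbl under pullback}, concludes via Lemma \ref{lem: nz for lc divisor} that precisely these divisors are contracted, and then deduces the pullback identity giving semi-ampleness directly. For this one needs every $g$-exceptional divisor to occur in $N_\sigma$ with strictly positive coefficient; the paper secures this by first reducing to $(X,\Ff,B,\Mm,t)$ $\Qq$-factorial klt (interpolating with a klt pair $(X,\Delta)$ --- which also disposes of $t=1$) and then perturbing the pulled-back ample part by a small exceptional anti-ample$/X$ divisor $\tau Q$ with $\tau$ below the klt gap. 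You skip this reduction, your $\Aa'$ only descends from $X$, and in the merely lc case exceptional lc places contribute zero to $N_\sigma$, need not be contracted, and $X\dashrightarrow V$ may extract divisors; so ``the divisors contracted by $X\dashrightarrow V$ are exactly $D_1,\dots,D_k$'' is not established. Finally, dismissing $t=1$ by citing \cite{CHLX23,LMX24b} does not work: the $t=1$ case of this theorem is exactly a strengthening of \cite[Theorem 1.2(1)]{LMX24b} (no hypothesis $B\geq\Delta$), as the paper's Remark after the proof points out; the fix is the Step-1 interpolation.

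The auxiliary parts are also not as immediate as claimed. (1) does not follow from semi-ampleness of the $\Rr$-divisor $K+A$: a Cartier divisor that is an $\Rr$-linear pullback of an ample $\Rr$-Cartier divisor on $T$ need not have $\mathcal{O}_X(n(K+A))$ globally generated for all $n\gg 0$; the paper proves (1) by applying the usual base-point-freeness theorem to two lc generalized-pair structures on $V$ (absorbing $l\Hh$ and $(l+1)\Hh$) and using coprimality of $l+1$ and $l+2$. In (3)(b), the final step ``numerically $c$-trivial plus rational singularities implies $L$ descends as a line bundle'' is unjustified --- numerical triviality on fibers gives no such descent without a base-point-free input; the correct argument, as in the paper and in the proof of the classical statement in \cite{KM98}, is to apply (1) with base $T$ to $L=K+(L-K)$, where $L-K$ is ample$/T$, and set $L_T:=L_{n+1,T}-L_{n,T}$. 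Your (2) is essentially the paper's, except that one should cite \cite[Lemma 3.4]{HL22} to convert the qdlt generalized pair plus ample divisor on $V$ into an honest klt pair before pushing forward.
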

\begin{proof}

\noindent\textbf{Step 1}. We reduce to the case when $(X,\Ff,B,\Mm,t)$ is $\Qq$-factorial klt and $\Supp B$ contains the divisorial part of $\Bb_+((K+A)/U)$.

Let $(X,\Delta)$ be a klt pair. Possibly replacing $(X,\Ff,B,\Mm,t)$ with $(X,\Ff,(1-\delta)B+\delta\Delta,(1-\delta)\Mm,(1-\delta)t)$ and replacing $A$ with $A+\delta K-\delta(K_X+\Delta)$ for some $0<\delta\ll 1$, we may assume that $(X,\Ff,B,\Mm,t)/U$ is klt and $t<1$. Since $X$ is potentially klt, there exists a small $\Qq$-factorialization $h: (X',\Ff',B',\Mm,t)\rightarrow (X,\Ff,B,\Mm,t)$. 
Let $E\geq 0$ be an $h$-anti-ample $\Rr$-divisor on $X'$. Then $h^\ast A-eE$ is ample for any $0<e\ll 1$. By Lemma \ref{lem: perturb qdlt afs}, $(X,\Ff,B+eE,\Mm,t)$ is klt for any $0<e\ll 1$. Since $K_{(X',\Ff',B'+eE,\Mm,t)}+h^\ast A-eE$ and $K+A$ are crepant, we may replace $(X,\Ff,B,\Mm,t)$ with $(X',\Ff',B'+eE,\Mm,t)$ and $A$ with $h^\ast A-eE$, and assume that $(X,\Ff,B,\Mm,t)$ is $\Qq$-factorial klt.

Since $K+A$ is big and nef$/U$, the divisorial part of $\Bb_+((K+A)/U)$ consists of finitely many components. We let $S$ to be the support of the divisorial part of  $\Bb_+((K+A)/U)$. Then possibly replacing $B$ with $B+eS$ and $A$ with $A-eS$ for $0<e\ll 1$, we may assume that $\Supp B$ contains the divisorial part of $\Bb_+((K+A)/U)$.

\medskip

\noindent\textbf{Step 2}. In this step we take a foliated log resolution of $(X,\Ff,B,\Mm)$ and we construct auxiliary divisors satisfying certain properties.

Let $g: W\rightarrow X$ be a foliated log resolution of $(X,\Ff,B,\Mm,t)$ associated with an equidimensional toroidal contraction $f: (W,\Sigma_W)\rightarrow (Z,\Sigma_Z)$. 
Let $\Ff_W:=g^{-1}\Ff$, $S_W:=g^{-1}_\ast S$, and let $G_W$ be the vertical$/Z$ part of $\Sigma_W$.
Since any component of $S$ is a component of $B$, $S_W^{\inv}\subset\Sigma_W$.

Let $E_{1},\dots,E_{m}$ be $g$-exceptional prime divisors. Let $e_i:=-a(E_i,X,\Ff,B,\Mm,t)$, $A_W:=g^\ast A$, $\Ff_W:=g^{-1}\Ff$, and $B_W:=g^{-1}_\ast B$. 
Then we have
$$tK_{\Ff_W}+(1-t)K_W+B_W+\sum e_iE_i+\Mm_W=g^\ast K.$$
Since $(X,\Ff,B,\Mm,t)$ is klt, there exists a positive real number $s$, such that for any irreducible component $D$ of $B_W+e_iE_i$, we have
$$\mult_D(B_W+e_iE_i)<t\epsilon_{\Ff}(D)+(1-t)-s.$$
In particular, $e_i<t\epsilon_{\Ff}(E_i)+(1-t)-s$ for any $i$. Let
$$\widehat B_W:=g^{-1}_\ast (B^{\ninv}\vee S^{\ninv}+B^{\inv}\vee (1-t)S^{\inv}).$$
Then
$$\Supp(\widehat B_W-B_W)=S_W.$$

Since $X$ is $\Qq$-factorial, there exists a $g$-exceptional $\Qq$-divisor $Q\geq 0$ that is anti-ample$/X$ and $\lfloor Q\rfloor=0$. In particular, there exists a real number $0<\tau<\frac{s(1-t)}{2}$, such that $A_W-\tau Q$ is ample$/U$. Let $\Aa:=\overline{A_W-\tau Q}$.

\medskip

\noindent\textbf{Step 3}. In this step we run a foliated MMP $\phi: W\dashrightarrow Y$.

Let $H:=K+A$ and $\Hh:=\overline{H}$. By Lemma \ref{lem: nqc of K+A}, $\Hh$ is NQC$/U$. We may write $\Hh=\sum a_i\Hh_i$ such that each $a_i>0$ and each $\Hh_i$ is nef$/U$ and $\bb$-Cartier. Let $\epsilon_0:=\min\{a_i,1\}$.

Since $$\left(W,\Ff_W,\widehat B_W+\sum E_i,\Mm+\Aa\right)$$ is foliated log smooth, 
$$\left(W,\Ff_W,\widehat{B}_W^{\ninv}+\sum \epsilon_{\Ff}(E_i)E_i,\Mm+\Aa,G_W\right)/Z$$
is $\Qq$-factorial ACSS. Let $l\gg\frac{2\dim X}{\epsilon_0}$ be an integer. Since $\Hh_W$ is big and nef, by \cite[Theorem 16.1.4]{CHLX23}, we may run a 
$$\left(K_{\Ff_W}+\widehat{B}_W^{\ninv}+\sum \epsilon_{\Ff}(E_i)E_i+\Mm_W+\Aa_W+l\Hh_W\right)\text{-MMP}/U$$ with scaling of an ample divisor which terminates with a good minimal model$/U$. Moreover, by \cite[Lemma B.6]{LMX24b}, this MMP is $\Hh_W$-trivial.
  
Let $\phi: W\dashrightarrow Y$ be this MMP, $\Ff_Y:=\phi_\ast \Ff_W$, and $B_Y,\widehat{B}_Y,E_{i,Y},A_Y,Q_Y,G_Y,S_Y$ the images of $B_W,\widehat{B}_W,E_{i},A_W,Q_W,G_W,S_W$ on $Y$ respectively. Then
$$P_Y:=K_{\Ff_Y}+\widehat{B}_Y^{\ninv}+\sum\epsilon_{\Ff}(E_i)E_{i,Y}+\Mm_Y+\Aa_Y+l\Hh_Y$$
is semi-ample$/U$. Moreover, by \cite[Lemma 16.1.1]{CHLX23}, there exists an ample $\Rr$-divisor $A'_Y$ on $Y$ and a nef$/U$ $\bb$-divisor $\Mm'$, such that $\Mm'_Y+A'_Y=\Mm_Y+\Aa_Y$, $\Mm'-\Mm$ is nef$/U$, and $$\left(Y,\Ff_Y,\widehat{B}_Y^{\ninv}+\sum\epsilon_{\Ff}(E_i)E_{i,Y},\Mm'+\Aa'+l\Hh;G_Y\right)/Z$$ 
is $\Qq$-factorial ACSS, where $\Aa':=\overline{A'_Y}$. Then
$$\left(Y,\widehat{B}_Y^{\ninv}+\sum\epsilon_{\Ff}(E_i)E_{i,Y}+G_Y,\Mm'+\Aa'+l\Hh\right)$$
is $\Qq$-factorial qdlt, so
$$\left(Y,\Ff_Y,\widehat{B}_Y^{\ninv}+\sum\epsilon_{\Ff}(E_i)E_{i,Y}+(1-t)G_Y,\Mm'+\Aa'+l\Hh,t\right)$$
is lc.

\medskip

\noindent\textbf{Step 4}. In this step we show that there exists a natural lc generalized pair structure on $Y$, and run a generalized pair MMP  $\psi: Y\dashrightarrow V$.

Let $\Pp:=\overline{P_Y}$. Let $\tau_i:=t\epsilon_{\Ff}(E_i)+(1-t)$ for each $i$. Then
\begin{align*}
    &tK_{\Ff_Y}+(1-t)K_Y+\widehat{B}_Y+\sum \tau_iE_i+\Mm_Y+\Aa_Y+l\Hh_Y\\
        =&tK_{\Ff_Y}+(1-t)K_Y+\widehat{B}_Y+\sum \tau_iE_i+\Mm'_Y+\Aa'_Y+l\Hh_Y\\
=&tP_Y+(1-t)\bigg(K_Y+\widehat{B}_Y^{\ninv}+\frac{1}{1-t}\widehat{B}_Y^{\inv}+\sum E_{i,Y}+\Mm'_Y+\Aa'_Y+l\Hh_Y\bigg).
\end{align*}
Let $\Delta_Y:=\widehat{B}_Y^{\ninv}+\frac{1}{1-t}\widehat{B}_Y^{\inv}+\sum E_{i,Y}$ and $\Nn:=\frac{t}{1-t}\Pp+\Mm'+l\Hh$. Then $(Y,\Delta_Y,\Nn)$ is $\Qq$-factorial lc. Since $\Aa'$ descends to $Y$, we may run a 
$$(K_Y+\Delta_Y+\Nn_Y+\Aa'_Y)\text{-MMP}/U$$
which terminates with a good minimal model$/U$. Moreover, this is also a 
$$\left(tK_{\Ff_Y}+(1-t)K_Y+\widehat{B}_Y+\sum \tau_iE_i+\Mm_Y+\Aa_Y+l\Hh_Y\right)\text{-MMP}/U.$$
By \cite[Lemma B.6]{LMX24b}, this MMP is $\Hh_Y$-trivial. Let $\psi: Y\dashrightarrow V$ be this MMP, $\Ff_V:=\psi_\ast \Ff_Y$, and $B_V,\widehat{B}_V,E_{i,V},A_V,Q_V,G_V,S_V$ the images of $B_Y,\widehat{B}_Y,E_{i,Y},A_Y,Q_Y,G_Y,S_Y$ on $V$ respectively. Since
$$\left(Y,\Ff_Y,\widehat{B}_Y^{\ninv}+\sum\epsilon_{\Ff}(E_i)E_{i,Y}+(1-t)G_Y,\Mm+\Aa+l\Hh,t\right)$$
is lc,
$$\left(Y,\Ff_Y,\widehat{B}_Y+\sum\tau_iE_{i,Y},\Mm+\Aa+l\Hh,t\right)$$
is lc, so
$$\left(V,\Ff_V,\widehat{B}_V+\sum\tau_iE_{i,V},\Mm+\Aa+l\Hh,t\right)$$
is lc.

\medskip

\noindent\textbf{Step 5}. In this step we show that the adjoint foliated structure we have constructed on $V$ is the good minimal model$/U$ of an adjoint foliated structure on $W$.

Let 
$$K^V:=tK_{\Ff_V}+(1-t)K_V+\widehat{B}_V+\sum\tau_iE_{i,V}+\Mm_V+\Aa_V$$
and
$$K^W:=tK_{\Ff_W}+(1-t)K_W+\widehat{B}_W+\sum\tau_iE_{i}+\Mm_W+\Aa_W.$$
Then $(\psi\circ\phi)_\ast K^W=K^V$. Let $p: \widehat X\rightarrow W$ and $q: \widehat X\rightarrow\bar X$ be a resolution of indeterminacy of $\psi\circ\phi$, and write
$$p^\ast K^W=q^\ast K^V+F$$
for some $\Rr$-divisor $F$. Then $F$ is exceptional$/V$. Since $\psi\circ\phi$ is $\Hh_W$-trivial, we have
$$p^\ast (K^W+l\Hh_W)=q^\ast (K^V+l\Hh_V)+F.$$
and $p^\ast \Hh_W=q^\ast \Hh_{V}$. 

Since $S$ is the divisorial part of $\Bb_+(H/U)$, by Lemma \ref{lem: sbl under pullback}, $S_W\cup\cup_{i=1}^mE_i$ is the divisorial part of $\Bb_+(\Hh_W/U)$. Since  $\psi\circ\phi$ is $\Hh_W$-trivial, any divisor contracted by  $\psi\circ\phi$ is either $E_i$ for some $i$, or a component of $S_W$. Thus $\Supp(p_\ast F)\subset S_W\cup\cup_{i=1}^mE_i$, so for any component $D$ of $p_\ast F$, we have
\begin{align*}
  -t\epsilon_{\Ff}(D)-(1-t)&=a(D,W,\widehat B_W+\sum\tau_iE_i,\Mm+\Aa,t)\\
  &=a(D,V,\widehat B_V+\sum\tau_iE_{i,V},\Mm+\Aa,t)-\mult_Dp_\ast F\\
  &\geq -t\epsilon_{\Ff}(D)-(1-t)-\mult_Dp_\ast F.
\end{align*}
So $\mult_Dp_\ast F\geq 0$. Since $K^V+l\Hh_V$ is nef$/U$, $-F$ is nef$/W$. By the negativity lemma, $F\geq 0$. Therefore,
$$\left(V,\Ff_V,\widehat{B}_V+\sum\tau_iE_i,\Mm+\Aa+l\Hh,t\right)/U$$
is a semi-ample model of 
$$\left(W,\Ff_W,\widehat{B}_W+\sum\tau_iE_i,\Mm+\Aa+l\Hh,t\right)/U.$$
\medskip

\noindent\textbf{Step 6}. In this step we show that $K+A$ is semi-ample$/U$.

Since
$$\tau_i-e_i=t\epsilon_{\Ff}(E_i)+(1-t)-e_i>s>\tau,$$
for each $i$, we have
$$\Supp\left(\sum_{i=1}^m(\tau_i-e_i)E_i-\tau Q\right)=\cup_{i=1}^mE_i.$$
Since
$$K^W+l\Hh_W=(l+1)g^\ast (K+A)+\sum(\tau_i-e_i)E_i-\tau Q+(\widehat{B}_W-B_W),$$
$K+A$ is big and nef, and the divisorial part of $\Bb_+(g^\ast (K+A)/U)$ is $\Supp(\widehat{B}_W-B_W)\cup\cup_{i=1}^mE_i$, by Lemmas \ref{lem: sbl of movable big divisor} and \ref{lem: sbl under pullback},
we have
$$N_{\sigma}(K^W+l\Hh_W/U)=\sum(\tau_i-e_i)E_i-\tau Q+(\widehat{B}_W-B_W).$$
By Lemma \ref{lem: nz for lc divisor}, all $E_i$ and all components of $S_W$ are contracted by $\psi\circ\phi$. Therefore, the induced birational map
$$\mu: X\dashrightarrow V$$
does not extract any divisor and contracts all the divisors that are contained in $S$. Thus 
$$\mu_\ast B=\widehat{B}_V=B_V,$$
so $\mu_\ast H=\mu_\ast (K+A)=K^V$. Since $\psi\circ\phi$ is $\Hh_W$-trivial and $\Hh_W=g^\ast H$, $\mu$ is $H$-trivial. Therefore,
$$(g\circ p)^\ast (K+A)=q^\ast K^V,$$
so
$$(l+1)(g\circ p)^\ast (K+A)=(g\circ p)^\ast (K+A+l\Hh_X)=q^\ast (K^V+l\Hh_V).$$
Since $K^V+l\Hh_V$ is semi-ample$/U$, $K+A$ is semi-ample$/U$.

\medskip

\noindent\textbf{Step 7.} In this step we prove (1-3) and conclude the proof of the theorem.

(1) Since $K+A$ is Cartier, $K^V+l\Hh_V$ is Cartier and 
$$K^V+l\Hh_V=(1-t)(K_V+\Delta_V+\Nn_V+\Aa'_V).$$
Thus by the usual base-point-freeness theorem, we have that 
$$\mathcal{O}_{V}(n(K^V+l\Hh_{V}))=\mathcal{O}_{V}(n(l+1)\Hh_{V})$$ is globally generated$/U$ for any integer $n\gg 0$. Since $\psi\circ\phi$ is $\Hh_W$-trivial, $(V,\Delta_V,\Nn+\frac{1}{1-t}\Hh+\Aa')$ is lc, $K^V+(l+1)\Hh_{V}$ is Cartier, and 
$$K^V+(l+1)\Hh_{V}=(1-t)\left(K_V+\Delta_V+\Nn_V+\frac{1}{1-t}\Hh_V+\Aa'_V\right).$$
Thus by the usual base-point-freeness theorem, we have that
$$\mathcal{O}_{V}(n(K^V+(l+1)\Hh_{V}))=\mathcal{O}_{V}(n(l+2)\Hh_{V})$$ is globally generated$/U$ for any integer $n\gg 0$. Since $l+1$ and $l+2$ are co-prime, 
$\mathcal{O}_{V}(n\Hh_{V})$
is globally generated$/U$ for any integer $n\gg 0$, hence $$\mathcal{O}_{X}(n\Hh_X)=\mathcal{O}_{X}(n(K+A))$$
is globally generated$/U$ for any integer $n\gg 0$. This implies (1).

(2) By \textbf{Step 6}, $K+A$ defined a contraction $c: X\rightarrow T$ and there exists a contraction $c_V: V\rightarrow T$ defined by $K^V+l\Hh_V$. Then $c_V$ is also defined by $K_V+\Delta_V+\Nn_V+\Aa'_V$. 

Since $V$ is $\Qq$-factorial, by \cite[Lemma 3.4]{HL22}, there exists a klt pair $(V,D_V)$ such that
$$K_V+D_V\sim_{\mathbb R,U}K_V+\Delta_V+\Nn_V+\Aa'_V,$$
and $c_V$ is also defined by $K_V+D_V$. Therefore, $(T,D_T:=(c_V)_\ast D_V)$ is klt, so $T$ is potentially klt.

(3.a) It holds by construction.

(3.b) By assumption, $L-K$ is ample$/T$. By (1), $\mathcal{O}_X(nL)$ is globally generated$/T$ for any integer $n\gg 0$. Thus $nL=\cont_R^\ast L_{n,T}$ and  $(n+1)L=\cont_R^\ast L_{n+1,T}$ for some line bundles $L_{n,T}$ and $L_{n+1,T}$. We may let $L_T:=L_{n+1,T}-L_{n,T}$.
\end{proof}

\begin{prop}\label{prop: q-factoriality preserved for divisorial contraction}
 Let $(X,\Ff,B,\Mm,t)/U$ be an lc algebraically integrable adjoint foliated structure such that $X$ is $\Qq$-factorial klt. Let $K:=K_{(X,\Ff,B,\Mm,t)}$ and let $\cont_R: X\rightarrow T$ be the contraction of a $K$-negative extremal ray $R$. Assume that $\cont_R$ is a divisorial contraction. Then:
 \begin{enumerate}
     \item the exceptional locus of $\cont_R$ is irreducible;
    \item $T$ is $\Qq$-factorial; and 
    \item $\rho(X/U)=\rho(T/U)+1$.
 \end{enumerate}
\end{prop}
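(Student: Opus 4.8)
The plan is to reduce everything to the classical statement for klt pairs using the structure built in Theorem~\ref{thm: eo flipping and divisorial contraction}. Since $X$ is $\Qq$-factorial klt and $R$ is a $K$-negative extremal ray, after perturbing as in Step~1 of the proof of Theorem~\ref{thm: eo flipping and divisorial contraction} we may assume $(X,\Ff,B,\Mm,t)$ is $\Qq$-factorial klt and $t<1$; by the cone theorem (Theorem~\ref{thm: cone intro}) there is an ample$/U$ $\Rr$-divisor $A$ such that $K+A$ is the supporting function of $R$, and since $\cont_R$ is divisorial, $K+A$ is big$/U$. By Theorem~\ref{thm: eo flipping and divisorial contraction}(2) and its proof (Step~6, Step~7(2)), the contraction $c=\cont_R\colon X\to T$ coincides with the contraction defined by a divisor of the form $K_X+D_X$ for some klt pair $(X,D_X)$ with $K_X+D_X\sim_{\Rr,U}$ a multiple of $K+A$ up to the map $V\to T$; more precisely, there is a birational map $\mu\colon X\dashrightarrow V$ that is $H$-trivial (with $H=K+A$), does not extract divisors, and contracts exactly the components of the divisorial part $S$ of $\Bb_+(H/U)$, and on $V$ there is a $\Qq$-factorial klt pair $(V,D_V)$ whose associated contraction is $c_V\colon V\to T$ with $c=c_V\circ\mu$.

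First I would establish (1). Since $\cont_R$ is divisorial, its exceptional locus has a divisorial component; I claim it is precisely $\Supp S$, the divisorial part of $\Bb_+(H/U)$. Indeed, a prime divisor $P$ is contracted by $c$ iff $P\subset\Bb_+(H/U)$ (as $H$ is the pullback of an ample divisor from $T$ and is big and nef), and by the $H$-triviality of $\mu$ the divisors contracted by $\mu$ are exactly the components of $S$, while $\mu_\ast$ is an isomorphism in codimension one onto $V$ where $c_V$ is a genuine small-or-no contraction on the relevant locus. So the divisorial part of $\Exc(\cont_R)$ is $\Supp S$. That $\Supp S$ is irreducible then follows from the standard argument: any curve $C$ with $[C]\in R$ generates $R$, and two distinct prime divisors both contracted would give two independent relations among curve classes in $R$, contradicting $\rho(X/U)-\rho(T/U)$ being controlled by a single extremal ray (this is where one uses that $R$ is a single ray, not a higher-dimensional face). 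Alternatively, and more robustly, I would transport the problem to $(V,D_V)\to T$: since $\mu$ is an isomorphism in codimension one away from $S$, and $S\to c(S)$ has positive-dimensional fibers, combined with $T$ being $\Qq$-factorial (proved next) and $c$ contracting a divisor, the classical theory of divisorial contractions of klt pairs forces $\Exc$ to be an irreducible divisor.

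For (2) and (3), the cleanest route is: $T$ is the ample model of $K+A$ over $U$, and on $V$ we have the $\Qq$-factorial klt pair $(V,D_V)$ with $c_V\colon V\to T$ the contraction it defines. Now $\mu\colon X\dashrightarrow V$ is a birational contraction ($\mu_\ast^{-1}$ extracts nothing) contracting exactly $\Supp S$, and $\mu$ is $H$-trivial, so $V$ is obtained from $X$ by contracting the divisor(s) in $S$; since $\Exc(\cont_R)$ is a single prime divisor (by (1)) we get $\rho(V/U)=\rho(X/U)-1$ and $V\to T$ is small (an isomorphism in codimension one) — in fact I expect $c_V$ to be an isomorphism, giving $T=V$ directly, because once the unique divisor of $\Bb_+(H/U)$ is contracted, $H_V$ becomes ample$/U$; then $T=V$ is $\Qq$-factorial and $\rho(T/U)=\rho(V/U)=\rho(X/U)-1$. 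To make this precise without circularity I would argue: $K^V+l\Hh_V$ is semi-ample$/U$ defining $c_V$, and since $\mu$ contracted the full divisorial part of $\Bb_+(H/U)$, $\Hh_V$ is big, nef, and has $\Bb_+(\Hh_V/U)$ of codimension $\geq 2$; for a $\Qq$-factorial klt pair this forces, via Theorem~\ref{thm: eo flipping and divisorial contraction} applied on $V$ (or directly the base-point-free theorem plus the fact that a small birational contraction from a $\Qq$-factorial variety defined by a semiample big divisor with codimension-$\geq 2$ stable base locus must be an isomorphism when $V$ is $\Qq$-factorial), that $c_V$ is an isomorphism. Hence $T\cong V$, which is $\Qq$-factorial klt, and $\rho(X/U)=\rho(V/U)+1=\rho(T/U)+1$.

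\textbf{Main obstacle.} The delicate point is controlling the map $V\to T$: I need that contracting precisely the divisorial part of $\Bb_+(H/U)$ already produces the ample model, i.e. that $c_V$ is an isomorphism (equivalently that no further small contraction is needed). This requires knowing that on the $\Qq$-factorial variety $V$, a semi-ample big $\Rr$-divisor whose stable base locus has codimension $\geq 2$ is in fact ample$/U$ — which is true for $\Qq$-factorial varieties since the exceptional locus of the associated contraction would otherwise contain no divisor yet the contraction is birational, contradicting $\Qq$-factoriality (a small contraction cannot exist from a $\Qq$-factorial variety to a variety of the same dimension while contracting something). Making this last implication airtight — and in particular ensuring $\mu$ really does contract \emph{all} of $\Bb_+(H/U)$'s divisorial part and nothing outside $\Exc(\cont_R)$, so that the single extracted-then-contracted divisor matches $\Exc(\cont_R)$ — is where the proof needs the most care, and it is essentially a bookkeeping exercise with Lemmas~\ref{lem: sbl of movable big divisor}, \ref{lem: sbl under pullback}, \ref{lem: nz keep under pullback}, and \ref{lem: nz for lc divisor} already available in the excerpt.
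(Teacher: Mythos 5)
There is a genuine gap at the crux of your argument for (2) and (3). You reduce everything to the claim that the map $c_V\colon V\rightarrow T$ is an isomorphism, and you justify this by asserting that on a $\Qq$-factorial variety a nef, big, semi-ample $\Rr$-divisor whose augmented base locus has codimension $\geq 2$ must be ample, ``because a small contraction cannot exist from a $\Qq$-factorial variety.'' This is false: flipping contractions are exactly small, non-trivial, projective birational contractions from $\Qq$-factorial varieties, and the pullback of an ample divisor from the target is semi-ample, big, nef, with $\Bb_+$ of codimension $\geq 2$, yet not ample. The correct statement in this direction is that the \emph{target} of a non-trivial small contraction is never $\Qq$-factorial — but you cannot invoke that here without circularity, since $\Qq$-factoriality of $T$ is precisely what you are trying to prove. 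So the step ``$\Hh_V$ big and nef with small $\Bb_+$ $\Rightarrow$ $c_V$ is an isomorphism'' does not close, and with it the deductions $T\cong V$, $T$ $\Qq$-factorial, and $\rho(T/U)=\rho(X/U)-1$ all remain unproven. A secondary issue: your first argument for (1) (``two independent relations among curve classes'') is too vague to check, and your ``more robust'' alternative explicitly uses (2), which in your scheme is proved using (1), so as written the two parts are circular. You would also need to justify $\rho(V/U)=\rho(X/U)-1$ for the birational contraction $\mu$, which is not automatic from ``contracts one prime divisor.''

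For comparison, the paper does not re-enter the construction of Theorem \ref{thm: eo flipping and divisorial contraction} at all; it only uses its output statements. For (1): since $\overline{NE}(X/T)=R$ and $E$ is contracted, the negativity lemma forces $E$ to be anti-ample$/T$, so every contracted curve $C$ (all of which span $R$ by Theorem \ref{thm: eo flipping and divisorial contraction}) satisfies $E\cdot C<0$, hence lies in $E$; thus $\Exc(\cont_R)=E$ is irreducible. For (2): given a Weil divisor $D_T$ on $T$ with strict transform $D$, choose the rational $t$ with $(D+tE)\cdot R=0$; the line-bundle descent part of Theorem \ref{thm: eo flipping and divisorial contraction} gives $m(D+tE)=\cont_R^\ast H_T$ with $H_T$ Cartier, and pushing forward yields $mD_T=H_T$, so $T$ is $\Qq$-factorial. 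For (3): the exact sequence $0\rightarrow\Pic(T/U)\rightarrow\Pic(X/U)\rightarrow\ZZ$, exactness in the middle again coming from the descent statement, gives $\rho(X/U)=\rho(T/U)+1$. If you want to salvage your approach, the missing ingredient is exactly this descent property (or an independent proof that no curve survives to be contracted by $c_V$), and once you have it the classical argument above is both shorter and avoids the bookkeeping with $\Bb_+$ entirely.
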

\begin{proof}
By Theorem \ref{thm: cone}, $R$ is a $K$-negative exposed ray in $\overline{NE}(X/U)$. By \cite[Lemma 8.4.1]{CHLX23} there exists an ample$/U$ $\Rr$-divisor $A$ such that $K+A$ is the supporting function of $R$. By Theorem \ref{thm: eo flipping and divisorial contraction}, the exceptional locus of $\cont_R$ is covered by curves $C$ such that $[C]=R$. Since $\cont_R$ is a divisorial contraction, there exists a prime divisor $E$ on $X$ such that $\cont_R$ contracts $E$. 

(1) Since $\overline{NE}(X/T)=R$, $E$ is either ample$/T$, or trivial$/T$, or anti-ample$/T$. By the negativity lemma, $E$ is anti-ample$/T$. Thus for any curve $C$ that is contracted by $\cont_R$, by Theorem \ref{thm: eo flipping and divisorial contraction}, $E\cdot C<0$, so $C\subset E$. This implies (1). 

(2) Let $D_T$ be any Weil divisor on $T$ and $D$ the strict transform of $D_T$ on $X$. Then there exists a unique rational number $t$ such that $(D+tE)\cdot R=0$. By Theorem \ref{thm: eo flipping and divisorial contraction}, for $m>0$ sufficiently divisble, $m(D+tE)=\cont_R^\ast H_T$ for some Cartier divisor $H_T$ on $T$. Thus $H_T=(\cont_R)_\ast mD=mD_T$ is Cartier. Therefore, $T$ is $\Qq$-factorial.

(3) By Theorem \ref{thm: cone}, $R$ is spanned by a rational curve $C$. Consider the maps
$$0\rightarrow\Pic(T/U)\xrightarrow{D\rightarrow \cont_R^\ast D}\Pic(X/U)\xrightarrow{L\rightarrow (L\cdot C)}\mathbb Z.$$
Since $\cont_R$ is a contraction, $\Pic(T/U)\xrightarrow{D\rightarrow \cont_R^\ast D}\Pic(X/U)$ is an injection. By Theorem \ref{thm: eo flipping and divisorial contraction}, for any $L\in\Pic(X/U)$, if $L\cdot C=0$, then $L\cong\cont_R^\ast L_T$ for some line bundle $L_T$ in $T$. In particular, $L$ and $(\cont_R)^\ast L_t$ corresponds to the same element in $\Pic(X/U)$. Thus the sequence above is exact, and we get (3).
\end{proof}

\begin{rem}
    We note that in Theorem \ref{thm: eo flipping and divisorial contraction}, we do not need that $t<1$. Therefore, by letting $t=1$, we get a strengthened result compared to \cite[Theorem 1.2(1)]{LMX24b} for divisorial contractions and flipping contractions: we still need $X$ to be potentially klt but we do not require $B\geq\Delta$ for some klt pair $(X,\Delta)$. 
\end{rem}

\subsection{Proof of the general type base-point-freeness theorem}

\begin{lem}\label{lem: afs big and nef equivalent to +a}
Let $(X,\Ff,B,\Mm,t)/U$ be a klt algebraically integrable adjoint foliated structure such that $K:=K_{(X,\Ff,B,\Mm,t)}$ is nef$/U$ and big$/U$, and $X$ is potentially klt. Let $s>0$ be a real number. Then there exists a klt algebraically integrable adjoint foliated structure $(X,\Ff,B',\Mm,t)/U$ and an ample$/U$ $\Rr$-divisor $A$, such that
$$(1+s)K=tK_{\Ff}+(1-t)K_X+B'+A+\Mm_X.$$
\end{lem}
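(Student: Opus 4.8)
\textit{Proof plan.} The plan is to write the nef$/U$ and big$/U$ divisor $sK$ as the sum of an arbitrarily small effective divisor and an ample$/U$ divisor, and then to absorb the small effective part into the boundary $B$ while keeping the structure klt. Note first that $(1+s)K$ is again nef$/U$ and big$/U$. By the relative version of Kodaira's lemma, since $K$ is big$/U$ we may write $K\sim_{\mathbb{R},U}A_0+E_0$ for some ample$/U$ $\Rr$-divisor $A_0$ and some $\Rr$-divisor $E_0\geq 0$.

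For $\delta\in(0,1)$, set $A_\delta:=\delta sA_0+(1-\delta)sK$. Since $\delta sA_0$ is ample$/U$ and $(1-\delta)sK$ is nef$/U$, $A_\delta$ is ample$/U$ (it is positive on the relative cone of curves and $\Rr$-Cartier). A direct computation gives $A_\delta+\delta sE_0\sim_{\mathbb{R},U}\delta s(A_0+E_0)+(1-\delta)sK\sim_{\mathbb{R},U}sK$, so $R_\delta:=sK-A_\delta-\delta sE_0$ is an $\Rr$-Cartier $\Rr$-divisor with $R_\delta\sim_{\mathbb{R},U}0$; in particular $R_\delta\equiv_U0$. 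Now put
$$A:=A_\delta+R_\delta,\qquad B':=B+\delta sE_0.$$
Then $A$ is $\Rr$-Cartier with $A\equiv_U A_\delta$, hence $A$ is ample$/U$; $B'\geq 0$; and since $B'+A=B+\delta sE_0+A_\delta+R_\delta=B+sK$ we obtain
$$tK_{\Ff}+(1-t)K_X+B'+A+\Mm_X=\bigl(tK_{\Ff}+(1-t)K_X+B+\Mm_X\bigr)+sK=(1+s)K.$$
Thus $(X,\Ff,B',\Mm,t)/U$ is an algebraically integrable adjoint foliated structure of the required form, provided it is klt.

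It remains to choose $\delta>0$ small enough that $(X,\Ff,B+\delta sE_0,\Mm,t)$ is klt; this is where the argument genuinely uses the foliated setup. Concretely, taking a foliated log resolution $h\colon W\to X$ of $(X,\Ff,\Supp B\cup\Supp E_0,\Mm)$, the discrepancy $a(D,\Ff,B+\delta sE_0,\Mm,t)$ is an affine function of $\delta$ for each of the finitely many prime divisors $D$ on $W$, and at $\delta=0$ every such discrepancy satisfies the strict klt inequality since $(X,\Ff,B,\Mm,t)$ is klt; hence the strict inequalities persist for all sufficiently small $\delta>0$, and because $(W,\Ff_W,\dots,\Mm)$ is foliated log smooth this forces $(X,\Ff,B+\delta sE_0,\Mm,t)$ to be klt (cf. Lemma~\ref{lem: foliated log smooth imply lc} and the blow-up argument following Definition~\ref{defn: sing of afs}, which is what prevents one from simply reading klt off from boundary coefficients being $<1$ along $\Ff$-invariant divisors). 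Fixing such a $\delta$ finishes the proof; the main point requiring care is precisely this klt-preservation step, while the rest is a routine combination of Kodaira's lemma with the fact that ampleness$/U$ is a numerical condition. (The hypothesis that $X$ is potentially klt plays no role in this lemma and is only recorded for its applications.)
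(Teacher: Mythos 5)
Your proof is correct and follows essentially the same route as the paper: decompose $sK$ via (relative) Kodaira's lemma into an ample$/U$ part plus an arbitrarily small effective part, absorb the effective part into the boundary while preserving klt, and use that relative ampleness is a numerical condition to turn the $\sim_{\mathbb{R},U}$ identity into an actual equality of divisors (the paper does this by replacing $A_n$ with $K-\tfrac{1}{n}E$; your $R_\delta$-correction is the same trick, and you treat the scaling by $s$ more carefully than the paper, whose displayed identity as written only produces $2K$). The one place where your write-up is thinner than needed is the klt-preservation step: the paper delegates it to Lemma \ref{lem: perturb qdlt afs}, whereas you verify strict inequalities for the finitely many divisors on a foliated log resolution and then assert that foliated log smoothness forces klt; since Lemma \ref{lem: foliated log smooth imply lc} and the remark after Definition \ref{defn: sing of afs} only give lc statements, you still need the short interpolation argument --- discrepancies are affine in the boundary, so combining the klt structure at $\delta=0$ with an lc structure at a fixed $\delta_0>0$ (obtained exactly as in the proof of Lemma \ref{lem: perturb qdlt afs}, or by convex combination with the reduced-boundary lc structure of Lemma \ref{lem: foliated log smooth imply lc}) yields klt for all $0<\delta<\delta_0$ --- which is the same convexity point the paper itself leaves implicit, so this is a matter of polish rather than a genuine gap.
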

\begin{proof}
There exists $E\geq 0$ and an ample$/U$ $\Rr$-divisors $A_n$ such that 
$$K\sim_{\mathbb R,U}A_n+\frac{1}{n}E$$
for any positive integer $n$. Possibly replacing $A_n$ with $K-\frac{1}{n}E$ we may assume that $K=A_n+\frac{1}{n}E$. By Lemma \ref{lem: perturb qdlt afs}, there exists a positive integer $n$ such that $(X,\Ff,B+\frac{1}{n}E,\Mm,t)$ is klt. We may take $B':=B+\frac{1}{n}E$ and $A:=A_n$.
\end{proof}

\begin{proof}[Proof of Theorem \ref{thm: bpf intro}]
By Lemma \ref{lem: afs big and nef equivalent to +a} for any positive integer $m\geq 2$, there exists a klt algebraically integrable adjoint foliated structure $(X,\Ff,B_m,\Mm,t)$ and an ample$/U$ $\Rr$-divisor $A_n$ on $X$ such that 
$mK=K_m+A_m:=tK_{\Ff}+(1-t)K_X+B_m+\Mm_X+A_m$.

(1) By Theorem \ref{thm: eo flipping and divisorial contraction}, $2K$ is semi-ample$/U$, so $K$ is semi-ample$/U$.

(2) If $a\geq 2$, then by Theorem \ref{thm: eo flipping and divisorial contraction}(1), $$\mathcal{O}_X(n(aK))=\mathcal{O}_X(n(tK_{\Ff}+(1-t)K_X+B_a+\Mm_X+A_a))$$
is globally generated$/U$ and we are done. So we may assume that $a=1$. Then there exist integers $p_0,q_0$ such that $\mathcal{O}_X(2pK)$ and $\mathcal{O}_X(3qK)$ are globally generated$/U$ for any integers $p\geq p_0$ and $q\geq q_0$. Since any $n\geq 2p_0+3q_0+2$ can be written as $2p+3q$ for some $p\geq p_0$ and $q\geq q_0$, $\mathcal{O_X}(nK)$ is globally generated$/U$ for any $n\geq 2p_0+3q_0+2$.
\end{proof}

\subsection{Existence of Fano contractions}

At this point, we cannot prove the existence of Fano contractions for algebraically integrable adjoint foliated structures on potentially klt varieties due to non-$\Qq$-factoriality. Nevertheless, we can prove the existence of Fano contractions on $\Qq$-factorial klt varieties. Indeed, we have the following slightly stronger result:

\begin{thm}\label{thm: contraction not big}
Let  $(X,\Ff,B,\Mm,t)/U$ be an lc algebraically integrable adjoint foliated structure associated with $\pi: X\rightarrow U$. Let $K:=K_{(X,\Ff,B,\Mm,t)}$. Let $R$ be a $K$-negative extremal ray$/U$ and $H_R$ a supporting function$/U$ of $R$, such that $H_R$ is not big$/U$. Assume that there exist an $\Rr$-divisor $B_X$ and a nef$/U$ $\bb$-divisor $\Nn$ satisfying the following.
\begin{itemize}
    \item $K_X+B_X+\Nn_X$ is $\Rr$-Cartier.
    \item $\Mm-\Nn$ is nef$/U$.
    \item $B^{\ninv}\geq B_X^{\ninv}$ and $B^{\inv}\geq (1-t)B_X^{\inv}$.
    \item If $t=1$, then $(X,B,\Nn)$ is lc.
\end{itemize}
Then $R$ is also a $(K_X+B_X+\Nn_X)$-negative extremal ray$/U$. In particular,
there exists a contraction$/U$ $\cont_R: X\rightarrow T$ of $R$ satisfying the following:
\begin{enumerate}
    \item For any integral curve $C$ such that $\pi(C)$ is a point, $\cont_R(C)$ is a point if and only if $[C]\in R$.
    \item Let $L$ be a line bundle on $X$ such that $L\cdot R=0$. Then there exists a line bundle $L_T$ on $T$ such that $L\cong \cont_R^\ast L_T$.
    \item If $H_R$ is Cartier, then $\mathcal{O}_X(mH_R)$ is globally generated$/U$ for any integer $m\gg 0$.
    \item If $X$ is $\Qq$-factorial, then $T$ is $\Qq$-factorial.
\end{enumerate}
\end{thm}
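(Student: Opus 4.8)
\textbf{Proof strategy for Theorem \ref{thm: contraction not big}.}

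The plan is to reduce the statement to the known cone and contraction theory for the generalized pair $(X,B_X,\Nn)$ plus the bend-and-break mechanism. First I would show that $R$ is $(K_X+B_X+\Nn_X)$-negative. The key point is that since $H_R$ is not big, Proposition \ref{prop: cone non-big case} (applied after passing to a Stein factorization so that $\pi$ is a contraction, and after writing $H_R=K+A$ via \cite[Lemma 8.4.1]{CHLX23}) produces a rational curve $C$ spanning $R$ with $0<-K\cdot C\le 2\dim X$ that moves in a family covering $X$ — more precisely, the curves come from \cite[Theorem 8.1.1]{CHLX23} and pass through a general point of $X$. Because $B+\Mm_X$ is pseudo-effective (indeed $B\ge 0$ and $\Mm$ is nef$/U$), the decomposition $K=t(K_{\Ff}+\cdots)+(1-t)(K_X+\cdots)$ forces $K_X\cdot C<0$ (this is essentially Cases 1--3 of the proof of Proposition \ref{prop: cone non-big case}). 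Now using the hypotheses $B^{\ninv}\ge B_X^{\ninv}$, $B^{\inv}\ge(1-t)B_X^{\inv}$, and $\Mm-\Nn$ nef$/U$, together with the fact that a general such $C$ is not contained in $\Supp B$ (it moves in a covering family) so $B\cdot C\ge B_X\cdot C$ after accounting for invariant/non-invariant coefficients, one gets $(K_X+B_X+\Nn_X)\cdot C<0$, hence $(K_X+B_X+\Nn_X)\cdot R<0$. The case $t=1$ is handled separately using the hypothesis that $(X,B,\Nn)$ is lc, so that $B_X=B$ and the inequality $(K_X+B+\Nn_X)\cdot R<0$ is immediate from $K_{(X,\Ff,B,\Mm)}\cdot R<0$ and $\Mm-\Nn$ nef once one checks $K_X\cdot C<0$, which again follows from bend-and-break as above. (Alternatively, when $t=1$ one invokes Proposition \ref{prop: adjoint lc implies X lc} to know $(X,B,\Nn)$ is sub-lc and compares discrepancies directly.)

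Once $R$ is known to be a $(K_X+B_X+\Nn_X)$-negative extremal ray$/U$ with $(X,B_X,\Nn)$ a (sub-)lc generalized pair, I would invoke the cone and contraction theorem for generalized pairs (\cite{BZ16,HL23}) to obtain the contraction $\cont_R\colon X\to T$. The only subtlety is that $(X,B_X,\Nn)$ may only be sub-lc (it can have negative coefficients) rather than lc; to fix this I would, as in \textbf{Step 1} of the proof of Theorem \ref{thm: eo flipping and divisorial contraction}, perturb: replace $B_X$ by $(1-\delta)B_X+\delta\Delta$ for a klt $(X,\Delta)$ and absorb the difference — but here $X$ need not be potentially klt in general, so instead I would argue more directly. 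Since $H_R=K+A$ with $A$ ample$/U$ and $H_R$ a supporting function, $H_R-(K_X+B_X+\Nn_X)$ is ample$/U$ plus the effective-type correction coming from $t(K_{\Ff}+B_{\Ff}+\Pp_X)$; writing $H_R$ itself as the supporting function gives $H_R\cdot R=0$ and $H_R$ nef$/U$, and $H_R-(K_X+B_X+\Nn_X)$ is ample$/U$ up to $\Rr$-linear equivalence modulo an effective divisor — so the generalized-pair base-point-freeness/contraction theorem applies to produce $\cont_R$. Properties (1)--(3) then follow from the standard output of that contraction theorem (semiampleness of $H_R$, Cartier descent), and (4) ($\Qq$-factoriality of $T$) follows from the usual argument: for a Weil divisor $D_T$ on $T$ with strict transform $D$, choose $u$ with $(D+uH_R)\cdot R=0$ — wait, $H_R\cdot R=0$ already, so instead use that $\rho(X/T)=1$ and Theorem \ref{thm: eo flipping and divisorial contraction}-style line-bundle descent to push forward a multiple of $D$.

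The main obstacle I expect is the first step: verifying that a \emph{general} member $C$ of the covering family of rational curves spanning $R$ intersects $B$ (and $B_X$) in the ``expected'' way, i.e. that one really gets $(K_X+B_X+\Nn_X)\cdot C<0$ and not merely $\le 0$. When $t<1$ this is comfortable because the bend-and-break curves genuinely sweep out $X$ so $C\not\subset\Supp B$ and the coefficient comparison is clean; the delicate case is $t=1$, where $\Ff$-invariance enters and one must use the lc hypothesis on $(X,B,\Nn)$ carefully. A secondary obstacle is the sub-lc (vs.\ lc) issue for $(X,B_X,\Nn)$: one must make sure the generalized-pair contraction theorem of \cite{HL23} applies to sub-pairs with the relevant positivity, or perturb to reduce to the lc case without assuming $X$ potentially klt. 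I would organize the write-up so that the $t<1$ and $t=1$ cases are separated, with the $t<1$ case handled via Propositions \ref{prop: cone non-big case} and \ref{prop: adjoint lc implies X lc}, and the $t=1$ case reduced to the already-known foliated contraction theorem of \cite{CHLX23,LMX24b} combined with the discrepancy comparison from Proposition \ref{prop: adjoint lc implies X lc}.
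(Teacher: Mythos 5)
Your overall skeleton (Stein factorization, writing $H_R=K+A$ via [CHLX23, Lemma 8.4.1], producing a rational curve through a general point by bend and break, then invoking the generalized pair contraction theorem and a [KM98]-style argument for $\Qq$-factoriality) does match the paper, but the central step --- showing that $R$ is $(K_X+B_X+\Nn_X)$-negative --- has a genuine gap. From $K\cdot C<0$ together with pseudo-effectivity of $B+\Mm_X$ you only obtain (at best) $(tK_{\Ff}+(1-t)K_X)\cdot C<0$, and this does not force $K_X\cdot C<0$: in the relevant situation the bend-and-break curve is produced from the foliated part, i.e.\ it is tangent to $\Ff$ and one only knows $K_{\Ff}$-type negativity along it. Cases 1--3 in the proof of Proposition \ref{prop: cone non-big case} concern the signs of $K_X|_F\cdot J$ and $K_{\Ff}|_F\cdot J$ for an auxiliary $1$-cycle $J$ on a general fiber, not the sign of $K_X\cdot C$ for the curve $C$ itself, so they cannot be quoted for your claim. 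Consequently the assertion that ``$(K_X+B_X+\Nn_X)\cdot C<0$ follows since $C\not\subset\Supp B$ plus the coefficient inequalities'' is unjustified; passing from foliated negativity to negativity for the ambient structure is precisely the nontrivial content of the theorem.

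The missing idea is the paper's conversion trick. After reducing to the case where $R$ is $(K_{\Ff}+B_{\Ff}+\Pp_X)$-negative (where $tB_{\Ff}+(1-t)B_X=B$ and $t\Pp+(1-t)\Nn=\Mm$; otherwise the desired negativity is automatic), one uses [CHLX23, Theorem 8.1.1] to produce a curve $C_x$ through a general point that is \emph{tangent to $\Ff$} with $H_R\cdot C_x=0$, and then passes to a $\Qq$-factorial proper ACSS modification $h\colon (X',\Ff',B_{\Ff'},\Pp;G)/Z\rightarrow (X,\Ff,B_{\Ff},\Pp)$. Property $(\ast)$ (Proposition \ref{prop: weak cbf gfq}) gives $K_{\Ff'}+B_{\Ff'}\sim_Z K_{X'}+B_{\Ff'}+G$, and since the strict transform $C_x'$ is tangent to $\Ff'$, hence contracted over $Z$, one gets $(K_{\Ff'}+B_{\Ff'}+\Pp_{X'}+A')\cdot C_x'=(K_{X'}+B_{\Ff'}+G+\Pp_{X'}+A')\cdot C_x'$; combining this with $K_{X'}+B_{\Ff'}+G+\Nn_{X'}\geq h^\ast(K_X+B_X+\Nn_X)$ (which uses the lc-ness of $(X,B_X,\Nn)$, supplied by Proposition \ref{prop: adjoint lc implies X lc} when $t<1$ and by hypothesis when $t=1$) and the nefness of $\Pp-\Nn$ yields $(K_X+B_X+\Nn_X+A)\cdot C_x\leq 0$, hence $(K_X+B_X+\Nn_X)\cdot C_x<0$. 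Without this passage through the ACSS model and the weak canonical bundle formula there is no control on $K_X\cdot C_x$, so your argument cannot be completed as written. Two minor further points: the tangency of the bend-and-break curve to $\Ff$ is essential and should be recorded; and your discussion of item (4) is garbled --- the correct argument is the one of [KM98, Corollary 3.18], or simply note that when $X$ is $\Qq$-factorial one may take $B_X=0$ and $\Nn=\bm{0}$ and quote the usual contraction theorem for $\Qq$-factorial lc pairs.
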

\begin{proof}
First we prove (1-3). If $t=0$, then (1-3) follow from the contraction theorem for usual generalized pairs \cite[Theorems 2.2.1(4), 2.2.6]{CHLX23}. Thus we may assume that $t>0$.

By Proposition \ref{prop: adjoint lc implies X lc}, $(X,B,\Nn)$ is lc. First we show that $R$ is a $(K_X+B_X+\Nn_X)$-negative extremal ray$/U$.

Let $X\rightarrow U'\rightarrow U$ the Stein factorization of $\pi$. Possibly replacing $U$ with $U'$, we may assume that $\pi$ is a contraction. By Theorem \ref{thm: cone intro}, $R$ is an exposed ray in $\overline{NE}(X/U)$. By \cite[Lemma 8.4.1]{CHLX23}, 
we may assume that 
       $$H_R=tK_{\Ff}+(1-t)K_X+B+A+\Mm_X$$
for some ample$/U$ $\Rr$-Cartier $\Rr$-divisor $A$ on $X$. 

Let $B_{\Ff}$ be the unique $\Rr$-divisor on $X$ such that 
$$tB_{\Ff}+(1-t)B_X=B$$
and let $\Pp$ be the unique $\bb$-divisor on $X$ such that
$$t\Pp+(1-t)\Nn=\Mm.$$
Then $B_{\Ff}\geq B^{\ninv}\geq B_X^{\ninv}\geq 0$, $\Pp$ and $\Pp-\Nn$ are nef$/U$, and
$$t(K_{\Ff}+B_{\Ff}+\Pp_X)+(1-t)(K_X+B_X+\Nn_X)=K.$$
Therefore, we may assume that $R$ is $(K_{\Ff}+B_{\Ff}+\Pp_X)$-negative, otherwise $R$ is automatically $(K_X+B_X+\Nn_X)$-negative and we are done. By \cite[Lemma 8.4.1]{CHLX23} there exists a supporting function $H_R'$ of $R$ such that  $H_R'-(K_{\Ff}+B_{\Ff}+\Pp_X)$ is ample$/U$. Possibly replacing $H_R$ with $H_R+H_R'$ we may assume that $A_{\Ff}:=H_R-(K_{\Ff}+B_{\Ff}+\Pp_X)$ is ample$/U$.

Let $F$ be a general fiber of $\pi$. Then $H_F:=H_R|_F$ is nef, not big, and is not numerically trivial. Let $q:=\dim F$ and $A_F:=A_{\Ff}|_F$, then there exists an integer $1\leq k\leq q-1$ such that
$$H_F^k\cdot A_F^{q-k}>H_F^{k+1}\cdot A_F^{q-k-1}=0.$$

Let $D_i:=H_R$ for any $1\leq i\leq k+1$, and let $D_i:=A_{\Ff}$ for any $k+2\leq i\leq q$. Then
$$(D_1|_F)\cdot (D_2|_F)\cdots\dots\cdot (D_q|_F)=H_F^{k+1}\cdot A_F^{q-k-1}=0$$
and
$$-(K_{\Ff}+B_{\Ff}+\Pp_X)|_F\cdot (D_2|_F)\cdots\dots\cdot (D_q|_F)=(A_F-H_F)\cdot H_F^{k}\cdot A_F^{q-k-1}=H_F^{k}\cdot A_F^{q-k}>0.$$
Let $M:=H_R+A_{\Ff}=K_{\Ff}+B_{\Ff}+\Pp_X+2A_{\Ff}$. Then $M$ is nef$/U$. By \cite[Theorem 8.1.1]{CHLX23}, for any general closed point $x\in X$, there exists a rational curve $C_x$ such that $x\in C_x$, $\pi(C_x)$ is a closed point, $C_x$ is tangent to $\Ff$, and 
$$0=D_1\cdot C_x=H_R\cdot C_x.$$
In particular, $C_x$ spans $R$.

By Theorem \ref{thm: eo acss model}, there exists a $\Qq$-factorial proper  ACSS modification $$h\colon (X',\Ff',B_{\Ff'},\Pp;G)/Z\rightarrow (X,\Ff,B_{\Ff},\Pp).$$ Then $G$ contains any $\Ff'$-invariant prime divisor that is either $h$-exceptional or is a component of $\Supp h_\ast ^{-1}B_{\Ff}$, and $\Supp B_{\Ff'}$ contains any $h$-exceptional non-$\Ff'$-invariant prime divisor. We have
$$K_{\Ff'}+B_{\Ff'}+\Pp_{X'}=h^\ast (K_{\Ff}+B_{\Ff}+\Pp_X)-L'$$
for some $L'\geq 0$. Since $(X,B_X,\Nn)$ is lc and $B_{\Ff}\geq B_X^{\ninv}$, we have
$$K_{X'}+B_{\Ff'}+G+\Nn_{X'}\geq h^\ast (K_X+B_X+\Nn_X).$$

Let $x$ be a general closed point in $X$ and let $C_x'$ be the strict transform of $C_x$ on $X'$.  Since $x$ is a general closed point in $X$ and $C_x$ is tangent to $\Ff$, $C_x'$ is tangent to $\Ff'$.  Let $A_{\Ff'}:=h^\ast A_{\Ff}$. Then
\begin{align*}
  0&=H_R\cdot C_x=h^\ast H_R\cdot C_x'=h^\ast (K_{\Ff}+B_{\Ff}+\Pp_X+A)\cdot C_x'\geq (K_{\Ff'}+B_{\Ff'}+\Pp_{X'}+A')\cdot C_x'\\
  &=(K_{X'}+B_{\Ff'}+G+\Pp_{X'}+A')\cdot C_x'\geq (h^\ast (K_X+B_X+\Nn_X+A)+(\Pp_{X'}-\Nn_{X'}))\cdot C_x'\\
  &\geq (K_X+B_X+\Nn_X+A)\cdot C_x>(K_X+B_X+\Nn_X)\cdot C_x.
\end{align*}
Therefore, $R$ is a $(K_X+B_X+\Nn_X)$-negative extremal ray$/U$. Now (1-3) follow from \cite[Theorems 2.2.1(4), 2.2.6]{CHLX23}. 

Finally, (4) follows from the sames line of \cite[Proof of Corollary 3.18]{KM98} (or, since we may assume that $B_X=0$ and $\Nn=\bm{0}$ when $X$ is $\Qq$-factorial, (4) follows from the usual contraction theorem for $\Qq$-factorial lc pairs).
\end{proof}

\begin{rem}
    We note that in Theorem \ref{thm: contraction not big}, we do not need that $t<1$. Therefore, by letting $t=1$, we get a strengthened result compared to \cite[Proposition 7.1]{LMX24b} for divisorial contractions and flipping contractions: in the notation of \cite[Proposition 7.1]{LMX24b}, we only need $B\geq\Delta^{\ninv}\geq 0$, and do not need $B\geq\Delta$.
\end{rem}

Summarizing Theorem \ref{thm: contraction not big} and Theorem \ref{thm: eo flipping and divisorial contraction}, we obtain the following contraction theorem for algebraically integrable adjoint foliated structures.

\begin{thm}\label{thm: contraction theorem strong}
    Let $(X,\Ff,B,\Mm,t)/U$ be an lc algebraically integrable adjoint foliated structure such that $X$ is potentially klt. Let $K:=K_{(X,\Ff,B,\Mm,t)}$. Let $R$ be a $K$-negative extremal ray and $H_R$ a supporting function$/U$ of $R$. 
    
    Let $B_X$ be an $\Rr$-divisor on $X$ such that $B^{\ninv}\geq B_X^{\ninv}\geq 0$ and $B_X^{\inv}\geq (1-t)B^{\inv}\geq 0$. Let $\Nn$ be a $\bb$-divisor on $X$ such that $\Mm-\Nn$ and $\Nn$ are nef$/U$. Assume that one of the following conditions hold:

(i) $H_R$ is big$/U$. (ii) $X\rightarrow U$ is birational. (iii) $K$ is pseudo-effective$/U$. 

(iv) $(X,B_X,\Nn)$ is lc. (v) $t<1$ and $K_X+B_X+\Nn_X$ is $\Rr$-Cartier.  

(vi) $(X,\Ff,B,\Mm,t)$ is qdlt. (vii) $X$ is lc. (viii) $X$ is $\Qq$-factorial. 

\noindent Then there exists a contraction$/U$ $\cont_R: X\rightarrow T$ of $R$ satisfying the following:
\begin{enumerate}
    \item For any integral curve $C$ such that $\pi(C)$ is a point, $\cont_R(C)$ is a point if and only if $[C]\in R$.
    \item Let $L$ be a line bundle on $X$ such that $L\cdot R=0$. Then there exists a line bundle $L_T$ on $T$ such that $L\cong \cont_R^\ast L_T$.
    \item If $H_R$ is Cartier, then $\mathcal{O}_X(mH_R)$ is globally generated$/U$ for any integer $m\gg 0$.
    \end{enumerate}
\end{thm}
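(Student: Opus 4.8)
\textbf{Proof proposal for Theorem \ref{thm: contraction theorem strong}.}
The plan is to reduce everything to the two contraction statements already proved, namely Theorem \ref{thm: contraction not big} (the non-big case) and Theorem \ref{thm: eo flipping and divisorial contraction} (the big case, which requires only $X$ potentially klt), and to observe that the many hypotheses (i)--(viii) are simply various sufficient conditions guaranteeing that one of these two theorems applies. First I would dispose of the big case: if $H_R$ is big$/U$, then since $X$ is potentially klt we may write $H_R = K + A$ for an ample$/U$ $\Rr$-divisor $A$ (using \cite[Lemma 8.4.1]{CHLX23} together with the fact that $R$ is a $K$-negative exposed ray by Theorem \ref{thm: cone intro}), and then $K+A$ is nef and big$/U$, so Theorem \ref{thm: eo flipping and divisorial contraction} gives the contraction $\cont_R$ together with properties (1)--(3). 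So from now on we may assume $H_R$ is \emph{not} big$/U$, and we must produce the data $(B_X,\Nn)$ required by Theorem \ref{thm: contraction not big}; note the hypotheses of the present theorem already supply a candidate $B_X$ with $B^{\ninv}\geq B_X^{\ninv}\geq 0$ and $B^{\inv}\geq (1-t)B_X^{\inv}\geq 0$, and a candidate $\Nn$ with $\Mm-\Nn$ and $\Nn$ nef$/U$, so the only missing ingredients are $\Rr$-Cartierness of $K_X+B_X+\Nn_X$ and, in the case $t=1$, that $(X,B,\Nn)$ is lc.

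Next I would go through the remaining hypotheses and check that each of them, in the non-big case, either forces us into Theorem \ref{thm: contraction not big} or lets us arrange its hypotheses. Under (v), $\Rr$-Cartierness of $K_X+B_X+\Nn_X$ is assumed and $t<1$, so Theorem \ref{thm: contraction not big} applies directly (the $t=1$ caveat is vacuous). Under (iv), $(X,B_X,\Nn)$ is lc, hence $K_X+B_X+\Nn_X$ is $\Rr$-Cartier and $(X,B,\Nn)$ is lc by Proposition \ref{prop: adjoint lc implies X lc} (using $B^{\ninv}\ge B_X^{\ninv}$ and $B^{\inv}\ge(1-t)B_X^{\inv}$), so again Theorem \ref{thm: contraction not big} applies. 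Under (vii) we may take $B_X=0$, $\Nn=\bm 0$; then $K_X$ is $\Rr$-Cartier and $(X,0)$ is lc, so we are in case (iv). Under (viii), $X$ is $\Qq$-factorial, so $K_X$ is $\Rr$-Cartier automatically, and we may again take $B_X=0,\Nn=\bm0$ and apply Theorem \ref{thm: contraction not big}, whose conclusion (4) even gives $\Qq$-factoriality of $T$ though we do not need it here. Under (vi), $(X,\Ff,B,\Mm,t)$ is qdlt, so by Definition \ref{defn: qdlt afs} the pair $(X,B^{\ninv}+\frac{1}{1-t}B^{\inv},\Mm)$ is qdlt, hence lc, and $K_X+B^{\ninv}+\frac{1}{1-t}B^{\inv}+\Mm_X$ is $\Rr$-Cartier; taking $B_X:=B^{\ninv}+\frac{1}{1-t}B^{\inv}$ and $\Nn:=\Mm$ we have $B^{\ninv}\ge B_X^{\ninv}=B^{\ninv}$, $B^{\inv}=(1-t)B_X^{\inv}$, $\Mm-\Nn=\bm0$ nef, and $(X,B_X,\Nn)$ lc, placing us in case (iv).

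Finally the global-type hypotheses (ii) and (iii) should be reduced to the non-big case being impossible or to case (i). If $X\to U$ is birational, then any $\Rr$-Cartier $\Rr$-divisor on $X$ is big$/U$ as soon as it is not numerically trivial on the generic fiber; since $R$ is a nonzero ray and $H_R\cdot R=0$ with $H_R$ a nonzero nef class (a supporting function of a genuine extremal ray), $H_R$ is big$/U$ and we are in case (i). If $K$ is pseudo-effective$/U$, then by the length of extremal rays (Theorem \ref{thm: cone intro}(2)) $R$ is spanned by a curve $C$ with $0<-K\cdot C\le 2\dim X$; running the argument of Theorem \ref{thm: contraction not big} is still available, but more simply: pseudo-effectivity of $K$ together with $H_R=K+A$ for ample $A$ forces the relevant intersection-theoretic positivity used there, so we may invoke Theorem \ref{thm: contraction not big} after passing to a $\Qq$-factorial qdlt modification via Theorem \ref{thm: eoqdlt model} (which is $K$-crepant when $(X,\Ff,B,\Mm,t)$ is lc) and descending the contraction by \cite[Lemma 8.2.3]{CHLX23}. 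In all cases the contraction $\cont_R$ exists and satisfies (1)--(3), since these are exactly the conclusions of the two cited theorems and they descend along birational crepant modifications by the usual argument (a curve on $X'$ maps to a point on $T$ iff its image on $X$ does, and line bundles trivial on $R$ pull back from $T$ because they pull back from the modification). \textbf{The main obstacle} I anticipate is the bookkeeping in case (iii): one must make sure that after the $\Qq$-factorial qdlt modification the extremal ray $R$ still lifts to a $K'$-negative extremal ray whose supporting function remains non-big, and that the various positivity inequalities in the proof of Theorem \ref{thm: contraction not big} survive the reduction — this is routine but needs care with the non-$\Qq$-factorial subtleties flagged throughout the paper.
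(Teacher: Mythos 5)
Your overall strategy --- splitting into the big case handled by Theorem \ref{thm: eo flipping and divisorial contraction} and the non-big case handled by Theorem \ref{thm: contraction not big}, and checking that each of (i)--(viii) supplies the hypotheses of one of these two results --- is exactly the paper's proof, and your treatments of (i), (ii), (iv), (v), (vi), (vii) agree with it in substance (the paper routes (v)--(viii) through (iv) via Proposition \ref{prop: adjoint lc implies X lc}, but that is cosmetic).

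The genuine gap is case (iii). The paper's argument there is one line: if $K$ is pseudo-effective$/U$, then the supporting function of the form $K+A$ with $A$ ample$/U$ (which exists by the cone theorem and \cite[Lemma 8.4.1]{CHLX23}) is pseudo-effective plus ample, hence big$/U$, so (iii) reduces to (i) and there is no non-big case left to treat. You instead keep (iii) in the non-big regime and propose to pass to a $\Qq$-factorial qdlt modification and then invoke Theorem \ref{thm: contraction not big}. As written this does not work: Theorem \ref{thm: eoqdlt model} requires $t<1$, which (iii) does not give; hypothesis (iii) supplies no data $(B_X,\Nn)$ with $K_X+B_X+\Nn_X$ $\Rr$-Cartier (nor, at $t=1$, the lc condition) that Theorem \ref{thm: contraction not big} needs, and you never construct such data on the modification; and the descent of the contraction and of properties (1)--(3) from the modification back to $X$ is asserted rather than proved --- exactly the non-$\Qq$-factorial subtlety you yourself flag as the ``main obstacle''. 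The vague claim that pseudo-effectivity ``forces the relevant intersection-theoretic positivity'' is not a substitute: the proof of Theorem \ref{thm: contraction not big} needs $H_R$ \emph{non-big} to produce the index $k$ with $H_F^{k}\cdot A_F^{q-k}>H_F^{k+1}\cdot A_F^{q-k-1}=0$, whereas under (iii) the natural supporting function is big, which is precisely why the correct move is the reduction to (i). One smaller point: in (viii) you apply Theorem \ref{thm: contraction not big} with $B_X=0$, $\Nn=\bm{0}$ citing only $\Rr$-Cartierness of $K_X$; when $t=1$ that theorem also demands an lc condition, which holds because $X$ is $\Qq$-factorial and potentially klt, hence klt --- this is why the paper reduces (viii) to (vii) rather than quoting the theorem directly.
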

\begin{proof}
(i) The theorem follows from Theorem \ref{thm: eo flipping and divisorial contraction}. 

(ii) Since $\pi$ is birational, $H_R$ is big$/U$, so (ii) follows from (i). 

(iii) $R$ is a $(K+A)$-negative extremal ray$/U$ for some ample$/U$ $\Rr$-divisor $A$. Thus $H_R$ is big so (iii) follows from (i).

From now on we may assume that $H_R$ is not big$/U$.

(iv) The theorem follows from Theorem \ref{thm: contraction not big}.

(v) By Proposition \ref{prop: adjoint lc implies X lc}, $(X,B_X,\Nn)$ is lc, so (v) follows from (iv).

(vi) We may take $B_X=B^{\ninv}+\frac{1}{1-t}B^{\inv}$ and $\Mm=\Nn$ and (vi) follows from (iv).

(vii) We may take $B_X=0$ and $\Mm=\bm{0}$ and (vii) follows from (iv).

(viii) Since $X$ is potentially klt, $X$ is klt and (viii) follows from (vii).
\end{proof}

\begin{proof}[Proof of Theorem \ref{thm: cont intro}]
It is a special case of Theorem \ref{thm: contraction theorem strong}.
\end{proof}

\section{Existence of flips}\label{sec: eof}

\begin{thm}\label{thm: eof and small modification after divisorial contraction}
    Let $(X,\Ff,B,\Mm,t)/U$ be an lc algebraically integrable adjoint foliated structure such that $X$ is potentially klt. Let $K:=K_{(X,\Ff,B,\Mm,t)}$. Let $c: X\rightarrow T$ be a divisorial contraction$/U$ or a flipping contraction$/U$ of a $K$-negative extremal ray$/U$. Then $(X,\Ff,B,\Mm,t)/T$ has a $\Qq$-factorial good minimal model. 
    
    Moreover, let $X\dashrightarrow X^+$ be the ample model$/T$ of $K$, then $X^+$ is potentially klt, and if $c$ is a flipping contraction, then $X\dashrightarrow X^+$ is a $K$-flip.
\end{thm}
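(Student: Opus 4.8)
# Proof proposal for Theorem \ref{thm: eof and small modification after divisorial contraction}

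\textbf{Overall strategy.} The plan is to run the two-stage ``resolve the foliation, then fix the variety'' machinery already used in the proof of Theorem \ref{thm: eo flipping and divisorial contraction}, but now working relatively over $T$. The key point is that $c : X \to T$ is the contraction of a $K$-negative extremal ray, so $-K$ is ample$/T$; hence $\rho(X/T) = 1$ and for any sufficiently small ample$/U$ divisor $A$ on $X$, the divisor $K+A$ restricted over $T$ is either anti-ample$/T$ (flipping case, since $\Exc(c)$ has codimension $\geq 2$) or we may instead work with $H := -K$ which is ample$/T$. Concretely, I would argue as follows. First reduce to the $\Qq$-factorial klt case: since $X$ is potentially klt, take a small $\Qq$-factorialization and perturb $B$ by a small anti-ample$/X$ exceptional divisor via Lemma \ref{lem: perturb qdlt afs}, as in Step 1 of the proof of Theorem \ref{thm: eo flipping and divisorial contraction}, so we may assume $(X,\Ff,B,\Mm,t)$ is $\Qq$-factorial klt and $t<1$; this does not change the ample model$/T$ of $K$.

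\textbf{Constructing the good minimal model$/T$.} Since $-K$ is ample$/T$, pick an ample$/U$ $\Rr$-divisor $A$ on $X$ so that $H := K + A$ is still anti-ample (flipping case) or nef and big$/T$ after adding a large multiple; more simply, apply Lemma \ref{lem: afs big and nef equivalent to +a}-type reasoning relatively over $T$: $-K$ is ample$/T$, so $K$ itself behaves over $T$ like $-$(ample), and running a $K$-MMP$/T$ is the same as running a $(-(-K))$-MMP$/T$. Following the recipe of the proof of Theorem \ref{thm: eoqdlt model} / Theorem \ref{thm: eo flipping and divisorial contraction}: take a foliated log resolution $g : W \to X$ of $(X,\Ff,B,\Mm)$ with reduced exceptional divisor $E = \sum E_i$ and equidimensional toroidal structure; form $\widehat B_W$ by raising coefficients of the relevant components to their maximal lc values; put $A$ (or the relevant ample class) into a moduli $\bb$-divisor $\Aa := \overline{g^\ast A - \tau Q}$ with $Q \geq 0$ exceptional$/X$ anti-ample$/X$. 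Since $-K$ is ample$/T$ and $X$ is of Fano type over $T$ after $\Qq$-factorialization, the relevant adjusted foliated divisor $K_{\Ff_W} + \widehat B_W^{\ninv} + \sum \epsilon_\Ff(E_i) E_i + \Mm_W + \Aa_W$ is big$/T$; by \cite[Theorem 16.1.4]{CHLX23} we may run a $(K_{\Ff_W}+\cdots)$-MMP$/T$ with scaling of an ample divisor terminating in a good minimal model$/T$, call it $Y$. Then, exactly as in Steps 4--5 of the proof of Theorem \ref{thm: eo flipping and divisorial contraction}, $(Y, \Delta_Y, \Nn_Y)$ acquires a $\Qq$-factorial qdlt generalized pair structure with $\Nn_Y$ nef$/T$; running a $(K_Y+\Delta_Y+\Nn_Y)$-MMP$/T$ via \cite[Lemma 4.4]{BZ16} yields a good minimal model$/T$ $V$, and this is simultaneously a run of the adjoint MMP$/T$. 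Because $-K$ is ample$/T$, the augmented base locus $\Bb_+$ considerations over $T$ force all divisors contracted by $W \dashrightarrow V$ to be among the $E_i$ (divisorial contraction case: finitely many extra divisors on $X$ are also contracted, namely $\Exc(c)$, which we arrange to be in $\Supp B$); the negativity lemma argument of Step 5 then shows $(V,\Ff_V,\ldots,t)/T$ is a $\Qq$-factorial good minimal model of $(X,\Ff,B,\Mm,t)/T$. This proves the first assertion.

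\textbf{The ample model, potential kltness of $X^+$, and the flip.} Since $K_V := K_{(V,\Ff_V,B_V,\Mm,t)}$ is semi-ample$/T$ (being a good minimal model$/T$), it defines a contraction $c_V : V \to X^+$ over $T$; by construction $X^+$ is the ample model$/T$ of $K$, and the induced map $X \dashrightarrow X^+$ is $K$-non-positive (indeed $K$-negative in the divisorial/flipping situation). Potential kltness of $X^+$ follows exactly as in Step 7(2) of the proof of Theorem \ref{thm: eo flipping and divisorial contraction}: $V$ is $\Qq$-factorial and $c_V$ is also defined by $K_V + \Delta_V + \Nn_V + (\text{ample}/T)$, so by \cite[Lemma 3.4]{HL22} there is a klt pair $(V,D_V)$ with $K_V + D_V \sim_{\Rr, T} K_V + \Delta_V + \Nn_V + (\text{ample}/T)$ cutting out $c_V$, whence $(X^+, (c_V)_\ast D_V)$ is klt. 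Finally, in the flipping case, $c : X \to T$ is small and $-K$ is ample$/T$, while $X \dashrightarrow X^+$ is $K$-negative with $K_{X^+}$ ample$/T$ (as the pushforward of the semi-ample$/T$ divisor $K_V$ under the ample model map, which here is an isomorphism in codimension one since $\Exc(c)$ has codimension $\geq 2$ and nothing divisorial among $\Bb_+(H/T)$ gets contracted); by definition this exhibits $X \dashrightarrow X^+$ as the $K$-flip of $c$.

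\textbf{Main obstacle.} The delicate point is controlling \emph{exactly which divisors get contracted} by the composite $X \dashrightarrow W \dashrightarrow V$ — i.e., ensuring no divisor outside $\Exc(c)$ (divisorial case) or no divisor at all (flipping case) is contracted. The foliated MMP $W \dashrightarrow Y$ only sees $\Ff$ and can, a priori, contract divisors on $X$ that are not in $\Bb_+$ of the relevant class; as in the proof of Theorem \ref{thm: eo flipping and divisorial contraction} this is handled by adding a large multiple $l\Hh$ of the (nef, big$/T$) class $\Hh = \overline{H}$ to make the whole MMP $\Hh$-trivial (using the length of extremal rays, Theorem \ref{thm: cone intro}(2), and NQC-ness, Lemma \ref{lem: nqc of K+A}), together with the Nakayama--Zariski computations of Lemmas \ref{lem: sbl of movable big divisor}, \ref{lem: sbl under pullback}, \ref{lem: nz for lc divisor}. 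Verifying that these base-locus identities still hold relatively over $T$ when $-K$ (rather than $+K$) is the ample direction, and that the divisorial contraction case is correctly accounted for by pre-inflating $\Supp B$ to contain $\Exc(c)$, is where the real work lies.
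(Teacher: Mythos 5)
Your route is genuinely different from the paper's, and as written it has a real gap precisely at its load-bearing step: you never carry out the control of which divisors the composite $X\dashleftarrow W\dashrightarrow V$ contracts, you only flag it as ``where the real work lies.'' In the proof of Theorem \ref{thm: eo flipping and divisorial contraction} that control rests on $H=K+A$ being nef and big$/U$ (so one can add $l\Hh$ and use $\Hh$-triviality via the length of extremal rays) together with Lemmas \ref{lem: sbl of movable big divisor}, \ref{lem: sbl under pullback} and \ref{lem: nz for lc divisor}, which require a big and \emph{movable} divisor. Over $T$ the divisor you are minimalizing is $K$ itself, which is anti-ample$/T$: there is no nef class to be trivial on, $K$ is not movable$/T$ in the divisorial case (its $N_{\sigma}(X/T,\cdot)$ is a positive multiple of $\Exc(c)$), and its movability$/T$ in the flipping case is essentially what has to be proved. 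So the base-locus bookkeeping does not transfer verbatim and is genuinely missing. Several auxiliary assertions are also unjustified: ``$X$ is of Fano type over $T$'' does not follow from $-K$ being ample$/T$ (the adjoint divisor is not of the form $K_X+\Delta$); and the Step-1 style reduction to a klt structure changes $K$ itself (the perturbation preserves $K+A$, not $K$), so a good minimal model$/T$ of the perturbed structure is not automatically one of the original lc structure, which is what the theorem asserts. If you want to salvage your route, the correct normalization is $A:=-K$, i.e. $H=0$ over $T$, whose relative augmented base locus is $\Exc(c)$ (empty divisorial part in the flipping case, exactly $\Exc(c)$ in the divisorial case); but this still has to be written out.

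The paper's own proof avoids all of this and is much shorter: taking $A:=-K$, which is ample$/T$, Theorem \ref{thm: eo flipping and divisorial contraction} applied over $T$ already gives that $T$ is potentially klt; one then takes a small $\Qq$-factorialization $T'\to T$, which is of Fano type over $T$, pushes the adjoint foliated structure to $T'$, and runs a $(tK_{\Ff_{T'}}+(1-t)K_{T'}+B_{T'}+\Mm_{T'})$-MMP$/T$ there — because of the Fano type base this terminates with a good minimal model $(\bar X,\bar\Ff,\bar B,\Mm,t)/T$. Since $K$ is anti-ample$/T$, $\bar K$ is nef$/T$, and $X\dashrightarrow\bar X$ extracts no divisor (as $T'\to T$ is small), negativity shows $\bar X$ is a $\Qq$-factorial good minimal model of $(X,\Ff,B,\Mm,t)/T$; smallness of $\bar X\to T$ then gives that $X^+\to T$ is small and $X^+$ is potentially klt. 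In short: your proposal is the ``MMP over $T$'' variant of the contraction theorem that the introduction alludes to, which could plausibly be completed, but the crucial Nakayama--Zariski step is not done, and the paper's detour through the base $T$ renders the whole machinery unnecessary here.
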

\begin{proof}
Let $A:=-K$, then $A$ is ample$/T$ and $c$ is the contraction$/T$ associated to $K+A$. By Theorem \ref{thm: eo flipping and divisorial contraction}, $T$ is potentially klt, so there exists a small $\Qq$-factorialization $h: T'\rightarrow T$. Let $\Ff_{T'}:=h^{-1}c_\ast \Ff$ and let $B_{T'}$ be the image of $B$ on $T'$. Since $T'$ is of Fano type over $T$, we may run a $(tK_{\Ff_{T'}}+(1-t)K_{X'}+B_{T'}+\Mm_{T'})$-MMP$/T$ which terminates with a good minimal model $(\bar X,\bar\Ff,\bar B,\Mm,t)/T$ of $(T',\Ff_{T'},B_{T'},\Mm,t)/T$. In particular, $\bar X$ is $\Qq$-factorial.  Since $K$ is anti-ample$/T$ and $\bar K:=tK_{\bar\Ff}+(1-t)K_{\bar X}+\bar B+\Mm_{\bar X}$ is ample$/T$, $(\bar X,\bar\Ff,\bar B,\Mm,t)/T$ is a $\Qq$-factorial good minimal model of $(X,\Ff,B,\Mm,t)/T$

Since $h$ is small, the induced morphism $\bar X\rightarrow T$ is small, so the induced morphism $X^+\rightarrow T$ is small. Since $T$ is potentially klt, $X^+$ is potentially klt. The moreover part follows.
\end{proof}

\begin{proof}[Proof of Theorem \ref{thm: eof intro}]

By Theorem \ref{thm: eof and small modification after divisorial contraction} we only need to prove the moreover part. By Theorem \ref{thm: eof and small modification after divisorial contraction} again, there exists a $\Qq$-factorial good minimal model$/T$ $(\bar X,\bar\Ff,\bar B,\Mm,t)/T$ of $(X,\Ff,B,\Mm,t)/T$. Since $X\dashrightarrow\bar X$ does not extract any divisor, we have
$$1=\rho(X/T)\geq\rho(\bar X/T).$$
Since $\bar X\not=T$, $\rho(\bar X/T)\geq 1$, so $\rho(\bar X/T)=1$. Thus $\bar X=X^+$, so $X^+$ is $\Qq$-factorial.
\end{proof}

\begin{rem}
    We note that in Theorem \ref{thm: eof intro}, we do not need that $t<1$. Therefore, by letting $t=1$, we get a strengthened result comparing to \cite[Theorem 1.2(2)]{LMX24b}: we still need $X$ to be potentially klt but we do not require $B\geq\Delta$ for some klt pair $(X,\Delta)$. 
\end{rem}

\section{Existence of MMP for adjoint foliated structures}\label{sec: run mmp}

In this section we prove the existence of the minimal model program, Theorem \ref{thm: can run mmp intro}. Moreover, we show the existence of minimal model program with scaling.

\begin{lem}\label{lem: properties preserved under mmp}
Let $(X,\Ff,B,\Mm,t)/U$ be an lc algebraically integrable adjoint foliated structure such that $X$ is potentially klt. Let $K:=K_{(X,\Ff,B,\Mm,t)}$. Then for any birational map $\phi: (X,\Ff,B,\Mm,t)\dashrightarrow (X',\Ff',B',\Mm,t)$ that is a sequence of steps of a $K$-MMP$/U$, we have the following:
\begin{enumerate}
    \item $X'$ is potentially klt.
    \item If $X$ is $\Qq$-factorial, then $X'$ is $\Qq$-factorial.
    \item If $D$ is an $\Rr$-Cartier $\Rr$-divisor on $X$, then $\phi_\ast D$ is an $\Rr$-Cartier $\Rr$-divisor.
    \item Let $B_X\geq 0$ be an $\Rr$-divisor such that $B^{\ninv}\geq B_X^{\ninv}$ and $B^{\inv}\geq (1-t)B_X^{\inv}$. Let $B_{X'}:=\phi_\ast B_X$. Let $\Nn$ be a nef$/U$ $\bb$-divisor such that $\Mm-\Nn$ is nef$/U$. Then:
    \begin{enumerate}
     \item If $(X,B_X,\Nn)$ is lc, then $(X',B_{X'},\Nn)$ is lc.
     \item If $(X,B_X,\Nn)$ is klt and $(X,\Ff,B,\Mm,t)$ is klt, then $(X',B_{X'},\Nn)$ is klt.
    \end{enumerate}
    \item If $(X,\Ff,B,\Mm,t)$ is qdlt, then $(X',\Ff',B',\Mm,t)$ is qdlt.
\end{enumerate}
\end{lem}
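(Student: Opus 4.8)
The plan is to reduce everything to a single step of a $K$-MMP by induction on the number of steps, and then analyze the three possible types of step (divisorial contraction, flip, flipping contraction followed by its flip) using the results established earlier in this section and in Sections \ref{sec: contraction} and \ref{sec: eof}. So first I would fix one step $\phi\colon (X,\Ff,B,\Mm,t)\dashrightarrow (X',\Ff',B',\Mm,t)$ associated with a $K$-negative extremal ray $R/U$ and its contraction $\cont_R\colon X\to T$; it suffices to prove each of (1)--(5) for this step, the general case following by composing finitely many such steps. Here one must be slightly careful: to apply Theorem \ref{thm: contraction theorem strong} and Theorem \ref{thm: eof and small modification after divisorial contraction} repeatedly, the hypothesis ``$X$ potentially klt'' must propagate, so (1) should really be proved first (it is used to even run the next step), and then (2)--(5) can be handled step by step, each time knowing the source of the step is potentially klt by the already-established (1).

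For (1): if $\cont_R$ is a Fano contraction, then $X'=T$, and Theorem \ref{thm: eof and small modification after divisorial contraction} (or rather the relevant contraction statement) together with Theorem \ref{thm: eo flipping and divisorial contraction}(2) gives that $T$ is potentially klt. If $\cont_R$ is divisorial, then $X'=T$ is potentially klt by the same token. If $\cont_R$ is a flipping contraction, then $X'=X^+$ is potentially klt by Theorem \ref{thm: eof and small modification after divisorial contraction} (the ``moreover'' part). For (2): when $X$ is $\Qq$-factorial, in the divisorial case Proposition \ref{prop: q-factoriality preserved for divisorial contraction} gives $\Qq$-factoriality of $T=X'$; in the flipping case Theorem \ref{thm: eof intro} (the ``moreover'' part) gives $\Qq$-factoriality of $X^+$; the Fano case does not occur in a $K$-MMP step that changes the variety in the $\Qq$-factorial setting other than as a final Mori fiber space, but if one wishes one can note there is nothing to prove (the MMP terminates). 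For (3): $\phi_*D$ being $\Rr$-Cartier is automatic in the divisorial case since $X'$ is $\Qq$-factorial when $X$ is — and in general, write $D = K + (D-K)$ is not quite right since $D-K$ need not be ample; instead argue directly as in the standard MMP: over $T$, $D \equiv_T r (K+A)$ for the supporting divisor, wait — cleaner is: $\cont_R^*$ of the pushforward is well-defined because $D$ is numerically proportional to $K$ over $T$ modulo $\cont_R^*(\mathrm{N}^1(T))$; this is the usual argument and I would cite the analogous step in \cite{KM98} or deduce it from Theorem \ref{thm: eo flipping and divisorial contraction}(3.b) / Theorem \ref{thm: contraction theorem strong}(2) applied to a suitable Cartier twist of $D$, plus taking the flip in the flipping case.

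For (4): this is the heart of the matter and uses Proposition \ref{prop: adjoint lc implies X lc} together with the observation that $\phi$ is $K$-negative, hence by decomposing $K = t(K_\Ff + B_\Ff + \Pp_X) + (1-t)(K_X + B_X + \Nn_X)$ as in the proof of Proposition \ref{prop: adjoint lc implies X lc} (setting $B_\Ff$, $\Pp$ by $tB_\Ff + (1-t)B_X = B$, $t\Pp + (1-t)\Nn = \Mm$, so that $\Pp$ and $\Pp - \Nn$ are nef$/U$ and $B_\Ff \geq B^{\ninv} \geq 0$), one checks that a step of a $K$-MMP is also $(K_X+B_X+\Nn_X)$-non-positive: indeed $\cont_R$ is $K$-negative and $K_X + B_X + \Nn_X$ is a ``smaller'' divisor, so by the negativity lemma applied on a common resolution the discrepancies of $(X,B_X,\Nn)$ do not decrease; combined with Proposition \ref{prop: adjoint lc implies X lc} (which gives $(X,B_X,\Nn)$ lc, resp. klt, to begin with, hence that $(X',B_{X'},\Nn)$ remains lc, resp. klt). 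Here the subtlety is the klt case (4)(b): one needs that no lc place is created, which follows because the $(tK_\Ff + B_\Ff + \Pp)$-part is only $K$-non-positive while the strict negativity of $K$ is absorbed by the $(1-t)(K_X+B_X+\Nn_X)$ part — I would run the discrepancy inequality $a(E,X',\Ff',B',\Mm,t) = t\,a(E,\Ff',B_{\Ff'},\Pp) + (1-t)\,a(E,X',B_{X'},\Nn) \geq$ its value on $X$, which is $> -t\epsilon_\Ff(E) - (1-t)$, and separate out the $K_X+B_X+\Nn_X$ contribution. For (5): this is exactly Proposition \ref{prop: qdlt preserved under mmp}, once one notes that $K_{X'} + B'^{\ninv} + \frac{1}{1-t}B'^{\inv} + \Mm_{X'}$ is $\Rr$-Cartier — which follows from (3) applied to $K_X + B_X + \Mm_X$ with $B_X = B^{\ninv} + \frac{1}{1-t}B^{\inv}$ (which is $\Rr$-Cartier since $(X,\Ff,B,\Mm,t)$ qdlt implies $K_X + B_X + \Mm_X$ is $\Rr$-Cartier by Definition \ref{defn: qdlt afs}) — and then applying Proposition \ref{prop: qdlt preserved under mmp} step by step.

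The main obstacle I anticipate is verifying carefully in (4) that a single step of the $K$-MMP is genuinely $(K_X+B_X+\Nn_X)$-non-positive (not merely that it does not decrease discrepancies in an averaged sense), and that the klt condition in (4)(b) is preserved; this requires the separation-of-variables trick in the discrepancy formula together with the already-proven lc-ness of $(X,B_X,\Nn)$ from Proposition \ref{prop: adjoint lc implies X lc}, plus the general negativity lemma \cite[Lemma 3.3]{Bir12} on a common resolution. Everything else is either a direct citation of results proven earlier in this section (Propositions \ref{prop: q-factoriality preserved for divisorial contraction}, \ref{prop: qdlt preserved under mmp}, Theorems \ref{thm: eo flipping and divisorial contraction}, \ref{thm: eof intro}, \ref{thm: eof and small modification after divisorial contraction}) or a routine induction on the number of steps.
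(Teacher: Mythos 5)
Your reduction to a single step and your treatment of (1), (2), (3) and (5) follow the paper's proof essentially verbatim: (1) is part of Theorem \ref{thm: eof and small modification after divisorial contraction}, (2) splits into the divisorial case (Proposition \ref{prop: q-factoriality preserved for divisorial contraction}) and the flipping case (Theorem \ref{thm: eof intro}), (3) is the usual descent argument — choose $\lambda$ with $(D+\lambda K)\cdot R=0$, descend $D+\lambda K$ to $T$ via Theorem \ref{thm: eo flipping and divisorial contraction}(3.b), and subtract $\lambda K'$ — and (5) is exactly (3) plus Proposition \ref{prop: qdlt preserved under mmp}.

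The genuine gap is in (4). You propose to transfer lc/klt-ness of $(X,B_X,\Nn)$ to $(X',B_{X'},\Nn)$ by claiming that a step of the $K$-MMP is also $(K_X+B_X+\Nn_X)$-non-positive ``by the negativity lemma, since $K_X+B_X+\Nn_X$ is a smaller divisor''. This is not justified and is false in general: $K$-negativity of the contracted ray $R$ only controls the sign of $t(K_{\Ff}+B_{\Ff}+\Pp_X)\cdot R+(1-t)(K_X+B_X+\Nn_X)\cdot R$, and nothing prevents $(K_X+B_X+\Nn_X)\cdot R>0$ (the foliated part can carry all the negativity, e.g.\ for $t$ close to $1$), in which case the step is $(K_X+B_X+\Nn_X)$-positive and discrepancies of $(X,B_X,\Nn)$ can drop, so negativity on a common resolution gives you nothing. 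Your ``separate out the $K_X+B_X+\Nn_X$ contribution'' in the discrepancy identity has the same problem: from $a(E,X',\Ff',B',\Mm,t)\geq a(E,X,\Ff,B,\Mm,t)$ alone you cannot isolate $a(E,X',B_{X'},\Nn)$ without an upper bound on the foliated discrepancy $a(E,\Ff',B_{\Ff'},\Pp)$, and producing that bound is precisely the content of Proposition \ref{prop: adjoint lc implies X lc} (via an ACSS modification), not a formal consequence of the decomposition. The correct route — and the paper's one-line argument — is to forget the comparison with $(X,B_X,\Nn)$ altogether: the adjoint foliated structure $(X',\Ff',B',\Mm,t)$ is lc (resp.\ klt) because the step is $K$-negative, the coefficient inequalities $B'^{\ninv}\geq B_{X'}^{\ninv}$, $B'^{\inv}\geq(1-t)B_{X'}^{\inv}$ push forward, and $K_{X'}+B_{X'}+\Nn_{X'}$ is $\Rr$-Cartier by (3); then Proposition \ref{prop: adjoint lc implies X lc} applied \emph{on $X'$} gives directly that $(X',B_{X'},\Nn)$ is lc (resp.\ klt). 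Note also that your use of Proposition \ref{prop: adjoint lc implies X lc} on the source $X$ is redundant, since lc-ness (resp.\ klt-ness) of $(X,B_X,\Nn)$ is already a hypothesis of (4).
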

\begin{proof}
We may assume that $\phi$ is a single step of a $K$-MMP$/U$.

Then there are contractions $g: X\rightarrow T$ and $g': X'\rightarrow T$ such that $g'\circ\phi=g$ and $g$ is the contraction of a $K$-negative extremal ray$/U$. By Theorem \ref{thm: cone}, $R$ is exposed. By \cite[Lemma 8.4.1]{CHLX23}, there exists a supporting function $H$ of $R$ such that $H=K+A$ for some ample$/U$ $\Rr$-divisor $A$. Since $g$ is birational, $H$ is big$/U$.

(1) It is a part of Theorem \ref{thm: eof and small modification after divisorial contraction}.

(2) If $\Exc(g)$ contains any divisor, then by Proposition \ref{prop: q-factoriality preserved for divisorial contraction}, $T=X'$ is $\Qq$-factorial. If $\Exc(g)$ does not contain any divisor, then by Theorem \ref{thm: eof intro}, $X'$ is $\Qq$-factorial.

(3) We have $(D+\lambda K)\cdot R=0$ for some real number $\lambda$. By Theorem \ref{thm: eo flipping and divisorial contraction}(2.b), $D+\lambda K\sim_{\mathbb R}g^\ast L$ for some $\Rr$-Cartier $\Rr$-divisor $L_T$ on $T$. Thus $\phi_\ast D+K'=g'^\ast L$ is $\Rr$-Cartier. Since $K'$ is $\Rr$-Cartier, $\phi_\ast D$ is $\Rr$-Cartier.

(4) By (3), $K_{X'}+B_{X'}+\Nn_{X'}$ is $\Rr$-Cartier. (4) follows from Proposition \ref{prop: adjoint lc implies X lc}.

(5) It follows from (3) and Proposition \ref{prop: qdlt preserved under mmp}.
\end{proof}

\begin{thm}\label{thm: run mmp strong} Let $(X,\Ff,B,\Mm,t)/U$ be an lc algebraically integrable adjoint foliated structure such that $X$ is potentially klt. Let $K:=K_{(X,\Ff,B,\Mm,t)}$. Assume that one of the conditions (ii)-(viii) of Theorem \ref{thm: contraction theorem strong} hold. Then we may run a $K$-MMP$/U$.
\end{thm}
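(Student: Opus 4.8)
The plan is to reduce the theorem to the following one-step statement: whenever $(X,\Ff,B,\Mm,t)/U$ satisfies the hypotheses and $K=K_{(X,\Ff,B,\Mm,t)}$ is not nef$/U$, one step of a $K$-MMP$/U$ can be carried out, and the resulting adjoint foliated structure again satisfies the same hypotheses. Iterating this (with the convention that the process also stops if a Mori fibre space is reached) then produces the desired $K$-MMP$/U$.

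First I would observe that, since $(X,\Ff,B,\Mm,t)$ is lc, $\Nlc(X,\Ff,B,\Mm,t)=\emptyset$, so if $K$ is not nef$/U$ then by the cone theorem (Theorem~\ref{thm: cone intro}) there is a $K$-negative extremal ray$/U$ $R$, which has a supporting function$/U$ $H_R$. Theorem~\ref{thm: contraction theorem strong} then yields the contraction $\cont_R\colon X\to T$ of $R$: if $\cont_R$ is birational then $H_R$ is automatically big$/U$, so condition (i) of that theorem applies; otherwise we invoke the condition among (ii)--(viii) that is assumed to hold. Now one runs through the trichotomy. If $\dim T<\dim X$, the step is a Mori fibre space and the MMP ends. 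If $\cont_R$ is birational and contracts a divisor, then for $X$ $\Qq$-factorial Proposition~\ref{prop: q-factoriality preserved for divisorial contraction} shows $T$ is $\Qq$-factorial with $\rho(X/U)=\rho(T/U)+1$, and we pass to $T$; for $X$ merely potentially klt, Theorem~\ref{thm: eof and small modification after divisorial contraction} supplies the ample model$/T$ $X^+\to T$, which is a small modification with $X^+$ potentially klt, and the step takes the form $X\to T\leftarrow X^+$. If $\cont_R$ is small (a flipping contraction), Theorem~\ref{thm: eof intro} supplies the flip $X\dashrightarrow X^+$. In each birational case the new adjoint foliated structure is lc, since a step of a $K$-MMP does not decrease discrepancies, and its underlying variety is potentially klt by Lemma~\ref{lem: properties preserved under mmp}(1).

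It remains to check that the specific condition among (ii)--(viii) we started from survives the step, so that Theorem~\ref{thm: contraction theorem strong} applies again: (ii) and the value of $t$ are unchanged; (viii) is Lemma~\ref{lem: properties preserved under mmp}(2); (iv), (v), (vii) follow from Lemma~\ref{lem: properties preserved under mmp}(3)--(4) after pushing forward the auxiliary divisors $B_X$ and $\Nn$; (vi) is Lemma~\ref{lem: properties preserved under mmp}(5); and (iii) holds because the pushforward $K'$ of $K$ under a birational contraction of the pseudo-effective$/U$ divisor $K$ is again pseudo-effective$/U$. Since all of the substantive input --- bend-and-break, the cone theorem, the contraction theorem, and the existence of flips --- is already established, I expect the only genuine care needed to lie precisely in this bookkeeping: in particular, handling the non-$\Qq$-factorial divisorial case through Theorem~\ref{thm: eof and small modification after divisorial contraction} (replacing $T$ by $X^+$) rather than naively contracting, and recording the elementary fact that pseudo-effectivity is preserved under pushforward so that condition (iii) is stable along the MMP.
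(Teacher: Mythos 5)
Your proposal is correct and follows essentially the same route as the paper: contract a $K$-negative extremal ray via Theorem \ref{thm: contraction theorem strong}, stop at a Fano contraction, otherwise pass to the ample model over $T$ (Theorem \ref{thm: eof and small modification after divisorial contraction} / Theorem \ref{thm: eof intro}), and preserve the hypotheses via Lemma \ref{lem: properties preserved under mmp} so the process can be iterated. The only cosmetic difference is that the paper treats the divisorial and flipping cases uniformly through the ample model over $T$, whereas you split them into subcases; the ingredients and the bookkeeping are the same.
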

\begin{proof}
First we show that we can run a step of a $K$-MMP$/U$. Let $R$ be a $K$-negative extremal ray and $H_R$ a supporting function of $K$. The existence of the contraction of $R$ $g: X\rightarrow T$ follows from Theorem \ref{thm: contraction theorem strong}. If the contraction is a Fano contraction then we are done. Otherwise, by Theorem \ref{thm: eof and small modification after divisorial contraction}, $K$ has an ample model$/T$ $X'$ and $X\dashrightarrow X'$ is a step of a $K$-MMP$/U$. 

By Lemma \ref{lem: properties preserved under mmp}, conditions (ii)-(viii) of Theorem \ref{thm: contraction theorem strong} are preserved, so we may replace $X$ with $X'$ and continue this process. 
\end{proof}

\begin{prop}[$=$Theorem \ref{thm: lc afs ambient has nice singularities intro}(1)]\label{prop: klt afs imply potentially klt}
Let $(X,\Ff,B,\Mm,t)/U$ be a klt algebraically integrable adjoint foliated structure. Then $X$ is potentially klt.
\end{prop}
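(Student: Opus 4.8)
We may reduce to $0\le t<1$: if $X$ is $\Qq$-factorial then $K_X$ is $\Rr$-Cartier, and since $\Mm$ is nef$/U$ it is also nef$/X$, so Proposition~\ref{prop: adjoint lc implies X lc} applied to $(X,\Ff,B,\Mm,t)/X$ with $B_X=0$ and $\Nn=\bm 0$ shows that $(X,0)$ is sub-klt, i.e. $X$ is klt; and if $t=1$ the strategy below works after replacing ``$\Qq$-factorial qdlt modification'' by ``$\Qq$-factorial proper ACSS modification'' (Theorem~\ref{thm: eo acss model}), which over a klt structure is forced to be small and crepant and whose source is $\Qq$-factorial klt. So assume $0\le t<1$ and $X$ is not $\Qq$-factorial.

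First I would take a $\Qq$-factorial qdlt modification $h\colon (X',\Ff',B',\Mm,t)\to (X,\Ff,B,\Mm,t)$ (Theorem~\ref{thm: eoqdlt model}). Since $(X,\Ff,B,\Mm,t)$ is klt it is lc and has no nklt place, so Theorem~\ref{thm: eoqdlt model}(2) gives $K':=K_{(X',\Ff',B',\Mm,t)}=h^\ast K$, where $K:=K_{(X,\Ff,B,\Mm,t)}$ (in particular $K'\equiv_X 0$), and forces $h$ to contract no divisor, that is, $h$ is small. As $h$ is crepant and $(X,\Ff,B,\Mm,t)$ is klt, the structure $(X',\Ff',B',\Mm,t)$ is klt, and by Theorem~\ref{thm: eoqdlt model}(1) the variety $X'$ is $\Qq$-factorial klt, hence potentially klt.

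The main step is to realise $h$ as a contraction to which the proof of the contraction theorem applies. Fix an ample$/U$ $\Rr$-divisor $A_X$ on $X$ with $K+A_X$ ample$/U$, and set $H:=h^\ast(K+A_X)=K'+h^\ast A_X$ on $X'$. Then $H$ is nef and big$/U$; it is NQC$/U$ (write $K+A_X$ as a positive combination of ample Cartier divisors and pull back); it is semi-ample$/U$ with associated contraction$/U$ equal to $h$ (being $h^\ast$ of a globally generated$/U$ line bundle); and $\Bb_+(H/U)=\Exc(h)$ has codimension $\ge 2$, so the divisorial part of $\Bb_+(H/U)$ is empty (cf. Lemma~\ref{lem: sbl under pullback}). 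I would then run the argument of Theorem~\ref{thm: eo flipping and divisorial contraction} for $(X',\Ff',B',\Mm,t)/U$ with $H$ in the role of ``$K+A$'': its Step~1 reductions are already in place (the structure is $\Qq$-factorial klt and the divisorial part of $\Bb_+(H/U)$ is empty, so nothing need be added to $B'$), and the later steps only use that $H$ is nef, big, NQC and semi-ample$/U$. The conclusion corresponding to Theorem~\ref{thm: eo flipping and divisorial contraction}(2) then produces a contraction$/U$ associated to $H$ — which is $h\colon X'\to X$ — whose target is potentially klt, i.e. $X$ is potentially klt.

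The hard part will be this last step: checking that the proof of Theorem~\ref{thm: eo flipping and divisorial contraction} really does go through with $H=h^\ast(K+A_X)$, which is only nef and big$/U$ rather than $K'$ plus an ample$/U$ divisor. One must trace each place in that proof where ampleness of the added divisor is used (for instance, when perturbing on the foliated log resolution, or when bounding the divisors contracted by the auxiliary minimal model programs with scaling) and show it can be replaced either by the fact that $X'$ is already $\Qq$-factorial klt with a qdlt foliated structure, or by the base-point-freeness$/U$ of $H$ together with the exceptionality of $\Bb_+(H/U)$ over $X$ — equivalently, that $h$ is the contraction of an extremal face on which $H=K'+h^\ast A_X$ is trivial and the needed positivity survives. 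Granting this, the remaining bookkeeping, including the $\Ff'$-invariant versus non-$\Ff'$-invariant decomposition of the auxiliary divisors, is routine.
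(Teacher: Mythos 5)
Your overall skeleton agrees with the paper's proof: reduce to $t<1$, take a $\Qq$-factorial qdlt modification $h\colon (X',\Ff',B',\Mm,t)\to(X,\Ff,B,\Mm,t)$, observe that the klt hypothesis forces $h$ to be small and crepant with $X'$ $\Qq$-factorial klt, and then realise $h$ as a contraction to which Theorem \ref{thm: eo flipping and divisorial contraction}(2) applies. But the step you defer at the end is precisely the crux, and as written it is a genuine gap. Theorem \ref{thm: eo flipping and divisorial contraction} requires the divisor to be $K'$ plus an honestly ample$/U$ divisor, and its proof uses that ampleness essentially: Step 2 builds the ample $\bb$-divisor $\Aa=\overline{A_W-\tau Q}$ from the ample part, the MMPs invoked via \cite[Theorem 16.1.4]{CHLX23} and \cite[Lemma 16.1.1]{CHLX23}, the final base-point-freeness arguments, and the klt pair $(V,D_V)$ that yields part (2) all rely on it. So your assertion that ``the later steps only use that $H$ is nef, big, NQC and semi-ample$/U$'' is not correct, and you would in effect have to re-prove the theorem under weaker hypotheses. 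The paper sidesteps this with a perturbation you are missing: since $h$ is small, there is an effective divisor $E\geq 0$ on $X'$ with $-E$ ample$/X$; then $A':=h^\ast A-eE$ is ample for $0<e\ll 1$, and by Lemma \ref{lem: perturb qdlt afs} the structure $(X',\Ff',B'+eE,\Mm,t)$ is still klt. Since
$$K_{(X',\Ff',B'+eE,\Mm,t)}+A'=h^\ast\left(K_{(X,\Ff,B,\Mm,t)}+A\right),$$
the morphism $h$ itself is the contraction associated to this divisor (for instance working over the base $X$, where nefness and bigness of the pullback are automatic), and Theorem \ref{thm: eo flipping and divisorial contraction}(2) then applies verbatim, giving that its target $X$ is potentially klt; no re-examination of that proof is needed once the ample part has been restored in this way.

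Two smaller points. Your treatment of $t=1$ is both more complicated and partly incorrect: the paper simply observes that an algebraically integrable foliation $\Ff\neq T_X$ admits an $\Ff$-invariant divisor $S$ on $X$, whose discrepancy is at most $-t\epsilon_{\Ff}(S)-(1-t)=0$ when $t=1$, so klt forces $\Ff=T_X$ and the statement reduces to generalized pairs via \cite[Lemma 3.4]{HL22}; in particular your claim that a proper ACSS modification of a klt structure is crepant fails in general, because the ACSS boundary is $h^{-1}_\ast(B^{\ninv}\wedge\Supp B^{\ninv})+\Supp\Exc(h)^{\ninv}$ and thus discards $B^{\inv}$. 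Also, your appeal to Lemma \ref{lem: sbl under pullback} to control $\Bb_+(H/U)$ is not literally available since $X$ is not assumed $\Qq$-factorial there, though that observation plays no role once the argument is organised as above.
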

\begin{proof}
We may assume that $\Ff\not=T_X$, otherwise $(X,\Ff,B,\Mm,t)=(X,B,\Mm)$ and the proposition follows from \cite[Lemma 3.4]{HL22}. Since $\Ff$ is algebraically integrable, there exists an $\Ff$-invariant divisor $S$ on $X$. If $t=1$, then $S$ is an lc place of $(X,\Ff,B,\Mm,1)$, which contradicts the assumption that $(X,\Ff,B,\Mm,t)/U$ is klt. Thus we may assume that $t<1$.

Let $h: (X',\Ff',B',\Mm,t)\rightarrow (X,\Ff,B,\Mm,t)$ be a $\Qq$-factorial qdlt modification whose existence is guaranteed by Theorem \ref{thm: qdlt model intro}. Let $A$ be an ample divisor on $X$.
    Since $(X,\Ff,B,\Mm,t)$ is klt, $h$ is small. Thus there exists an $h$-anti-ample divisor $E\geq 0$. Then $A':=h^\ast A-eE$ is ample$/U$ for any $0<e\ll 1$. By Lemma \ref{lem: perturb qdlt afs}, $(X',\Ff',B'+eE,\Mm,t)$ is klt for any $0<e\ll 1$. Now the contraction $h: X'\rightarrow X$ is the contraction$/X$ associated to
    $$K_{(X',\Ff',B'+eE,\Mm,t)}+A'.$$
    By Theorem \ref{thm: eo flipping and divisorial contraction}(2), $X$ is potentially klt.
\end{proof}

\begin{proof}[Proof of Theorem \ref{thm: lc afs ambient has nice singularities intro}]
(1) follows from  Proposition \ref{prop: klt afs imply potentially klt} and (2) follows from Proposition \ref{prop: adjoint lc implies X lc}.
\end{proof}

\begin{proof}[Proof of Theorem \ref{thm: can run mmp intro}]
    By Proposition \ref{prop: klt afs imply potentially klt}, $X$ is potentially klt, so $X$ is klt, so Theorem \ref{thm: can run mmp intro} is a special case of Theorem \ref{thm: run mmp strong}.
\end{proof}

\begin{proof}[Proof of Theorem \ref{thm: simplified main theorem}]
    It is a special case of Theorem \ref{thm: can run mmp intro}.
\end{proof}

\begin{proof}[Proof of Theorem \ref{thm: small improvement LMX24b}]
    We consider $(X,\Ff,B)/U$ as the adjoint foliated structure $$(X,\Ff,B,\bm{0},1)/U.$$ Under condition (1), the existence of the $(K_{\Ff}+B)$-MMP$/U$ follows from Theorem \ref{thm: run mmp strong} (using condition (iii) of Theorem \ref{thm: contraction theorem strong}). Under condition (2), the existence of the $(K_{\Ff}+B)$-MMP$/U$ follows from Theorem \ref{thm: run mmp strong} (using condition (iv) of Theorem \ref{thm: contraction theorem strong}).

    Now we prove the moreover part. By \cite[Theorem 1.10]{LMX24b}, $(X,\Ff,B+A)/U$ has a minimal model or Mori fiber space in the sense of Birkar-Shokurov. The theorem follows from \cite[Theorem 1.11(1)]{LMX24b}.
\end{proof}

\appendix

\section{ACC type conjectures}\label{sec: acc}

\begin{rem}
We state the conjectures in this section for arbitrary foliations in arbitrary dimensions. 
However, due to the lack of the existence of suitable (partial) resolution statements for general foliations on varieties,
which in turn leads to the fact that the Minimal Model Program has yet to be estrablished in full generality for , 
we only expect the conjectures we will state in this section to be approachable either for algebraically integrable foliations or in dimension $\leq 3$ for now. 
Most of these conjectures are widely open in dimension $\geq 3$, even for in the algebraically integrable case, and some are open already in dimension $2$.

For simplicity, we only state the conjectures for adjoint foliated structures with moduli part $\Mm=\bm{0}$. But, of course, it is interesting also to consider the  versions with $\Mm\not=\bm{0}$.
\end{rem}

Instead of considering the usual lc thresholds, which are defined both for $K_X$ and $K_\mathcal{F}$ measuring the singularities with respect to effective $\Rr$-divisors, the adjoint lc threshold is computed just in terms of the parameter $t$ appearing in the definition of adjoint foliated structure. 
More precisely, given a $\Qq$-factorial normal variety $X$ and a foliation $\mathcal F$ on it, we define \emph{interpolated lc threshold} of $(X,\Ff)$ to be
\begin{align*}
\bar t:=\sup\{t\in [0,1]\mid (X,\Ff,t)\text{ is lc}\}.
\end{align*}
This definition was first introduced by M\textsuperscript{c}Kernan cf. \cite[25:00]{McK22}. 

Proposition \ref{prop: adjoint lc implies X lc} implies that either $\bar t=1$, or $(X,\Ff,t)$ is lc for any $t\leq\bar t$, at least when $\Ff$ is algebraically integrable. 
The \emph{interpolated lc threshold} is an interesting invariant for adjoint foliated structures given that it provides information about the singularities of the foliation and the underlying, at the same time.
We expect the interpolated lc threshold $s_0$ to play a crucial role in studying the behavior of the foliations via adjunction.

At the 2022 JAMI conference, M\textsuperscript{c}Kernan proposed the following conjecture on the ACC for interpolated lc thresholds indicating that the set of thresholds $\bar{t}$ such that $(1-\bar t)K_X+\bar t(K_{\Ff}+B)$ 
has an lc structure satisfies the ACC in any fixed dimension. 

\begin{conj}[M\textsuperscript{c}Kernan's ACC conjecture for interpolated lc thresholds, {\cite[28:22]{McK22}}]\label{conj: mckernan conjecture}
Let $d$ be a positive integer and let $\Ii\subset [0,+\infty)$ be a DCC set. 
Then there exists an ACC set $\Ii'=\Ii'(d, \Ii)$ satisfying the following property: 
\\
Let $(X,\Ff,B)$ be a foliated triple such that $K_{\Ff}+B$ and $K_X$ are $\Rr$-Cartier, $\dim X=d$, and $B\in\Ii$. Then
\begin{align*}
\sup\{t\in [0,1]\mid (X,\Ff,tB,t)\text{ is lc}\}
\end{align*}
belongs to $\Ii'$.
\end{conj}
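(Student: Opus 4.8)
The plan is to reduce Conjecture \ref{conj: mckernan conjecture} to an ACC statement for usual log canonical thresholds of pairs (or generalized pairs), using the machinery developed in this paper to pass between the foliated world and the world of ordinary pairs. The key structural input is the adjunction-type decomposition underlying Proposition \ref{prop: adjoint lc implies X lc}: writing $K_{(X,\Ff,tB,t)} = t(K_{\Ff}+B) + (1-t)K_X$, the singularities of the adjoint foliated structure are governed simultaneously by those of the foliated triple $(X,\Ff,B)$ and of the variety $X$. First I would dispose of the easy extreme: if $(X,\Ff,B)$ is itself lc, then by Proposition \ref{prop: adjoint lc implies X lc} (and the continuity/interpolation phenomenon recorded right after it in the excerpt) the supremum is either $1$ or is attained, and the value $1$ trivially lies in any ACC set we build; so the content is in bounding the threshold away from $1$ when $\Ff$ is \emph{not} lc.

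Next I would pass to a model where the foliation has mild singularities. Take a $\Qq$-factorial qdlt modification, or more precisely an ACSS modification $h: (X',\Ff',B',\Mm;G)/Z \to (X,\Ff,B)$ as in Theorem \ref{thm: eo acss model}; on $X'$ the foliated triple is ACSS, hence by Proposition \ref{prop: weak cbf gfq} we have $K_{\Ff'}+B' \sim_{Z} K_{X'}+B'+G$, and the invariant exceptional/boundary divisors are collected in $G$. The crepant-pullback formula for the adjoint structure then expresses $a(E,\Ff,tB,t)$ for each relevant divisor $E$ as an explicit affine function of $t$ whose slope and constant term are coefficients read off from $B'$, $G$, and the discrepancies of $K_{X'}$ and $K_{\Ff'}$ — all of which, crucially, come from a DCC set depending only on $d$ and $\Gamma$ (here using that $B\in\Gamma$ DCC forces $B'$ and the associated generalized-pair boundaries to have coefficients in a DCC set, by the standard behaviour of ACSS/dlt modifications). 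The interpolated lc threshold $\bar t$ is then the infimum over finitely many such affine functions of the value where that function first hits the log canonical bound, i.e. $\bar t = \min_E \frac{c_E}{c_E - c_E'}$ or a similar ratio of DCC-set quantities.

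The heart of the argument is then to show that such ratios, ranging over all $(X,\Ff,B)$ of dimension $d$ with $B\in\Gamma$, form an ACC set. This is where I would invoke — and this is the main obstacle — an ACC statement for log canonical thresholds in the generalized-pair / foliated setting: something like the ACC for lc thresholds of algebraically integrable foliated triples, or its reduction via the cone-bundle formula to Hacon–M\textsuperscript{c}Kernan–Xu-type ACC for generalized pairs. Concretely, one wants: for the generalized pair $(X', B'_X + G, \Mm + \overline{\tfrac{t}{1-t}(K_{\Ff'}+B')})$ appearing in the proof of Theorem \ref{thm: eoqdlt model}, the set of $t$ for which this is lc but not klt satisfies ACC. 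Establishing that requires boundedness of the relevant lc centers and a DCC-to-ACC inversion, which for algebraically integrable foliations should follow from known ACC for generalized log canonical thresholds after the cone reduction, but for general foliations in dimension $\geq 3$ is genuinely open — consistent with the Remark at the start of this appendix. So the proof proposal is: (i) reduce to $\Ff$ not lc; (ii) ACSS-modify and write $\bar t$ as a finite minimum of DCC-coefficient ratios; (iii) apply ACC for (generalized) lc thresholds on the modification to conclude that the collection of all such $\bar t$ is ACC; and the expected bottleneck is step (iii), which one can currently carry out unconditionally only for algebraically integrable foliations or in dimension $\leq 3$.
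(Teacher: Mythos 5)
You are attempting to prove a statement that the paper records as an \emph{open conjecture}: Conjecture \ref{conj: mckernan conjecture} is not proved anywhere in the paper, which states explicitly that it is widely open in dimension $\geq 3$ even for algebraically integrable foliations, with only a sketch by M\textsuperscript{c}Kernan in dimension $2$ (not reproduced). The only result the paper offers in this direction is the unlabeled proposition in Appendix \ref{sec: acc}, which shows that Conjectures \ref{conj: strong mckernan conjecture} and \ref{conj: acc for lct afs} for $\Qq$-factorial varieties in dimension $d$ imply Conjecture \ref{conj: mckernan conjecture} for $\Qq$-factorial varieties in dimension $d$; that argument is purely formal (take a strictly increasing sequence of thresholds $t_i\to\bar t$, rescale the boundary coefficients, which remain in a DCC set, and apply the two auxiliary conjectures to force $(X_i,\Ff_i,\bar t B_i,\bar t)$ to be lc, a contradiction), and it requires no modification, no cone/canonical bundle formula, and no finite-minimum formula. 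Your proposal is likewise only conditional --- your step (iii) is precisely an open ACC input, as you yourself acknowledge --- so it cannot be accepted as a proof of the conjecture; at best it is an alternative reduction, and as a reduction it is weaker and less precise than the paper's, because it does not even identify a clean auxiliary statement (analogous to Conjectures \ref{conj: strong mckernan conjecture} and \ref{conj: acc for lct afs}) to which the conjecture is reduced.

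Moreover, the intermediate steps of your reduction have genuine gaps. In step (ii), the claim that $\bar t$ is a finite minimum of ratios read off from a single ACSS modification is not justified: lc-ness of $(X,\Ff,tB,t)$ must be tested against \emph{all} divisors over $X$, the ACSS modification of Theorem \ref{thm: eo acss model} controls foliated discrepancies but not the $K_X$-part of the adjoint structure, and the qdlt modification of Theorem \ref{thm: qdlt model intro} requires $t<1$ and depends on $t$, so there is no single model on which the threshold is computed by finitely many affine functions of $t$. The assertion that the relevant coefficients ``all come from a DCC set depending only on $d$ and $\Ii$'' is false: the discrepancies of exceptional divisors with respect to $K_X$ and $K_{\Ff}$ entering the crepant pullback are arbitrary real numbers with no DCC structure --- this is exactly why the problem cannot be settled by a direct DCC/finite-minimum argument and remains open. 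Finally, step (i) is too quick: if $(X,\Ff,B)$ is lc it does not follow that $(X,\Ff,tB,t)$ is lc for $t<1$ (the ambient $X$ may fail to be lc; compare the example after Theorem \ref{thm: lc afs ambient has nice singularities intro} and Proposition \ref{prop: adjoint lc implies X lc}), so the problem does not reduce to bounding the threshold away from $1$ when $\Ff$ is not lc; the conjecture asks for ACC of the entire set of thresholds, not a gap at $1$.
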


We propose the following more general versions of the above conjecture on ACC conjecture for interpolated lc thresholds to hold.

\begin{conj}[M\textsuperscript{c}Kernan's ACC conjecture for interpolated lc thresholds, variation 1]\label{conj: strong mckernan conjecture}
 Let $d$ be a positive integer and let 
 $\Ii\subset [0,1]$ 
 be a DCC set. 
 Then there exists an ACC set $\Ii'$ satisfying the following property:
 \\
 Let $(X,B)$ be a pair of dimension $d$ such that $B\in\Ii$, and let $\Ff$ be a foliation on $X$ such that $(X,\Ff,B^{\ninv})$ is a foliated triple. Then
\begin{align*}
\sup\{t\in [0,1]\mid (X,\Ff,B^{\ninv}+(1-t)B^{\inv},t)\text{ is lc}\}
\end{align*}
belongs to $\Ii'$.
\end{conj}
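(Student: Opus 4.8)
The only route towards a result of this kind that we can presently envisage is to adapt the Hacon--M\textsuperscript{c}Kernan--Xu proof of the ACC for log canonical thresholds to adjoint foliated structures. As noted in the remark opening this appendix, this becomes feasible only when $\Ff$ is algebraically integrable (or $\dim X\le 3$), since it relies on the minimal model program, the adjunction formula, and the existence of $\Qq$-factorial qdlt modifications established in the main body of the paper; we describe the plan in that setting. Fix $d$ and the DCC set $\Ii$, and for a triple $(X,\Ff,B)$ as in the statement set $K(t):=tK_{\Ff}+(1-t)K_X+B^{\ninv}+(1-t)B^{\inv}$ and $\bar t:=\sup\{t\in[0,1]\mid (X,\Ff,B^{\ninv}+(1-t)B^{\inv},t)\text{ is lc}\}$. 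For every prime divisor $E$ over $X$ the function $t\mapsto a(E,\Ff,B^{\ninv}+(1-t)B^{\inv},t)+t\epsilon_{\Ff}(E)+(1-t)$ is affine in $t$, because the divisor cut out by the crepant pullback of $K(t)$ depends affinely on $t$; hence the set of $t$ for which the structure is lc is a closed interval, $\bar t$ is attained, and, after passing to a foliated log resolution, $\bar t$ is computed by one of finitely many divisors $E$, which is an lc place of $(X,\Ff,B^{\ninv}+(1-\bar t)B^{\inv},\bar t)$. If $\bar t=1$ we are done, so assume $\bar t<1$; arguing by contradiction, suppose there is a sequence $(X_i,\Ff_i,B_i)$ as in the statement with $\bar t_1<\bar t_2<\cdots$.

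The first step would be dimension reduction. For each $i$, extract the divisor $E_i$ computing $\bar t_i$ by a $\Qq$-factorial qdlt modification of $(X_i,\Ff_i,B_i^{\ninv}+(1-\bar t_i)B_i^{\inv},\bar t_i)$ using Proposition \ref{prop: qdlt model extract certain divisors}; this modification is crepant because the structure is lc at $\bar t_i$. Running a $K$-MMP over $X_i$, which is available by Theorem \ref{thm: run mmp strong}, one arranges further that $E_i$ is the unique exceptional divisor and $-E_i$ is ample over $X_i$, so that $E_i$ dominates its image in $X_i$. Applying the adjunction formula of Theorem \ref{thm: adj intro} to $E_i$ then yields an algebraically integrable adjoint foliated structure $(E_i^{\nu},\Ff_{E_i^\nu},B_{E_i^\nu},\bm{0},\bar t_i)$ of dimension $d-1$ which is lc at $\bar t_i$, and one expects --- via a suitable inversion-of-adjunction statement along $E_i$ --- that $\bar t_i$ is again realised as an interpolated lc threshold one dimension down. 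The heart of the reduction is then a \emph{DCC of coefficients under adjunction} principle: the coefficients of $B_{E_i^\nu}$ should lie in a DCC set depending only on $d$ and $\Ii$ (and similarly for the moduli $\bb$-divisor that appears once one works, as the induction will force one to, with generalized adjoint foliated structures). Granting this together with the version of the conjecture for generalized adjoint foliated structures in dimension $d-1$ --- the case $d=1$ being elementary --- the $\bar t_i$ would lie in a fixed ACC set, contradicting $\bar t_i\nearrow$.

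As in the classical situation, the DCC-of-coefficients principle cannot be isolated from a \emph{global ACC for adjoint foliated structures}: the statement that, if $(X,\Ff,B,t)$ is lc with $K(t)\equiv 0$, $\dim X=d$ and $B$ in a DCC set, then the coefficients of $B$ and the admissible values of $t$ form an ACC set. The plan would be to prove the ACC for interpolated lc thresholds and this global ACC simultaneously by induction on $d$, feeding the lower-dimensional global ACC into the adjunction step and using it, together with a boundedness of $\Qq$-complements for algebraically integrable adjoint foliated structures --- in the spirit of the complement theory of \cite{CHLX23} --- to bound the relevant invariants in dimension $d$.

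The main obstacle is exactly this auxiliary package. A boundedness of complements and a global ACC for (generalized) algebraically integrable foliated structures are at present available only in restricted generality, and carrying the extra parameter $t$ through them uniformly, as well as establishing inversion of adjunction for adjoint foliated structures, appears to require genuinely new input. A second difficulty is the behaviour near $t=1$: there the ambient variety need not be lc (Theorem \ref{thm: lc afs ambient has nice singularities intro}(2) fails), and abundance and Bertini-type statements break down, so the dimension-reduction argument degenerates and the endpoint has to be handled separately, presumably through ACC-type results for $K_{\Ff}+B$ directly. Finally, for foliations that are not algebraically integrable no minimal model program is known in dimension $\ge 4$, so outside dimension $\le 3$ the strategy has no starting point, and the conjecture in that generality should be viewed as beyond current technology.
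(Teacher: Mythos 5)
This statement is Conjecture \ref{conj: strong mckernan conjecture}: the paper offers no proof of it, and indeed explicitly records that it (together with Conjecture \ref{conj: acc for lct afs}) ``remains open when $d=2$'' and is widely open in higher dimensions even for algebraically integrable foliations. The only thing the paper proves in its vicinity is Proposition A.5, namely that Conjectures \ref{conj: strong mckernan conjecture} and \ref{conj: acc for lct afs} together imply Conjecture \ref{conj: mckernan conjecture} for $\Qq$-factorial varieties, plus a remark that future work will show the global ACC (Conjecture \ref{conj: interpolated global acc}) in dimension $d$ implies the threshold conjectures in dimension $d+1$ for algebraically integrable foliations. So there is no ``paper proof'' to compare yours against, and the task of this appendix is precisely to formulate the statement, not to establish it.

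Your proposal, read as a proof, has a genuine gap that you yourself flag: it is an induction scheme in the style of Hacon--M\textsuperscript{c}Kernan--Xu whose essential inputs --- inversion of adjunction for adjoint foliated structures, a DCC-of-coefficients statement under the adjunction of Theorem \ref{thm: adj intro}, the lower-dimensional global ACC (Conjecture \ref{conj: interpolated global acc}), and boundedness of complements for algebraically integrable adjoint foliated structures --- are all unproven, and some (the global ACC with the parameter $t$, and the entire framework for non-algebraically-integrable foliations in dimension $\ge 4$) are themselves open conjectures. Reducing one open conjecture to several others is a reasonable research plan, and your outline is consistent with the strategy the authors hint at, but it cannot be accepted as a proof of the statement; in particular the restriction to the algebraically integrable (or low-dimensional) setting already falls short of the conjecture as stated, which is for arbitrary foliations in dimension $d$. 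The preliminary observations you do argue (affineness in $t$ of the log discrepancy defect, hence that the lc locus in $t$ is a closed interval and $\bar t$ is computed on a foliated log resolution) are fine for algebraically integrable foliations, but they only set up the problem; none of the ACC content is established.
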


\begin{conj}[ACC for lc thresholds for adjoint foliated structures]\label{conj: acc for lct afs}
 Let $d$ be a positive integer, $t\in [0,1]$ a real number, and let $\Ii\subset [0,+\infty)$ be a DCC set. 
 Then there exists an ACC set $\Ii'$ satisfying the following property:
 \\
 Let $(X,\Ff,B,t)$ be an adjoint foliated structure of dimension $d$ such that $B\in\Ii$ and $t\in\Ii$. Let $D\in\Ii$ be an $\Rr$-divisor. Then
\begin{align*}
\sup\{s\in [0,1]\mid (X,\Ff,B+s(D^{\ninv}+(1-t)D^{\inv}),t)\text{ is lc}\}
\end{align*}
belongs to $\Ii'$.
\end{conj}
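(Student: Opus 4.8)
As Conjecture \ref{conj: acc for lct afs} is open, what follows is a strategy rather than a proof. The plan is to adapt to algebraically integrable adjoint foliated structures the classical approach to the ACC for log canonical thresholds, exploiting the fact that the three main tools it requires — adjunction (Theorem \ref{thm: afs adj}), $\Qq$-factorial qdlt modifications (Theorem \ref{thm: qdlt model intro}), and the minimal model program (Theorem \ref{thm: can run mmp intro}) — are now available in this setting.

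First I would argue by contradiction. If the statement failed, there would be adjoint foliated structures $(X_i,\Ff_i,B_i,t_i)$ of dimension $d$ and $\Rr$-divisors $D_i$, with all coefficients of $B_i$ and $D_i$ and all $t_i$ in a fixed DCC set $\Ii$, such that the thresholds
\[
s_i:=\sup\{s\in[0,1]\mid (X_i,\Ff_i,B_i+s(D_i^{\ninv}+(1-t_i)D_i^{\inv}),t_i)\text{ is lc}\}
\]
form a strictly increasing sequence; in particular we may assume $s_i\in(0,1)$ for all $i$. Writing $\bar B_i:=B_i+s_i(D_i^{\ninv}+(1-t_i)D_i^{\inv})$ and noting that log canonicity can be tested on a single foliated log resolution of $(X_i,\Ff_i,\Supp B_i\cup\Supp D_i)$ (Definition \ref{defn: log resolution}), the set of parameters $s$ for which the structure is lc is a closed subinterval of $[0,1]$; hence $(X_i,\Ff_i,\bar B_i,t_i)$ is lc, and it is not klt since $s_i$ is a supremum, so it admits a prime divisor $E_i$ over $X_i$ that is an lc place. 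By Proposition \ref{prop: qdlt model extract certain divisors} I would pass to a $\Qq$-factorial qdlt modification on which $E_i$ becomes a prime divisor $S_i$ appearing in the boundary with coefficient $1$ or $1-t_i$ according to whether it is $\Ff_i$-invariant; after this replacement the ambient variety $X_i$ is klt by Theorem \ref{thm: qdlt model intro}(3), and the thresholds $s_i$ are unchanged.

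Next I would apply adjunction. By Theorem \ref{thm: afs adj} we obtain an algebraically integrable adjoint foliated structure $(S_i,\Ff_{S_i},B_{S_i},t_i)$ of dimension $d-1$, again lc, with $K_{(S_i,\Ff_{S_i},B_{S_i},t_i)}$ equal to the restriction of $K_{(X_i,\Ff_i,\bar B_i,t_i)}$. The key point here — which I would isolate as a separate lemma — is that the coefficients of $B_{S_i}$, together with $t_i$, still vary in a DCC set depending only on $d$ and $\Ii$: for the $(1-t)K_X$-contribution this is the standard computation of the different in the set $D(\Ii)$, while for the $tK_{\Ff}$-contribution one needs the analogous DCC statement for the foliated adjunction of \cite{CS23b} along invariant and non-invariant divisors, carefully tracking the $\epsilon_{\Ff}$-weights and the parameter $t$. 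Granting this, the strictly increasing sequence of thresholds should be converted, after running an MMP via Theorem \ref{thm: can run mmp intro} to make the adjoint canonical class either relatively big or relatively numerically trivial, into either a violation of the $(d-1)$-dimensional case of Conjecture \ref{conj: acc for lct afs} (handled by induction on $d$) or a violation of a \emph{Global ACC} for algebraically integrable adjoint foliated structures.

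The hardest part will be proving this Global ACC statement: if $(X,\Ff,B,\Mm,t)$ is lc and algebraically integrable with $K_{(X,\Ff,B,\Mm,t)}\equiv_U 0$ and the coefficients of $B$ and $\Mm$ and the value $t$ lying in DCC sets, then these data lie in a finite set. This rests on boundedness of adjoint foliated structures with numerically trivial adjoint canonical class, which at present is available only in very special situations — for $t$ near $1$ in dimension $2$ it is essentially \cite{SS23}, and for $t=1$ it is tied to boundedness questions for foliations that are open in dimension $\ge 3$. A complementary approach through bounded complements and uniform rational polytopes, in the spirit of Shokurov's philosophy, would reduce the problem to $\Qq$-coefficients but bottoms out in the same Global ACC. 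Throughout, a difficulty specific to this setting, absent in the classical case, is that the interpolation parameter $t$ is itself part of the DCC data and interacts nontrivially both with the $\epsilon_{\Ff}$-weights in the definition of log canonicity and with the adjunction formula, so it must be treated as a genuine variable rather than fixed.
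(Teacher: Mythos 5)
You are being asked about Conjecture \ref{conj: acc for lct afs}, and the first thing to say is that the paper contains no proof of it: it is stated in the appendix as an open problem, explicitly recorded as open even in dimension $d=2$, so there is no argument in the paper to compare yours against. Your proposal is candid about being a strategy rather than a proof, and as a roadmap it coincides with what the authors themselves envisage: they say they expect Conjectures \ref{conj: strong mckernan conjecture}, \ref{conj: acc for lct afs} and \ref{conj: interpolated global acc} to be approached by induction on dimension, and they announce as future work precisely the implication from the Global ACC (Conjecture \ref{conj: interpolated global acc}) in dimension $d$ to the threshold conjectures in dimension $d+1$ --- which is the reduction you sketch via extraction of an lc place by Proposition \ref{prop: qdlt model extract certain divisors}, adjunction via Theorem \ref{thm: afs adj}, and the MMP of Theorem \ref{thm: can run mmp intro}, in the spirit of the classical ACC for lc thresholds.

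That said, the proposal does not close the statement, and the gaps sit exactly where the difficulty lies. First, the Global ACC your induction bottoms out in is itself Conjecture \ref{conj: interpolated global acc}, open even in dimension $2$ for adjoint structures, so the whole chain is conditional on a statement at least as hard as the one being proved. Second, the lemma you isolate --- that the coefficients of $B_{S_i}$ produced by Theorem \ref{thm: afs adj}, together with $t_i$, again lie in a DCC set depending only on $d$ and $\Ii$ --- is nowhere established: the paper's adjunction gives no coefficient formula analogous to the classical computation of the different in $D(\Ii)$, and the bookkeeping of the $\epsilon_{\Ff}$-weights, of invariant versus non-invariant components, and of the parameter $t$ is genuinely unproved, not merely routine. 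Third, the conjecture as stated concerns arbitrary foliations, whereas every tool you invoke --- foliated log resolutions, Proposition \ref{prop: qdlt model extract certain divisors}, Theorem \ref{thm: afs adj}, Theorem \ref{thm: can run mmp intro}, and Theorem \ref{thm: qdlt model intro} (which moreover needs $t<1$) --- is available only for algebraically integrable foliations; the same restriction underlies your claim that the supremum is attained and an lc place exists, since that is checked on a single foliated log resolution. So at best the strategy addresses the algebraically integrable case, which is indeed the only case the authors regard as currently approachable, and even there the two key inputs remain open.
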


We expect Conjectures \ref{conj: strong mckernan conjecture} and \ref{conj: acc for lct afs} to imply Conjecture \ref{conj: mckernan conjecture}. Although, due to technical reasons, we cannot prove this yet, we can at least prove the $\Qq$-factorial case:

\begin{prop}
Let $d$ be a positive integer. If Conjectures \ref{conj: strong mckernan conjecture} and \ref{conj: acc for lct afs} hold for $\Qq$-factorial varieties in dimension $d$, then Conjecture \ref{conj: mckernan conjecture} holds for $\Qq$-factorial varieties in dimension $d$.
\end{prop}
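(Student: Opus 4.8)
The plan is to realise the interpolated lc threshold $\bar t:=\sup\{t\in[0,1]\mid (X,\Ff,tB,t)\text{ is lc}\}$ in terms of quantities that Conjectures \ref{conj: strong mckernan conjecture} and \ref{conj: acc for lct afs} control, and then to show that the set of its possible values, as $(X,\Ff,B)$ ranges over $\Qq$-factorial $d$-folds with $B\in\Ii$, is ACC.

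First I would exploit $\Qq$-factoriality. Since $X$ is $\Qq$-factorial, every Weil divisor is $\Rr$-Cartier, so $K_X$, $K_{\Ff}=(K_{\Ff}+B)-B$, $B$, $B^{\ninv}$ and $B^{\inv}$ are all $\Rr$-Cartier; hence $(X,\Ff,B^{\ninv})$ is a foliated triple, $(X,B)$ is a pair, and both conjectures may be invoked for the relevant objects. The case $B=0$ is immediate: $\bar t$ is then literally the threshold appearing in Conjecture \ref{conj: strong mckernan conjecture} for the pair $(X,0)$. So assume $B\ne0$. Working directly from the definition of lc, the set $\{(s,t)\in[0,1]^2\mid(X,\Ff,sB,t)\text{ is lc}\}$ is cut out by the inequalities $t\bigl(a(E,\Ff,sB)+\epsilon_{\Ff}(E)\bigr)+(1-t)\bigl(a(E,X)+1\bigr)\ge0$ over all prime divisors $E$ over $X$, so it is closed; in particular the supremum defining $\bar t$ is attained, and one reads off $\bar t=\min(\bar t_{\mathrm c},\bar t_{\mathrm i})$, where $\bar t_{\mathrm c}$ gathers the contributions of the prime components of $B$ on $X$ and $\bar t_{\mathrm i}$ the contributions of the exceptional divisors and of the divisors not contained in $\Supp B$. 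A non-invariant component of $B$ with coefficient $b\in\Ii$ imposes $t\le1/b$ (vacuous unless $b>1$) and an invariant one imposes $t\le1/(1+b)$, so $\bar t_{\mathrm c}$ lies in $\{1\}\cup\{1/b\mid b\in\Ii,\ b>1\}\cup\{1/(1+b)\mid b\in\Ii\}$, an ACC set because $x\mapsto1/x$ and $x\mapsto1/(1+x)$ are strictly decreasing and so send the DCC set $\Ii$ to ACC sets.

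The heart of the matter is to bound $\bar t_{\mathrm i}$, and this is where both conjectures are used. Note that $s^{\ast}(t):=\sup\{s\mid(X,\Ff,sB,t)\text{ is lc}\}$ is decreasing, and that $\bar t_{\mathrm i}$ is essentially the point where the diagonal $\{s=t\}$ leaves the lc region, i.e.\ the fixed point $\bar t=s^{\ast}(\bar t)$. Conjecture \ref{conj: acc for lct afs}, applied with zero starting boundary and with divisor $D=B$, controls $s^{\ast}(t)$ for any value of $t$ lying in a prescribed DCC set; Conjecture \ref{conj: strong mckernan conjecture}, applied to the pairs $(X,B)$, $(X,B^{\ninv})$ and $(X,B^{\inv})$ (after truncating $B$ to coefficients $\le1$), controls the thresholds in the parameter direction. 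The plan is to combine these by a bootstrapping argument: when $B^{\inv}\ne0$ the coefficient inequalities already force $\bar t\le1/(1+\delta)<1$ with $\delta:=\min(\Ii\cap(0,\infty))$, confining $\bar t$ to a compact subinterval of $[0,1)$, and one interleaves the two conjectures to restrict $\bar t$ to a set small enough to serve as an admissible parameter for Conjecture \ref{conj: acc for lct afs}, after which applying that conjecture at parameter $\bar t$ pins the fixed point $\bar t=\bar t_{\mathrm i}$ down inside an ACC set $\Ii'_0=\Ii'_0(d,\Ii)$; when $B^{\inv}=0$ there is no plt-type constraint on $\bar tB^{\ninv}$ and the same scheme runs with Conjecture \ref{conj: strong mckernan conjecture} absorbing the parameter. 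One then takes $\Ii'$ to be the union of $\{0,1\}$, the ACC sets $\{1/b\mid b\in\Ii\}$ and $\{1/(1+b)\mid b\in\Ii\}$, and $\Ii'_0(d,\Ii)$.

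I expect this last bootstrapping step to be the main obstacle. In Conjecture \ref{conj: mckernan conjecture} the unknown $t$ appears at once as the boundary coefficient (in $tB$) and as the interpolation parameter, whereas in Conjectures \ref{conj: strong mckernan conjecture} and \ref{conj: acc for lct afs} these two roles are played by independent parameters that are moreover required to range over prescribed DCC sets, not to take the a priori uncontrolled value $\bar t$; decoupling them and reinstating $\bar t$ as an admissible input is the only non-formal part of the argument. This also explains why the statement is restricted to $\Qq$-factorial $X$: in general $K_{\Ff}$, $B^{\ninv}$ and $B^{\inv}$ need not be $\Rr$-Cartier, so $(X,\Ff,B^{\ninv})$ need not be a foliated triple and $(X,\Ff,B^{\ninv}+(1-t)B^{\inv},t)$ need not even be an adjoint foliated structure, and the two conjectures cannot be applied at all — this is the ``technical reason'' that obstructs the general case.
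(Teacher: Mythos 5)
There is a genuine gap, and you have in fact pointed at it yourself: the step where the unknown $\bar t$, which enters simultaneously as the coefficient scaling and as the interpolation parameter, has to be fed back into Conjectures \ref{conj: strong mckernan conjecture} and \ref{conj: acc for lct afs}, whose hypotheses require the coefficient data and the parameter to lie in \emph{prescribed} DCC sets. Your ``bootstrapping/interleaving'' paragraph is only a plan, not an argument; nothing in the proposal explains how $\bar t$ (and the quantities $\tfrac{\bar t}{1-\bar t}$, etc.) becomes an admissible input, and the preliminary decomposition $\bar t=\min(\bar t_{\mathrm c},\bar t_{\mathrm i})$ with the explicit control of the coefficient part, while correct, does not advance this central difficulty. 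As stated, the proof is incomplete exactly at the point where the proposition has content.

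The missing idea is to argue by contradiction with a limiting sequence, exploiting that the two conjectures are quantified over \emph{all} DCC sets: suppose there is a strictly increasing sequence of interpolated thresholds $t_i\nearrow\bar t$ realized by $\Qq$-factorial triples $(X_i,\Ff_i,B_i)$ with $B_i\in\Ii$, and build a single DCC set $\Ii_1$ \emph{from the sequence itself}, e.g.\ containing $\bar t$, $t_i\gamma$, $\tfrac{t_i}{1-t_i}$, $\tfrac{1}{1-t_i}$, $\bar t\gamma$, $\bar t(1-\bar t)\gamma$ for $\gamma\in\Ii$ (this is DCC because $\{t_i\}$ is increasing). Writing $B_i'=t_iB_i^{\ninv}+\tfrac{t_i}{1-t_i}B_i^{\inv}$, one has that $(X_i,\Ff_i,B_i'^{\ninv}+(1-t_i)B_i'^{\inv},t_i)=(X_i,\Ff_i,t_iB_i,t_i)$ is lc; Conjecture \ref{conj: strong mckernan conjecture} applied with coefficient set $\Ii_1$ gives thresholds in a fixed ACC set which are $\geq t_i$, so after passing to a subsequence they are $\geq\bar t$ and the structure at parameter $\bar t$ is lc. Two further applications of Conjecture \ref{conj: acc for lct afs} at the fixed parameter $\bar t$ (first scaling the whole boundary from $t_i$ to $\bar t$, then scaling the invariant part from $\tfrac{1}{1-t_i}$ to $\tfrac{1}{1-\bar t}$) upgrade this, again via ACC versus the increasing inputs, to the statement that $(X_i,\Ff_i,\bar tB_i,\bar t)$ is lc --- contradicting $t_i<\bar t$. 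This contradiction scheme, with the DCC set chosen a posteriori from the hypothetical sequence, is precisely what ``decouples'' the two roles of $t$ and is absent from your proposal.
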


\begin{proof}
    Suppose Conjecture \ref{conj: mckernan conjecture} does not hold for $\Qq$-factorial varieties in dimension $d$, then we can assume that there exists a strictly increasing sequence
    $\{t_i \}_{i \in \mathbb N}$ 
    and  for all
    $i \in \mathbb N$ there exists a $\Qq$-factorial foliated triple $(X_i,\Ff_i,B_i)$ as in the the statement of  
    Conjecture \ref{conj: mckernan conjecture} such that
    $t_i=\sup\{t\in [0,1]\mid (X_i,\Ff_i,t_iB_i,t_i)\text{ is lc}\}$.
    Let 
    $\bar t:=\lim_{i\rightarrow+\infty}t_i$.
    In particular, we can assume that  $t_i<1$ for each $i$. 
    
    We write $B_i':=t_iB_i^{\ninv}+\frac{t_i}{1-t_i}B_i^{\inv}$. Let $$\Ii_1:=\left\{\bar t, t_i\gamma,\frac{t_i}{1-t_i},\frac{1}{1-t_i},\bar t\gamma,\bar t(1-\bar t)\gamma \middle\vert i\in\mathbb N^+,\gamma\in\Ii\right\}\cap [0,+\infty).$$ Then $\dim X_i=d$, $\Ii_1$ is a DCC set, $B_i'\in\Ii_1$, and
    $$(X_i,\Ff_i,t_iB_i=B_i'^{\ninv}+(1-t_i)B_i'^{\inv},t_i)$$
    is log canonical for any $i \in \mathbb N$. 
    Therefore, by Conjecture \ref{conj: strong mckernan conjecture} in dimension $d$, possibly passing to a subsequence, 
    $$(X_i,\Ff_i,B_i'^{\ninv}+(1-\bar t)B_i'^{\inv},\bar t)=\left(X_i,\Ff_i,t_i\left(B_i^{\ninv}+\frac{1-\bar t}{1-t_i}B_i^{\inv}\right),\bar t\right)$$
    is lc. By Conjecture \ref{conj: acc for lct afs} in dimension $d$, possibly passing to a subsequence, 
    $$\left(X_i,\Ff_i,\bar t\left(B_i^{\ninv}+\frac{1-\bar t}{1-t_i}B_i^{\inv}\right),\bar t\right)=\left(X_i,\Ff_i,\bar t B_i^{\ninv}+\frac{1}{1-t_i}(\bar t(1-\bar t)B_i^{\inv}),\bar t\right)$$
    is lc. By Conjecture \ref{conj: acc for lct afs} in dimension $d$ again, possibly passing to a subsequence, 
    $$\left(X_i,\Ff_i,\bar t B_i^{\ninv}+\frac{1}{1-\bar t}(\bar t(1-\bar t)B_i^{\inv}),\bar t\right)=\left(X_i,\Ff_i,\bar t B_i^{\ninv}+\bar tB_i^{\inv},\bar t\right)=(X_i,\Ff_i,\bar tB_i,\bar t)$$
    is lc. This is not possible as $\bar t>t_i$.
\end{proof}

All three conjectures above are widely open in dimension $\geq 3$, even for algebraically integrable foliations. M\textsuperscript{c}Kernan sketched a proof of Conjecture \ref{conj: mckernan conjecture} \cite{McK22} when $d=2$, while Conjectures \ref{conj: strong mckernan conjecture} and \ref{conj: acc for lct afs} remain open when $d=2$. As a very special case of these conjectures, we obtain the following $1$-gap conjecture for interpolated lc thresholds:

\begin{conj}[$1$-gap for interpolated lc thresholds]\label{conj: 1-gap lct}
Let $d$ be a positive integer. Then there exists a positive real number $\tau=\tau(d)$ satisfying the following property:

Let $(X,\Ff,t)$ be an adjoint foliated structure of dimension $d$.
Assume that $(X,\Ff,t)$ is lc for some $t>1-\tau$ and that $K_{\Ff}$ is $\Qq$-Cartier. 
Then $\Ff$ is lc.
\end{conj}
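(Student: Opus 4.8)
The plan is to deduce Conjecture \ref{conj: 1-gap lct} formally from M\textsuperscript{c}Kernan's ACC conjecture for interpolated lc thresholds: I would show that Conjecture \ref{conj: mckernan conjecture}, applied with $B=0$ and $\Ii=\{0\}$, already implies Conjecture \ref{conj: 1-gap lct} (and the same reduction works with Conjecture \ref{conj: strong mckernan conjecture} or Conjecture \ref{conj: acc for lct afs} in its place). So the argument is conditional, and its only nontrivial input is the ACC conjecture; the rest is bookkeeping with discrepancies, which I describe below.

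First I would record the structural fact that the lc locus in $t$ is a closed interval. Fix a normal variety $X$ and a foliation $\Ff$ with $K_\Ff$ $\Qq$-Cartier, and suppose $(X,\Ff,t_0)$ is lc for some $t_0\in(0,1)$. Then $(1-t_0)K_X=K_{(X,\Ff,t_0)}-t_0K_\Ff$ is $\Rr$-Cartier, hence $K_X$ is $\Rr$-Cartier, so for every prime divisor $E$ over $X$ the discrepancy is affine in $t$:
$$a(E,\Ff,0,\bm{0},t)=t\,a(E,\Ff)+(1-t)\,a(E,X).$$
Thus the defining inequality for lc, namely $a(E,\Ff,0,\bm{0},t)\geq -t\epsilon_\Ff(E)-(1-t)$, rewrites as the affine condition $t(a(E,\Ff)+\epsilon_\Ff(E))+(1-t)(a(E,X)+1)\geq 0$, valid for all $t\in[0,1]$ including $t=1$, where it is exactly the condition that $\Ff$ be lc. For each $E$ the set of $t$ satisfying it is a closed subinterval of $[0,1]$, so
$$I(X,\Ff):=\{t\in[0,1]\mid (X,\Ff,t)\text{ is lc}\}$$
is a (possibly empty) closed subinterval of $[0,1]$; in particular $\bar t(X,\Ff):=\sup I(X,\Ff)$ is attained when $I(X,\Ff)\neq\emptyset$, and $\Ff$ is lc if and only if $\bar t(X,\Ff)=1$.

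Next, for a positive integer $d$ I would set
$$T_d:=\{\bar t(X,\Ff)\mid \dim X=d,\ K_\Ff\ \Qq\text{-Cartier},\ I(X,\Ff)\neq\emptyset,\ \Ff\text{ not lc}\}\subset[0,1).$$
Every element of $T_d$ equals $\sup\{t\in[0,1]\mid (X,\Ff,t)\text{ is lc}\}$ for a foliated triple $(X,\Ff,0)$ with $\dim X=d$ and $K_\Ff+0=K_\Ff$, $K_X$ both $\Rr$-Cartier; so Conjecture \ref{conj: mckernan conjecture} with $\Ii=\{0\}$ produces an ACC set $\Ii'=\Ii'(d)$ with $T_d\subset\Ii'$. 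Since $\Ii'$ has no infinite strictly increasing sequence and $T_d\subset[0,1)$, this forces $\sup T_d<1$; put $\tau(d):=1-\sup T_d>0$ (with $\sup\emptyset=0$). Finally, given $(X,\Ff,t)$ of dimension $d$ with $K_\Ff$ $\Qq$-Cartier which is lc for some $t>1-\tau(d)=\sup T_d$: if $t=1$ then $\Ff$ is lc by definition, and otherwise $I(X,\Ff)\neq\emptyset$ with $\bar t(X,\Ff)\geq t>\sup T_d$, so $\bar t(X,\Ff)\notin T_d$, which by the definition of $T_d$ means $\Ff$ is lc. This establishes Conjecture \ref{conj: 1-gap lct}.

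The main obstacle is Conjecture \ref{conj: mckernan conjecture} itself, which is open in dimension $\geq 3$ and whose accessible cases are confined to algebraically integrable foliations or to $\dim\leq 3$; this limitation is inherited wholesale by the argument above. Everything else — the affine-linearity of $t\mapsto a(E,\Ff,0,\bm{0},t)$, the resulting interval structure of $I(X,\Ff)$, and the extraction of a strictly increasing sequence contradicting ACC — is routine, and I do not expect any difficulty there.
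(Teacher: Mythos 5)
You have not proved the statement, and neither does the paper: Conjecture \ref{conj: 1-gap lct} is recorded there as an open conjecture, known unconditionally only in dimension $2$ (by \cite[Lemma 2.19]{SS23}, with the explicit value $\tau(2)=\frac{1}{6}$, obtained by direct surface arguments) and described as widely open in dimension $\geq 3$ even for algebraically integrable foliations. Your argument is, as you say yourself, conditional on Conjecture \ref{conj: mckernan conjecture}, which is likewise open (McKernan has only sketched the case $d=2$). So what you have written is a proof of the implication ``Conjecture \ref{conj: mckernan conjecture} with $B=0$ implies Conjecture \ref{conj: 1-gap lct}'', which is exactly the relationship the paper already asserts without proof when it introduces Conjecture \ref{conj: 1-gap lct} as ``a very special case'' of the ACC conjectures. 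The genuine gap is therefore the missing unconditional input: no argument in your proposal, and none in the paper, supplies the ACC statement you rely on, and the only known unconditional route to the $1$-gap statement is the dimension-$2$ result of \cite{SS23}, which does not go through ACC at all.

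For what it is worth, the reduction itself is essentially correct and matches the style of the paper's own conditional reductions among its conjectures (compare the proposition in Appendix \ref{sec: acc} deducing Conjecture \ref{conj: mckernan conjecture} from Conjectures \ref{conj: strong mckernan conjecture} and \ref{conj: acc for lct afs} in the $\Qq$-factorial case). Your key points are fine: if $(X,\Ff,t_0)$ is an adjoint foliated structure for some $t_0<1$ and $K_{\Ff}$ is $\Qq$-Cartier, then $K_X$ is $\Rr$-Cartier, so $a(E,\Ff,0,\bm{0},t)$ is affine in $t$; the lc condition is then a family of closed affine constraints, so $I(X,\Ff)$ is a closed interval, the threshold is attained, and $\Ff$ is lc exactly when the threshold equals $1$; and an ACC set cannot contain thresholds of non-lc foliations accumulating to $1$ from below, which gives $\tau(d)>0$. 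You also correctly dispose of the degenerate cases $t=1$ and $K_X$ not $\Rr$-Cartier (where $I(X,\Ff)\subset\{1\}$). But since the only nontrivial ingredient is an open conjecture, the proposal cannot be accepted as a proof of Conjecture \ref{conj: 1-gap lct} itself.
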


When $d=2$, Conjecture \ref{conj: 1-gap lct} was proven by the fifth and the sixth authors \cite[Lemma 2.19]{SS23} which shows that we may take $\tau(2)=\frac{1}{6}$, but the conjecture is widely open in dimension $\geq 3$, even for algebraically integrable foliations. From the point of view of explicit geometry it is also interesting to ask the following question:

\begin{ques}
Let $d$ be a positive integer. Find the optimal value of $\tau(d)$.
\end{ques}

Inspired by \cite{HMX14}, we also propose the following global ACC conjecture for adjoint foliated structures:

\begin{conj}[M\textsuperscript{c}Kernan's global ACC conjecture for adjoint foliated structures]\label{conj: interpolated global acc}
Let $d$ be a positive integer and $\Ii\subset [0,1]$ a DCC set. Then there exists a finite set $\Ii_0 = \Ii_0(d, I) \subset \Ii$ satisfying the following property:

Let $(X,\Ff,B^{\ninv}+(1-t)B^{\inv},t)$ be an lc adjoint foliated structure with $t\in\Ii$, $B\in\Ii$.
Assume that $X\rightarrow Z$ is a contraction such that 
\begin{align*}
& tK_{\Ff}+(1-t)K_X+B^{\ninv}+(1-t)B^{\inv}\equiv_Z 0,
\qquad
\text{and}
\\
& sK_{\Ff}+(1-s)K_X+B^{\ninv}+(1-s)B^{\inv}\not\equiv_Z 0
\end{align*}
for any $s \in [0, 1]$, $s \neq t$.
Then $t\in\Ii_0$ and also the coefficients of the horizontal$/Z$ part of $B$ belong to $\Ii_0$.
\end{conj}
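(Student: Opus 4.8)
The plan is to follow the strategy of the global ACC theorem of Hacon--McKernan--Xu \cite{HMX14} for usual pairs, upgraded in two directions: first, to incorporate the interpolation parameter $t$ as an additional ``ACC coordinate''; and second, to reduce assertions about adjoint foliated structures to assertions about generalized pairs and lower-dimensional adjoint foliated structures via the modification and adjunction machinery built in this paper. Throughout, write $K := tK_{\Ff}+(1-t)K_X+B^{\ninv}+(1-t)B^{\inv}$, so the hypotheses read $K\equiv_Z 0$ together with the ``minimality of $t$'' statement. One argues by contradiction: if the conjecture fails there is a sequence of such data $(X_i,\Ff_i,B_i,t_i)/Z_i$ with $\dim X_i=d$, $B_i\in\Ii$ and $t_i\in\Ii$, whose parameters $t_i$ and whose horizontal$/Z_i$ coefficients are not eventually constant. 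By the DCC hypothesis one passes to a subsequence along which $t_i\to t_\infty$ and each horizontal coefficient converges, monotonically and nontrivially, to a limit; the contradiction one aims for is that the ``limiting'' adjoint foliated structure is still lc --- by one of the ACC-for-thresholds statements below --- while numerical triviality then forces the limiting data to already occur in the sequence, which is incompatible with strict monotonicity.

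The main steps, in order, would be the following. \textbf{(1)} \emph{Reduction to a good model.} Using the $\Qq$-factorial qdlt modifications of Theorem \ref{thm: qdlt model intro} (or the ACSS modifications of Theorem \ref{thm: eo acss model}) one replaces $X_i$ by a $\Qq$-factorial klt model on which $\Ff_i$ is ACSS; the point to check is that both $K\equiv_Z 0$ and the minimality of $t$ are inherited, which follows since the modification is $K$-trivial and crepant over the lc locus. \textbf{(2)} \emph{Reduction over $Z$.} Restricting to a general fiber $F$ of $X_i\to Z_i$ yields $K|_F\equiv 0$ absolutely, which reduces the vertical$/Z_i$ part of the statement to the absolute case in dimension $\le d$. \textbf{(3)} \emph{Adjunction to lc centers.} A coefficient $b$ of a horizontal$/Z_i$ prime divisor $S_i$ is analyzed by increasing $b$ to an lc place and applying the adjunction formula of Theorem \ref{thm: afs adj}, producing a lower-dimensional adjoint foliated structure to which the conjecture is applied inductively; the $\Ff$-invariant and non-$\Ff$-invariant cases are separated exactly as in the definition of lc places, the invariant coefficients carrying the extra factor $1-t$. \textbf{(4)} \emph{The absolute case.} Here one writes $K = t(K_{\Ff}+B_{\Ff}+\Pp_X)+(1-t)(K_X+B_X+\Nn_X)$ as in the proof of Proposition \ref{prop: adjoint lc implies X lc}; the $(1-t)$-summand is a numerically trivial generalized pair, so generalized-pair global ACC pins down $B_X$ and hence $B^{\ninv}$, while the $t$-direction is controlled by a relative form of Conjecture \ref{conj: mckernan conjecture} together with Conjecture \ref{conj: acc for lct afs}. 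Assembling these, the limiting coefficients and $t_\infty$ are achieved by infinitely many members of the sequence, contradicting strict monotonicity.

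The hard part will be Step \textbf{(4)}, essentially because the inputs it demands are themselves open: global ACC (indeed even ACC for lc thresholds) is unknown for algebraically integrable foliations, so it cannot be cited, and the interpolation parameter couples the $K_{\Ff}$-coefficients to the $K_X$-coefficients through the rescaling $b\mapsto b^{\ninv}+\frac{1}{1-t}b^{\inv}$, which does not preserve DCC sets unless $t$ is controlled first --- a genuine chicken-and-egg difficulty. Secondary obstacles are the failure of Bertini-type theorems (forcing boundedness arguments of HMX type to be run with $\bb$-divisors, as elsewhere in this paper) and the need for an analogue, adapted to foliations, of the HMX boundedness of weakly special models. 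For these reasons one should expect an unconditional proof only in dimension $\le 2$, or conditionally on Conjectures \ref{conj: strong mckernan conjecture} and \ref{conj: acc for lct afs}; in higher dimension the conjecture remains, as stated, widely open.
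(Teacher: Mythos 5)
The statement you are addressing is Conjecture \ref{conj: interpolated global acc}, which the paper does not prove: it appears in Appendix \ref{sec: acc} as an open problem, explicitly described as ``widely open even in dimension 2, already in the case where $Z$ is a point'' (the special case $t=1$, $Z$ a point being known only in dimension $\leq 3$). So there is no proof in the paper to compare against, and your proposal does not supply one either: by your own admission the decisive Step (4) is conditional on Conjectures \ref{conj: strong mckernan conjecture} and \ref{conj: acc for lct afs}, which are themselves open (already in dimension $2$), and on a foliated analogue of the HMX boundedness machinery that does not exist in the literature or in this paper. Moreover Step (3) invokes the conjecture inductively in lower dimension via Theorem \ref{thm: afs adj}, so the induction has no unconditional base; the scheme is circular unless some dimension is settled independently.

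Beyond the conditionality, two of your intermediate steps would need genuine new ideas even granting the inputs. In Step (2), restriction to a general fiber of $X\to Z$ is not innocuous here: Bertini-type statements fail for foliations (as the paper stresses), the restricted foliation and adjoint structure on a general fiber need not inherit lc-ness in any formal way, and the hypothesis that $sK_{\Ff}+(1-s)K_X+B^{\ninv}+(1-s)B^{\inv}\not\equiv_Z 0$ for $s\neq t$ is a statement about relative numerical classes over $Z$ which does not localize to a fiber. In Step (4), the decomposition $K=t(K_{\Ff}+B_{\Ff}+\Pp_X)+(1-t)(K_X+B_X+\Nn_X)$ only gives that the weighted sum is numerically trivial over $Z$; applying generalized-pair global ACC to the $(1-t)$-summand requires that summand to be $\equiv_Z 0$ by itself, which does not follow and is precisely the kind of separation the interpolation parameter obstructs (your own ``chicken-and-egg'' remark). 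In short: the statement is a conjecture, the paper offers no proof, and your text is a plausible program rather than an argument; presenting it as a proof attempt leaves an essential gap at exactly the point the paper identifies as open.
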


Conjecture \ref{conj: interpolated global acc} remains widely open even in dimension 2, already in the case where $Z$ is a point. 
In fact, even when $t=1$ is fixed and $Z$ is point, 
Conjecture \ref{conj: interpolated global acc} is non-trivial, and it has only recently been proven in dimension $\leq 3$ by~\cite[Theorem 2.5]{Che22},~\cite[Theorem 1.1]{LMX24a}, and by~\cite[Theorem 1.2]{DLM23} in the algebraically integrable case.

We expect that Conjectures \ref{conj: strong mckernan conjecture}, \ref{conj: acc for lct afs}, and \ref{conj: interpolated global acc} may be solved via an approach based on induction on dimension. In future work, we will prove that Conjecture \ref{conj: interpolated global acc} for algebraically integrable foliations in dimension $d$ implies Conjectures \ref{conj: strong mckernan conjecture} and \ref{conj: acc for lct afs} for algebraically integrable foliations in dimension $d+1$. 
We also expect that Conjectures \ref{conj: strong mckernan conjecture} and \ref{conj: acc for lct afs} in dimension $d$ will imply Conjecture \ref{conj: interpolated global acc} in dimension $d$.

An important consequence of the ACC for lc thresholds is the existence of uniform lc rational polytopes \cite[Theorem 5.6]{HLS19}. Such result also hold for foliations in dimension $\leq 3$ \cite{LMX24a,LMX24c} or for algebraically integrable foliations \cite{DLM23}. Inspired by this, we also conjecture the existence of a uniform interpolated lc rational polytope for adjoint foliated structures:

\begin{conj}[Uniform interpolated lc rational polytope]
Let $d$ be a positive integer, $\Ii_0\subset [0,1]\cap\mathbb Q$ a finite set, and $t_0\in [0,1]$ an irrational real number. Then there exists a positive real number $\delta$ depending only on $d,\Ii_0$, and $t_0$ satisfying the following.

Assume that $(X,\Ff,B^{\ninv}+(1-t_0)B^{\inv},t_0)$ is an lc adjoint foliated structure of dimension $d$ such that $B\in\Ii_0$. Then $(X,\Ff,B^{\ninv}+(1-t)B^{\inv},t)$ is lc for any $t\in (t_0-\delta,t_0+\delta)$.
\end{conj}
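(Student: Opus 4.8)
The plan is to prove this statement conditionally on Conjectures \ref{conj: strong mckernan conjecture} and \ref{conj: acc for lct afs} (in the $\Qq$-factorial form, which will be enough after a $\Qq$-factorialization), by the standard deduction of a uniform rational polytope from an ACC statement, in the spirit of \cite[Theorem 5.6]{HLS19}, adapted to the parameter $t$.

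First I would record the affine structure in $t$. For a fixed $(X,\Ff,B)$ with $B\in\Ii_0$, put $B(t):=B^{\ninv}+(1-t)B^{\inv}$; the identity
\[
K_{(X,\Ff,B(t),t)}=t\left(K_{\Ff}+B^{\ninv}\right)+(1-t)\left(K_X+B^{\ninv}+B^{\inv}\right)
\]
shows that, once $K_{(X,\Ff,B(t),t)}$ is $\Rr$-Cartier for $t$ near $t_0$ — equivalently $K_{\Ff}-K_X-B^{\inv}$ is $\Rr$-Cartier, which I build into the hypotheses so that ``lc at $t$'' is meaningful — for every prime divisor $E$ over $X$ the quantity $g_E(t):=a(E,\Ff,B(t),t)+t\epsilon_{\Ff}(E)+(1-t)$ is \emph{affine} in $t$, with $g_E(1)=a(E,\Ff,B^{\ninv})+\epsilon_{\Ff}(E)$ and $g_E(0)=a(E,X,B^{\ninv}+B^{\inv})+1$. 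Hence $\{t\in[0,1]:(X,\Ff,B(t),t)\ \text{is lc}\}=\bigcap_E\{g_E\ge 0\}\cap[0,1]$ is a closed interval $[\ell(X,\Ff,B),r(X,\Ff,B)]$; and since an affine function nonnegative at two points is nonnegative between them, it will suffice to produce a uniform $\delta=\delta(d,\Ii_0,t_0)>0$ such that every such triple which is lc at $t_0$ is also lc at $t_0-\delta$ and at $t_0+\delta$. (Irrationality of $t_0$ gives $0<t_0<1$, so $t_0\pm\delta\in(0,1)$ for small $\delta$.) Equivalently, I need to show that $t_0$ is neither attained as an endpoint $r(X,\Ff,B)$ (resp.\ $\ell(X,\Ff,B)$), nor an accumulation point of such endpoints from the relevant side, as $(X,\Ff,B)$ ranges over dimension $d$ with $B\in\Ii_0$. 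The two endpoints are entirely parallel, so I describe only the right endpoint $r$.

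Next I would run the ACC argument. If no uniform $\delta$ exists on the right, there are $(X_i,\Ff_i,B_i)$ of dimension $d$, $B_i\in\Ii_0$, lc at $t_0$ but not lc at some $s_i\in(t_0,t_0+\tfrac1i)$; then $r_i:=r(X_i,\Ff_i,B_i)\in[t_0,s_i)$, so $r_i\to t_0$ with $r_i\ge t_0$. Since $r_i=\sup\{t\in[0,1]:(X_i,\Ff_i,B_i^{\ninv}+(1-t)B_i^{\inv},t)\ \text{is lc}\}$, Conjecture \ref{conj: strong mckernan conjecture} applied to the finite (hence DCC) set $\Ii_0$ puts all $r_i$ in a fixed ACC set $\Ii'=\Ii'(d,\Ii_0)$. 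I would then use the refinement that every accumulation point of $\Ii'$ is \emph{rational} — this should be proved together with the ACC statement, as in \cite{HMX14,HLS19}: an accumulation point is itself an interpolated lc threshold of a structure in a limiting lower-dimensional family, and such a threshold is rational once the relevant canonical divisors are $\Qq$-Cartier. As $t_0$ is irrational it is not an accumulation point of $\Ii'$, so after passing to a subsequence $r_i=t_0$ for all $i$. Finally, to rule out $r(X,\Ff,B)=t_0$ for $\dim X=d$, $B\in\Ii_0$, I would reduce — via a $\Qq$-factorialization of $X$ and, using $t_0<1$, a $\Qq$-factorial qdlt modification (Theorem \ref{thm: qdlt model intro}) — to the case $X$ $\Qq$-factorial klt with $K_{\Ff}$, $K_X$ both $\Qq$-Cartier; there each $g_E$ has rational slope and intercept, the endpoint $r(X,\Ff,B)$ is attained by finitely many $E$ (finiteness being exactly the $\Qq$-factorial case of Conjecture \ref{conj: acc for lct afs}), hence $r(X,\Ff,B)\in\Qq$, contradicting $r(X,\Ff,B)=t_0$. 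The same reduction plus the dual ACC and rationality statements dispose of the left endpoint $\ell$.

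The hard part will be the inputs, not the bookkeeping: ACC together with rationality of accumulation points for interpolated lc thresholds of adjoint foliated structures (Conjecture \ref{conj: strong mckernan conjecture} and its refinement), and the finiteness/rationality packaged in Conjecture \ref{conj: acc for lct afs}, are open for $d\ge 3$. The genuinely new subtlety compared with the classical uniform-rational-polytope argument is that $K_{\Ff}$ and $K_X$ need not individually be $\Qq$-Cartier — only their $t_0$-combination is — so the ``affine in $t$'' picture and every rationality assertion have to be installed only after a $\Qq$-factorialization / qdlt-modification reduction, and one must check that these reductions do not change the relevant endpoints.
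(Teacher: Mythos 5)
The statement you are addressing is not proved in the paper: it is formulated in Appendix \ref{sec: acc} as an open conjecture, alongside Conjectures \ref{conj: strong mckernan conjecture} and \ref{conj: acc for lct afs}, which the paper explicitly says are open even in dimension $2$. Your argument is therefore not a proof but a conditional reduction, and moreover it needs strengthenings of those conjectures that the paper does not formulate. Concretely, you require (a) that the accumulation points of the ACC set $\Ii'$ of interpolated lc thresholds are \emph{rational}, and (b) that in the $\Qq$-factorial case the endpoint $r(X,\Ff,B)$ is computed by finitely many divisors (equivalently is attained, hence rational). Neither is contained in the cited conjectures; in particular your parenthetical claim that the finiteness is ``exactly the $\Qq$-factorial case of Conjecture \ref{conj: acc for lct afs}'' is a misattribution — that conjecture is an ACC statement about thresholds in the auxiliary parameter $s$ for a \emph{fixed} $t$, and says nothing about which divisors compute the threshold in $t$, nor about rationality or attainment. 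Since each $g_E$ has rational crossing point but $r(X,\Ff,B)$ is an infimum over infinitely many $E$, and an infimum of rationals can be irrational, the rationality of individual endpoints is a genuine additional input, not bookkeeping; the analogous step in \cite{HMX14,HLS19} rests on accumulation-point analyses that have no established foliated counterpart here.

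There is also a gap in your reduction to the $\Qq$-Cartier/affine-in-$t$ setting. The conjecture only assumes that the $t_0$-combination is $\Rr$-Cartier; you ``build into the hypotheses'' that $K_{\Ff}-K_X-B^{\inv}$ is $\Rr$-Cartier, which changes the statement, and you propose to recover the general case via a $\Qq$-factorialization or a $\Qq$-factorial qdlt modification (Theorem \ref{thm: qdlt model intro}). But ``$(X,\Ff,B^{\ninv}+(1-t)B^{\inv},t)$ is lc'' for $t\neq t_0$ presupposes $\Rr$-Cartierness of $K_{(X,\Ff,B(t),t)}$ on $X$ itself, and this cannot be tested upstairs: crepant pullback is only available at $t_0$, and lc-ness on the modification at nearby $t$ does not descend, since the qdlt modification extracts nklt places and is only guaranteed to be crepant when the downstairs structure at that $t$ is lc — which is what you are trying to prove. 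So your closing caveat that ``one must check that these reductions do not change the relevant endpoints'' is precisely the unverified step, not a routine check. In summary, the paper leaves this statement as a conjecture, and your proposal is a plausible sketch of how one would deduce a uniform rational polytope from suitable ACC plus rationality-of-accumulation-points statements, but it does not prove the statement, conditionally or otherwise, as written.
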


\smallskip

We also remark that there is a more general version of the ACC conjecture for interpolated lc thresholds proposed by M\textsuperscript{c}Kernan. Instead of considering a normal variety $X$ and a foliation $\Ff$ on $X$, we can consider the more general setting where we are given two foliations $\Ff$ and $\Gg$ on $X$ such that $\Ff \subset \Gg$ and the adjoint foliated structure $(\Gg,\Ff,t)$, i.e., a structure such that $t \in [0,1]$ and $tK_{\Ff}+(1-t)K_{\Gg}$ is $\Rr$-Cartier. 
We may define the singularities of $(\Gg,\Ff,t)$ similarly to those of $(X,\Ff,t)$, by measuring them on all possible resolutions of $X$.
Similarly to the previous definitons, we can also define a new version of the interpolated lc thresholds in this new context simply by taking 
\begin{align*}
\sup\{t\in [0,1]\mid (\Gg,\Ff,t)\text{ is lc}\}.
\end{align*}
M\textsuperscript{c}Kernan conjectured that also in this framework the set of all possible interpolated lc threshold satisfies the ACC in any fixed dimension.

\begin{conj}[{\cite[50:55]{McK23}}]\label{conj: two foliation interpolated acc}
    Let $d$ be a positive integer. Then there exists an ACC set 
    $\Ii=\Ii(d) \subset [0, 1]$
    satisfying the following property:

    Let $X$ be a normal variety of dimension $d$ and let $\Ff\subset\Gg\subset T_X$ be foliations.
    Then,
    \begin{align*}
\sup\{t\in [0,1]\mid (\Gg,\Ff,t)\text{ is lc}\}
    \end{align*}
    belongs to $\Ii$.
\end{conj}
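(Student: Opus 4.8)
The plan is to prove Conjecture~\ref{conj: two foliation interpolated acc} by induction on the dimension $d$, using an adjunction formula adapted to nested foliations $\Ff\subset\Gg$ together with a global ACC input of the type of Conjecture~\ref{conj: interpolated global acc}. Realistically this is only approachable when both $\Ff$ and $\Gg$ are algebraically integrable, or in dimension $\le 3$, so I would first restrict to that setting, where the machinery of this paper (ACSS and $\Qq$-factorial qdlt modifications, the cone and contraction theorems, existence of flips and of minimal models) is available.

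First I would pass to a common partial resolution. Choosing a birational model $h\colon X'\to X$ that simultaneously puts $\Gg$ and $h^{-1}\Ff$ into ACSS form, the condition that $(\Gg,\Ff,t)$ is lc becomes a finite system of inequalities $a(E,\Gg,\Ff,t)\ge -1$, one for each divisor $E$ extracted by $h$, and each discrepancy $a(E,\Gg,\Ff,t)$ is an affine function of $t$. Hence $\bar t(X,\Ff\subset\Gg):=\sup\{t\in[0,1]\mid(\Gg,\Ff,t)\text{ is lc}\}$ is a corner of this polytope of $t$'s, and the monotonicity statement analogous to Proposition~\ref{prop: adjoint lc implies X lc} shows that $(\Gg,\Ff,t)$ is lc for all $t\le \bar t$, so that the supremum is attained by an actual lc (indeed lc-but-non-klt) structure.

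Next, arguing by contradiction, suppose $\bar t_i:=\bar t(X_i,\Ff_i\subset\Gg_i)$ is a strictly increasing sequence with limit $\bar t<1$ (the case $\bar t=1$ must be excluded separately). By the previous step, for each $i$ there is a divisor $E_i$ over $X_i$ which is an lc place of $(\Gg_i,\Ff_i,\bar t_i)$; let $W_i$ be its center and $S_i$ its normalization. Here I would invoke an adjunction formula in the style of Theorem~\ref{thm: afs adj}, extended from the case $\Gg=T_X$ to a genuine nested pair $\Ff\subset\Gg$, producing restricted foliations $\Ff_{S_i}\subset\Gg_{S_i}$ on $S_i$ and a boundary $\Theta_i\ge 0$ whose coefficients lie in a fixed DCC set depending only on $d$, and such that $(\Gg_{S_i},\Ff_{S_i},\Theta_i^{\ninv}+(1-\bar t_i)\Theta_i^{\inv},\bar t_i)$ is lc but not klt. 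Applying a global ACC statement on $S_i$ (the two-foliation analogue of Conjecture~\ref{conj: interpolated global acc}) together with the ACC for interpolated lc thresholds (Conjectures~\ref{conj: strong mckernan conjecture} and~\ref{conj: acc for lct afs}) in dimension $<d$, one concludes that $\bar t_i$ lies in an ACC set, contradicting $\bar t_i\nearrow\bar t$. As in the proof of the $\Qq$-factorial implication recorded above, some bookkeeping is needed to keep the auxiliary coefficient sets DCC under the rescalings $B\mapsto B^{\ninv}+(1-t)B^{\inv}$ that arise when passing between the foliated-triple and adjoint-structure normalizations.

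The main obstacle — and the reason the conjecture is open — is twofold. First, there is no resolution of singularities, nor an MMP, for general foliations of intermediate rank in dimension $\ge 4$, so the very first reduction fails outside the algebraically integrable or low-dimensional range; this is precisely the limitation flagged in the opening remark of this appendix. Second, even in the algebraically integrable case one must first build the adjunction and inversion-of-adjunction machinery for a truly nested pair $\Ff\subset\Gg$ — the present paper only handles $\Gg=T_X$ — and then establish the required global ACC input, which is currently known only for $t=1$ and $\dim\le 3$. I would therefore expect a complete argument to split into two stages: first prove the two-foliation adjunction and the two-foliation global ACC statement in dimension $d$, and then deduce Conjecture~\ref{conj: two foliation interpolated acc} in dimension $d+1$ from them, mirroring the inductive scheme the authors announce for Conjectures~\ref{conj: strong mckernan conjecture} and~\ref{conj: acc for lct afs}.
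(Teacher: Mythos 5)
The statement you are addressing is not a theorem of the paper at all: Conjecture~\ref{conj: two foliation interpolated acc} is stated in the appendix as an open conjecture of M\textsuperscript{c}Kernan, with no proof given, and the authors explicitly record that the conjectures of this appendix are widely open even in low dimension and even for algebraically integrable foliations. So the only question is whether your argument actually establishes it, and it does not. What you have written is a reduction of one open conjecture to several other open statements: a two-foliation adjunction formula (the paper's Theorem~\ref{thm: afs adj} treats only $\Gg=T_X$, and even there it restricts a divisor $S$ that is a component of $B$ on $X$ itself, not an lc place with center $W_i$ of arbitrary codimension, so the inductive restriction step you invoke has no available substitute); a two-foliation analogue of the global ACC Conjecture~\ref{conj: interpolated global acc}, which is open already for $\Gg=T_X$ in dimension $2$ with boundary, and known only for $t=1$ in dimension $\le 3$; and Conjectures~\ref{conj: strong mckernan conjecture} and~\ref{conj: acc for lct afs} in lower dimension, which are likewise open. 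An argument that is conditional on these inputs cannot be spliced in as a proof of the conjecture.

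Beyond the conditionality, several individual steps are unjustified even in the algebraically integrable setting. The claim that lc-ness of $(\Gg,\Ff,t)$ can be checked by finitely many affine inequalities on one common ACSS-type model presupposes both a simultaneous foliated resolution for the nested pair $\Ff\subset\Gg$ and a statement that log canonicity can be verified on that single model; neither is provided by the paper (Lemma~\ref{lem: foliated log smooth imply lc} and the ACSS machinery concern a single foliation with $\Gg=T_X$), and for general foliations of intermediate rank resolution is simply unavailable, whereas the conjecture as stated is about arbitrary foliations in arbitrary dimension, so restricting to the algebraically integrable case changes the statement rather than proving it. Similarly, your assertions that the supremum $\bar t$ is attained, that $(\Gg,\Ff,t)$ is lc for all $t\le\bar t$, and that the limiting case $\bar t=1$ can be ``excluded separately'' all rest on a two-foliation analogue of Proposition~\ref{prop: adjoint lc implies X lc} that has not been formulated, let alone proved. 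As an outline of how one might eventually attack the conjecture your plan is reasonable and consistent with the inductive scheme the authors themselves sketch for Conjectures~\ref{conj: strong mckernan conjecture} and~\ref{conj: acc for lct afs}, but it should be presented as a strategy, not as a proof.
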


Of course, one may generalize Conjecture \ref{conj: two foliation interpolated acc} to the case with a boundary and also consider the corresponding global ACC conjecture. Due to the technicality of the statements, we omit the detailed formulations.

Finally, inspired by the ACC for log canonical threshold polytopes \cite[Theorem 1.1]{HLQ21}, it is also interesting to consider the case with more than two foliations, i.e., the ``ACC for interpolated lc threshold polytope". More precisely, instead of considering two foliations $\Ff \subset \Gg$ and the one-parameter combination $tK_{\Ff}+(1-t)K_{\Gg}$, we may consider a sequence of foliations $\Ff_1 \subset \Ff_2 \subset \dots \subset \Ff_n$ and the combination $\sum a_iK_{\Ff_i}$, with $a_i \in [0,1]$ and $\sum a_i = 1$, and study the ACC property of the polytope
$$\left\{(x_1,\dots,x_n)\Big| \sum x_i=1, x_i\in [0,1], \sum x_iK_{\Ff_i}\text{ has an lc adjoint foliated structure}\right\}.$$
Due to technicality we do not provide a more detailed conjecture here.

\end{document}